\def\rg{\hbox to 30pt{\rightarrowfill}}
\def\lg{\hbox to 30pt{\leftarrowfill}}
          \newtheorem{theorem}{Theorem}[section]
      \newtheorem{definition}[theorem]{Definition}
      \newtheorem{proposition}[theorem]{Proposition}
      \newtheorem{corollary}[theorem]{Corollary}
      \newtheorem{lemma}[theorem]{Lemma}
      \newcommand{\BB}{{\mathbb B}}
      \newcommand{\CC}{{\mathbb C}}
      \newcommand{\NN}{{\mathbb N}}
      \newcommand{\ZZ}{{\mathbb Z}}
      \newcommand{\DD}{{\mathbb D}}
      \newcommand{\FF}{{\mathbb F}}
      \newcommand{\TT}{{\mathbb T}}
\newcommand{\HH}{{\mathbb H}}
      \newcommand{\cA}{{\mathcal A}}
      \newcommand{\cC}{{\mathcal C}}
      \newcommand{\cD}{{\mathcal D}}
      \newcommand{\cE}{{\mathcal E}}
      \newcommand{\cG}{{\mathcal G}}
      \newcommand{\cH}{{\mathcal H}}
      \newcommand{\cK}{{\mathcal K}}
      \newcommand{\cL}{{\mathcal L}}
      \newcommand{\cM}{{\mathcal M}}
      \newcommand{\cN}{{\mathcal N}}
      \newcommand{\cQ}{{\mathcal Q}}
      \newcommand{\cS}{{\mathcal S}}
      \newcommand{\cU}{{\mathcal U}}
      \newcommand{\cV}{{\mathcal V}}
      \newcommand{\cY}{{\mathcal Y}}
      \newcommand{\cX}{{\mathcal X}}
      \newcommand{\supp}{\hbox{\rm{supp}}\,}
      \newcommand{\rank}{\hbox{\rm{rank}}\,}
      \newdimen\expt
      \def\boxit#1{\setbox0\hbox{$\displaystyle{#1}$}
            \hbox{\lower.4\expt
       \hbox{\lower3\expt\hbox{\lower\dp0
            \hbox{\vbox{\hrule height.4\expt
       \hbox{\vrule width.4\expt\hskip3\expt
            \vbox{\vskip3\expt\box0\vskip2\expt}%
       \hskip3\expt\vrule width.4\expt}\hrule height.4\expt}}}}}}
\begin{document}
       \pagestyle{myheadings}
      \markboth{ Gelu Popescu}{  Noncommutative polydomains,   Berezin transforms,   and operator model theory    }

      \title [  Berezin transforms on Noncommutative varieties  in polydomains]
      {          Berezin transforms on Noncommutative  varieties in polydomains }
        \author{Gelu Popescu}
\date{April 10, 2013}
      \thanks{Research supported in part by an NSF grant}
      \subjclass[2000]{Primary:  46L52;  47A56;  Secondary: 32A07; 32A38}
      \keywords{Multivariable operator theory;  Berezin transform;  Noncommutative polydomain; Noncommutative variety;  Free holomorphic
      function;
      Fock space;   Invariant subspace;   Dilation theory; Characteristic function;.
}

      \address{Department of Mathematics, The University of Texas
      at San Antonio \\ San Antonio, TX 78249, USA}
      \email{\tt gelu.popescu@utsa.edu}

\begin{abstract} Let $\cQ$ be a set of polynomials in noncommutative indeterminates
$Z_{i,j}$, $i\in \{1,\ldots,k\}$, $j\in \{1,\ldots, n_i\}$. In this paper, we study noncommutative varieties
$$
\cV_\cQ(\cH):=\{{\bf X}=\{X_{i,j}\}\in {\bf D}(\cH):\ g({\bf X})=0 \text{ for all } g\in \cQ\},
$$
where ${\bf D}(\cH)$ is a {\it regular polydomain} in $B(\cH)^{n_1+\cdots +n_k}$ and $B(\cH)$ is the algebra of bounded linear operators on a Hilbert space $\cH$.
Under natural conditions on $\cQ$, we show that there is a {\it universal model}
${\bf S}=\{{\bf S}_{i,j}\}$ such that $g({\bf S})=0$, $g\in \cQ$, acting on a subspace of a tensor product of full Fock spaces. We characterize the variety $\cV_\cQ(\cH)$ and  its pure part in terms of the universal model  and a class of completely positive linear maps. We obtain a characterization of  those elements   in $\cV_\cQ(\cH)$ which admit characteristic functions and prove that the characteristic function is a complete unitary invariant for the class of completely non-coisometric elements. We study the universal model ${\bf S}$, its joint invariant subspaces and the representations of the universal operator algebras it generates: the {\it variety algebra} $\cA(\cV_\cQ)$, the Hardy algebra $F^\infty(\cV_\cQ)$, and   the $C^*$-algebra $C^*(\cV_\cQ)$. Using noncommutative Berezin transforms associated with each variety, we develop  an operator model theory and dilation theory for large classes of   varieties in  noncommutative polydomains. This includes various commutative cases  which are  close connected to the theory of holomorphic functions in several complex variables and algebraic geometry.
\end{abstract}

      \maketitle

\section*{Contents}
{\it

\quad Introduction

\begin{enumerate}
   \item[1.] Noncommutative  varieties in polydomains and   Berezin transforms
   \item[2.] Universal operator  models and joint invariant subspaces
    \item[3.] Noncommutative varieties and multivariable  function theory
       \item[4.] Isomorphisms of universal operator algebras
            \item[5.] Dilation theory on noncommutative varieties in polydomains
           \item[6.] Characteristic functions and operator models
   \end{enumerate}

\quad References

}

\bigskip

\section*{Introduction}

 We denote by $B(\cH)^{n_1}\times_c\cdots \times_c B(\cH)^{n_k}$
   the set of all tuples  ${\bf X}:=({ X}_1,\ldots, { X}_k)$ in $B(\cH)^{n_1}\times\cdots \times B(\cH)^{n_k}$
     with the property that the entries of ${X}_s:=(X_{s,1},\ldots, X_{s,n_s})$  are commuting with the entries of
      ${X}_t:=(X_{t,1},\ldots, X_{t,n_t})$  for any $s,t\in \{1,\ldots, k\}$, $s\neq t$.
 In an attempt to unify the multivariable operator model theory  for the ball-like domains and  commutative polydiscs,  we developed in \cite{Po-Berezin} an operator  model  theory and a theory of free holomorphic functions on  {\it regular polydomains} of the form
$$
{\bf D_q^m}(\cH):=\left\{ {\bf X}=(X_1,\ldots, X_k)\in B(\cH)^{n_1}\times_c\cdots \times_c B(\cH)^{n_k}: \ {\bf \Delta_{q,X}^p}(I)\geq 0 \ \text{ for }\ {\bf 0}\leq {\bf p}\leq {\bf m}\right\},
$$
where ${\bf m}:=(m_1,\ldots, m_k)$ and ${\bf n}:=(n_1,\ldots, n_k)$ are in $\NN^k$,
 the {\it defect mapping} ${\bf \Delta_{q,X}^m}:B(\cH)\to  B(\cH)$ is defined by
$$
{\bf \Delta_{q,X}^m}:=\left(id -\Phi_{q_1, X_1}\right)^{m_1}\circ \cdots \circ\left(id -\Phi_{q_k, X_k}\right)^{m_k},
$$
and ${\bf q}=(q_1,\ldots, q_k)$ is  a $k$-tuple of positive regular polynomials $q_i\in \CC[Z_{i,1},\ldots, Z_{i,n_i}]$, i.e., all the coefficients  of $q_i$ are positive, the constant term is zero, and the coefficients of the linear terms $Z_{i,1},\ldots, Z_{i,n_i}$ are different from zero.  If the polynomial $q_i$ has the form $q_i=\sum_{\alpha} a_{i,\alpha} Z_{i,\alpha}$, the  completely positive linear map
$\Phi_{q_i,X_i}:B(\cH)\to B(\cH)$  is defined by setting $\Phi_{q_i,X_i}(Y):=\sum_{\alpha} a_{i,\alpha} X_{i,\alpha} Y X_{i,\alpha} ^*$ for $Y\in B(\cH)$.

 In this paper, we study noncommutative varieties in the polydomain ${\bf D_q^m}(\cH)$, given by
$$
\cV_\cQ(\cH):=\{{\bf X}\in {\bf D_q^m}(\cH):\ g({\bf X})=0 \text{ for all } g\in \cQ\},
$$
where $\cQ$ is a set of polynomials in noncommutative indeterminates
$Z_{i,j}$, which generates a nontrivial ideal in $\CC[Z_{i,j}]$. The goal is to  understand the structure of this noncommutative variety, determine
 its elements and classify them up to unitary equivalence, for
  large classes of sets  $\cQ\subset \CC[Z_{i,j}]$.
  This study can be seen as  an attempt to initiate   noncommutative algebraic geometry in polydomains.

 To present our results, we need some notation.   Let $H_{n_i}$ be
an $n_i$-dimensional complex  Hilbert space.
  We consider the full Fock space  of $H_{n_i}$ defined by
$$F^2(H_{n_i}):=\bigoplus_{p\geq 0} H_{n_i}^{\otimes p},$$
where $H_{n_i}^{\otimes 0}:=\CC 1$ and $H_{n_i}^{\otimes p}$ is the
(Hilbert) tensor product of $p$ copies of $H_{n_i}$. Let $\FF_{n_i}^+$ be the unital free semigroup on $n_i$ generators
$g_{1}^i,\ldots, g_{n_i}^i$ and the identity $g_{0}^i$. We use the notation $Z_{i,\alpha_i}:=Z_{i,j_1}\cdots Z_{i,j_p}$
  if $\alpha_i\in \FF_{n_i}^+$ and $\alpha_i=g_{j_1}^i\cdots g_{j_p}^i$, and
   $Z_{i,g_0^i}:=1$.
 If $(\alpha):=(\alpha_1,\ldots, \alpha_k)$ is  in $ \FF_{n_1}^+\times \cdots \times\FF_{n_k}^+$, we  set $Z_{(\alpha)}:= Z_{1,\alpha_1}\cdots Z_{k,\alpha_k}$.

   In Section 1, after setting up the notation and recalling some basic results from \cite{Po-Berezin-poly}, we show that the {\it abstract variety} $\cV_\cQ:=\{\cV_\cQ(\cH): \  \cH\text{ \it is a Hilbert space}\}$ has  a universal model ${\bf S}=\{{\bf S}_{i,j}\}$ such that $g({\bf S})=0$, $g\in \cQ$, where  each ${\bf S}_{i,j}$ is
  acting on a subspace $\cN_\cQ$ of a tensor product of full Fock spaces.
  For each  element ${\bf T}\in  \cV_\cQ(\cH)$ we introduce the {\it constrained
noncommutative Berezin transform}  at ${\bf T}$ as the map ${\bf
B_{T, {\cQ}}}:B(\cN_\cQ) \to B(\cH)$ defined by setting
 \begin{equation*}
 {\bf B_{T,{\cQ}}}[\varphi]:= {\bf K_{q,T,{\cQ}}^*} (\varphi\otimes I_\cH){\bf K_{q,T,{\cQ}}},
 \qquad \varphi\in B( \cN_J),
 \end{equation*}
where ${\bf K_{f,T,{\cQ}}}$ is the constrained Berezin kernel.   This Berezin \cite{Ber} type transform will play an important role in this paper.
We  show that the {\it pure} elements of the noncommutative variety
$\cV_{\cQ}(\cH)$ are detected by a class of completely positive linear maps. More precisely,
  given ${\bf T}=\{T_{i,j}\}\in B(\cH)^{n_1}\times \cdots \times B(\cH)^{n_k}$, we prove that  ${\bf T}$ is a pure element of
$\cV_{\cQ}(\cH)$ if and only if there is a unital completely positive and $w^*$-continuous linear map
$$
\Psi: \overline{\text{\rm span}}^{w^*} \{{\bf S}_{(\alpha)} {\bf S}_{(\beta)}^*: \
 (\alpha), (\beta) \in \FF_{n_1}^+\times \cdots \times \FF_{n_k}^+\}\to B(\cH)
 $$
 such that
 $$
 \Psi({\bf S}_{(\alpha)} {\bf S}_{(\beta)}^*)={\bf T}_{(\alpha)} {\bf T}_{(\beta)}^*,\qquad (\alpha), (\beta) \in \FF_{n_1}^+\times \cdots \times \FF_{n_k}^+.
 $$
Every map $\Psi$ with the above-mentioned  properties   is the constrained Berezin transform ${\bf B}_{{\bf T},\cQ}$  at a pure  element ${\bf T}\in \cV_\cQ(\cH)$.
A similar result (see Theorem \ref{C*-charact2}) characterizing the noncommutative variety $\cV_\cQ(\cH)$ is provided under the condition that $\cQ\subset \CC[Z_{i,j}]$ is a left ideal generated by homogeneous polynomials.

  In Section 2, we use the noncommutative Berezin transforms  to show that
a tuple ${\bf T}=\{T_{i,j}\}$ in $B(\cH)^{n_1}\times \cdots \times B(\cH)^{n_k}$ is a pure element in $\cV_\cQ(\cH)$ if and only if it is unitarily equivalent to the compression of a multiple of the universal model  to a co-invariant subspace.  In this case, we have
$$ { \bf T}_{(\alpha)}= {\bf B}_{{\bf T},\cQ}[{\bf S}_{(\alpha)}\otimes  I_\cD], \qquad (\alpha) \in \FF_{n_1}^+\times \cdots \times \FF_{n_k}^+,
    $$
the constrained Berezin kernel ${\bf K_{q,T,\cQ}}$ is an isometry, and the  subspace ${\bf K_{q,T,\cQ}}\cH$ is   co-invariant  under  each operator
${\bf S}_{i,j}\otimes  I_\cD$, where $\cD$ is the closure of the range of the defect operator ${\bf \Delta_{q,T}^m}(I)$.  For a certain class of noncommutative varieties $\cV_\cQ(\cH)$, this leads to a characterization  of  the pure elements ${\bf T}\in \cV_\cQ(\cH)$ with  $\dim \cD=n\in \NN$.  In particular, we obtain the following description and classification of
 the pure elements ${\bf T}\in \cV_\cQ(\cH)$ with $\dim \cD=1$. We show that they have the form
 ${\bf T}=\{P_\cM {\bf S}_{i,j}|_\cM\}$, where $\cM$ is  a co-invariant  subspace under each operator  ${\bf S}_{i,j}$.
Moreover,
if $\cM'$ is another co-invariant subspace under ${\bf S}_{i,j}$, which gives rise to an element ${\bf T}'\in \cV_\cQ(\cH)$, then ${\bf T}$
and ${\bf T}'$ are unitarily equivalent if and only if $\cM=\cM'$. This extends a result of Douglas and Foias \cite{DF} for the Hardy space  $H^2(\DD^n)$ over the polydisc.
We also  obtain  a characterization of the Beurling \cite{Be} type  joint invariant subspaces under the universal model ${\bf S}=\{{\bf S}_{i,j}\}$.   We prove that a subspace $\cM\subset \cN_\cQ\otimes \cH$    has the form  $\cM=M\left(\cN_\cQ\otimes \cE\right)$ for some partially isometric {\it multi-analytic} operator $M:\cN_\cQ\otimes \cE\to
\cN_\cQ\otimes \cH$  with respect to the universal model ${\bf S}$, i.e., $M({\bf S}_{i,j}\otimes I_\cH)=({\bf S}_{i,j}\otimes I_\cK)M$
for all $i,j$,  if and only if
 $$
  {\bf \Delta_{q,S\otimes{\rm I_\cH}}^p}(P_\cM)\geq 0,\qquad \text{ for any }\ {\bf p} \in \ZZ_+^k,  {\bf p}\leq {\bf m},
   $$
 where $P_\cM$ is the orthogonal projection of the Hilbert space
  $\cN_\cQ\otimes \cH$ onto $\cM$.

There is a strong connection between the  noncommutative varieties in polydomains, the theory of functions in several complex variables,  and the classical complex algebraic geometry. Note that the representation of the abstract variety $\cV_\cQ$ on the complex plane $\CC$ is the compact  set
$$
\cV_\cQ(\CC)={\bf D_q}(\CC)\cap \{\lambda\in \CC^n: \ g(\lambda)=0 \text{ for all } g\in \cQ\}
$$
and  ${\bf D^\circ_q}(\CC)=\{\lambda\in \CC^n: \ {\bf \Delta}_{{\bf q},\lambda}(1)> 0\}$ is a Reinhardt domain in $\CC^n$, where $n=n_1+\cdots+ n_k$ is the number of indeterminates in ${\bf q}=(q_1,\ldots, q_k)$.

In Section 3, we determine all the joint  invariant
subspaces of co-dimension one of  the universal model ${\bf S}=\{{\bf S}_{i,j}\}$. We show that the joint eigenvectors for
${\bf S}_{i,j}^*$  are
precisely the noncommutative constrained  Berezin kernels ${\bf K_{q,\lambda,{\cQ}}}$, where $\lambda\in
\cV_\cQ(\CC))\cap {\bf D^\circ_q}(\CC)$.
We introduce the variety algebra $\cA(\cV_\cQ)$  as the norm closed algebra generated by the ${\bf S}_{i,j}$ and the identity,  and the Hardy algebra $F^\infty(\cV_\cQ)$ as the WOT-closed version. We identify  the
$w^*$-continuous and multiplicative linear functionals of the Hardy algebra  $F^\infty(\cV_\cQ)$ as the maps, indexed by $\lambda\in \cV_\cQ(\CC)\cap {\bf D^\circ_q}(\CC)$, defined by
$\Phi_\lambda(A):= {\bf B}_{\lambda,\cQ}[A]$ for  $A\in F^\infty(\cV_\cQ)$.
If  $\cQ\subset \CC[Z_{i,j}]$
  is a left ideal generated by  noncommutative
    homogenous polynomials,
 then we show that  the right joint spectrum
$\sigma_r({\bf S})$ coincides with $\cV_\cQ(\CC)$. On the other hand, it turns out that the variety $\cV_\cQ(\CC)$   is homeomorphic to  the space  $M_{\cA(\cV_\cQ)}$ of all characters of the variety algebra $\cA(\cV_\cQ)$,  via the mapping $\lambda\mapsto \Phi_\lambda$, where $\Phi_\lambda$ is the evaluation functional.

Special attention is given to the commutative case when $\cQ=\cQ_c$, the left ideal generated by the commutators $Z_{i,j} Z_{s,t}-Z_{s,t} Z_{i,j}$ of the indeterminates
in $\CC[Z_{i,j}]$. In this case, the universal model associated with $\cV_{\cQ_c}$, denoted by ${\bf L}=\{{\bf L}_{i,j}\}$,  is acting on the Hilbert space $\cN_{\cQ_c}$ which coincides with  the closed span of all vectors  ${\bf K_{q,\lambda,{\cQ_c}}}$ with $ \lambda\in {\bf D^\circ_q}(\CC)\}$,   and it is identified with a Hilbert space  $H^2({\bf D^\circ_q}(\CC))$ of holomorphic functions on ${\bf D^\circ_q}(\CC)$, namely, the reproducing kernel Hilbert space  with kernel defined by
$$
\kappa_{\bf q}^c(\mu,\lambda):=\frac{1}{\prod_{i=1}^k\left(1- q_i(\mu_i\overline{\lambda}_i )\right)^{m_i}}, \qquad \mu,\lambda\in {\bf D^\circ_q}(\CC).
$$
We prove that the Hardy algebra $F^\infty(\cV_{\cQ_c})$  is reflexive and  coincides with the multiplier
algebra of  the Hilbert space $H^2({\bf D^\circ_q}(\CC))$. Under this identification, ${\bf L}_{i,j}$ is the multiplier by the coordinate function $\lambda_{i,j}$.
 We remark that when $n_1=\cdots =n_k$ and $\cQ_{cc}$ is  the left ideal generated by $\cQ_c$ and the polynomials
 $Z_{i,j}-Z_{p,j}$, the universal model associated with $\cV_{\cQ_{cc}}$ is acting on the Hilbert space $\cN_{\cQ_{cc}}$ which can
 be identified with the  reproducing kernel Hilbert space with kernel
$$
\kappa_{\bf q}^{cc}(z,w):=\frac{1}{\prod_{i=1}^k\left(1- q_i(z\overline{w} )\right)^{m_i}},\qquad z, w\in \cap_{i=1}^k{\bf D}^\circ_{q_i}(\CC).
$$
  In the particular case when $f_1=\cdots =f_k=Z_1+\cdots +Z_n$ and $m_1=\cdots=m_k=1$, we obtain   the reproducing kernel
  $(z,w)\mapsto \frac{1}{(1-\left<z,w\right>)^k}$ on the unit ball $\BB_n$. In this case, the reproducing kernel Hilbert spaces  are the  Hardy-Sobolev spaces (see \cite{BeaBur1}), which include the Drurry-Arveson space (see \cite{Dru}, \cite{Arv1}, \cite{DP}, \cite{Po-poisson}), the  Hardy space of the ball and the Bergman space (see \cite{Ru}). All the results of this paper are true in  these commutative settings.

In Section 4, we show that the isomorphism  problem for the universal polydomain algebras is closed connected to to the biholomorphic  equivalence of Reinhardt domains in several complex variables.
Let ${\bf q}=(q_1,\ldots, q_k)$ and ${\bf g}=(g_1,\ldots, g_{k'})$ be  tuples of positive regular polynomials  with $n$ and $\ell$ indeterminates, respectively, and let ${\bf m}\in \NN^k$ and ${\bf d}\in \NN^{k'}$. We prove that  if   the polydomain algebras $\cA({\bf D_q^m})$ and
 $\cA({\bf D_g^d})$  are   unital completely contractive
  isomorphic, then  the Reinhardt domains ${\bf
D^\circ_{q}}(\CC)$   and ${\bf D^\circ_{g}}(\CC)$  are biholomorphic equivalent and  $n=\ell$.
     A similar result holds for the commutative  variety algebras $\cA({\bf \cV_{q, \cQ_c}^m})$ and
 $\cA({\bf \cV_{g, \cQ_c}^d})$. We remark that  when  ${\bf q}=Z_1+\cdots + Z_n$ and ${\bf g}=(Z_1,\ldots, Z_n)$, the corresponding domain algebras are   the universal algebra of  a commuting row contraction $\cA(\cV_{{\bf q},\cQ_c}^1)$ and  the commutative polydisc algebra  $\cA(\cV_{{\bf g},\cQ_c}^1)$, respectively.
  Since $\BB_n$ and $\DD^n$ are not biholomorphic equivalent domains in $\CC^n$ if $n\geq 2$ (see \cite{Kr}), our result implies that the two algebras  are not isomorphic. The classification problem for polydomain algebras will be pursued in a future paper.

In Section 5,  we develop a dilation theory  for  noncommutative varieties in polydomains.
 For the class of noncommutative varieties   $\cV_\cQ(\cH)$, where $\cQ\subset \CC[Z_{i,j}]$ is an ideal generated by homogeneous  polynomials, the dilation theory is  refined.  In this case, we  obtain  Wold type
decompositions for non-degenerate $*$-representations of the
$C^*$-algebra $C^*( \cV_\cQ)$ generated by the universal model  ${\bf S}_{i,j}$ and the identity, and coisometric    dilations for the elements of  $\cV_\cQ(\cH)$. Under natural conditions, the dilation is unique up to unitary equivalence. In the  particular case when $k={\bf m}=1$, ${\bf q}=Z_1+\cdots + Z_n$, and $\cQ=\cQ_c$, we recover Arveson's results \cite{Arv1} concerning the dilation  theory for commuting row contractions.

In the last section of this paper,
we  provide a characterization for the class of tuples of  operators in  the noncommutative variety $\cV_\cQ(\cH)$ which admit constrained  characteristic functions. In this case,
  the characteristic function is  a complete unitary invariant  for  the  completely non-coisometric tuples. We  also provide operator models  in terms of  the  constrained characteristic functions. These results  extend the corresponding ones from \cite{SzFBK-book}, \cite{Po-charact}, \cite{Po-varieties}, \cite{Po-varieties2}, \cite{BES1}, \cite{BS},  \cite{Po-domains}, and \cite{Po-Berezin-poly}, to varieties in noncommutative polydomains.

We   remark that  the results of this paper are presented in a more general setting,
 when ${\bf q}$ is replaced by a $k$-tuple ${\bf f}=(f_1,\ldots, f_k)$ of
  positive regular free holomorphic functions in a neighborhood of the origin, and $\cQ$ is replaced by a WOT-closed left ideal of the Hardy algebra $F^\infty({\bf D_f^m})$.

  We mention that noncommutative varieties in {\it ball-like domains} were studied   in several papers (see \cite{ArPo2},   \cite{Po-varieties}, \cite{Po-varieties2}, \cite{Po-Berezin}, \cite{Po-unitary}, \cite{Po-domains},  \cite{Po-similarity-domains}, and the references there in).
   The commutative case when $m_1\geq 2$, $n_1\geq 2$, and  $q_1=Z_1+\cdots + Z_n$,  was studied
   by Athavale \cite{At1}, M\" uller \cite{M}, M\" uller-Vasilescu \cite{MVa},
   Vasilescu \cite{Va1}, and Curto-Vasilescu \cite{CV1}.
   Some  of these results  were extended by
   S.~Pott \cite{Pot} when $q_1$ is a positive regular  polynomial in commuting indeterminates (see also \cite{BS}).
The commutative polydisc case, i.e, $k\geq 2$, $n_1=\cdots=n_k=1$, and ${\bf q}=(Z_1,\ldots,Z_n)$,  was first considered by Brehmer \cite{Br} in connection with regular dilations. Motivated by Agler's work \cite{Ag2} on weighted shifts as model operators, Curto and Vasilescu developed a theory of standard operator models in
 the polydisc in \cite{CV2}, \cite{CV3}. Timotin \cite{T}  obtained some of their results from Brehmer's theorem. The polyball case, when $k\geq 2$ and $q_i=Z_1+\cdots +Z_{n_i}$, $i\in \{1,\ldots,k\}$, was considered in \cite{Po-poisson}  and \cite{BeTi2} for the noncommutative and commutative case, respectively.

\bigskip

\section{Noncommutative  Varieties in polydomains  and   Berezin transforms}

In  this section, we consider noncommutative varieties
${\bf \cV_{f,{\it J}}^m}(\cH)\subset {\bf D_f^m}(\cH)$ determined by  left ideals $J$  in either one of the following algebras:  $\CC[Z_{i,j}]$, $\CC[{\bf W}_{i,j}]$, $\cA({\bf D_f^m})$, or $F^\infty({\bf D_f^m})$.   We associate  with each such
a variety a universal model ${\bf S}=({\bf S}_1,\ldots,
{\bf S}_n)\in {\bf \cV_{f,{\it J}}^m}(\cN_J)$, where $\cN_J$ is an appropriate
subspace of  a tensor product of  full Fock  spaces. We introduce a {\it constrained
noncommutative Berezin transform} and use it to characterize  noncommutative varieties in   polydomains.

 We begin by recalling  from \cite{Po-Berezin-poly}  some  definitions  and basic properties  of the universal model associated with the abstract noncommutative polydomain ${\bf D_f^m}$ and of the associated Berezin kernel.

For each $i\in \{1,\ldots, k\}$,
let $\FF_{n_i}^+$ be the unital free semigroup on $n_i$ generators
$g_{1}^i,\ldots, g_{n_i}^i$ and the identity $g_{0}^i$.  The length of $\alpha\in
\FF_{n_i}^+$ is defined by $|\alpha|:=0$ if $\alpha=g_0^i$  and
$|\alpha|:=p$ if
 $\alpha=g_{j_1}^i\cdots g_{j_p}^i$, where $j_1,\ldots, j_p\in \{1,\ldots, n_i\}$.
If $Z_{i,1},\ldots,Z_{i,n_i}$  are  noncommuting indeterminates,   we
denote $Z_{i,\alpha}:= Z_{i,j_1}\cdots Z_{i,j_p}$  and $Z_{i,g_0^i}:=1$.
 Let  $f_i:= \sum_{\alpha\in
\FF_{n_i}^+} a_{i,\alpha} Z_\alpha$, \ $a_{i,\alpha}\in \CC$,  be a formal power series in $n_i$ noncommuting indeterminates $Z_{i,1},\ldots,Z_{i,n_i}$. We say that $f_i$ is
a {\it
positive regular free holomorphic function} if
$a_{i,\alpha}\geq 0$ for
any $\alpha\in \FF_{n_i}^+$, \ $a_{i,g_{0}^i}=0$,
   $a_{i,g_{j}^i}>0$ for  $j\in \{1,\ldots, n_i\}$, and
 $
\limsup_{k\to\infty} \left( \sum_{|\alpha|=k}
|a_{i,\alpha}|^2\right)^{1/2k}<\infty.
 $
 Throughout this paper, we denote by $B(\cH)$ the algebra of bounded
linear operators on a separable Hilbert space $\cH$.
Given $X_i:=(X_{i,1},\ldots, X_{i,n_i})\in B(\cH)^{n_i}$, define the map $\Phi_{f_i,X_i}:B(\cH)\to B(\cH)$  by setting
 $$
\Phi_{f_i,X_i}(Y):=\sum_{k=1}^\infty\sum_{\alpha\in \FF_{n_i}^+,|\alpha|=k} a_{i,\alpha} X_{i,\alpha}
YX_{i,\alpha}^*, \qquad   Y\in B(\cH),$$
 where  the convergence is in the week
operator topology.
Let ${\bf n}:=(n_1,\ldots, n_k)$ and ${\bf m}:=(m_1,\ldots, m_k)$, where $n_i,m_i\in\NN:=\{1,2,\ldots\}$ and $i\in \{1,\ldots, k\}$, and let ${\bf f}:=(f_1,\ldots,f_k)$ be a $k$-tuple of positive regular free holomorphic functions.
 We associate with each  element ${\bf X}=(X_1,\ldots, X_k)\in B(\cH)^{n_1}\times\cdots \times B(\cH)^{n_k}$  and ${\bf p}=(p_1,\ldots, p_k)\in \ZZ_+^k$ the {\it defect mapping} ${\bf \Delta_{f,X}^p}:B(\cH)\to  B(\cH)$ defined by
$$
{\bf \Delta_{f,X}^p}:=\left(id -\Phi_{f_1, X_1}\right)^{p_1}\circ \cdots \circ\left(id -\Phi_{f_k, X_k}\right)^{p_k}.
$$
 We use the convention that $(id-\Phi_{f_i,X_i})^0=id$.
We denote by $B(\cH)^{n_1}\times_c\cdots \times_c B(\cH)^{n_k}$
   the set of all tuples  ${\bf X}=({ X}_1,\ldots, { X}_k)\in B(\cH)^{n_1}\times\cdots \times B(\cH)^{n_k}$, where ${ X}_i:=(X_{i,1},\ldots, X_{i,n_i})\in B(\cH)^{n_i}$, $i\in \{1,\ldots, k\}$,
     with the property that, for any $p,q\in \{1,\ldots, k\}$, $p\neq q$, the entries of ${ X}_p$ are commuting with the entries of ${ X}_q$. In this case we say that ${ X}_p$ and ${ X}_q$ are commuting tuples of operators. Note that, for each $i\in \{1,\ldots,k\}$,  the operators $X_{i,1},\ldots, X_{i,n_i}$ are not necessarily commuting.

In \cite{Po-Berezin-poly},  we developed an operator  model  theory and a theory of free holomorphic functions on  the noncommutative polydomain
$$
{\bf D_f^m}(\cH):=\left\{ {\bf X}=(X_1,\ldots, X_k)\in B(\cH)^{n_1}\times_c\cdots \times_c B(\cH)^{n_k}: \ {\bf \Delta_{f,X}^p}(I)\geq 0 \ \text{ for }\ {\bf 0}\leq {\bf p}\leq {\bf m}\right\}.
$$
 Throughout this paper, we refer to ${\bf D_f^m}:=\{{\bf D_f^m}(\cH): \ \cH \, \text{is a Hilbert space}\}$ as the {\it abstract noncommutative polydomain}, while ${\bf D_f^m}(\cH)$ is its representation on the Hilbert space $\cH$.

 Let $H_{n_i}$ be
an $n_i$-dimensional complex  Hilbert space with orthonormal basis
$e_1^i,\dots,e_{n_i}^i$.
  We consider the full Fock space  of $H_{n_i}$ defined by
$$F^2(H_{n_i}):=\CC 1 \oplus\bigoplus_{p\geq 1} H_{n_i}^{\otimes p},$$
where   $H_{n_i}^{\otimes p}$ is the
(Hilbert) tensor product of $p$ copies of $H_{n_i}$. Set $e_\alpha^i :=
e^i_{j_1}\otimes \cdots \otimes e^i_{j_p}$ if
$\alpha=g^i_{j_1}\cdots g^i_{j_p}\in \FF_{n_i}^+$
 and $e^i_{g^i_0}:= 1\in \CC$.
Note that $\{e^i_\alpha:\alpha\in\FF_{n_i}^+\}$ is an orthonormal
basis of $F^2(H_{n_i})$.
Let $m_i, n_i\in \NN:=\{1,2,\ldots\}$, $i\in\{1,\ldots,k\}$, and  $j\in\{1,\ldots,n_i\}$. We  define
 the {\it weighted left creation  operators} $W_{i,j}:F^2(H_{n_i})\to
F^2(H_{n_i})$,   associated with the abstract noncommutative
domain  ${\bf D}_{f_i}^{m_i} $    by setting
\begin{equation*}
W_{i,j} e_\alpha^i:=\frac {\sqrt{b_{i,\alpha}^{(m_i)}}}{\sqrt{b_{i,g_j
\alpha}^{(m_i)}}} e^i_{g_j \alpha}, \qquad \alpha\in \FF_{n_i}^+,
\end{equation*}
where
\begin{equation} \label{b-coeff}
  b_{i,g_0}^{(m_i)}:=1\quad \text{ and } \quad
 b_{i,\alpha}^{(m_i)}:= \sum_{p=1}^{|\alpha|}
\sum_{{\gamma_1,\ldots,\gamma_p\in \FF_{n_i}^+}\atop{{\gamma_1\cdots \gamma_p=\alpha }\atop {|\gamma_1|\geq
1,\ldots, |\gamma_p|\geq 1}}} a_{i,\gamma_1}\cdots a_{i,\gamma_p}
\left(\begin{matrix} p+m_i-1\\m_i-1
\end{matrix}\right)
\end{equation}
for all  $ \alpha\in \FF_{n_i}^+$ with  $|\alpha|\geq 1$.
For each $i\in \{1,\ldots, k\}$ and $j\in \{1,\ldots, n_i\}$, we
define the operator ${\bf W}_{i,j}$ acting on the tensor Hilbert space
$F^2(H_{n_1})\otimes\cdots\otimes F^2(H_{n_k})$ by setting
$${\bf W}_{i,j}:=\underbrace{I\otimes\cdots\otimes I}_{\text{${i-1}$
times}}\otimes W_{i,j}\otimes \underbrace{I\otimes\cdots\otimes
I}_{\text{${k-i}$ times}}.
$$
 The $k$-tuple ${\bf W}:=({\bf W}_1,\ldots, {\bf W}_k)$, where  ${\bf W}_i:=({\bf W}_{i,1},\ldots,{\bf W}_{i,n_i})$, is  an element  in the
noncommutative polydomain $ {\bf D_f^m}(\otimes_{i=1}^kF^2(H_{n_i}))$ and it
  is  called the {\it universal model} associated
  with the abstract noncommutative
  polydomain ${\bf D_f^m}$.
We say that ${\bf T}=({ T}_1,\ldots, { T}_k)\in {\bf D_f^m}(\cH)$ is
 { \it completely non-coisometric}    if there is no $h\in \cH$, $h\neq 0$ such that
$$
\left< (id-\Phi_{f_1,T_1}^{q_1})\cdots (id-\Phi_{f_k,T_k}^{q_k})(I_\cH)h, h\right>=0$$
for any $(q_1,\ldots, q_k)\in \NN^k$.  The $k$-tuple
${\bf T} $ is called  {\it pure} if
$$
\lim_{{\bf q}=(q_1,\ldots, q_k)\in \ZZ_+^k}(id-\Phi_{f_k,T_k}^{q_k})\cdots (id-\Phi_{f_1,T_1}^{q_1})(I)=I.
$$

The {\it noncommutative Berezin kernel} associated with any element
   ${\bf T}=\{T_{i,j}\}$ in the noncommutative polydomain ${\bf D_f^m}(\cH)$ is the operator
   $${\bf K_{f,T}}: \cH \to F^2(H_{n_1})\otimes \cdots \otimes  F^2(H_{n_k}) \otimes  \overline{{\bf \Delta_{f,T}^m}(I) (\cH)}$$
   defined by
   $$
   {\bf K_{f,T}}h:=\sum_{\beta_i\in \FF_{n_i}^+, i=1,\ldots,k}
   \sqrt{b_{1,\beta_1}^{(m_1)}}\cdots \sqrt{b_{k,\beta_k}^{(m_k)}}
   e^1_{\beta_1}\otimes \cdots \otimes  e^k_{\beta_k}\otimes {\bf \Delta_{f,T}^m}(I)^{1/2} T_{1,\beta_1}^*\cdots T_{k,\beta_k}^*h,
   $$
where the defect operator is defined by
$$
{\bf \Delta_{f,T}^m}(I)  :=(id-\Phi_{f_1,T_1})^{m_1}\cdots (id-\Phi_{f_k,T_k})^{m_k}(I),
$$
and the coefficients  $b_{1,\beta_1}^{(m_1)}, \ldots, b_{k,\beta_k}^{(m_k)}$
are given by relation \eqref{b-coeff}.
The noncommutative Berezin kernel  ${\bf K_{f,T}}$ is a contraction  and
$$
{\bf K_{f,T}^*}{\bf K_{f,T}}=
\lim_{q_k\to\infty}\ldots \lim_{q_1\to\infty}  (id-\Phi_{f_k,T_k}^{q_k})\cdots (id-\Phi_{f_1,T_1}^{q_1})(I),
$$
where the limits are in the weak  operator topology.
 Moreover, for any $i\in \{1,\ldots, k\}$ and $j\in \{1,\ldots, n_i\}$,  $${\bf K_{f,T}} { T}^*_{i,j}= ({\bf W}_{i,j}^*\otimes I)  {\bf K_{f,T}}.
    $$

The {\it  noncommutative Berezin transform}
 at   ${\bf T}\in {\bf D_f^m}(\cH)$ is the mapping
 $ {\bf B_{T}}: B(\otimes_{i=1}^k F^2(H_{n_i}))\to B(\cH)$
 given by

 \begin{equation*}
 \label{def-Be2}
 {\bf B_{T}}[g]:= {\bf K^*_{f,T}} (g\otimes I_\cH) {\bf K_{f,T}},
 \qquad g\in B(\otimes_{i=1}^k F^2(H_{n_i})).
 \end{equation*}
  The polydomain algebra $\cA({\bf D_f^m})$ is the norm closed algebra generated by ${\bf W}_{i,j}$ and the identity.
Let
  $$\cS:=\overline{\text{\rm  span}} \{{\bf W}_{(\alpha)} {\bf W}_{(\beta)}^*:\
(\alpha), (\beta) \in \FF_{n_1}^+\times \cdots \times \FF_{n_k}^+\},
$$
where the closure is in the operator norm.
   We proved in \cite{Po-Berezin-poly} that  there is
    a unital completely contractive linear map
$
\Psi_{{\bf f,T}}:\cS \to B(\cH)
$
such that
\begin{equation*}
\Psi_{{\bf f,T}}(g)=\lim_{r\to 1} {\bf B}_{r{\bf T}}[g],\qquad g\in \cS,
\end{equation*}
where the limit exists in the norm topology of $B(\cH)$, and
$$
\Psi_{\bf f,T}  ({\bf W}_{(\alpha)} {\bf W}_{(\beta)}^*)=  {\bf T}_{(\alpha)} {\bf T}_{(\beta)}^*,\qquad
(\alpha), (\beta) \in \FF_{n_1}^+\times \cdots \times \FF_{n_k}^+,
$$
where  ${\bf W}_{(\alpha)}:= {\bf W}_{1,\alpha_1}\cdots {\bf W}_{k,\alpha_k}$ for $(\alpha):=(\alpha_1,\ldots, \alpha_k)$.
 In particular, the restriction of
$\Psi_{\bf f,T}$ to the polydomain algebra $\cA({\bf D_f^m})$ is a
completely contractive homomorphism. For information on completely bounded (resp. positive) maps, we refer to \cite{Pa-book}.

The noncommutative Hardy  algebra $F^\infty({\bf D^m_f})$ is the sequential  SOT-(resp.~WOT-, $w^*$-) closure of all polynomials in ${\bf W}_{i,j}$   and
the identity, where   $i\in \{1,\ldots,k\}$, $j\in \{1,\ldots, n_k\}$.
Each elemeny    $\varphi({\bf W}_{i,j})$ in  $ F^\infty({\bf D_f^m})$ has a unique  Fourier type representation
$$\varphi({\bf W}_{i,j})=\sum\limits_{(\beta) \in \FF_{n_1}^+\times \cdots \times \FF_{n_k}^+} c_{(\beta)}
{\bf W}_{(\beta)},\qquad c_{(\beta)}\in \CC, $$
and $\varphi({\bf W}_{i,j})
     =\text{\rm SOT-}\lim_{r\to 1} \varphi(r{\bf W}_{i,j}),
     $
 where $\varphi(r{\bf W}_{i,j})$ is in the   polydomain algebra $\cA({\bf D_f^m})$.
We recall \cite{Po-Berezin-poly} the following result concerning the   $F^\infty({\bf D_f^m})$--{\it functional calculus}
 for the completely non-coisometric  part of the noncommutative polydomain ${\bf D^m_f}(\cH)$.
Let ${\bf T}=({ T}_1,\ldots, { T}_k)$ be  a completely
non-coisometric $k$-tuple
      in  the noncommutative polydomain ${\bf D_f^m}(\cH)$.  Then
 $$\Psi_T(\varphi):=\text{\rm SOT-}\lim_{r\to 1} \varphi(rT_{i,j}), \qquad
  \Psi_T=\varphi({\bf W}_{i,j})\in F^\infty({\bf D_f^m}),
 $$
 exists in the strong operator topology   and defines a map
 $\Psi_T:F^\infty({\bf D_f^m})\to B(\cH)$ with the following
 properties:
\begin{enumerate}
\item[(i)]
$\Psi_T(\varphi)=\text{\rm SOT-}\lim\limits_{r\to 1}{\bf B}_{r{\bf T}}[\varphi]$, where ${\bf
B}_{r{\bf T}}$ is the    Berezin transform  at  $r{\bf T}\in {\bf D_f^m}(\cH)$;
\item[(ii)] $\Psi_T$ is    WOT-continuous (resp.
SOT-continuous)  on bounded sets;
\item[(iii)]
$\Psi_T$ is a unital completely contractive homomorphism and
$$\Psi_{\bf T}({\bf W}_{(\beta)})={T}_{(\beta)}, \qquad (\beta) \in \FF_{n_1}^+\times \cdots \times \FF_{n_k}^+
$$
\end{enumerate}
If ${\bf T}$ is a pure $k$-tuple, then $\Psi_T(\varphi)={\bf B_T}[\varphi]$.

For each $i\in\{1,\ldots, k\}$, let $Z_i:=(Z_{i,1},\ldots, Z_{i,n_i})$ be
an  $n_i$-tuple of noncommuting indeterminates and assume that, for any
$s,t\in \{1,\ldots, k\}$, $s\neq t$, the entries in $Z_s$ are commuting
 with the entries in $Z_t$.  The algebra of all polynomials  in indeterminates $Z_{i,j}$ is denoted by $\CC[Z_{i,j}]$.

Let  ${\bf W}:=\{{\bf W}_{i,j}\}$  be   the {\it universal model} associated
  with the abstract noncommutative
  polydomain ${\bf D_f^m}$. If $\cQ$ is a left ideal of     polynomials  in $\CC[Z_{i,j}]$, we let $\cQ_{\bf W}:=\{q({\bf W}_{i,j}):\ q\in \cQ\}$ be the  corresponding ideal
  in the algebra $\CC[{\bf W}_{i,j}]$ of all polynomials in ${\bf W}_{i,j}$ and the identity.
   Using the $\cA({\bf D_f^m})$-functional calculus, one can easily show that the norm-closed left ideal generated by  $\cQ_{\bf W}$ in the polydomain algebra $\cA({\bf D_f^m})$ coincides with the norm closure ${\overline \cQ}_{\bf W}$. Similarly, using the $F^\infty({\bf D_f^m})$--functional calculus, one can prove that
  the   WOT-closed left ideal  generated by  $\cQ_{\bf W}$ in the Hardy algebra $F^\infty({\bf D_f^m})$ coincides with   ${\overline \cQ}_{\bf W}^{wot}$.
If $J$ is a left ideal in $\CC[{\bf W}_{i,j}]$, $\cA({\bf D_f^m})$, or $F^\infty({\bf D_f^m})$, we introduce the subspace $\cM_J$ to be the closed image  of $J$ in $\otimes_{i=1}^k F^2(H_{n_i})$, i.e.,
  $\cM_J:=\overline{J(\otimes_{i=1}^k F^2(H_{n_i}))}$.  We also introduce the space
  $$
  \cN_J:= [\otimes_{i=1}^k F^2(H_{n_i})]\ominus \cM_J.
  $$
  When $\cQ$ is a  left ideal of     polynomials  in $\CC[Z_{i,j}]$, we set $\cM_\cQ:= \cM_{\cQ_{\bf W}}$ and $\cN_\cQ:= [\otimes_{i=1}^k F^2(H_{n_i})]\ominus \cM_\cQ$. We remark that in this case we have
  $$
  \cN_\cQ=\cN_{{\overline \cQ}_{\bf W}}=\cN_{{\overline \cQ}_{\bf W}^{wot}}.
  $$

   To simplify our notation, throughout this paper, unless otherwise specified, we consider $J$ to denote a left ideal in either one of the following algebras:  $\CC[Z_{i,j}]$, $\CC[{\bf W}_{i,j}]$, $\cA({\bf D_f^m})$, or $F^\infty({\bf D_f^m})$.
 We always  assume that $\cN_J\neq
 \{0\}$. It is easy to see that $\cN_J$ is invariant under each
 operator ${\bf W}_{i,j}^* $  for $i\in \{1,\ldots, k\}$, $j\in \{1,\ldots, n_i\}$.  Define ${\bf S}_{i,j}:=P_{\cN_J}
{\bf W}_{i,j}|_{\cN_J}$, where
 $P_{\cN_J}$ is the orthogonal projection of $\otimes_{i=1}^k F^2(H_{n_i})$ onto $\cN_J$.
 Using the properties of  the universal model ${\bf W}=\{{\bf W}_{i,j}\}$  and the fact that  $\cN_J$ is invariant under each
 operator ${\bf W}_{i,j}^*$, one can obtain  the following result.

 \begin{lemma} \label{univ-mod-var} Let $J $ be a left ideal in either one of the following algebras:  $\CC[Z_{i,j}]$, $\CC[{\bf W}_{i,j}]$, $\cA({\bf D_f^m})$, or $F^\infty({\bf D_f^m})$. The $k$-tuple ${\bf S}:=({\bf S}_1,\ldots, {\bf S}_k)$, where ${\bf S}_i:=({\bf S}_{i,1}\ldots, {\bf S}_{i,n_i})$  and
  ${\bf S}_{i,j}:=P_{\cN_J}
{\bf W}_{i,j}|_{\cN_J}$ has the following properties.
\begin{enumerate}
\item[(i)] ${\bf S}$ is a pure tuple in the polydomain ${\bf D_f^m}(\cN_J)$.
\item[(ii)] Under  the $F^\infty({\bf D_f^m})$-functional calculus,
 $$
 g({\bf S}_1,\ldots, {\bf S}_k)=0,\qquad g\in \overline{J}^{wot}.
 $$
\item[(iii)] If \,${\bf P}_\CC$ denotes  the
 orthogonal projection from $\otimes_{i=1}^k F^2(H_{n_i})$ onto $\CC 1$,  then
 $$(id-\Phi_{f_1,{\bf S}_1})^{m_1}\cdots (id-\Phi_{f_k,{\bf S}_k})^{m_k}(I_{\cN_J})=P_{\cN_J}{\bf P}_\CC|_{\cN_J}.
 $$
\end{enumerate}
 \end{lemma}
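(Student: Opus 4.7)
The plan is to exploit throughout that $\cN_J$ is invariant under each ${\bf W}_{i,j}^*$, so with respect to the decomposition $\otimes_{i=1}^k F^2(H_{n_i})=\cN_J\oplus\cM_J$ each universal operator has the upper-triangular block form
$$
{\bf W}_{i,j}=\begin{pmatrix} {\bf S}_{i,j} & * \\ 0 & * \end{pmatrix}.
$$
Two consequences follow at once from the product rule for upper-triangular matrices: the entries of ${\bf S}_s$ and ${\bf S}_t$ inherit the commutation of the entries of ${\bf W}_s$ and ${\bf W}_t$ for $s\neq t$, and for every polynomial $g$ one has $g({\bf S})=P_{\cN_J}g({\bf W})|_{\cN_J}$. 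A direct block calculation also shows that, for $Y\in B(\cN_J)$ with zero extension $\tilde Y:=Y\oplus 0$, the identity $\widetilde{\Phi_{f_i,{\bf S}_i}(Y)}=\Phi_{f_i,{\bf W}_i}(\tilde Y)$ holds; iterating and taking products over $i$ gives
$$
\widetilde{\Delta_{{\bf f},{\bf S}}^{\bf p}(I_{\cN_J})}=\Delta_{{\bf f},{\bf W}}^{\bf p}(P_{\cN_J}),\qquad {\bf p}\in\ZZ_+^k,
$$
and the analogous identity for the iterated operators that define the purity limit.

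For item (i), the crucial observation is that for any $v\in\cN_J$ the iterated adjoints ${\bf W}_{i_r,\alpha_r}^*\cdots{\bf W}_{i_1,\alpha_1}^*v$ again lie in $\cN_J$, so the $P_{\cN_J}$ inserted inside $\Phi_{f_{i_1},{\bf W}_{i_1}}^{p_1}\cdots\Phi_{f_{i_r},{\bf W}_{i_r}}^{p_r}(P_{\cN_J})v$ is redundant. Hence $\Delta_{{\bf f},{\bf W}}^{\bf p}(P_{\cN_J})v=\Delta_{{\bf f},{\bf W}}^{\bf p}(I)v$ for $v\in\cN_J$; compressing to $\cN_J$ and invoking ${\bf W}\in{\bf D_f^m}$ yields $\Delta_{{\bf f},{\bf S}}^{\bf p}(I_{\cN_J})\geq 0$ for ${\bf 0}\leq{\bf p}\leq{\bf m}$, whence ${\bf S}\in{\bf D_f^m}(\cN_J)$. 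Purity of ${\bf S}$ descends from purity of ${\bf W}$: the restriction of ${\bf W}$ to the invariant subspace $\cM_J$ remains pure, so $(id-\Phi_{f_k,{\bf W}_k}^{q_k})\cdots(id-\Phi_{f_1,{\bf W}_1}^{q_1})(P_{\cM_J})\to P_{\cM_J}$ in the weak operator topology; subtracting from the analogous limit for $I$ gives the corresponding limit for $P_{\cN_J}$, and the extension identity transports this back to the purity limit for ${\bf S}$ on $\cN_J$.

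For item (iii), the ${\bf p}={\bf m}$ instance of the extension identity reads $\widetilde{\Delta_{{\bf f},{\bf S}}^{\bf m}(I_{\cN_J})}=\Delta_{{\bf f},{\bf W}}^{\bf m}(P_{\cN_J})$. A parallel block computation using ${\bf W}_{i,j}$-invariance of $\cM_J$ shows that $\Phi_{f_i,{\bf W}_i}(P_{\cM_J})$, and by iteration $\Delta_{{\bf f},{\bf W}}^{\bf m}(P_{\cM_J})$, is supported on $\cM_J$, while $\Delta_{{\bf f},{\bf W}}^{\bf m}(P_{\cN_J})$ is supported on $\cN_J$. Splitting $I=P_{\cN_J}+P_{\cM_J}$ and using the known identity $\Delta_{{\bf f},{\bf W}}^{\bf m}(I)={\bf P}_\CC$ for the universal model, compression of both sides to $\cN_J$ yields $\Delta_{{\bf f},{\bf S}}^{\bf m}(I_{\cN_J})=P_{\cN_J}{\bf P}_\CC|_{\cN_J}$.

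Item (ii) then follows from the block identity $g({\bf S})=P_{\cN_J}g({\bf W})|_{\cN_J}$: for a polynomial $g\in J$ the range of $g({\bf W})$ lies in $\cM_J$ by the very definition of $\cM_J$, so $g({\bf S})=0$; the extension to $g\in\overline{J}^{wot}$ uses the WOT-continuity on bounded sets of the $F^\infty({\bf D_f^m})$-functional calculus, available because ${\bf S}$ is pure (hence completely non-coisometric) by (i). The main technical obstacle is the positivity step in (i): the operator $\Delta_{{\bf f},{\bf W}}^{\bf p}(P_{\cN_J})$ is a signed combination of completely positive maps applied to the non-identity projection $P_{\cN_J}$, and its positivity on $\cN_J$ is not at all transparent without the $^*$-invariance-driven collapse to $\Delta_{{\bf f},{\bf W}}^{\bf p}(I)$ described above.
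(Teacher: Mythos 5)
Your overall strategy---exploiting co-invariance of $\cN_J$ under each ${\bf W}_{i,j}^*$ to pass between ${\bf S}$ and ${\bf W}$---is precisely the paper's. However, the ``extension identity'' you assert, $\widetilde{\Phi_{f_i,{\bf S}_i}(Y)}=\Phi_{f_i,{\bf W}_i}(\tilde Y)$ as an operator equality on $\otimes_{i=1}^k F^2(H_{n_i})$, is false: the range of $\Phi_{f_i,{\bf W}_i}(\tilde Y)=\sum_\alpha a_{i,\alpha}{\bf W}_{i,\alpha}\tilde Y{\bf W}_{i,\alpha}^*$ lies in $\overline{\text{\rm span}}\{{\bf W}_{i,\alpha}\cN_J\}$, which spills into $\cM_J$ because $\cN_J$ is only co-invariant, not reducing. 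Concretely, take $k=n_1=m_1=1$, $f_1=Z_1$, and $J$ the ideal generated by $Z_1$, so that $\cN_J=\CC\,1$ and ${\bf S}=0$: then $\widetilde{\Phi_{f,{\bf S}}(I_{\cN_J})}=0$ while $\Phi_{f,{\bf W}}(P_{\cN_J})=W P_{\CC 1}W^*$ is the rank-one projection onto $\CC e_1\subset\cM_J$. The same defect undermines your claim in (iii) that $\Delta_{{\bf f},{\bf W}}^{\bf m}(P_{\cN_J})$ is supported on $\cN_J$: indeed $\Delta_{{\bf f},{\bf W}}^{\bf m}(P_{\cN_J})={\bf P}_\CC-\Delta_{{\bf f},{\bf W}}^{\bf m}(P_{\cM_J})$ carries a nonzero $\cM_J$-block whenever $\Delta_{{\bf f},{\bf W}}^{\bf m}(P_{\cM_J})\neq 0$, which is the generic case.

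What is true---and what every one of your downstream steps actually uses---is only the \emph{compression} identity
$\Phi_{f_i,{\bf S}_i}^{q_i}(I_{\cN_J})=P_{\cN_J}\Phi_{f_i,{\bf W}_i}^{q_i}(I)\big|_{\cN_J}$,
which follows directly from ${\bf S}_{i,\alpha}^*={\bf W}_{i,\alpha}^*\big|_{\cN_J}$ and propagates to iterates because ${\bf W}_{i,\alpha}^*$ preserves $\cN_J$. Replacing each claimed operator equality with the corresponding compression repairs the proof and, in fact, simplifies it: to get (i) and (iii), compress the inequalities $\Delta_{{\bf f},{\bf W}}^{\bf p}(I)\geq 0$, the purity limit $\Phi_{f_i,{\bf W}_i}^{q_i}(I)\to 0$ in SOT, and the identity $\Delta_{{\bf f},{\bf W}}^{\bf m}(I)={\bf P}_\CC$ to $\cN_J$. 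This is exactly the paper's argument and it makes the detour through the restriction of ${\bf W}$ to the invariant subspace $\cM_J$---which your proof of purity takes and which itself silently uses that purity passes to restrictions---unnecessary. Your treatment of (ii) via the ${\bf S}$--${\bf W}$ block identity for polynomials, the definition of $\cM_J$, and WOT-continuity of the $F^\infty({\bf D_f^m})$-functional calculus is sound.
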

\begin{proof} Since $\cN_J$ is invariant under each operator  ${\bf W}_{i,j}^*$, we have
$\Phi_{f_i, {\bf S}_i}^{q_i}(I)=P_{\cN_J}\Phi_{f_i, {\bf W}_i}^{q_i}(I)|_{\cN_J}$.
Taking into account that ${\bf W}$ is a pure  element in ${\bf D_f^m}(\otimes_{i=1}^k F^2(H_n))$, we deduce that
SOT-$\lim_{q_i\to \infty}\Phi_{f_i, {\bf W}_i}^{q_i}(I)=0$, which implies that ${\bf S}$ is a pure tuple in the polydomain ${\bf D_f^m}(\cN_J)$.
To prove part (ii), note that if $g({\bf W}_{i,j})\in \overline{J}^{wot}$, then
the range of $g({\bf W}_{i,j})$ is in $ \cN_J$. Using the $F^\infty({\bf D_f^m})$-functional calculus, we deduce that
$$
g({\bf S}_1,\ldots, {\bf S}_k)=\text{\rm SOT-}\lim_{r\to 1}g(r{\bf S}_{i,j})=
\text{\rm SOT-}\lim_{r\to 1}P_{\cN_J}g(r{\bf W}_{i,j})|_{\cN_J}=P_{\cN_J}g({\bf W}_{i,j})|_{\cN_J}=0.
$$
Part (iii) follows from the fact that ${\bf \Delta_{f,W}^m}(I)={\bf P}_\CC$ and $\cN_J$ is invariant under each operator  ${\bf W}_{i,j}^*$. Indeed, we have
${\bf \Delta_{f,S}^m}(I)=P_{\cN_J}{\bf \Delta_{f,W}^m}(I)|_{\cN_J}=P_{\cN_J}{\bf P}_\CC|_{\cN_J}$.
\end{proof}

  We define the noncommutative variety ${\bf \cV_{f,{\it J}}^m}(\cH)$ in the polydomain  $ {\bf D_f^m}(\cH)$ by setting
 $$
{\bf \cV_{f,{\it J}}^m}(\cH):= \left\{{\bf X}=\{X_{i,j}\} \in {\bf D_f^m}(\cH):\
g({\bf X} )=0\  \text{ for any }\  g\in J\right\}.
$$
We remark that  this variety   is well-defined if  $J$ is a left ideal in    $\CC[Z_{i,j}]$, $\CC[{\bf W}_{i,j}]$, or $\cA({\bf D_f^m})$. In the case when $J$ is a  WOT-closed left ideal in  $F^\infty({\bf D_f^m})$, we can use the $F^\infty({\bf D_f^m})$--functional calculus to define
the   variety ${\bf \cV_{f,{\it J,cnc}}^m}(\cH)$ of all completely non coisometric  (c.n.c.)  tuples ${\bf X}\in {\bf D_f^m}(\cH)$  satisfying the equation $g({\bf X})=0$ for any $g\in J$.

According to Lemma \ref{univ-mod-var}, the $k$-tuple ${\bf S}:=({\bf S}_1,\ldots, {\bf S}_k)$ is in the noncommutative variety
 $ {\bf \cV_{f,{\it J}}^m}(\cN_J)$. We remark that ${\bf S}$ will play the role of {\it universal model}
   for the {\it abstract  noncommutative variety}
   $${\bf \cV_{f,{\it J}}^m}:=\{{\bf \cV_{f,{\it J}}^m}(\cH):\ \cH \text { is a Hilbert space}\}.
   $$

We introduce the {\it constrained noncommutative  Berezin kernel}
associated with  ${\bf T}\in {\bf \cV_{f,{\it J}}^m}(\cH)$  as   the
bounded operator  \ ${\bf K_{f,T,{\it J}}}:\cH\to \cN_J\otimes
\overline{{\bf \Delta_{f,T}^m}(I) (\cH)}$ defined by
$${\bf K_{f,T, {\it J }}}:=\left(P_{\cN_J}\otimes I_{\overline{{\bf \Delta_{f,T}^m}(I) (\cH)}}\right){\bf K_{f,T}},
$$
where ${\bf K_{f,T}}$ is the noncommutative Berezin kernel associated with ${\bf T}\in
{\bf D_f^m}(\cH)$. The next result shows that the main properties of the noncommutative Berezin kernel  remain true for the constrained Berezin kernel associated with the elements of  the  noncommutative variety ${\bf \cV_{f,{\it J}}^m}(\cH)$.
\begin{proposition} \label{Ber-const}
Let ${\bf T}=({ T}_1,\ldots, { T}_k)$, with  $T_i:=(T_{i,1},\ldots, T_{i,n_i})$, be in the noncommutative variety $ {\bf \cV_{f,{\it J}}^m}(\cH)$, where $J$ is a  left ideal in $\CC[Z_{i,j}]$, $\CC[{\bf W}_{i,j}]$, or $\cA({\bf D_f^m})$.  The constrained noncommutative  Berezin kernel associated with
 ${\bf T} $ has the following properties.
\begin{enumerate}
\item[(i)] ${\bf K_{f,T, {\it J}}}$ is a contraction  and
$$
{\bf K_{f,T,{\it J}}^*}{\bf K_{f,T, {\it J }}}=
\lim_{q_k\to\infty}\ldots \lim_{q_1\to\infty}  (id-\Phi_{f_k,T_k}^{q_k})\cdots (id-\Phi_{f_1,T_1}^{q_1})(I),
$$
where the limits are in the weak  operator topology.
\item[(ii)]  For any $i\in \{1,\ldots, k\}$ and $j\in \{1,\ldots, n_i\}$,  $${\bf K_{f,T, {\it J }}} { T}^*_{i,j}= ({\bf S}_{i,j}^*\otimes I) {\bf K_{f,T, {\it J }}}.
    $$
    \item[(iii)]  If ${\bf T}$ is pure,   then
$${\bf K_{f,T, {\it J}}^*}{\bf K_{f,T, {\it J }}}=I_\cH. $$
\end{enumerate}
If $J$ is a  WOT-closed left ideal in  $F^\infty({\bf D_f^m})$ and  ${\bf T}\in{\bf \cV_{f,{\it J,cnc}}^m}(\cH)$, all the properties above remain true.
\end{proposition}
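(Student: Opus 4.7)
The plan is to show that for any ${\bf T}\in {\bf \cV_{f,{\it J}}^m}(\cH)$ the range of the unconstrained Berezin kernel ${\bf K_{f,T}}$ already sits inside $\cN_J\otimes \overline{{\bf \Delta_{f,T}^m}(I)(\cH)}$, so that the constrained kernel ${\bf K_{f,T,{\it J}}}=(P_{\cN_J}\otimes I){\bf K_{f,T}}$ coincides with ${\bf K_{f,T}}$ itself. Once this range inclusion is established, parts (i)--(iii) reduce immediately to the corresponding properties of ${\bf K_{f,T}}$ recalled in the excerpt.

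For the range inclusion I start from the intertwining
$$
{\bf K_{f,T}}\,T_{i,j}^{*}\;=\;({\bf W}_{i,j}^{*}\otimes I_{\cH})\,{\bf K_{f,T}}.
$$
Taking adjoints and iterating, one obtains $p({\bf T}){\bf K_{f,T}}^{*} = {\bf K_{f,T}}^{*}(p({\bf W})\otimes I_{\cH})$ for every polynomial $p\in \CC[{\bf W}_{i,j}]$. When $g\in J$ is a polynomial, the left side vanishes because ${\bf T}\in {\bf \cV_{f,{\it J}}^m}(\cH)$, so ${\bf K_{f,T}}^{*}$ annihilates every vector $g({\bf W})\xi\otimes \eta$ with $\xi\in \otimes_{i=1}^{k} F^{2}(H_{n_{i}})$ and $\eta\in \overline{{\bf \Delta_{f,T}^m}(I)(\cH)}$. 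Such vectors span a dense subset of $\cM_{J}\otimes \overline{{\bf \Delta_{f,T}^m}(I)(\cH)}$, which forces $\mathrm{Ran}\,{\bf K_{f,T}}\subseteq \cN_{J}\otimes \overline{{\bf \Delta_{f,T}^m}(I)(\cH)}$. The contractivity in (i) and the weak-operator limit formula for ${\bf K_{f,T,{\it J}}}^{*}{\bf K_{f,T,{\it J}}}$ are then inherited verbatim from ${\bf K_{f,T}}$. For (ii), the ${\bf W}_{i,j}^{*}$-invariance of $\cN_{J}$ (a consequence of $\cM_{J}$ being ${\bf W}_{i,j}$-invariant, which holds because $J$ is a left ideal) yields ${\bf S}_{i,j}^{*}={\bf W}_{i,j}^{*}|_{\cN_{J}}$, so that on the range of ${\bf K_{f,T,{\it J}}}$ the operators ${\bf W}_{i,j}^{*}\otimes I$ and ${\bf S}_{i,j}^{*}\otimes I$ agree, and the stated intertwining follows. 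Statement (iii) is then immediate from (i) and the definition of a pure tuple.

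To cover left ideals in $\cA({\bf D_{f}^m})$, I extend the identity $p({\bf T}){\bf K_{f,T}}^{*} = {\bf K_{f,T}}^{*}(p({\bf W})\otimes I_{\cH})$ from polynomials to arbitrary $g\in \cA({\bf D_{f}^m})$ by norm continuity of the $\cA({\bf D_{f}^m})$-functional calculus applied to both ${\bf T}$ and ${\bf W}$. For the WOT-closed left ideals $J\subset F^{\infty}({\bf D_{f}^m})$ with ${\bf T}\in {\bf \cV_{f,{\it J,cnc}}^m}(\cH)$, I invoke the $F^{\infty}({\bf D_{f}^m})$-functional calculus recalled in the excerpt: since $g({\bf T})=\text{SOT-}\lim_{r\to 1} g(rT_{i,j})$ and $g({\bf W})=\text{SOT-}\lim_{r\to 1} g(r{\bf W}_{i,j})$, composing with ${\bf K_{f,T}}^{*}$ and applying to a fixed vector promotes the intertwining to every $g\in F^{\infty}({\bf D_{f}^m})$, so that $g\in J$ once again forces ${\bf K_{f,T}}^{*}(g({\bf W})\otimes I_{\cH})=0$. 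I expect the main technical obstacle to be exactly this last passage to the limit: one must verify that the SOT-convergence $g(r{\bf W})\to g({\bf W})$ survives after tensoring with $I_{\cH}$ and composing with ${\bf K_{f,T}}^{*}$, which is where the c.n.c. hypothesis on ${\bf T}$ is essential for simultaneously applying the $F^{\infty}$-calculus on the ${\bf T}$-side. Granted this, the range inclusion and properties (i)--(iii) transfer to the $F^{\infty}$-setting exactly as in the polynomial case.
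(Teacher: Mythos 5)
Your argument is correct and is essentially the paper's own proof, just cast in terms of operator identities ($p({\bf T})\,{\bf K}_{\bf f,T}^{*}={\bf K}_{\bf f,T}^{*}\,(p({\bf W})\otimes I)$) rather than the paper's inner-product identity \eqref{K-eq}; the key step in both is the range inclusion $\operatorname{Ran}{\bf K_{f,T}}\subseteq \cN_J\otimes\overline{{\bf \Delta_{f,T}^m}(I)\cH}$, proved by passing from polynomials to the norm/WOT closures exactly as you do. The technical worry you raise at the end is benign: SOT-convergence $g(r{\bf W})\to g({\bf W})$ does survive tensoring with $I_\cH$ (by the uniform bound $\|g(r{\bf W})\|\le\|g({\bf W})\|$) and composition with the bounded operator ${\bf K}_{\bf f,T}^{*}$, so the c.n.c.\ hypothesis is used only, as you suspect, to run the $F^\infty$-functional calculus on the ${\bf T}$-side and get $g({\bf T})=\text{SOT-}\lim_{r\to1}g(rT_{i,j})=0$.
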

\begin{proof}
Since     ${\bf K_{f,T}} { T}^*_{i,j}= ({\bf W}_{i,j}^*\otimes I)  {\bf K_{f,T}}$
for any $i\in \{1,\ldots, k\}$ and $j\in \{1,\ldots, n_i\}$, we deduce that
\begin{equation} \label{K-eq}
\left<  {\bf K_{f,T}}x,  q({\bf W}_{i,j}){\bf W}_{(\alpha)}
(1)\otimes y\right>=\left<x, q(T_{i,j})T_{(\alpha)}
{\bf K_{f,T}^*}(1\otimes y\right>=\left<x, q(T_{i,j})T_{(\alpha)}
{\bf \Delta_f^m}(I)^{1/2} y\right>
\end{equation}
for any $x\in \cH$, $y\in \overline{{\bf \Delta_{f,T}^m}(I)\cH}$, $(\alpha) \in \FF_{n_1}^+\otimes \cdots \otimes \FF_{n_k}^+$, and any polynomial $q({\bf W}_{i,j})\in \CC[{\bf W}_{i,j}]$.  Consequently, if $J$ is a  left ideal in $\CC[Z_{i,j}]$ or  $\CC[{\bf W}_{i,j}]$, then $q(T_{i,j})=0$ for any $q\in J$ and therefore
\begin{equation}\label{range}
\text{\rm range}\,{\bf K_{f,T}}\subseteq \cN_J\otimes
\overline{{\bf \Delta_{f,T}^m}(I)\cH}.
\end{equation}
 Assume that $J$ is a norm-closed left ideal of $\cA({\bf D_f^m})$ and let $g({\bf W}_{i,j})\in J$. Choose a sequence of polynomials  $q_n({\bf W}_{i,j})$ which converges in norm to $g({\bf W}_{i,j})$.  This implies that $q_n(T_{i,j})$ converges in norm to
$g({T}_{i,j})$.  Using equation \eqref{K-eq}, we deduce a similar one where $q({\bf W}_{i,j})$ is replaced by $g({\bf W}_{i,j})$. As above, we deduce that  relation \eqref{range} remains true in this case. Now, we consider the case when $J$ is a  WOT-closed left ideal in  $F^\infty({\bf D_f^m})$ and  ${\bf T}\in{\bf \cV_{f,{\it J,cnc}}^m}(\cH)$. Let  $\varphi({\bf W}_{i,j})$  be in  $J\subset F^\infty({\bf D_f^m})$  with    Fourier  representation
$$\varphi({\bf W}_{i,j})=\sum\limits_{(\beta) \in \FF_{n_1}^+\times \cdots \times \FF_{n_k}^+} c_{(\beta)}
{\bf W}_{(\beta)}.$$
Then $\varphi({\bf W}_{i,j})
     =\text{\rm SOT-}\lim_{r\to 1} \varphi(r{\bf W}_{i,j}),
     $
 and  $\varphi(r{\bf W}_{i,j})$ is in the   polydomain algebra $\cA({\bf D_f^m})$.
 Relation \eqref{K-eq} implies
 \begin{equation*}
 \left<  {\bf K_{f,T}}x,  \varphi(r{\bf W}_{i,j}){\bf W}_{(\alpha)}
(1)\otimes y\right> =\left<x, \varphi(rT_{i,j})T_{(\alpha)}
{\bf \Delta_f^m}(I)^{1/2} y\right>
\end{equation*}
for any $r\in [0,1)$,  $x\in \cH$,   $y\in \overline{{\bf \Delta_{f,T}^m}(I)\cH}$, and $(\alpha)  \in \FF_{n_1}^+\times \cdots \times \FF_{n_k}^+$.
Due to the $F^\infty({\bf D_f^m})$--functional calculus, we have
$0=\varphi({T}_{i,j})
     =\text{\rm SOT-}\lim_{r\to 1} \varphi(r{T}_{i,j})$. Consequently.
$\left<  {\bf K_{f,T}}x,  \varphi({\bf W}_{i,j}){\bf W}_{(\alpha)}
(1)\otimes y\right>  =0$ for any $\varphi({\bf W}_{i,j})\in J$,
 $y \in \overline{{\bf \Delta_{f,T}^m}(I)\cH}$, and $(\alpha) \in \FF_{n_1}^+\times \cdots \times \FF_{n_k}^+$. Therefore, relation  \eqref{range} holds also in this case. It is clear that  due to relation \eqref{range}, we have ${\bf K_{f,T,{\it J}}^*}{\bf K_{f,T, {\it J }}}={\bf K_{f,T}^*}{\bf K_{f,T}}$.
 Now, one can easily complete the proof using the appropriate properties of the noncommutative Berezin kernel ${\bf K_{f,T}}$ and the definition of the constrained Berezin kernel.
\end{proof}

For each  $n$-tuple ${\bf T}:=\{T_{i,j}\}\in {\bf \cV_{f,{\it J}}^m}(\cH)$, we introduce the {\it constrained
noncommutative Berezin transform}  at ${\bf T}$ as the map ${\bf
B_{T, {\it J}}}:B(\cN_J) \to B(\cH)$ defined by setting
 \begin{equation*}
 {\bf B_{T,{\it J}}}[g]:= {\bf K_{f,T,{\it J}}^*} (g\otimes I_\cH){\bf K_{f,T,{\it J}}},
 \quad g\in B( \cN_J),
 \end{equation*}
where $J$ is a left ideal in    $\CC[Z_{i,j}]$, $\CC[{\bf W}_{i,j}]$,  $\cA({\bf D_f^m})$, or  $F^\infty({\bf D_f^m})$. Note that ${\bf B_{T,{\it J}}}$ is a completely contractive, completely positive,  and $w^*$-continuous linear map. Consequently,  ${\bf B_{T,{\it J}}}$ is WOT-continuous (resp. SOT-continuous) on bounded sets. Note that ${\bf T}$ is pure if and only if  ${\bf B_{T,{\it J}}}(I)=I$.

\begin{theorem}\label{vN1-variety} Let ${\bf T}=(T_1,\ldots, T_k)\in B(\cH)^{n_1}\times \cdots \times B(\cH)^{n_k}$ and let $J$ be a $w^*$-closed left ideal of $F^\infty({\bf D_f^m})$. Then ${\bf T}$ is a pure element of the noncommutative variety
$\cV_{{\bf f}, J}^{\bf m}(\cH)$ if and only if there is a unital completely positive and $w^*$-continuous linear map
$$
{\bf \Psi}: \overline{\text{\rm span}}^{w^*} \{{\bf S}_{(\alpha)} {\bf S}_{(\beta)}^*: \
 (\alpha), (\beta) \in \FF_{n_1}^+\times \cdots \times \FF_{n_k}^+\}\to B(\cH)
 $$
 such that
 $$
 {\bf \Psi}({\bf S}_{(\alpha)} {\bf S}_{(\beta)}^*)={\bf T}_{(\alpha)} {\bf T}_{(\beta)}^*,\qquad (\alpha), (\beta) \in \FF_{n_1}^+\times \cdots \times \FF_{n_k}^+.
 $$
\end{theorem}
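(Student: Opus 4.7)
The plan is to treat the two implications separately: necessity is immediate from the properties of the constrained Berezin transform developed above, while sufficiency rests on a normal Stinespring dilation.

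For the forward direction, given a pure ${\bf T}\in\cV_{{\bf f},J}^{\bf m}(\cH)$, I would take $\Psi:={\bf B_{T,J}}$. By Proposition \ref{Ber-const}(iii), purity forces ${\bf K_{f,T,J}}^*{\bf K_{f,T,J}}=I_\cH$, so $\Psi$ is unital; complete positivity and $w^*$-continuity are immediate from the conjugation formula. Iterating the intertwining relation of Proposition \ref{Ber-const}(ii) and taking adjoints yields ${\bf K_{f,T,J}}{\bf T}_{(\beta)}^*=({\bf S}_{(\beta)}^*\otimes I){\bf K_{f,T,J}}$ and ${\bf T}_{(\alpha)}{\bf K_{f,T,J}}^*={\bf K_{f,T,J}}^*({\bf S}_{(\alpha)}\otimes I)$, whence
\[
\Psi({\bf S}_{(\alpha)}{\bf S}_{(\beta)}^*)={\bf K_{f,T,J}}^*({\bf S}_{(\alpha)}\otimes I)({\bf S}_{(\beta)}^*\otimes I){\bf K_{f,T,J}}={\bf T}_{(\alpha)}{\bf K_{f,T,J}}^*{\bf K_{f,T,J}}{\bf T}_{(\beta)}^*={\bf T}_{(\alpha)}{\bf T}_{(\beta)}^*.
\]

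For the reverse direction, I would first extend $\Psi$ via Arveson's extension theorem to a unital completely positive $w^*$-continuous map $\tilde\Psi:B(\cN_J)\to B(\cH)$, and then apply the normal Stinespring dilation to produce a Hilbert space $\cE$ and an isometry $V:\cH\to\cN_J\otimes\cE$ with $\tilde\Psi(X)=V^*(X\otimes I_\cE)V$. Setting one of the words equal to the identity in the defining relation gives $V^*({\bf S}_{(\alpha)}\otimes I_\cE)V={\bf T}_{(\alpha)}$ and its adjoint; substituting these back into the full identity and computing
\[
\|(I-VV^*)({\bf S}_{(\beta)}^*\otimes I_\cE)Vx\|^2=\langle {\bf T}_{(\beta)}{\bf T}_{(\beta)}^*x,x\rangle-\|{\bf T}_{(\beta)}^*x\|^2=0
\]
forces $V\cH$ to be coinvariant under each ${\bf S}_{i,j}\otimes I_\cE$. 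Hence ${\bf T}$ is unitarily equivalent to the compression of the pure tuple ${\bf S}\otimes I_\cE\in{\bf D_f^m}(\cN_J\otimes\cE)$ to the coinvariant subspace $V\cH$, which delivers at once the cross-block commutativity of the entries of ${\bf T}$, positivity of every defect ${\bf \Delta_{f,T}^p}(I)\geq 0$ for ${\bf 0}\leq{\bf p}\leq{\bf m}$ (by compressing the corresponding positive defects of ${\bf S}\otimes I_\cE$), membership ${\bf T}\in{\bf D_f^m}(\cH)$, and purity of ${\bf T}$.

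The main obstacle lies in verifying $g({\bf T})=0$ for every $g\in J$ when $J$ is a $w^*$-closed left ideal of $F^\infty({\bf D_f^m})$ rather than a polynomial ideal, since then $g({\bf T})$ must be interpreted through the $F^\infty$-functional calculus which presupposes ${\bf T}$ to be completely non-coisometric. This dissolves once purity of ${\bf T}$ has been established, because purity implies c.n.c. Lemma \ref{univ-mod-var}(ii) then gives $g({\bf S})=0$, hence $g({\bf S}\otimes I_\cE)=g({\bf S})\otimes I_\cE=0$; combining the SOT-limit definition $g({\bf T})=\text{SOT-}\lim_{r\to 1}g(rT_{i,j})$ with the polynomial compression identity $g(rT_{i,j})=V^*g(r{\bf S}\otimes I_\cE)V$ (valid by coinvariance of $V\cH$) and passing to the limit yields $g({\bf T})=V^*g({\bf S}\otimes I_\cE)V=0$, completing the argument.
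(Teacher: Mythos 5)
Your forward direction coincides with the paper's: both take $\Psi={\bf B}_{{\bf T},J}$, invoke Proposition \ref{Ber-const}(iii) for unitality, and read off the moment identity from the intertwining relation, so that part is fine.

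Your reverse direction is a genuinely different route from the paper's, and it has a gap at the very first step. The paper never dilates: it works directly with ${\bf \Psi}$ on the $w^*$-closed operator system, using $w^*$-continuity and positivity of ${\bf \Psi}$ to transfer, term by term, the positivity of ${\bf \Delta}_{{\bf f},{\bf S}}^{\bf p}(I)$, the SOT-limit $\Phi_{f_i,{\bf S}_i}^{q_i}(I)\to 0$, and the vanishing $g({\bf S}_{i,j})=0$ from the universal model to ${\bf T}$. You instead propose to first apply Arveson's extension theorem to get a unital cp $w^*$-continuous $\tilde\Psi$ on all of $B(\cN_J)$ and then take a \emph{normal} Stinespring dilation. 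But Arveson's extension theorem preserves complete positivity, not $w^*$-continuity: a normal cp map on a $w^*$-closed operator system need not admit a normal cp extension to $B(\cN_J)$, and the theorem gives no control over normality of the extension. Without a normal extension you have no normal Stinespring dilation, hence no reason for the dilating representation of $B(\cN_J)$ to be of the form $X\mapsto X\otimes I_{\cE}$; a general Stinespring dilation of a non-normal $\tilde\Psi$ can have a ``singular'' Wold component, in which case the dilating tuple need not be pure and the compression argument does not yield purity of ${\bf T}$. (Everything downstream of that point — the coinvariance computation, the compression identities for polynomials, and the $F^\infty$-functional-calculus argument that $g({\bf T})=0$ once purity and c.n.c.\ are secured — is correct.) Note also that the paper's Theorem \ref{compact}, which would let one identify $\overline{\text{\rm span}}^{w^*}\{{\bf S}_{(\alpha)}{\bf S}_{(\beta)}^*\}$ with $B(\cN_J)$ and sidestep the extension issue, is proved only for ${\bf q}$ polynomial, $J$ a two-sided ideal, and $1\in\cN_J$ — none of which is assumed in Theorem \ref{vN1-variety} — so that escape route is not available in the generality claimed. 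To close the gap you would either have to supply a normal Arveson-type extension result applicable here, or adopt the paper's direct strategy of evaluating ${\bf \Psi}$ on the defect series and on $\lim_q\Phi_{f_i,{\bf S}_i}^q(I)$ without dilating.
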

\begin{proof} Due to Proposition \ref{Ber-const},
 if ${\bf T}:=(T_1,\ldots, T_k)$ is a pure
tuple  in the noncommutative variety
${\bf \cV_{f,{\it J}}^m}(\cH)$,
   then   ${\bf K_{f,T, {\it J}}}$ is an isometry and the constrained
noncommutative Berezin transform
 is
    a unital completely contractive and $w^*$-continuos linear map
such that
   $$
{\bf B_{T,{\it J}}}[{\bf S}_{(\alpha)} {\bf S}_{(\beta)}^*]={\bf K_{f,T,{\it J}}^*}[{\bf S}_{(\alpha)} {\bf S}_{(\beta)}^*\otimes
I_\cH]{\bf K_{f,T, {\it J}}} =T_\alpha T_\beta^*
$$
for any $(\alpha), (\beta) \in \FF_{n_1}^+\times \cdots \times \FF_{n_k}^+$.
To prove the converse, assume that ${\bf \Psi}$ has the required properties. Since $({\bf S}_1,\ldots, {\bf S}_k)$ is a commuting tuple and ${\bf \Psi}$ is a homomorphism when restricted to $\CC[{\bf S}_{i,j}]$, we deduce that  $(T_1,\ldots, T_k)$ is a commuting tuple. Taking into account that  $\Phi_{f_i, {\bf S}_i}$ is a $w^*$-continuous map, and
${\bf \Delta}_{{\bf f},{\bf S}}^{\bf p}$ is a linear combination  of products of the form
 $\Phi_{f_1, {\bf S}_1}^{q_1}\cdots \Phi_{f_k, {\bf S}_k}^{q_k}$, where $(q_1,\ldots, q_k)\in \ZZ_+^k$, we deduce that ${\bf \Delta}_{{\bf f},{\bf S}}^{\bf p}$ is a
 $w^*$-continuous map. Since $\Psi$ is  a completely positive $w^*$-continuous linear map such that
 $
 {\bf \Psi}({\bf S}_{(\alpha)} {\bf S}_{(\beta)}^*)={\bf T}_{(\alpha)} {\bf T}_{(\beta)}^*$
 for any $ (\alpha), (\beta) \in \FF_{n_1}^+\times \cdots \times \FF_{n_k}^+,
 $
we obtain
$$
{\bf \Delta}_{{\bf f},{\bf S}}^{\bf p}(I)={\bf \Psi}({\bf \Delta}_{{\bf f},{\bf S}}^{\bf p}(I))\geq 0
$$
 for any ${\bf p}=(p_1,\ldots, p_k)\in \ZZ_+^k$ with ${\bf p}\leq {\bf m}$. Therefore,
 ${\bf T}\in {\bf D_f^m}(\cH)$. On the other hand, for each $i\in \{1,\ldots, k\}$, we have
 $$
 \lim_{q_i\to\infty} \Phi_{f_i, T_i}^{q_i}(I)={\bf \Psi}(\lim_{q_i\to\infty}  \Phi_{f_i, {\bf S}_i}^{q_i}(I))={\bf \Psi}(0)=0,
 $$
 which shows that ${\bf T}$ is a pure tuple in  the polydomain ${\bf D_f^m}(\cH)$.
 To prove that ${\bf T}$ is in   the noncommutative variety
$\cV_{{\bf f}, J}^{\bf m}(\cH)$, fix  $g\in J$ and recall that $g({\bf W}_{i,j})
     =\text{\rm SOT-}\lim_{r\to 1} g(r{\bf W}_{i,j}),
     $
     where
   $g(r{\bf W}_{i,j})$ is in the   polydomain algebra $\cA({\bf D_f^m})$, and $\|g(r{\bf W}_{i,j})\|\leq \|g({\bf W}_{i,j})\|$ for any $r\in [0,1)$.
 Using the the $F^\infty({\bf D_f^m})$--functional calculus for pure elements in ${\bf D_f^m}(\cH)$ and the fact that  WOT and $w^*$-topology coincide on bounded sets, we deduce that
 \begin{equation*}
 \begin{split}
 g({T}_{i,j})&=\text{\rm WOT-}\lim_{r\to 1} g(r{T}_{i,j})
 =\text{\rm WOT-}\lim_{r\to 1}{\bf \Psi}( g(r{\bf S}_{i,j}))\\
 &={\bf \Psi}(\text{\rm WOT-}\lim_{r\to 1} g(r{\bf S}_{i,j}))={\bf \Psi}(g({\bf S}_{i,j}))={\bf \Psi}(0)=0.
 \end{split}
 \end{equation*}
 Therefore, ${\bf T}$ is in   the noncommutative variety
$\cV_{{\bf f}, J}^{\bf m}(\cH)$. The proof is complete.
\end{proof}

\begin{theorem}\label{C*-charact2} Let  $\cQ\subset \CC[Z_{i,j}]$
  be a left ideal generated by  noncommutative
    homogenous polynomials  and let
$${\bf T}:=(T_1,\ldots, T_k)\in   B(\cH)^{n_1}\times\cdots \times B(\cH)^{n_k}.$$
 Then  ${\bf T}$ is in the
noncommutative variety ${\bf \cV_{q,\cQ}^m}(\cH)$, where  ${\bf q}=(q_1,\ldots, q_k)$ is a $k$-tuple of   positive regular  noncommutative polynomials, if and only if  there is a unital completely positive linear map
 $
 {\bf \Psi}: \overline{\cS}\to B(\cH)$, where
 $\overline{\cS}:= \overline{\text{\rm span}} \{{\bf S}_{(\alpha)} {\bf S}_{(\beta)}^*: \
 (\alpha), (\beta) \in \FF_{n_1}^+\times \cdots \times \FF_{n_k}^+\}$,
  such that
   $$
{\bf \Psi}({\bf S}_{(\alpha)} {\bf S}_{(\beta)}^*)=  {\bf T}_{(\alpha)} {\bf T}_{(\beta)}^*,\qquad
(\alpha), (\beta) \in \FF_{n_1}^+\times \cdots \times \FF_{n_k}^+, $$
where ${\bf S}:=\{{\bf S}_{i,j}\}$  is the  universal model associated with the abstract  noncommutative variety ${\bf \cV_{q,\cQ}^m}$ .
\end{theorem}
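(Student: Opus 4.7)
For the implication $(\Rightarrow)$ I would reduce to the pure case by scaling. For $r\in[0,1)$, since $q_i$ is a positive regular polynomial every monomial of $q_i$ has length at least one, so $\Phi_{q_i,rT_i}\le r^{2}\Phi_{q_i,T_i}$ and hence $\|\Phi_{q_i,rT_i}\|\le r^{2}<1$; it follows that $r{\bf T}$ is pure in ${\bf D_q^m}(\cH)$. Because $\cQ$ is generated by homogeneous polynomials, one has $g(r{\bf T})=r^{\deg g}g({\bf T})=0$ for each homogeneous generator $g$, hence for every $g\in \cQ$, so $r{\bf T}\in \cV_{{\bf q},\cQ}^{\bf m}(\cH)$. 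Applying Proposition \ref{Ber-const} (or equivalently Theorem \ref{vN1-variety} to the $w^*$-closed left ideal of $F^\infty({\bf D_q^m})$ generated by $\cQ$, noting that $\cN_J=\cN_\cQ$), the constrained Berezin transform yields a unital completely positive map ${\bf B}_{r{\bf T},\cQ}:\overline{\cS}\to B(\cH)$ with
\[
{\bf B}_{r{\bf T},\cQ}[{\bf S}_{(\alpha)}{\bf S}_{(\beta)}^*]=r^{|\alpha|+|\beta|}{\bf T}_{(\alpha)}{\bf T}_{(\beta)}^*.
\]
Setting ${\bf\Psi}[X]:=\lim_{r\to 1^-}{\bf B}_{r{\bf T},\cQ}[X]$, the pointwise norm limit exists trivially on monomials, by linearity on polynomials, and on all of $\overline{\cS}$ via a $3\varepsilon$-argument using the uniform bound $\|{\bf B}_{r{\bf T},\cQ}\|\le 1$. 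The limit map ${\bf\Psi}$ is unital completely positive (pointwise norm limits of ucp maps on a norm-closed operator system remain ucp) and satisfies the required formula on generators.

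For the converse $(\Leftarrow)$, I would adapt the backward argument of Theorem \ref{vN1-variety} using Stinespring. Extend ${\bf\Psi}$ by Arveson to a ucp map on $C^*(\{{\bf S}_{i,j}\})$ and dilate, obtaining a $*$-representation $\pi$ on a Hilbert space $\cK$ and an isometry $V:\cH\to \cK$ with ${\bf\Psi}(X)=V^*\pi(X)V$. The hypothesis (with $(\beta)=0$ and then $(\alpha)=0$) gives $V^*\pi({\bf S}_{(\alpha)})V={\bf T}_{(\alpha)}$, and then
\[
V^*\pi({\bf S}_{(\alpha)})(I-VV^*)\pi({\bf S}_{(\beta)})^*V=0 \qquad \text{for all } (\alpha),(\beta).
\]
An elementary orthogonality argument --- the smallest subspace of $\cK$ containing $V\cH$ and invariant under every $\pi({\bf S}_{i,j})^*$ contains vectors that are simultaneously in it and in its orthogonal complement --- forces $V\cH$ itself to be coinvariant under each $\pi({\bf S}_{i,j})$, and hence $\pi({\bf S}_{(\alpha)})^*V=V{\bf T}_{(\alpha)}^*$. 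Computing $V^*\pi({\bf S}_{(\alpha)}{\bf S}_{(\beta)})V$ in two ways then gives ${\bf T}_{(\alpha\beta)}={\bf T}_{(\alpha)}{\bf T}_{(\beta)}$, where $(\alpha\beta)$ denotes coordinatewise concatenation; specialising to multi-indices supported in single but distinct coordinates yields the mixed commutation $T_{i,j}T_{s,t}=T_{s,t}T_{i,j}$ for $i\ne s$. Because ${\bf q}$ is a polynomial, ${\bf\Delta_{{\bf q},{\bf S}}^{\bf p}}(I)$ is a finite linear combination of monomials in $\overline{\cS}$, so linearity and positivity of ${\bf\Psi}$ deliver ${\bf\Delta_{{\bf q},{\bf T}}^{\bf p}}(I)\ge 0$ for ${\bf 0}\le{\bf p}\le{\bf m}$. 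Finally, by Lemma \ref{univ-mod-var}(ii) applied in the polynomial case, $g({\bf S})=0$ for $g\in\cQ$, so $g({\bf T})={\bf\Psi}(g({\bf S}))=0$ by linearity. Combining these facts gives ${\bf T}\in \cV_{{\bf q},\cQ}^{\bf m}(\cH)$.

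The main obstacle is the Stinespring/coinvariance step in the converse: extracting, from the pure positivity data ${\bf\Psi}({\bf S}_{(\alpha)}{\bf S}_{(\beta)}^*)={\bf T}_{(\alpha)}{\bf T}_{(\beta)}^*$, the geometric fact that $V\cH$ is coinvariant under each $\pi({\bf S}_{i,j})$, which is what unlocks both the commutativity of the distinct tuples ${\bf T}_i$ and the homomorphism property of ${\bf\Psi}|_{\CC[{\bf S}_{i,j}]}$ needed to conclude $g({\bf T})=0$. The forward direction, by contrast, reduces to the pure case treated in Proposition \ref{Ber-const} together with a routine passage to the norm limit.
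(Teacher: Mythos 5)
Your argument is correct and follows the same two-step scheme as the paper's proof: scale to $r{\bf T}$, use the isometry of the constrained Berezin kernel at $r{\bf T}$ to produce a unital completely positive (equivalently, completely contractive) map implementing the formula, and let $r\to 1$; then for the converse, use positivity of $\Psi$ on the defect operators ${\bf \Delta_{q,S}^p}(I)$ together with the fact that $\Psi$ restricted to $\CC[{\bf S}_{i,j}]$ is multiplicative. The only place you diverge is in how that multiplicativity is established: the paper asserts it without proof (implicitly relying on Choi's multiplicative-domain theorem, since $\Psi({\bf S}_{(\alpha)}{\bf S}_{(\alpha)}^*)=\Psi({\bf S}_{(\alpha)})\Psi({\bf S}_{(\alpha)})^*$ puts each ${\bf S}_{(\alpha)}$ in the multiplicative domain), while your Arveson-extension-plus-Stinespring route makes the same point concrete. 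I would tighten your phrasing at the pivotal step: the line "contains vectors that are simultaneously in it and in its orthogonal complement" is opaque, while the clean argument is simply that taking $(\alpha)=(\beta)$ in $V^*\pi({\bf S}_{(\alpha)})(I-VV^*)\pi({\bf S}_{(\beta)})^*V=0$ gives $\|(I-VV^*)\pi({\bf S}_{(\alpha)})^*Vh\|^2=0$ for all $h$ (since $I-VV^*$ is a projection because $V$ is an isometry), whence $\pi({\bf S}_{(\alpha)})^*V\cH\subseteq V\cH$ for every $(\alpha)$, which is the coinvariance you need.
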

\begin{proof} Assume that ${\bf T}\in {\bf \cV_{q,\cQ}^m}(\cH)$. Since ${\bf D_q^m}(\cH)$ is a radial domain \cite{Po-Berezin-poly}, $r{\bf T}\in {\bf D_q^m}(\cH)$ for any $r\in [0,1)$. Note that, due to the fact that  $\cQ\subset \CC[Z_{i,j}]$
  is a left ideal generated by  noncommutative
    homogenous polynomials, if $g\in \cQ$, then $g({\bf T}_{i,j})=0$ and  $g(r{\bf T}_{i,j})=0$. Thus $r{\bf T}\in {\bf \cV_{q,\cQ}^m}(\cH)$ and,
   as in the proof of Theorem \ref{vN1-variety}, one  can show that
 $\text{\rm range}\, {\bf K_{q,{\it r}T}}\subseteq \cN_\cQ\otimes \cH$
for any $r\in [0,1)$, where ${\bf K_{q,{\it r}T}}$ is the Berezin
kernel associated with $r{\bf T}\in{\bf D^m_q}(\cH)$. Moreover,
$$
{\bf K_{q,{\it r}T,\cQ}} (r^{|\alpha|+|\beta|} {\bf T}_{(\alpha)}{\bf T}_{(\beta)}^*)=({\bf S}_{(\alpha)}{\bf S}_{(\beta)}^*\otimes
I_\cH){\bf K_{q,{\it r}T,\cQ}},\qquad (\alpha), (\beta)  \in \FF_{n_1}^+\times \cdots \times \FF_{n_k}^+.
$$
  Since $r{\bf T}$ is
 pure, ${\bf K_{q,{\it r}T,\cQ}}$ is an isometry.
Consequently,  for any $n\times n$ matrix with entries $\psi_{st}({\bf S}_{i,j})$ in the linear span  $\cS$ of all products ${\bf S}_{(\alpha)} {\bf S}_{(\beta)}^*$,  where $(\alpha), (\beta) \in \FF_{n_1}^+\times \cdots \times \FF_{n_k}^+$, we have the von Neumann type inequality
$$
\|[\psi_{st}(r{\bf T}_{i,j})]_{n\times n}\|\leq \|[\psi_{st}({\bf S}_{i,j})]_{n\times n}\|, \qquad r\in [0,1).
$$
Taking $r\to 1$, we deduce that $
\|[\psi_{st}({\bf T}_{i,j})]_{n\times n}\|\leq \|[\psi_{st}({\bf S}_{i,j})]_{n\times n}\|.
$
  We define
the unital
completely contractive linear map ${\bf \Psi_{f,q,\cQ}}:\cS\to B(\cH) $
 by setting ${\bf \Psi_{q,T,\cQ}}({\bf S}_{(\alpha)} {\bf S}_{(\beta)}^*):= {\bf T}_{(\alpha)} {\bf T}_{(\beta)}^*$, for all $ (\alpha), (\beta)$  in $\FF_{n_1}^+\times \cdots \times \FF_{n_k}^+$. Now, it is clear that ${\bf \Psi}$ has a unique extension to a unital
completely contractive linear map on $\overline{\cS}$.

  To prove the converse, assume that ${\bf \Psi}$ has the required properties and   note that, due to Lemma \ref{univ-mod-var} and the fact that $1\in \cN_\cQ$, we have
$$ (I-\Phi_{q_1,{T}_1})^{p_1}\cdots (I-\Phi_{q_k,{T}_k})^{p_k}(I)={\bf \Psi}\left[ (I-\Phi_{q_1,{\bf S}_1})^{p_1}\cdots (I-\Phi_{q_k,{\bf S}_k})^{p_k}(I_{\cN_\cQ})\right]\geq 0
$$
for any $p_i\in \{0,1,\ldots, m_i\}$ and
       $i\in \{1,\ldots,k\}$. Since $({\bf S}_1,\ldots, {\bf S}_k)$ is a commuting tuple and $\Psi$ is a homomorphism when restricted to $\CC[{\bf S}_{i,j}]$, we deduce that  $(T_1,\ldots, T_k)$ is a commuting tuple. Therefore, ${\bf T}\in {\bf D_f^m}(\cH)$. On the other hand, since $g({\bf S}_{i,j})=0$ for any $g\in \cQ$, we have
        $g({\bf T}_{i,j})=\Psi(g({\bf S}_{i,j}))=0$, which shows that  ${\bf T}\in {\bf \cV_{q,\cQ}^m}(\cH)$.
         The proof is complete.
\end{proof}

 \begin{proposition}\label{vN2-variety}
 Let  $\cQ\subset \CC[Z_{i,j}]$
  be a left ideal generated by  noncommutative
    homogenous polynomials, and let ${\bf T}:=(T_1,\ldots, T_n)$  be
  in the noncommutative variety
${\bf \cV_{f,\cQ}^m}(\cH)$, where ${\bf f}=(f_1,\ldots, f_k)$ is a $k$-tuple of  positive regular free holomorphic functions.
   Then there is
    a unital completely contractive linear map
$ {\bf \Psi_{f,T,\cQ}}:  \overline{\cS}\to B(\cH),
 $
 where $\overline{\cS}:= \overline{\text{\rm span}} \{{\bf S}_{(\alpha)} {\bf S}_{(\beta)}^*: \
 (\alpha), (\beta) \in \FF_{n_1}^+\times \cdots \times \FF_{n_k}^+\}$,
  such that
\begin{equation*}
{\bf \Psi_{f,T,\cQ}}(g)=\lim_{r\to 1} {\bf B}_{r{\bf T}, \cQ}[g],\qquad g\in \overline{\cS},
\end{equation*}
where the limit exists in the norm topology of $B(\cH)$, and
$$
{\bf \Psi_{f,T,\cQ}}({\bf S}_{(\alpha)} {\bf S}_{(\beta)}^*)={\bf T}_{(\alpha)} {\bf T}_{(\beta)}^*, \qquad (\alpha), (\beta) \in \FF_{n_1}^+\times \cdots \times \FF_{n_k}^+.
$$
In particular, the restriction of ${\bf \Psi_{f,T,\cQ}}$ to the variety algebra $\cA({\bf \cV_{f,\cQ}^m})$ is a unital completely contractive homomorphism.
 If, in addition,
 ${\bf T}$ is a  pure $k$-tuple of operators,
then  $$\lim_{r\to 1} {\bf B}_{r{\bf T},\cQ}[g]= {\bf
B}_{\bf T,\cQ}[g], \qquad g\in \overline{\cS},
$$
 where the limit exists in the norm
topology of $B(\cH)$.
\end{proposition}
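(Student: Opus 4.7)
The plan is to combine the radial structure of $\cV_{f,\cQ}^m$ with the properties of the constrained Berezin kernel from Proposition \ref{Ber-const}. Since $\cQ$ is generated by homogeneous polynomials and ${\bf D_f^m}$ is a radial domain, the scaled tuple $r{\bf T}$ lies in $\cV_{f,\cQ}^m(\cH)$ for every $r\in[0,1)$; moreover, a standard argument from \cite{Po-Berezin-poly} shows that $r{\bf T}$ is then a pure tuple. Proposition \ref{Ber-const}(iii) therefore gives that ${\bf K_{f,{\it r}T,\cQ}}$ is an isometry, so ${\bf B}_{r{\bf T},\cQ}$ is a unital, completely positive, completely contractive linear map on $B(\cN_\cQ)$.

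Iterating the intertwining identity ${\bf K_{f,{\it r}T,\cQ}}(rT_{i,j})^*=({\bf S}_{i,j}^*\otimes I_\cH){\bf K_{f,{\it r}T,\cQ}}$ from Proposition \ref{Ber-const}(ii) together with the isometry property yields
\[
{\bf B}_{r{\bf T},\cQ}[{\bf S}_{(\alpha)}{\bf S}_{(\beta)}^*]=r^{|\alpha|+|\beta|}{\bf T}_{(\alpha)}{\bf T}_{(\beta)}^*
\]
for every $(\alpha),(\beta)\in\FF_{n_1}^+\times\cdots\times\FF_{n_k}^+$. Consequently, for any polynomial $p\in\cS$ the norm limit $\lim_{r\to 1}{\bf B}_{r{\bf T},\cQ}[p]$ exists and sends ${\bf S}_{(\alpha)}{\bf S}_{(\beta)}^*$ to ${\bf T}_{(\alpha)}{\bf T}_{(\beta)}^*$. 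Since each ${\bf B}_{r{\bf T},\cQ}$ has norm at most one, a standard $\varepsilon/3$ argument based on the density of polynomials in $\overline{\cS}$ extends the limit to every $g\in\overline{\cS}$ and produces a well-defined unital completely contractive map ${\bf \Psi_{f,T,\cQ}}:\overline{\cS}\to B(\cH)$. On the subalgebra of polynomials in $\{{\bf S}_{i,j}\}$, the map sends ${\bf S}_{(\alpha)}$ to ${\bf T}_{(\alpha)}$, and the identity ${\bf T}_{(\alpha)}{\bf T}_{(\beta)}={\bf T}_{(\alpha\beta)}$, which holds because the blocks $T_s$ and $T_t$ commute for $s\neq t$, makes ${\bf \Psi_{f,T,\cQ}}$ multiplicative on a norm-dense subalgebra and hence a completely contractive homomorphism throughout $\cA(\cV_{f,\cQ}^m)$ by continuity.

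For the pure case, observe that if ${\bf T}$ is itself pure then Proposition \ref{Ber-const}(iii) already gives that ${\bf K_{f,T,\cQ}}$ is an isometry, so ${\bf B}_{{\bf T},\cQ}$ is a unital completely contractive map on $B(\cN_\cQ)$ with ${\bf B}_{{\bf T},\cQ}[{\bf S}_{(\alpha)}{\bf S}_{(\beta)}^*]={\bf T}_{(\alpha)}{\bf T}_{(\beta)}^*$ by the same intertwining argument. Thus ${\bf B}_{{\bf T},\cQ}$ and ${\bf \Psi_{f,T,\cQ}}$ are two contractions on $\overline{\cS}$ agreeing on the dense polynomial subspace, so they coincide, which yields the claimed identity $\lim_{r\to 1}{\bf B}_{r{\bf T},\cQ}[g]={\bf B}_{{\bf T},\cQ}[g]$ in operator norm.

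The main obstacle is establishing that $r{\bf T}$ is pure for every $r\in[0,1)$. Morally, scaling by $r<1$ shrinks each $\Phi_{f_i,rT_i}$ in a spectral-radius sense and forces $(id-\Phi_{f_i,rT_i}^{q_i})(I)\to I$ strongly as $q_i\to\infty$, but a clean justification requires the asymptotic estimates developed in \cite{Po-Berezin-poly}. Once purity of $r{\bf T}$ is granted, the remainder of the proof reduces to uniform-boundedness and density arguments that are routine in view of Proposition \ref{Ber-const}.
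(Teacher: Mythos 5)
Your proposal is correct and follows essentially the same route as the paper's own proof. The paper defines $\Psi_{\bf f,T,\cQ}$ on the span $\cS$ and establishes complete contractivity by referencing the direct implication of Theorem \ref{C*-charact2}, which uses exactly the radiality, homogeneity, purity of $r{\bf T}$, and isometry of ${\bf K_{f,{\it r}T,\cQ}}$ that you spell out; the extension to $\overline{\cS}$ by the $\varepsilon/3$ argument and the identification with ${\bf B_{T,\cQ}}$ in the pure case are also the same.
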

\begin{proof}   Following  the proof   of the direct implication of
 Theorem \ref{C*-charact2}, we can show that
the linear map ${\bf \Psi_{f,T,\cQ}}:\cS\to B(\cH) $
  defined by ${\bf \Psi_{f,T,\cQ}}({\bf S}_{(\alpha)} {\bf S}_{(\beta)}^*):= {\bf T}_{(\alpha)} {\bf T}_{(\beta)}^*$, for all $ (\alpha), (\beta) \in \FF_{n_1}^+\times \cdots \times \FF_{n_k}^+$, is  unital and
completely contractive.  Given $g=g({\bf S}_{i,j})\in \overline{\cS}$, we define ${\bf \Psi_{f,T,\cQ}}(g):=\lim_{n\to\infty} {\bf \Psi_{f,T,\cQ}}(g_n)$, where $g_n\in \cS$ with $\|g-g_n\|\to 0$, as $n\to\infty$. Note that ${\bf \Psi_{f,T,\cQ}}(g)$ does not depend on the choice of the sequence $\{g_n\}$ and
\begin{equation*}
\begin{split}
\|{\bf \Psi_{f,T,\cQ}}(g)-{\bf B}_{r{\bf T}, \cQ}[g]\|&\leq
\|{\bf \Psi_{f,T,\cQ}}(g)-{\bf \Psi_{f,T,\cQ}}(g_n)\|
 +\|{\bf \Psi_{f,T,\cQ}}(g_n)-{\bf B}_{r{\bf T}, \cQ}[g_n]\| +\|{\bf B}_{r{\bf T}, \cQ}[g_n-g]\|\\
 &\leq 2\|g-g_n\| +\|{\bf \Psi_{f,T,\cQ}}(g_n)-{\bf B}_{r{\bf T}, \cQ}[g_n]\|.
\end{split}
\end{equation*}
Hence, we deduce that ${\bf \Psi_{f,T,\cQ}}(g)=\lim_{r\to 1} {\bf B}_{r{\bf T}, \cQ}[g]$ for any $ g\in \overline{\cS}$. Now, we assume that  ${\bf T}$ is a pure $k$-tuple in  ${\bf \cV_{f,\cQ}^m}(\cH)$.
Since
$$
{\bf B_{T,\cQ}}[g_n]:= {\bf K^*_{f,T,\cQ}} (g_n\otimes I_\cH) {\bf K_{f,T,\cQ}}=g_n({ T}_{i,j})
$$
and taking into account that   $g_n\in \cS$ with $\|g-g_n\|\to 0$, as $n\to\infty$,  we conclude that
$
{\bf B_{T,\cQ}}[g]=\Psi_{\bf f,T,\cQ}(g)$ for any  $g\in \cS$.
This
completes the proof.
\end{proof}

\bigskip

\section{Universal operator  models and joint invariant subspaces}

In this section,  we  obtain  a characterization of the Beurling \cite{Be} type  joint invariant subspaces under the universal model ${\bf S}=\{{\bf S}_{i,j}\}$ of $\cV_{\bf f, \it J}^{\bf m}$, and a characterization of  the joint reducing subspaces of ${\bf S}\otimes I$. We use noncommutative Berezin transforms to characterize the pure elements in  noncommutative varieties  $\cV_{\bf f, \it J}^{\bf m}$ and obtain a classification result for the  pure elements of rank one.

  Denote by  $C^*({\bf S}_{i,j})$ the $C^*$-algebra generated by  the operators ${\bf S}_{i,j}$,  where $i\in \{1,\ldots, k\}$,  $j\in \{1,\ldots, n_i\}$, and the identity.

\begin{theorem}\label{compact} Let ${\bf q}=(q_1,\ldots, q_k)$ be a $k$-tuple of   positive regular  noncommutative polynomials
 and let ${\bf S}=({\bf S}_1,\ldots, {\bf S}_k)$ be the universal model
associated with the abstract noncommutative variety   $\cV_{\bf q, {\it J}}^{\bf m}$, where $J$ is a WOT-closed two sided ideal of $F^\infty({\bf D_q^m})$ such that $1\in \cN_J$.
  Then all
the compact operators in $B(\cN_J)$ are contained in the operator
space
$$\overline{\cS}:=\overline{\text{\rm  span}} \{ {\bf S}_{(\alpha)} {\bf S}_{(\beta)}^*:\
(\alpha), (\beta) \in \FF_{n_1}^+\times \cdots \times \FF_{n_k}^+\}.
$$
Moreover,   the $C^*$-algebra  $C^*({\bf S}_{i,j})$ is
irreducible.
\end{theorem}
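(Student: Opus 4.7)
The plan is to extract the rank-one projection $P_{\CC 1}$ from Lemma \ref{univ-mod-var}(iii) and then use the ${\bf S}$-action to generate every compact operator on $\cN_J$.  Since $1\in\cN_J$ by hypothesis, the operator $P_{\cN_J}{\bf P}_\CC|_{\cN_J}$ appearing in Lemma \ref{univ-mod-var}(iii) is precisely the rank-one projection $P_{\CC 1}$ of $\cN_J$ onto $\CC 1$, so the lemma reads
$$P_{\CC 1}=(id-\Phi_{q_1,{\bf S}_1})^{m_1}\cdots(id-\Phi_{q_k,{\bf S}_k})^{m_k}(I_{\cN_J}).$$
Because each $q_i$ is a polynomial, the right-hand side unpacks to a finite linear combination of products ${\bf S}_{(\gamma)}{\bf S}_{(\gamma)}^*$, placing $P_{\CC 1}$ inside $\cS$.

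The next step is to show that $\overline\cS$ is stable under the action $A\mapsto{\bf S}_{(\mu)}A{\bf S}_{(\nu)}^*$.  Co-invariance of $\cN_J$ under each ${\bf W}_{i,j}$ yields ${\bf S}_{(\alpha)}=P_{\cN_J}{\bf W}_{(\alpha)}|_{\cN_J}$ and the concatenation law ${\bf S}_{(\mu)}{\bf S}_{(\alpha)}={\bf S}_{(\mu\alpha)}$ (with $(\mu\alpha)$ meant coordinate-wise in each $\FF_{n_i}^+$); taking adjoints, ${\bf S}_{(\beta)}^*{\bf S}_{(\nu)}^*={\bf S}_{(\nu\beta)}^*$.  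Hence each generator ${\bf S}_{(\alpha)}{\bf S}_{(\beta)}^*$ of $\cS$ is mapped to the generator ${\bf S}_{(\mu\alpha)}{\bf S}_{(\nu\beta)}^*\in\cS$, and stability on $\overline\cS$ follows by norm-continuity.  Applied to $P_{\CC 1}$, this gives ${\bf S}_{(\alpha)}P_{\CC 1}{\bf S}_{(\beta)}^*\in\overline\cS$ for every $(\alpha),(\beta)$; a direct calculation shows this operator is the rank-one map $x\mapsto\langle x,{\bf S}_{(\beta)}1\rangle\,{\bf S}_{(\alpha)}1$.

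Since ${\bf W}_{(\alpha)}1$ is a positive scalar multiple of the standard tensor basis vector $e^1_{\alpha_1}\otimes\cdots\otimes e^k_{\alpha_k}$, the set $\{{\bf W}_{(\alpha)}1\}$ has dense linear span in $\otimes_{i=1}^k F^2(H_{n_i})$; compressing by $P_{\cN_J}$ shows $\{{\bf S}_{(\alpha)}1\}$ has dense linear span in $\cN_J$.  Standard bilinear norm-continuity of the rank-one construction $(x,y)\mapsto\langle\,\cdot\,,y\rangle x$ then approximates an arbitrary rank-one operator on $\cN_J$ by finite linear combinations of operators of the form above, so $\overline\cS$ contains every finite-rank and hence every compact operator on $\cN_J$.

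For irreducibility of $C^*({\bf S}_{i,j})$, observe that this $C^*$-algebra contains $\overline\cS$ by construction, hence contains every compact operator on $\cN_J$; since the commutant of the ideal of compact operators in $B(\cN_J)$ is $\CC I_{\cN_J}$, the commutant of $C^*({\bf S}_{i,j})$ reduces to scalars, and irreducibility follows.  The only delicate step is the two-sided stability of $\overline\cS$, which rests solely on the co-invariance of $\cN_J$ together with the multiplicativity of the indexing $(\alpha)\mapsto{\bf W}_{(\alpha)}$---both already in hand---so no further technical input is required.
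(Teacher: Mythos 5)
Your proof of the compactness part follows the same route as the paper: both extract the rank-one projection $P_{\CC 1}$ from Lemma \ref{univ-mod-var}(iii) (using $1\in\cN_J$ and the fact that each $q_i$ is a polynomial, so the defect operator is a finite linear combination of products ${\bf S}_{(\gamma)}{\bf S}_{(\gamma)}^*$), then sandwich it between polynomials in $\bf S$ and use density of $\{{\bf S}_{(\alpha)}1\}$ in $\cN_J$ to reach all rank-one and hence all compact operators. You spell out the concatenation law ${\bf S}_{(\mu)}{\bf S}_{(\alpha)}{\bf S}_{(\beta)}^*{\bf S}_{(\nu)}^*={\bf S}_{(\mu\alpha)}{\bf S}_{(\nu\beta)}^*$ coming from co-invariance, which the paper uses implicitly when it writes $\chi({\bf S}_{i,j}){\bf P}_\CC^{\cN_J}g({\bf S}_{i,j})^*\in\overline\cS$; that is a welcome clarification rather than a deviation.

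Where you genuinely diverge is in the proof of irreducibility. The paper gives a direct argument: it takes a nonzero reducing subspace $\cE\subseteq\cN_J$, applies ${\bf P}_\CC{\bf S}_{(\beta)}^*$ to a nonzero $\varphi\in\cE$ to find that $1\in\cE$, and then uses invariance under the ${\bf S}_{i,j}$ to conclude $\cE=\cN_J$. You instead derive irreducibility as a corollary of the compactness statement: since $C^*({\bf S}_{i,j})\supseteq\overline\cS$ contains every compact operator on $\cN_J$ and the commutant of the compacts is $\CC I$, the commutant of $C^*({\bf S}_{i,j})$ is trivial. Your route is shorter and exploits the first half of the theorem; the paper's route is self-contained and makes the cyclicity mechanism explicit, which it reuses elsewhere (e.g.\ in the proof of Theorem \ref{cyclic}). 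Both arguments are correct.
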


\begin{proof}
   Since $1\in \cN_J$,   Lemma \ref{univ-mod-var} implies
\begin{equation}\label{proj2}
  (I-\Phi_{q_1,{\bf S}_1})^{m_1}\cdots (I-\Phi_{q_k,{\bf S}_k})^{m_k}(I_{\cN_J})=P_{\cN_J}{\bf P}_\CC|_{\cN_J}={\bf P}_\CC^{\cN_J},
\end{equation}
where $ P^{\cN_J}_\CC$ is the orthogonal projection of $\cN_J$
onto $\CC$.
Fix a polynomial
$g({\bf W}_{i,j}):= \sum\limits_{{(\beta) \in \FF_{n_1}^+\times \cdots \times \FF_{n_k}^+}\atop{|\beta_1|+\cdots +|\beta_k|\leq n}} d_{(\beta)}
{\bf W}_{(\beta)} $
and  let $\xi:=\sum\limits_{(\beta) \in \FF_{n_1}^+\times \cdots \times \FF_{n_k}^+} c_{(\beta)}e_{(\beta)}$ be in $\cN_J\subset \otimes_{i=1}^k F^2(H_{n_i})$, where  we denote $e_{(\beta)}:=e^1_{\beta_1}\otimes\cdots \otimes  e^k_{\beta_k}$, if $(\beta):=(\beta_1,\ldots, \beta_k)$.
It is easy to see   that
$
{\bf P}_\CC^{\cN_J} g({\bf S}_{i,j})^*\xi= \left< \xi,g({\bf S}_{i,j})(1)\right>.
$
Consequently, we have
\begin{equation}\label{rankone}
\chi({\bf S}_{i,j}){\bf P}_\CC^{\cN_J} g({\bf S}_{i,j})^*\xi= \left<
\xi,g({\bf S}_{i,j})(1)\right>\chi({\bf S}_{i,j})(1)
\end{equation}
for any polynomial $\chi({\bf S}_{i,j})$.  Employing relation \eqref{proj2}, we deduce that the
operator $\chi({\bf S}_{i,j}){\bf P}_\CC^{\cN_J} g({\bf S}_{i,j})^*$
 has rank one  and  it is   in the
operator space $\overline{\cS}$. On the other hand, due to the fact that
the set of all vectors  of the form
 $  \sum\limits_{{(\beta) \in \FF_{n_1}^+\times \cdots \times \FF_{n_k}^+}\atop{|\beta_1|+\cdots +|\beta_k|\leq n}} d_{(\beta)}
{\bf S}_{(\beta)}(1)$ with  $n\in \NN$, $d_{(\beta)}\in \CC$, is  dense in
$\cN_J$, relation \eqref{rankone} implies that   all
the compact operators in $B(\cN_J)$ are contained in  $\cS$.

  To prove the last part of this theorem, let $\cE\neq\{0\}$ be a
subspace of $\cN_J\subset \otimes_{i=1}^k F^2(H_{n_i})$, which is jointly reducing
for the operators ${\bf S}_{i,j}$,    $i\in \{1,\ldots,k\}$ and $j\in\{1,\ldots,n_i\}$. Let $\varphi\in \cE$,
$\varphi\neq 0$, and assume that
$
\varphi=\sum\limits_{(\beta) \in \FF_{n_1}^+\times \cdots \times \FF_{n_k}^+} a_{(\beta)}e_{(\beta)}.
$
 If
 $a_{(\beta)}$  is a nonzero coefficient of $\varphi$,
then we have
 $$
{\bf P}_\CC {\bf S}_{1,\beta_1}^*\cdots {\bf S}_{k,\beta_k}^*\varphi= {\bf P}_\CC {\bf W}_{1,\beta_1}^*\cdots {\bf W}_{k,\beta_k}^*\varphi = \frac{1}{\sqrt{b_{1,\beta_1}^{(m_1)}}}\cdots \frac{1}{\sqrt{b_{k,\beta_k}^{(m_k)}}}  a_{(\beta)}.
$$
 Due to relation \eqref{proj2} and  using the fact that
 $\cE$ is reducing for each  ${\bf S}_{i,j}$,
we deduce that $a_{(\beta)}\in \cE$, so $1\in \cE$. Using  again
that $\cE$ is invariant under the operators ${\bf S}_{i,j}$, we
deduce that  $\cE= \cN_J$.
This completes the proof.
\end{proof}

 Let ${\bf T}=({ T}_1,\ldots, { T}_k)\in {\bf D_f^m}(\cH)$  and  ${\bf T}'=({ T}_1',\ldots, { T}_k')\in {\bf D_f^m}(\cH')$ be $k$-tuples with $T_i:=(T_{i,1},\ldots, T_{i,n_i})$ and $T_i':=(T_{i,1}',\ldots, T_{i,n_i}')$. We say that  ${\bf T}$ is unitarily equivalent to ${\bf T}'$ if there is a unitary operator $U:\cH\to \cH'$ such that
$
T_{i,j}=U^* T_{i,j}'U
$
for all $i\in\{1,\ldots,k\}$ and $j\in\{1,\ldots, n_i\}$.

\begin{corollary}\label{eq-mult}
Let ${\bf S}=\{{\bf S}_{i,j}\} $ be the universal model
associated with the abstract noncommutative variety   $\cV_{\bf q, {\it J}}^{\bf m}$, where $J$ is a WOT-closed left ideal of $F^\infty({\bf D_q^m})$ such that $1\in \cN_J$. If $\cH$, $\cK$ are Hilbert spaces, then $\{{\bf S}_{i,j}\otimes I_\cH\}$ is unitarily equivalent to $\{{\bf S}_{i,j}\otimes I_\cK\}$ if and only if $\dim \cH=\dim \cK$.
\end{corollary}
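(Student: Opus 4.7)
The reverse implication is immediate: any unitary $U\colon\cH\to\cK$ gives $I_{\cN_J}\otimes U$ as an intertwiner between $\{{\bf S}_{i,j}\otimes I_\cH\}$ and $\{{\bf S}_{i,j}\otimes I_\cK\}$. So the plan addresses only the forward direction, and the approach is the standard multiplicity argument for an irreducible $C^*$-algebra that meets the ideal of compact operators nontrivially.

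Suppose $V\colon\cN_J\otimes\cH\to\cN_J\otimes\cK$ is a unitary with $V({\bf S}_{i,j}\otimes I_\cH)=({\bf S}_{i,j}\otimes I_\cK)V$ for all $i,j$. First I would promote this intertwining from the generators to the full $C^*$-algebra $C^*({\bf S}_{i,j})$: take adjoints of both sides, form $*$-polynomials in the ${\bf S}_{i,j}$, and close in operator norm. This yields $V(a\otimes I_\cH)=(a\otimes I_\cK)V$ for every $a\in C^*({\bf S}_{i,j})$.

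The key input is then Theorem \ref{compact}: since $1\in\cN_J$, the rank-one projection $P:={\bf P}_\CC^{\cN_J}$ of $\cN_J$ onto $\CC 1$ is compact, so $P\in\overline{\cS}\subset C^*({\bf S}_{i,j})$. In fact, Lemma \ref{univ-mod-var}(iii) already identifies $P$ with the defect operator $(I-\Phi_{q_1,{\bf S}_1})^{m_1}\cdots(I-\Phi_{q_k,{\bf S}_k})^{m_k}(I_{\cN_J})$, which, because the $q_i$ are polynomials, is a finite $*$-polynomial in the ${\bf S}_{i,j}$. Applying the extended intertwining at $a=P$ gives $V(P\otimes I_\cH)V^*=P\otimes I_\cK$.

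A rank comparison finishes the argument. The projection $P\otimes I_\cH$ on $\cN_J\otimes\cH$ has range $\CC 1\otimes\cH$, of Hilbert-space dimension $\dim\cH$, and similarly $P\otimes I_\cK$ has rank $\dim\cK$. Since unitary conjugation preserves the dimension of the range of a projection, $\dim\cH=\dim\cK$. I do not expect any obstacle here; the only substantive use of the hypothesis $1\in\cN_J$ is to guarantee, via Lemma \ref{univ-mod-var}(iii) together with Theorem \ref{compact}, that the rank-one projection $P$ actually lies in $C^*({\bf S}_{i,j})$, so that the multiplicity of the amplification representation $a\mapsto a\otimes I$ becomes a unitary invariant.
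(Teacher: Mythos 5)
Your argument is correct, but it takes a genuinely different route from the paper's. The paper notes that the intertwining unitary $U$ commutes with all operators $X\otimes I$, $X\in C^*({\bf S}_{i,j})$, and then invokes the \emph{irreducibility} of $C^*({\bf S}_{i,j})$ (established in Theorem~\ref{compact}) to conclude via the commutant structure that $U=I_{\cN_J}\otimes A$ with $A\colon\cH\to\cK$ unitary; the dimension equality is then immediate. You instead bypass irreducibility entirely: you promote the intertwining to the full $C^*$-algebra, locate the rank-one projection $P={\bf P}_\CC^{\cN_J}$ inside $C^*({\bf S}_{i,j})$ (here Lemma~\ref{univ-mod-var}(iii) plus the fact that the $q_i$ are polynomials suffices, one does not even need the full force of Theorem~\ref{compact}), and compare ranks of $P\otimes I_\cH$ and $P\otimes I_\cK$, which unitary conjugation preserves. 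Your version is more elementary and isolates precisely what is used, namely that the algebra contains a nonzero finite-rank projection; it would work verbatim even for a non-irreducible algebra with that property. The paper's version yields the stronger structural fact $U=I_{\cN_J}\otimes A$, which is reused elsewhere in the paper (for example in the uniqueness part of Theorem~\ref{dil3} and in Corollary~\ref{rank1}), so the choice of argument there is not accidental.
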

\begin{proof}
  Let   $U:\cN_J\otimes \cH\to
\cN_J\otimes \cK$ be a unitary operator such that
$
U({\bf S}_{i,j}\otimes I_\cH)=({\bf S}_{i,j}\otimes I_{\cK})U
$
for all $i\in \{1,\ldots, k\}$ and $j\in \{1,\ldots, n_i\}$.
 Then
$
U({\bf S}_{i,j}^*\otimes I_\cH)=({\bf S}_{i,j}^*\otimes I_{\cK})U
$
and, due to the fact that  the $C^*$-algebra $C^*({\bf S}_{i,j})$ is irreducible, we
must have $U=I_{\cN_J}\otimes A$, where $A\in B(\cH,\cK)$ is a
unitary operator. Therefore, $\dim \cH=\dim \cK$. The proof is
complete.
 \end{proof}

We recall that a   subspace $\cH\subseteq \cK$ is
called co-invariant under $\Lambda\subset B(\cK)$ if $X^*\cH\subseteq
\cH$ for any $X\in \Lambda$.

\begin{theorem}\label{cyclic} Let ${\bf S}=\{{\bf S}_{i,j}\}$   be the universal model
associated with the abstract noncommutative variety $\cV_{\bf f, {\it J}}^{\bf m}$, where $J$ is a WOT-closed two sided ideal of $F^\infty({\bf D_f^m})$ such that $1\in \cN_J$.
  If $\cK$  be a Hilbert space and
$\cM\subseteq \cN_J\otimes \cK$ is a co-invariant subspace under each operator
${\bf S}_{i,j}\otimes I_\cK$ for  $i\in \{1,\ldots, k\}$, $j\in \{1,\ldots, n_i\}$, then  there exists a subspace
$\cE\subseteq \cK$ such that
\begin{equation*}
\overline{\text{\rm span}}\,\left\{\left({\bf S}_{(\beta)}\otimes
I_\cK\right)\cM:\ (\beta) \in \FF_{n_1}^+\times \cdots \times \FF_{n_k}^+\right\}=\cN_J\otimes \cE.
\end{equation*}
\end{theorem}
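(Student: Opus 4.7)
The plan is to introduce the candidate subspace
\[
\cE := \{k \in \cK : 1 \otimes k \in \cN\} \subseteq \cK,
\]
where $\cN$ denotes the left-hand side of the claimed equality (note that $\cE$ is automatically closed), and then verify the two inclusions $\cN_J \otimes \cE \subseteq \cN$ and $\cN \subseteq \cN_J \otimes \cE$ separately.

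For the first inclusion, I would first check that $\{{\bf S}_{(\alpha)}(1) : (\alpha) \in \FF_{n_1}^+ \times \cdots \times \FF_{n_k}^+\}$ spans $\cN_J$ densely. Since $J$ is a left ideal, $\cM_J$ is invariant under every ${\bf W}_{i,j}$, so $P_{\cN_J}{\bf W}_{i,j} P_{\cN_J} = P_{\cN_J}{\bf W}_{i,j}$ and therefore ${\bf S}_{(\alpha)}(1) = P_{\cN_J}{\bf W}_{(\alpha)}(1)$ is a nonzero multiple of $P_{\cN_J}(e^1_{\alpha_1}\otimes\cdots\otimes e^k_{\alpha_k})$. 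As $\{e^1_{\alpha_1}\otimes\cdots\otimes e^k_{\alpha_k}\}$ is an orthonormal basis of $\otimes_{i=1}^k F^2(H_{n_i})$, their projections to $\cN_J$ span $\cN_J$. Then for $k \in \cE$, applying ${\bf S}_{(\alpha)}\otimes I_\cK$ to $1\otimes k \in \cN$ and taking closed linear span over $(\alpha)$ produces $\cN_J \otimes k \subseteq \cN$, which gives $\cN_J \otimes \cE \subseteq \cN$.

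For the reverse inclusion, the central tool will be Theorem \ref{compact}: under the hypothesis $1 \in \cN_J$, every compact operator on $\cN_J$, and in particular every rank-one operator $\theta_{\xi,\eta}:\zeta \mapsto \left<\zeta,\eta\right>\xi$, lies in $\overline{\cS} := \overline{\text{span}}\{{\bf S}_{(\alpha)}{\bf S}_{(\beta)}^*\}$. For any $m \in \cM$, the co-invariance of $\cM$ gives $({\bf S}_{(\beta)}^*\otimes I_\cK)m \in \cM$, hence $({\bf S}_{(\alpha)}{\bf S}_{(\beta)}^*\otimes I_\cK)m \in ({\bf S}_{(\alpha)}\otimes I_\cK)\cM \subseteq \cN$; norm-continuity and linearity of $A \mapsto (A\otimes I_\cK)m$ then extend this to $(A\otimes I_\cK)m \in \cN$ for every $A \in \overline{\cS}$ and $m \in \cM$. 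Choosing $A = \theta_{1,\eta}$ with $\eta \in \cN_J$ yields $1 \otimes T_\eta(m) \in \cN$, where $T_\eta:\cN_J \otimes \cK \to \cK$ is the partial inner product in the first factor characterized by $(\theta_{1,\eta}\otimes I_\cK)v = 1 \otimes T_\eta(v)$. Hence $T_\eta(m) \in \cE$ for all $\eta \in \cN_J$ and $m \in \cM$. Expanding $m = \sum_i \psi_i \otimes T_{\psi_i}(m)$ with respect to any orthonormal basis $\{\psi_i\}$ of $\cN_J$ now displays $m$ as an element of $\cN_J \otimes \cE$, so $\cM \subseteq \cN_J \otimes \cE$; since $\cN_J \otimes \cE$ is already invariant under each ${\bf S}_{i,j} \otimes I_\cK$, it must contain the invariant subspace $\cN$ generated by $\cM$.

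The main obstacle is the rank-one reduction in the second inclusion: everything hinges on converting co-invariance of $\cM$ into the statement $(A \otimes I_\cK)\cM \subseteq \cN$ for all compact $A$ on $\cN_J$, which in turn requires the compact-operator conclusion of Theorem \ref{compact}. That theorem itself depends on $1 \in \cN_J$ together with the identity ${\bf P}^{\cN_J}_\CC = (I-\Phi_{f_1,{\bf S}_1})^{m_1}\cdots(I-\Phi_{f_k,{\bf S}_k})^{m_k}(I_{\cN_J})$ from Lemma \ref{univ-mod-var}(iii). Once that rank-one machinery is in place, the rest of the argument reduces to straightforward bookkeeping with an orthonormal basis of $\cN_J$.
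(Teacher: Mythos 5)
The overall architecture of your argument (define a candidate $\cE$, prove both inclusions) is sound, and your first inclusion is correct: $\cM_J$ is ${\bf W}_{i,j}$-invariant because $J$ is a left ideal, so ${\bf S}_{(\alpha)}(1)=P_{\cN_J}{\bf W}_{(\alpha)}(1)$ is a nonzero multiple of $P_{\cN_J}(e^1_{\alpha_1}\otimes\cdots\otimes e^k_{\alpha_k})$, these span $\cN_J$ densely, and $\cN$ is ${\bf S}_{(\alpha)}\otimes I_\cK$-invariant (using ${\bf S}_{(\alpha)}{\bf S}_{(\beta)}={\bf S}_{(\alpha\beta)}$, which again rests on ${\bf W}$-invariance of $\cM_J$).

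The gap is in the reverse inclusion, and it is one of scope: you invoke Theorem \ref{compact} to place the rank-one operators $\theta_{1,\eta}$ in the norm closure $\overline{\cS}$ and then use norm-continuity of $A\mapsto(A\otimes I_\cK)m$. But Theorem \ref{compact} is stated (and its proof only works) under the hypothesis that ${\bf q}=(q_1,\ldots,q_k)$ is a $k$-tuple of \emph{positive regular noncommutative polynomials}, whereas Theorem \ref{cyclic} is stated for a general $k$-tuple ${\bf f}$ of positive regular \emph{free holomorphic functions}. The polynomial assumption is essential in the proof of Theorem \ref{compact}: when each $q_i$ is a polynomial, the defect operator $(I-\Phi_{q_1,{\bf S}_1})^{m_1}\cdots(I-\Phi_{q_k,{\bf S}_k})^{m_k}(I)$ from Lemma \ref{univ-mod-var}(iii) is a \emph{finite} linear combination of products ${\bf S}_{(\alpha)}{\bf S}_{(\beta)}^*$ and therefore lies in $\cS$ itself, so that $\chi({\bf S}){\bf P}_\CC^{\cN_J}g({\bf S})^*$ lands in $\cS$. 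For a non-polynomial ${\bf f}$, the maps $\Phi_{f_i,{\bf S}_i}$ are infinite WOT-convergent series, ${\bf P}_\CC^{\cN_J}$ is only an SOT/WOT limit of elements of $\cS$, and there is no argument in the paper (nor an obvious one) putting ${\bf P}_\CC^{\cN_J}$, hence the compacts, into the \emph{norm} closure $\overline{\cS}$. As written, your reverse inclusion therefore only covers the polynomial case.

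The paper avoids this by never passing through $\overline{\cS}$: it works directly with the identity $1\otimes h_0=({\bf P}_\CC\otimes I_\cK)\xi={\bf \Delta}_{{\bf f},{\bf S}\otimes I_\cK}^{\bf m}(I)\xi$ from Lemma \ref{univ-mod-var}(iii), where $\xi\in\cM$, and observes that each term ${\bf S}_{(\alpha)}{\bf S}_{(\beta)}^*\otimes I_\cK$ in the WOT-convergent expansion of the defect operator maps $\cM$ into $\cY$ by co-invariance; since a norm-closed subspace is weakly closed, the limit stays in $\cY$. Your proof can be repaired in exactly this spirit: replace the appeal to Theorem \ref{compact} by the observation that the truncations of ${\bf \Delta}_{{\bf f},{\bf S}}^{\bf m}(I)$ lie in $\cS$, are uniformly bounded in norm (each $\Phi_{f_i,{\bf S}_i}^j(I)$ is dominated by $I$, so the binomial expansion gives a uniform bound of $\prod_i 2^{m_i}$), and converge SOT; then $(\theta_{1,\eta}\otimes I_\cK)m$ is a norm limit of elements of $\cN$ and the rest of your bookkeeping goes through. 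As stated, however, the proposal does not establish the theorem in the generality in which it is claimed.
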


\begin{proof}
   Set
 $\cE:=({\bf P}_\CC\otimes I_\cK)\cM\subseteq \cK$, where ${\bf P}_\CC$ is the
 orthogonal projection from $\cN_J$ onto $\CC 1\subset \cN_J$ and let $\varphi$ be a
nonzero element of $\cM$ with   representation
$$\varphi=\sum\limits_{(\beta) \in \FF_{n_1}^+\times \cdots \times \FF_{n_k}^+} e_{(\beta)} \otimes h_{(\beta)} \in \cM\subset \cN_J,
$$
where $h_{(\beta)}\in \cK$ and $\sum\limits_{(\beta) \in \FF_{n_1}^+\times \cdots \times \FF_{n_k}^+}\|h_{(\beta)}\|^2<\infty$.
  Assume that $h_{(\sigma)}\neq 0$ for some $\sigma=(\sigma_1,\ldots, \sigma_k)$ in $\FF_{n_1}^+\times \cdots \times \FF_{n_k}^+$  and note that
\begin{equation*} \begin{split}
({\bf P}_\CC \otimes I_\cK) ({\bf S}_{1,\sigma_1}^*\cdots {\bf S}_{k,\sigma_k}^*\otimes I_\cK)\varphi&= ({\bf P}_\CC \otimes I_\cK) ({\bf W}_{1,\sigma_1}^*\cdots {\bf W}_{k,\sigma_k}^*\otimes I_\cK)\varphi\\
&= 1\otimes\frac{1}{\sqrt{b_{1,\sigma_1}^{(m_1)}}}\cdots \frac{1}{\sqrt{b_{k,\sigma_k}^{(m_k)}}}  h_{(\sigma)}.
\end{split}
\end{equation*}
 Consequently, since $\cM$ is  a
co-invariant subspace under each operator ${\bf S}_{i,j}\otimes I_\cK$,
we must have  $ h_{(\sigma)}\in \cE$.  Since $1\in \cN_J$, we deduce that
$$
 ({\bf S}_{1,\sigma_1}\cdots {\bf S}_{k,\sigma_k}\otimes I_\cK)(1\otimes
h_{(\sigma)})=\frac{1}{\sqrt{b_{1,\sigma_1}^{(m_1)}}}\cdots \frac{1}{\sqrt{b_{k,\sigma_k}^{(m_k)}}}P_{\cN_J}(e^1_{\sigma_1}\otimes\cdots \otimes  e^k_{\sigma_k})\otimes h_{(\sigma)}
$$
is a vector in $\cN_J\otimes \cE$.
 Therefore,
\begin{equation}\label{phi-series}
\varphi= \lim_{n\to\infty}\sum_{q=0}^n \sum\limits_{{(\beta) \in \FF_{n_1}^+\times \cdots \times \FF_{n_k}^+}\atop{|\beta_1|+\cdots +|\beta_k|=q}} P_{\cN_J}(e^1_{\beta_1}\otimes\cdots \otimes  e^k_{\beta_k})\otimes h_{(\beta) }
\end{equation}
is in $\cN_J\otimes \cE$. Hence,
 $\cM\subset\cN_J\otimes \cE$ and
$$
\cY:= \overline{\text{\rm span}}\,\left\{({\bf S}_{(\sigma)} \otimes I_\cK)\cM:\ (\sigma) \in \FF_{n_1}^+\times \cdots \times \FF_{n_k}^+\right\}\subset \cN_J\otimes \cE.
$$

Now, we prove  the reverse inclusion.
 If
$h_0\in \cE$, $h_0\neq 0$, then there exists $\xi\in \cM\subset
\cN_J\otimes \cE$ such that $$\xi=1\otimes
h_0+\sum\limits_{{(\beta) \in \FF_{n_1}^+\times \cdots \times \FF_{n_k}^+}\atop{|\beta_1|+\cdots +|\beta_k|\geq 1}} e^1_{\beta_1}\otimes\cdots \otimes  e^k_{\beta_k}\otimes h_{(\beta)}
$$
and $1\otimes h_0=({\bf P}_\CC\otimes I_\cK) \xi$.
Consequently, due to Lemma \ref{univ-mod-var}, we have
\begin{equation*}
\begin{split}
1\otimes h_0=({\bf P}_\CC\otimes I_\cK) \xi
=(id-\Phi_{f_1,{\bf S}_1\otimes I_\cK})^{m_1}\cdots (id-\Phi_{f_k,{\bf S}_k\otimes I_\cK})^{m_k}(I_{\cN_J}\otimes I_\cK)\xi.
\end{split}
\end{equation*}
 Taking into account that $\cM$ is co-invariant under each operator ${\bf S}_{i,j}\otimes I_\cK$,
we deduce that $h_0\in\cY$ for any $h_0\in \cE$. Therefore,  $\cE\subset
\cY$. This   inclusion  shows that $({\bf S}_{(\sigma)} \otimes I_\cK)(1\otimes \cE)\subset \cY$ for any $(\sigma) \in \FF_{n_1}^+\times \cdots \times \FF_{n_k}^+$, which
implies
\begin{equation*}\label{PN}
\frac{1}{\sqrt{b_{1,\sigma_1}^{(m_1)}}}\cdots \frac{1}{\sqrt{b_{k,\sigma_k}^{(m_k)}}}P_{\cN_J}(e^1_{\sigma_1}\otimes\cdots \otimes  e^k_{\sigma_k})\otimes \cE\subset
\cY.
\end{equation*}
Consequently, if $\varphi\in \cN_J\otimes \cE$
has   the  representation \eqref{phi-series},   we conclude  that
$\varphi \in \cY.$
Therefore, $ \cN_J\otimes \cE\subseteq \cY$.  The proof is complete.
\end{proof}

Now, we can easily deduce the following result.

\begin{corollary}  Let   ${\bf S}:=({\bf S}_1,\ldots, {\bf S}_k)$ be the universal model associated to the abstract  noncommutative variety $\cV_{{\bf f}, J}^{\bf m}$, where $J$ is   a WOT-closed two sided ideal of $F^\infty({\bf D_f^m})$ such that $1\in \cN_J$. If $\cK$  is a Hilbert space, then    a  subspace
$\cM\subseteq \cN_J\otimes \cK$ is reducing under each operator
${\bf S}_{i,j}\otimes I_\cK$ for  $i\in \{1,\ldots, k\}$, $j\in \{1,\ldots, n_i\}$,
if and only if   there exists a subspace $\cE\subseteq \cK$ such
that
\begin{equation*}
 \cM=\cN_J\otimes \cE.
\end{equation*}
\end{corollary}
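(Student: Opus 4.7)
The plan is to derive this corollary directly from Theorem \ref{cyclic} by exploiting the two-sided condition: a reducing subspace is simultaneously invariant and co-invariant under each ${\bf S}_{i,j}\otimes I_\cK$.

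For the easy direction, suppose $\cM=\cN_J\otimes \cE$ for some closed subspace $\cE\subseteq \cK$. Since ${\bf S}_{i,j}\otimes I_\cK$ acts nontrivially only on the first tensor factor, and $\cN_J$ is both invariant and co-invariant under ${\bf S}_{i,j}$ (indeed, ${\bf S}_{i,j}$ is defined as an operator on $\cN_J$), the subspace $\cN_J\otimes \cE$ is reducing for each ${\bf S}_{i,j}\otimes I_\cK$.

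For the nontrivial direction, assume $\cM$ is reducing under each ${\bf S}_{i,j}\otimes I_\cK$. In particular, $\cM$ is co-invariant, so Theorem \ref{cyclic} applies and produces a subspace $\cE:=({\bf P}_\CC\otimes I_\cK)\cM\subseteq \cK$ such that
$$
\overline{\text{\rm span}}\,\left\{({\bf S}_{(\beta)}\otimes I_\cK)\cM:\ (\beta)\in \FF_{n_1}^+\times\cdots\times\FF_{n_k}^+\right\}=\cN_J\otimes \cE.
$$
Inspection of that proof also yields the inclusion $\cM\subseteq \cN_J\otimes \cE$ (via the series expansion \eqref{phi-series}). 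For the reverse inclusion, invoke the invariance half of the reducing condition: $({\bf S}_{(\beta)}\otimes I_\cK)\cM\subseteq \cM$ for every $(\beta)$, hence $\cN_J\otimes \cE\subseteq \cM$. Combining the two inclusions gives $\cM=\cN_J\otimes \cE$.

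There is no serious obstacle here; the content is entirely packaged in Theorem \ref{cyclic}, and the only thing to notice is that the hypothesis of being a \emph{two-sided} WOT-closed ideal is what is already being used in Theorem \ref{cyclic}, and that invariance converts the spanning relation into an equality.
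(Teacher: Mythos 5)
Your proof is correct and follows exactly the route the paper intends (the paper merely says "Now, we can easily deduce"): apply Theorem \ref{cyclic} to the co-invariant part of the reducing hypothesis to get $\cM\subseteq \cN_J\otimes\cE$ and the spanning identity, then use the invariant part to collapse the spanning set into $\cM$, yielding equality. The converse direction is immediate as you note.
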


Let ${\bf S}:=\{{\bf S}_{i,j}\}$ be the universal model associated to the abstract  noncommutative variety ${\bf {\cV}_{f,{\it J}}^m}$.  An operator $M:\cN_J\otimes \cH\to
\cN_J\otimes \cK$  is called {\it  multi-analytic}  with respect to ${\bf S}$
 if $$M({\bf S}_{i,j}\otimes I_\cH)=({\bf S}_{i,j}\otimes I_\cK)M$$
for any $i\in \{1,\ldots,k\}$ and  $j\in \{1,\ldots, n_i\}$. In case $M$ is a partial isometry, we call
it {\it inner multi-analytic} operator.

The next result is an extension of Theorem 5.2 from \cite{Po-Berezin-poly} to varieties in noncommutative polydomains. The constructions from the proof are needed in  a forthcoming section  to define  characteristic functions associated with  noncommutative varieties.

\begin{theorem}\label{Beur-fact} Let ${\bf S}:=({\bf S}_1,\ldots, {\bf S}_k)$ be the universal model associated to the abstract  noncommutative variety $\cV_{{\bf f}, J}^{\bf m}$ and  let ${\bf S}_i\otimes I_\cH:=({\bf S}_{i,1}\otimes I_\cH,\ldots, {\bf S}_{i,n_i}\otimes I_\cH)$ for  $i\in \{1,\ldots,k\}$, where $\cH$ is a Hilbert space.
If $G\in B(\cN_J\otimes \cH)$ then the following statements  are equivalent.
\begin{enumerate}
\item[(i)]
There is  a multi-analytic operator
 $\Gamma:\cN_J\otimes \cE \to
 \cN_J\otimes \cH$ with respect to ${\bf S}$,
 where  $\cE$ is a Hilbert space, such that
$$G=\Gamma \Gamma^*.$$
\item[(ii)] For any ${\bf p}:=(p_1,\ldots, p_k)\in \ZZ_+^k$
 such that ${\bf p}\leq {\bf m}$, ${\bf p}\neq 0$,
     $$
   ({\bf \Delta}_{{\bf f,S}\otimes I_\cH}^{\bf p}
(G)\geq 0.
   $$
\end{enumerate}
\end{theorem}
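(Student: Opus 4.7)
My plan for $(i)\Rightarrow(ii)$ is a short computation using the intertwining property. If $\Gamma({\bf S}_{i,j}\otimes I_\cE)=({\bf S}_{i,j}\otimes I_\cH)\Gamma$, then for any $A\in B(\cN_J\otimes\cE)$ and any $i$,
\[
\Phi_{f_i,{\bf S}_i\otimes I_\cH}(\Gamma A\Gamma^*)=\Gamma\,\Phi_{f_i,{\bf S}_i\otimes I_\cE}(A)\,\Gamma^*,
\]
so $(id-\Phi_{f_i,{\bf S}_i\otimes I_\cH})$ passes through the map $X\mapsto \Gamma X\Gamma^*$. Iterating in each coordinate, and using that the maps $\Phi_{f_i,{\bf S}_i\otimes I_\cH}$ commute for different $i$ because the ${\bf S}_i$'s commute entrywise across blocks, one obtains
\[
{\bf \Delta}_{{\bf f},{\bf S}\otimes I_\cH}^{\bf p}(\Gamma\Gamma^*) = \Gamma\,{\bf \Delta}_{{\bf f},{\bf S}\otimes I_\cE}^{\bf p}(I_{\cN_J\otimes\cE})\,\Gamma^*,
\]
which is positive for every ${\bf 0}\le{\bf p}\le{\bf m}$ because ${\bf S}\otimes I_\cE\in \cV_{{\bf f},J}^{\bf m}(\cN_J\otimes\cE)\subset {\bf D_f^m}(\cN_J\otimes\cE)$.

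For $(ii)\Rightarrow(i)$, I would mirror the unconstrained construction of Theorem~5.2 in \cite{Po-Berezin-poly}, substituting the universal model ${\bf S}$ for ${\bf W}$. Set $D:={\bf \Delta}_{{\bf f},{\bf S}\otimes I_\cH}^{\bf m}(G)^{1/2}\in B(\cN_J\otimes\cH)$ (positive by hypothesis with ${\bf p}={\bf m}$) and $\cE:=\overline{D(\cN_J\otimes\cH)}$. The crucial ingredient is the Poisson-type reproducing identity
\[
G = \sum_{(\beta)\in \FF_{n_1}^+\times\cdots\times\FF_{n_k}^+} b^{(m_1)}_{1,\beta_1}\cdots b^{(m_k)}_{k,\beta_k}\,({\bf S}_{(\beta)}\otimes I_\cH)\,D^2\,({\bf S}_{(\beta)}^*\otimes I_\cH),
\]
with convergence in the strong operator topology. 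This follows from the one-coordinate telescoping formula $A=\sum_{\beta\in\FF_{n_i}^+} b^{(m_i)}_{i,\beta}\,({\bf S}_{i,\beta}\otimes I_\cH)(id-\Phi_{f_i,{\bf S}_i\otimes I_\cH})^{m_i}(A)({\bf S}_{i,\beta}^*\otimes I_\cH)$, valid for any bounded $A$ because ${\bf S}_i\otimes I_\cH$ is pure (Lemma~\ref{univ-mod-var}(i)), applied iteratively across $i=1,\ldots,k$; the intermediate positivity hypotheses ${\bf \Delta}_{{\bf f},{\bf S}\otimes I_\cH}^{\bf p}(G)\ge 0$ for ${\bf 0}<{\bf p}\le{\bf m}$ are precisely what is needed to legitimise each rearrangement and passage to the limit as one peels off one $(id-\Phi_{f_i,{\bf S}_i\otimes I_\cH})^{m_i}$ factor at a time.

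With the Poisson identity in hand, I would define a constrained Berezin-type kernel $K:\cN_J\otimes\cH\to \cN_J\otimes\cE$ by
\[
K\Phi := \sum_{(\beta)} \sqrt{b^{(m_1)}_{1,\beta_1}\cdots b^{(m_k)}_{k,\beta_k}}\;P_{\cN_J}(e_{(\beta)})\otimes D({\bf S}_{(\beta)}^*\otimes I_\cH)\Phi,
\]
modelled on the constrained Berezin kernel of Proposition~\ref{Ber-const} but with $D$ in place of ${\bf \Delta}_{{\bf f},{\bf T}}^{\bf m}(I)^{1/2}$. A direct computation of $\|K\Phi\|^2$ using the Poisson identity gives $K^*K=G$, so $\Gamma:=K^*$ satisfies $\Gamma\Gamma^*=G$; multi-analyticity of $\Gamma$ then follows from the same index-shifting computation that establishes the intertwining in Proposition~\ref{Ber-const}(ii). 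The principal technical obstacle is verifying that $K$ genuinely takes values in $\cN_J\otimes\cE$ rather than merely in $[\otimes_{i=1}^k F^2(H_{n_i})]\otimes\cE$, for otherwise one obtains only $\Gamma\Gamma^*\le G$. This constrained-setting subtlety is handled, as in the proof of Proposition~\ref{Ber-const}, by invoking $g({\bf S}_{i,j}\otimes I_\cH)=0$ for $g\in J$ (Lemma~\ref{univ-mod-var}(ii)) to force the $\cM_J$-components of $K\Phi$ to cancel term-by-term; once this is in place, the remainder of the argument is parallel to the polydomain case treated in \cite{Po-Berezin-poly}.
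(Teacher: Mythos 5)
Your direction $(i)\Rightarrow(ii)$ is exactly the paper's: conjugation by a multi-analytic $\Gamma$ commutes with each $\Phi_{f_i,{\bf S}_i\otimes I}$, so ${\bf \Delta}^{\bf p}(\Gamma\Gamma^*)=\Gamma\,{\bf \Delta}^{\bf p}(I)\,\Gamma^*\geq0$. The converse, however, follows a genuinely different route. The paper first establishes $G\geq 0$ by iterating the inequality $\Phi_{f_1,{\bf S}_1\otimes I}({\bf \Delta}^{\bf m'}(G))\leq {\bf \Delta}^{\bf m'}(G)$ with purity of ${\bf S}$, then defines an auxiliary $k$-tuple ${\bf X}=\{X_{i,j}\}$ on $\cG:=\overline{\text{\rm range}\,G^{1/2}}$ by $X_{i,j}^*(G^{1/2}x):=G^{1/2}({\bf S}_{i,j}^*\otimes I)x$, shows ${\bf X}$ is a \emph{pure} element of the variety $\cV_{{\bf f},J}^{\bf m}(\cG)$, and then takes $\Gamma:=G^{1/2}{\bf K_{f,X,{\it J}}^*}$; the identity $\Gamma\Gamma^*=G$ reduces to the already-established fact that the constrained Berezin kernel of a pure tuple in the variety is an isometry (Proposition \ref{Ber-const}(iii)). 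You skip the auxiliary tuple entirely and instead write down a modified Berezin-type kernel with $D={\bf \Delta}^{\bf m}_{{\bf f},{\bf S}\otimes I}(G)^{1/2}$ in place of the usual defect, deriving $K^*K=G$ from a Poisson-type reproducing identity. The two constructions produce essentially the same $\Gamma$ (up to a unitary identification of $\cE$), but they shift the technical burden differently: the paper's dilation-tuple route lets everything rest on established purity machinery, while your direct route requires one to \emph{independently prove} the reproducing identity
\[
G=\sum_{(\beta)} b^{(m_1)}_{1,\beta_1}\cdots b^{(m_k)}_{k,\beta_k}\,({\bf S}_{(\beta)}\otimes I)\,D^2\,({\bf S}_{(\beta)}^*\otimes I),
\]
and this is where your outline leaves a real gap. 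You assert the one-coordinate telescoping formula is ``valid for any bounded $A$ because ${\bf S}_i\otimes I$ is pure,'' but for $m_i\geq 2$ purity alone does not suffice: the partial sums $\sum_{p=0}^N\binom{p+m_i-1}{m_i-1}\Phi^p((id-\Phi)^{m_i}(A))$ differ from $A$ by terms of the form $p^d\,\Phi^p((id-\Phi)^d(A))$ with $1\leq d<m_i$, whose vanishing requires a polynomial-rate decay estimate (the result quoted after Lemma \ref{Psi-X}, itself drawn from \cite{Po-similarity-domains}) which in turn needs the positivity of $(id-\Phi)^{m_i}(A)$ and of the intermediate defects — the very hypotheses you list, but whose role here is more delicate than ``legitimising rearrangement.'' You would also need $G\geq 0$ (which your Poisson identity would deliver once proved, but which must be established before the intermediate positivity arguments close up), and you should state explicitly that the unconstrained kernel $\tilde K$ — without the $P_{\cN_J}$ projection — is what one first estimates, with $\text{\rm range}\,\tilde K\subseteq\cN_J\otimes\cE$ established afterwards via $g({\bf S}\otimes I)=0$; as written, the formula with $P_{\cN_J}(e_{(\beta)})$ inside the sum makes the norm computation invalid since those projected vectors are not mutually orthogonal. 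So: a workable alternative route, and a nice observation that the Berezin-kernel formula can be run with a general defect $D$ in place of ${\bf \Delta}^{\bf m}(I)^{1/2}$, but you owe a proof of the reproducing identity that invokes the decay lemma rather than purity alone.
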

\begin{proof}  Assume that item (i) holds. Then we have
$$
{\bf \Delta}_{{\bf f,S}\otimes I_\cH}^{\bf p}
(G)=(id-\Phi_{f_1,{\bf S}_1\otimes I_\cH})^{p_1}\cdots
 (id-\Phi_{f_k,{\bf S}_k\otimes I_\cH})^{p_k}(G)=\Gamma {\bf \Delta}_{{\bf f,S}\otimes I_\cE}^{\bf p}(I)\Gamma^*\geq 0
$$
for any ${\bf p}:=(p_1,\ldots, p_k)\in \ZZ_+^k$ such that ${\bf p}\leq {\bf m}$,
 ${\bf p}\neq 0$.

Now,  assume that (ii) holds. In particular, we have
$\Phi_{f_1,{\bf S}_1\otimes I_\cH}({\bf \Delta}_{{\bf f,S}\otimes I_\cH}^{\bf m'}(G))
\leq {\bf \Delta}_{{\bf f,S}\otimes I_\cH}^{\bf m'}(G)$, where ${\bf m}'
=(m_1-1,m_2,\ldots,m_k)$, which implies
 $\Phi_{f_1,{\bf S}_1\otimes I_\cH}^n({\bf \Delta}_{{\bf f,S}\otimes I_\cH}^{\bf m'}(G))
 \leq {\bf \Delta}_{{\bf f,S}\otimes I_\cH}^{\bf m'}(G)$ for any $n\in \NN$. Since
$ {\bf S}:=({\bf S}_1,\ldots, {\bf S}_k)$ is a pure $k$-tuple, we have
SOT-$\lim_{n\to\infty}\Phi_{f_1,{\bf S}_1
\otimes I_\cH}^n({\bf \Delta}_{{\bf f,S}\otimes I_\cH}^{\bf m'}(G))=0$. Consequently,
 ${\bf \Delta}_{{\bf f,S}\otimes I_\cH}^{\bf m'}(G)\geq 0$.
Continuing this process, we deduce that $G\geq 0$.

 Let $\cG:=\overline{\text{\rm range}\, G^{1/2}}$ and define
\begin{equation}\label{ai}
A_{i,j}(G^{1/2} x):=G^{1/2} ({\bf S}_{i,j}^*\otimes I_\cH)x,\qquad x\in
\cN_J\otimes \cH,
\end{equation}
 for any $i\in \{1,\ldots, k\}$ and $j\in \{1,\ldots, n_i\}$. Taking into account that  $\Phi_{f_i,{\bf S}_i\otimes I}(G)\leq G$, we have
\begin{equation*}
\begin{split}
\sum_{\alpha\in \FF_{n_i}^+, |\alpha|\geq 1}
 a_{i,\alpha}\|A_{i,{\tilde \alpha}}G^{1/2}x\|^2
=\left< \Phi_{f_i,{\bf S}_i\otimes I_\cH}(G)x,x\right>\leq \|G^{1/2} x\|^2
\end{split}
\end{equation*}
for any $x\in \cN_J\otimes \cH$, where $\widetilde \alpha=g_{j_p}^i\cdots g_{j_1}^i$ denotes the reverse of
$\alpha=g_{j_1}^i\cdots g_{j_p}^i\in \FF_{n_i}^+$.  Consequently,
$a_{i,g_j^i}\|A_{i,j}G^{1/2} x\|^2\leq \|G^{1/2} x\|^2$, for any $x\in
\cN_J\otimes \cH$.  Since $a_{i,g_j^i}\neq 0$ each  $A_{i,j}$ can be uniquely
be extended to a bounded operator (also denoted by $A_{i,j}$) on the
subspace $\cG$. Set $X_{i,j}:=A_{i,j}^*$ for  $i\in \{1,\ldots, k\}$, $j\in\{1,\ldots, n_i\}$.    An
approximation argument shows that $\Phi_{f_i,X_i}(I_\cG)\leq I_\cG$
and relation \eqref{ai} implies
\begin{equation}
\label{int**}
X_{i,j}^*(G^{1/2} x)=G^{1/2} ({\bf S}_{i,j}^*\otimes I_\cH)x,\qquad x\in
\cN_J\otimes \cH.
\end{equation}
   This implies
 $
 G^{1/2} {\bf \Delta_{f,X}^p}(I_\cM) G^{1/2}
 ={\bf \Delta}_{{\bf f,S}\otimes I_\cH}^{\bf p}(G)\geq 0
 $
for any ${\bf p}:=(p_1,\ldots, p_k)\in \ZZ_+^k$ such that ${\bf p}\leq {\bf m}$, ${\bf p}\neq 0$.
 Now, note that
\begin{equation*}
\begin{split}
\left< \Phi_{f_i, X_i}^n(I_\cG) Y^{1/2}x, G^{1/2} x\right> &= \left<
\Phi_{f_i,
{\bf S}_i\otimes I_\cH}^n(G)x,  x\right>
\leq \|G\| \left< \Phi_{f_i, {\bf S}_i\otimes I_\cH}^n(I)x,  x\right>
\end{split}
\end{equation*}
for any $ x\in
\cN_J\otimes \cH$ and $n\in \NN$. Since \
SOT-$\lim\limits_{n\to\infty}\Phi_{f_i,{\bf S}_i\otimes I_\cH}^n(I)=0$,
we have \
SOT-$\lim\limits_{m\to\infty}\Phi_{f_i,X_i}^n(I_\cG)=0$. Therefore,
${\bf X}:=(X_1,\ldots, X_k)$ is a pure $k$-tuple in the noncommutative variety
${\bf D_f^m}(\cM)$.  Due to the $F^\infty({\bf D_f^m})$--functional calculus, relation \eqref{int**} implies
$$
G^{1/2} g(X_{i,j})=g({\bf S}_{i,j})G^{1/2}=0, \qquad g\in J.
$$
  Consequently,
$g(X_{i,j})=0$ for any $g\in J$. This shows that ${\bf X}:=(X_1,\ldots, X_k)$ is a pure $k$-tuple in the noncommutative variety $\cV_{{\bf f},J}^{\bf m}(\cG)$.
According to Proposition \ref{Ber-const}, the noncommutative Berezin
kernel ${\bf K_{f,X, {\it J}}}:\cG\to \cN_J\otimes \cE$
 is an isometry with the property that
 $
  X_{i,j}{\bf K_{f,X,{\it J}}^*}={\bf K_{f,X,{\it J}}^*} ({\bf S}_{i,j}\otimes I_\cE)$.
 Set $\cE:=\overline{{\bf \Delta_{f,X}^m}(I_\cG)(\cG)}$ and define the bounded linear  operator  $\Gamma:=G^{1/2} {\bf K_{f,X,{\it J}}^*}: \cN_J\otimes \cE\to \cN_J\otimes
\cH$. Note that
\begin{equation*}
\begin{split}
\Gamma({\bf S}_{i,j}\otimes I_\cE)&=G^{1/2}{\bf K_{f,X,{\it J}}^*}
({\bf S}_{i,j}\otimes I_\cE)=G^{1/2}
X_{i,j} {\bf K_{f,X, {\it J}}^*}\\
&=({\bf S}_{i,j}\otimes I_\cH) G^{1/2} {\bf K_{f,X, {\it J}}^*}
 =({\bf S}_{i,j}\otimes I_\cH) \Gamma
\end{split}
\end{equation*}
for any $i\in\{1,\ldots, k\}$ and $j\in \{1,\ldots, n_i\}$, which proves that $\Gamma$ is a multi-analytic operator with respect to  the universal model ${\bf S}=\{{\bf S}_{i,j}\}$.  Note that $\Gamma \Gamma^*=G^{1/2}
{\bf K_{f,X, {\it J}}^*} {\bf  K_{f,X,{\it J}}} G^{1/2} =G$. The proof is complete.
\end{proof}

Following the classical case \cite{Be}, we say that $\cM\subset \cN_J\otimes \cH$ is
a Beurling type invariant subspace under the operators
 ${\bf S}_{i,j}\otimes I_\cH$ for  $i\in \{1,\ldots, k\}$, $j\in \{1,\ldots, n_i\}$,
  if there is an inner multi-analytic operator  with respect to ${\bf S}=\{{\bf S}_{i,j}\}$,
  $$\Psi:\cN_J\otimes
   \cE \to \cN_J\otimes \cH,$$
   such that
   $\cM=\Psi\left(\cN_J\otimes \cE\right)$.

\begin{corollary}\label{Beurling}
Let  $\cM\subset \cN_J\otimes \cH$
 be an invariant subspace under the operators
  ${\bf S}_{i,j}\otimes I_\cH$ for any $i\in \{1,\ldots, k\}$,
  $j\in \{1,\ldots, n_i\}$. Then $\cM$ is  Beurling type  invariant subspace if and only if
  $$
  {\bf \Delta_{f,S\otimes{\rm I_\cH}}^p}(P_\cM)\geq 0,\qquad \text{ for any }\ {\bf p} \in \ZZ_+^k,  {\bf p}\leq {\bf m},
   $$
  where $P_\cM$ is the orthogonal projection of the Hilbert space
    $ \cN_J\otimes \cH$ onto $\cM$.
 \end{corollary}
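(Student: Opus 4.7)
The plan is to deduce this corollary as a direct application of the factorization Theorem \ref{Beur-fact}, taking $G=P_\cM$. The positivity condition in the statement is precisely condition (ii) of that theorem, so the bulk of the work has already been done; what remains is to translate between a partial isometry factorization and a general positive factorization.

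For the forward direction, suppose $\cM=\Psi(\cN_J\otimes \cE)$ where $\Psi:\cN_J\otimes\cE\to \cN_J\otimes \cH$ is an inner (i.e.\ partially isometric) multi-analytic operator with respect to ${\bf S}$. Then $P_\cM=\Psi\Psi^*$, since the range projection of a partial isometry equals $\Psi\Psi^*$. Applying the implication (i)$\Rightarrow$(ii) of Theorem \ref{Beur-fact} with $G:=P_\cM$ and $\Gamma:=\Psi$ immediately yields ${\bf \Delta}_{{\bf f},{\bf S}\otimes I_\cH}^{\bf p}(P_\cM)\geq 0$ for all ${\bf 0}\leq {\bf p}\leq {\bf m}$.

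For the converse, I would apply the implication (ii)$\Rightarrow$(i) of Theorem \ref{Beur-fact} to the positive operator $G:=P_\cM$. This produces a Hilbert space $\cE$ and a multi-analytic operator $\Gamma:\cN_J\otimes\cE\to \cN_J\otimes \cH$ with respect to ${\bf S}$ such that $\Gamma\Gamma^*=P_\cM$. The key observation is that since $\Gamma\Gamma^*$ is an orthogonal projection, $\Gamma$ must be a partial isometry: indeed, the nonzero spectrum of $\Gamma\Gamma^*$ coincides with that of $\Gamma^*\Gamma$, so $\sigma(\Gamma^*\Gamma)\subseteq\{0,1\}$ and thus $\Gamma^*\Gamma$ is also a projection, which is equivalent to $\Gamma$ being a partial isometry. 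Consequently $\Gamma$ is inner multi-analytic, and its range equals $\operatorname{range}(\Gamma\Gamma^*)=\operatorname{range}(P_\cM)=\cM$, so $\cM=\Gamma(\cN_J\otimes\cE)$ is a Beurling type invariant subspace.

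Neither direction presents a real obstacle once Theorem \ref{Beur-fact} is in place; the only subtle point is the passage from an arbitrary factorization $P_\cM=\Gamma\Gamma^*$ to a partially isometric one, which is handled by the spectral argument above. Note that we do not need to invoke invariance of $\cM$ separately in the converse, since it is part of the hypothesis, and in any case it is automatic from $\cM=\Gamma(\cN_J\otimes\cE)$ with $\Gamma$ multi-analytic with respect to ${\bf S}$.
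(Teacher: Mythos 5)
Your proof is correct and follows the same route as the paper: the forward direction is the implication (i)$\Rightarrow$(ii) of Theorem \ref{Beur-fact} applied to $G=P_\cM=\Psi\Psi^*$, and the converse applies (ii)$\Rightarrow$(i) with $G=P_\cM$. The one place you are more careful than the paper is the spectral argument showing that $\Gamma\Gamma^*=P_\cM$ forces $\Gamma^*\Gamma$ (and hence $\Gamma$) to be a partial isometry with range exactly $\cM$; the paper states this outcome without comment, and your explicit justification via $\sigma(\Gamma^*\Gamma)\setminus\{0\}=\sigma(\Gamma\Gamma^*)\setminus\{0\}\subseteq\{1\}$ is exactly what is needed to close that step.
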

\begin{proof}
If
$M:\cN_J\otimes
\cE \to \cN_J\otimes \cH$ is a
 inner multi-analytic operator
 and
 $\cM=M\left(\cN_J\otimes \cE\right)$,
then $P_\cM=M M^*$. Taking into account
Lemma \ref{univ-mod-var}, we deduce that
$$
   {\bf \Delta_{f,S\otimes{\rm I_\cH}}^p}(P_\cM)
    =\Psi (P_{\bf C}\otimes I_\cE) \Psi^*\geq 0
   $$
   for any ${\bf p}:=(p_1,\ldots, p_k)\in \ZZ_+^k$ such that ${\bf p}\leq {\bf m}$.
   The converse is a consequence of Theorem \ref{Beur-fact}, when we take $G=P_\cM$.
 The proof is complete.
 \end{proof}

We remark that in the particular case when ${\bf m}=(1,\ldots,1)$, the condition in Corollary \ref{Beurling} is satisfied when ${\bf S}\otimes I_\cH|_\cM:=\{{\bf S}_{i,j}\otimes I_\cH|_\cM\}$ is  doubly commuting. The proof is very similar to that of  the corresponding result from \cite{Po-Berezin-poly}.

\begin{theorem}\label{dil3} Let ${\bf S}=\{{\bf S}_{i,j}\}$   be the universal model
associated with the abstract noncommutative variety $\cV_{\bf f, {\it J}}^{\bf m}$, where $J$ is a WOT-closed left ideal of $F^\infty({\bf D_f^m})$,
  and let ${\bf T}=\{{T}_{i,j}\}$ be  a
 pure element in the noncommutative variety $\cV_{{\bf f}, J}^{\bf m}(\cH)$. If
  $${\bf K_{f,T,{\it J}}}: \cH \to \cN_J \otimes  \overline{{\bf \Delta_{f,T}^m}(I)(\cH)}$$
  is the noncommutative constrained  Berezin kernel,
 then the  subspace ${\bf K_{f,T,{\it J}}}\cH$ is   co-invariant  under  each operator
${\bf S}_{i,j}\otimes  I_{\overline{{\bf \Delta_{f,T}^m}\cH}}$ for   any $i\in \{1,\ldots, k\}$,  $  j\in \{1,\ldots, n_i\}$. If $1\in \cN_J$, then
    the dilation
  provided by   the relation
    $$ { \bf T}_{(\alpha)}= {\bf K_{f,T,{\it J}}^*}({\bf S}_{(\alpha)}\otimes  I_{\overline{{\bf\Delta_{f,T}^m}(I)(\cH)}})  {\bf K_{f,T,{\it J}}}, \qquad (\alpha) \in \FF_{n_1}^+\times \cdots \times \FF_{n_k}^+,
    $$
  is minimal.
If, in addition,  \ ${\bf f}= {\bf q}=(q_1,\ldots, q_k)$ is a $k$-tuple of   positive regular  noncommutative polynomials and
\begin{equation*}
\overline{\text{\rm span}}\,\{{\bf S}_{(\alpha)}  {\bf S}_{(\beta)}^*) :\
 (\alpha), (\beta)\in \FF_{n_1}^+\times \cdots \times \FF_{n_k}^+\}=C^*({\bf S}_{i,j}),
\end{equation*}
then  the   minimal dilation  of ${\bf T}$
 is unique up to an isomorphism.
\end{theorem}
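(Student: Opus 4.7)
The plan is to verify each assertion of Theorem~\ref{dil3} in sequence using Proposition~\ref{Ber-const} and the structural results established earlier.

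Co-invariance and the dilation formula are immediate consequences of Proposition~\ref{Ber-const}. The intertwining identity ${\bf K_{f,T,{\it J}}}\,T_{i,j}^{*}=({\bf S}_{i,j}^{*}\otimes I)\,{\bf K_{f,T,{\it J}}}$ in part (ii) shows directly that ${\bf K_{f,T,{\it J}}}\cH$ is co-invariant under each ${\bf S}_{i,j}\otimes I$. Since ${\bf T}$ is pure, part (iii) gives that ${\bf K_{f,T,{\it J}}}$ is an isometry; taking adjoints in the intertwining and using ${\bf K_{f,T,{\it J}}^{*}}{\bf K_{f,T,{\it J}}}=I_\cH$, one iterates to obtain $T_{(\alpha)}={\bf K_{f,T,{\it J}}^{*}}({\bf S}_{(\alpha)}\otimes I){\bf K_{f,T,{\it J}}}$ for all $(\alpha)$.

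For minimality, set $\cK:=\overline{{\bf \Delta_{f,T}^m}(I)(\cH)}$ and $\cM:={\bf K_{f,T,{\it J}}}\cH$. From the explicit formula for the Berezin kernel together with the hypothesis $1\in\cN_J$, the term corresponding to $\beta_1=g_0^1,\dots,\beta_k=g_0^k$ gives $({\bf P}_\CC\otimes I_\cK){\bf K_{f,T,{\it J}}}h=1\otimes{\bf \Delta_{f,T}^m}(I)^{1/2}h$, so that $({\bf P}_\CC\otimes I_\cK)\cM$ is dense in $1\otimes\cK$. The argument from Theorem~\ref{cyclic} then applies essentially verbatim: because $\cM$ is $({\bf S}_{i,j}^{*}\otimes I)$-invariant and Lemma~\ref{univ-mod-var}(iii) yields $(id-\Phi_{f_1,{\bf S}_1\otimes I})^{m_1}\cdots(id-\Phi_{f_k,{\bf S}_k\otimes I})^{m_k}(I)={\bf P}_\CC\otimes I$, one obtains $1\otimes\cK\subseteq\overline{\text{\rm span}}\{({\bf S}_{(\alpha)}\otimes I)\cM\}$; sweeping $1\otimes\cK$ by the operators ${\bf S}_{(\alpha)}\otimes I$ then exhausts a dense subset of $\cN_J\otimes\cK$, which is the minimality claim.

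For uniqueness, take ${\bf f}={\bf q}$ and suppose ${\bf T}$ arises also as the compression of $\{{\bf S}_{i,j}\otimes I_{\cE'}\}$ via an isometry $V\colon\cH\to\cN_J\otimes\cE'$ with co-invariant range and satisfying the minimality condition $\overline{\text{\rm span}}\{({\bf S}_{(\alpha)}\otimes I_{\cE'})V\cH\}=\cN_J\otimes\cE'$. Define
$$U_0\bigl(({\bf S}_{(\alpha)}\otimes I_\cK){\bf K_{f,T,{\it J}}}h\bigr):=({\bf S}_{(\alpha)}\otimes I_{\cE'})Vh$$
on the dense span produced above, and show $U_0$ extends to an isometry by comparing inner products. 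For the first dilation, co-invariance gives
$$\bigl\langle({\bf S}_{(\alpha_j)}\otimes I){\bf K_{f,T,{\it J}}}h_j,({\bf S}_{(\alpha_i)}\otimes I){\bf K_{f,T,{\it J}}}h_i\bigr\rangle=\bigl\langle{\bf K_{f,T,{\it J}}^{*}}\bigl({\bf S}_{(\alpha_i)}^{*}{\bf S}_{(\alpha_j)}\otimes I\bigr){\bf K_{f,T,{\it J}}}h_j,h_i\bigr\rangle,$$
and analogously for the $V$-dilation. By the density hypothesis $\overline{\text{\rm span}}\{{\bf S}_{(\gamma)}{\bf S}_{(\delta)}^{*}\}=C^{*}({\bf S}_{i,j})$, the element ${\bf S}_{(\alpha_i)}^{*}{\bf S}_{(\alpha_j)}$ can be approximated in norm by linear combinations of products ${\bf S}_{(\gamma)}{\bf S}_{(\delta)}^{*}$; for each such product the compression $V^{*}({\bf S}_{(\gamma)}{\bf S}_{(\delta)}^{*}\otimes I_{\cE'})V$ equals $T_{(\gamma)}T_{(\delta)}^{*}$ (using co-invariance exactly as in the proof of Proposition~\ref{vN2-variety}), and equally ${\bf K_{f,T,{\it J}}^{*}}({\bf S}_{(\gamma)}{\bf S}_{(\delta)}^{*}\otimes I_\cK){\bf K_{f,T,{\it J}}}={\bf B_{T,{\it J}}}[{\bf S}_{(\gamma)}{\bf S}_{(\delta)}^{*}]=T_{(\gamma)}T_{(\delta)}^{*}$. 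Passing to the norm limit, both compressions evaluate ${\bf S}_{(\alpha_i)}^{*}{\bf S}_{(\alpha_j)}$ to the same operator on $\cH$, so the two inner products agree. Hence $U_0$ extends to a unitary $U\colon\cN_J\otimes\cK\to\cN_J\otimes\cE'$ intertwining the two dilations and carrying ${\bf K_{f,T,{\it J}}}\cH$ onto $V\cH$; Corollary~\ref{eq-mult} then forces $\dim\cK=\dim\cE'$, proving uniqueness. The main technical obstacle is exactly this transfer step—showing that the compression map $A\mapsto V^{*}(A\otimes I_{\cE'})V$ depends only on ${\bf T}$ for every $A\in C^{*}({\bf S}_{i,j})$—for which the density hypothesis is indispensable, since without it the compressions of general operators in $C^{*}({\bf S}_{i,j})$ need not be determined by the values $T_{(\gamma)}T_{(\delta)}^{*}$.
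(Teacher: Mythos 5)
Your argument is correct, and for the co-invariance, dilation formula, and minimality it is the same as the paper's: Proposition \ref{Ber-const} gives the intertwining and the isometry property, and the observation that $({\bf P}_\CC\otimes I)\,{\bf K_{f,T,{\it J}}}\cH$ is dense in the defect space feeds into Theorem \ref{cyclic} exactly as in the paper.

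Where you diverge is the uniqueness step. The paper first extends the map ${\bf S}_{(\alpha)}{\bf S}_{(\beta)}^*\mapsto T_{(\alpha)}T_{(\beta)}^*$ to a unital completely positive map $\Psi$ on $C^*({\bf S}_{i,j})$ (possible precisely because of the density hypothesis), observes that $X\mapsto X\otimes I_\cD$ and $X\mapsto X\otimes I_{\cD'}$, compressed through the two co-invariant subspaces, are both minimal Stinespring dilations of $\Psi$, and then quotes uniqueness of the minimal Stinespring dilation to produce the intertwining unitary $U$ with $U{\bf K_{q,T,{\it J}}}=V$; irreducibility of $C^*({\bf S}_{i,j})$ (Theorem \ref{compact}) then forces $U=I\otimes W$. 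You instead build the unitary by hand, defining it on the spanning vectors $({\bf S}_{(\alpha)}\otimes I){\bf K_{f,T,{\it J}}}h$ and checking isometry by showing that the two compressions $A\mapsto {\bf K_{f,T,{\it J}}^*}(A\otimes I){\bf K_{f,T,{\it J}}}$ and $A\mapsto V^*(A\otimes I)V$ agree on all of $C^*({\bf S}_{i,j})$ — first on the products ${\bf S}_{(\gamma)}{\bf S}_{(\delta)}^*$ via co-invariance, then by norm density and contractivity, in particular at ${\bf S}_{(\alpha_i)}^*{\bf S}_{(\alpha_j)}$. This is in effect an unwinding of the Stinespring uniqueness argument in this concrete situation, so the mathematical mechanism is the same (the density hypothesis does the identical work, and you close with Corollary \ref{eq-mult}, i.e. the same irreducibility input); what your version buys is self-containedness and a transparent identification of why the density hypothesis is indispensable, at the cost of redoing a standard abstract lemma. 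Both routes are sound; note only that the "isomorphism" conclusion also requires, as you implicitly use, that the intertwining extends from the spanning set to all of ${\bf S}_{i,j}\otimes I$, which holds because ${\bf S}_{i,j}{\bf S}_{(\alpha)}$ is again of the form ${\bf S}_{(\beta)}$ (the tuples ${\bf S}_1,\ldots,{\bf S}_k$ commute with each other), a point worth making explicit.
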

 \begin{proof} According to Proposition \ref{Ber-const},
  $${\bf K_{f,T,{\it J}}} { T}^*_{i,j}= ({\bf S}_{i,j}^*\otimes I)  {\bf K_{f,T, {\it J}}},\qquad i\in \{1,\ldots, k\}, j\in \{1,\ldots, n_i\},
   $$
   and the noncommutative Berezin kernel ${\bf K_{f,T, {\it J}}}$  is an isometry.
Due to  the
definition of the constrained Berezin kernel ${\bf K_{f,T, {\it J}}}$, we obtain
$
 ({\bf P}_\CC \otimes I_\cD) K_{\bf f,T, {\it J}}\cH= \cD,
$
where $\cD:=\overline{{\bf\Delta_{f,T}^m}(I)(\cH)}$.
   Now, using
Theorem \ref{cyclic} in the particular case when $\cM:=K_{\bf f,T, {\it J}}\cH$
and $\cE:=\cD$, we deduce that the subspace
${\bf K_{f,T, {\it J}}}\cH$ is cyclic for the operators  ${\bf S}_{i,j}\otimes I_\cE$, where $i\in \{1,\ldots, k\}$ and $j\in \{1,\ldots, n_i\}$. This implies
  the minimality of the  dilation, i.e.,
\begin{equation}\label{minimal1}
\cN_J\otimes \cD=\bigvee_{(\alpha)\in \FF_{n_1}^+\times \cdots \times \FF_{n_k}^+}
({\bf S}_{(\alpha)}\otimes I_{\cD}) {\bf K_{f,T,{\it J}}}\cH.
\end{equation}
Now, assume that
${\bf f}= {\bf q}=(q_1,\ldots, q_k)$ is a $k$-tuple of   positive
 regular  noncommutative polynomials  and that  the relation in the theorem  holds. Consider
another minimal   dilation of ${\bf T}$, i.e.,
\begin{equation}
\label{another} { \bf T}_{(\alpha)}=V^* ({ \bf S}_{(\alpha)}\otimes I_{\cD'})V, \qquad (\alpha) \in \FF_{n_1}^+\times \cdots \times \FF_{n_k}^+,
\end{equation}
where $V:\cH\to \cN_J\otimes \cD'$ is an isometry, $V\cH$ is
co-invariant under each operator ${\bf S}_{i,j}\otimes I_{\cD'}$, and
\begin{equation}\label{minimal2}
\cN_J\otimes \cD'=\bigvee_{(\alpha)\in \FF_{n_1}^+\times \cdots \times \FF_{n_k}^+} ({\bf S}_{(\alpha)}\otimes
I_{\cD'}) V\cH.
\end{equation}
According  to Theorem \ref{vN1-variety}, there exists a unique unital completely positive
linear map
${\bf \Psi}: C^*({\bf S}_{i,j})\to B(\cH)$
 with the property that
$$
{\bf \Psi}\left( {\bf S}_{(\alpha)} {\bf S}_{(\beta)}^*\right)= {\bf T}_{(\alpha)} {\bf T}_{(\beta)}^*, \qquad (\alpha), (\beta)\in \FF_{n_1}^+\times \cdots \times \FF_{n_k}^+.
$$
   Now, we consider the $*$-representations
$
\pi_1:C^*({\bf S}_{i,j})\to B(\cN_J\otimes \cD)$, $ \pi_1(X)
:= X\otimes I_{\cD},
$
  and
  $
\pi_2:C^*({\bf S}_{i,j})\to B(\cN_J)\otimes \cD')$, $ \pi_2(X):=
X\otimes I_{\cD'}.
$
Since the   subspaces ${\bf K_{q,T,{\it J}}}\cH$  and $V\cH$  are     co-invariant  for each operator
${\bf S}_{i,j}\otimes  I_{\cD}$,
 relation    \eqref{another}  implies
$$
{\bf \Psi}(X)={\bf K_{q,T, {\it J}}^*}\pi_1(X){\bf K_{q,T, {\it J}}}
=V^*\pi_2(X)V,\qquad
\  X\in C^*({\bf S}_{i,j}).
$$
Relations \eqref{minimal1} and
\eqref{minimal2} show  that $\pi_1$
and $\pi_2$ are  minimal Stinespring dilations of the completely
 positive linear map
 ${\bf \Psi}$.  Since
these representations are unique up to an isomorphism, there exists a unitary operator $U:\cN_J\otimes
\cD\to \cN_J\otimes \cD'$ such that
 $
U({\bf S}_{i,j}\otimes I_{\cD})=({\bf S}_{i,j}\otimes
I_{\cD'})U
$
for $i\in \{1,\ldots, k\}, j\in \{1,\ldots, n_i\}$,
and $U{\bf K_{q,T, {\it J}}}=V$.  Taking into account that $U$ is unitary, we deduce that
$
U({\bf S}_{i,j}^*\otimes I_{\cD})=({\bf S}_{i,j}^*\otimes
I_{\cD'})U.
$
Since the $C^*$-algebra $C^*({\bf S}_{i,j})$ is irreducible, due to Theorem \ref{compact},  we must have $U=I\otimes W$, where
$W\in B(\cD,\cD')$ is a unitary operator.
This implies that $\dim \cD=\dim\cD'$ and
$U{\bf K_{q,T, {\it J}}}\cH=V\cH$. Consequently, the two dilations are
unitarily equivalent.
 The proof is complete.
\end{proof}

\begin{proposition}\label{rank-n} Let ${\bf S}=\{{\bf S}_{i,j}\} $   be the universal model
associated with the abstract noncommutative variety $\cV_{\bf q, {\it J}}^{\bf m}$, where $J$ is a WOT-closed left ideal of $F^\infty({\bf D_q^m})$ such that $1\in \cN_J$,
  and   ${\bf q}=(q_1,\ldots, q_k)$ is  a $k$-tuple of   positive regular  noncommutative polynomials such that
  \begin{equation*}
\overline{\text{\rm span}}\,\{{\bf S}_{(\alpha)}  {\bf S}_{(\beta)}^*) :\
 (\alpha), (\beta)\in \FF_{n_1}^+\times \cdots \times \FF_{n_k}^+\}=C^*({\bf S}_{i,j}).
\end{equation*}
A
 pure element  ${\bf T}=\{{T}_{i,j}\}\in {\bf {\cV}_q^m}(\cH)$
    has
    $$\rank {\bf \Delta_{q,T}^m}(I)=n,\qquad  n=1,2,\ldots, \infty,
     $$
     if and only if it is
unitarily equivalent to one obtained by compressing $\{{\bf S}_{i,j}\otimes
I_{\CC^n}\}$ to a co-invariant subspace
$\cM\subset  \cN_J\otimes \CC^n$  under each operator
 ${\bf S}_{i,j}\otimes
I_{\CC^n}$
 with the property that $\dim [({\bf P}_\CC\otimes I_{\CC^n})\cM]=n$,
where ${\bf P}_\CC$ is the orthogonal projection from $\cN_J $
onto  $\CC 1$.
\end{proposition}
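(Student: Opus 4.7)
The plan is to apply Theorem~\ref{dil3} for the forward implication and to use an explicit compression argument for the converse. Assume first that ${\bf T}$ is pure in the noncommutative variety with $\rank {\bf \Delta_{q,T}^m}(I)=n$; then the defect space $\cD:=\overline{{\bf \Delta_{q,T}^m}(I)(\cH)}$ has dimension $n$ and may be identified with $\CC^n$. By Proposition~\ref{Ber-const}, the constrained Berezin kernel ${\bf K_{q,T,{\it J}}}\colon\cH\to\cN_J\otimes\cD$ is an isometry intertwining $T_{i,j}^*$ with ${\bf S}_{i,j}^*\otimes I_\cD$. Consequently $\cM:={\bf K_{q,T,{\it J}}}\cH$ is co-invariant under each ${\bf S}_{i,j}\otimes I_{\CC^n}$, and ${\bf T}$ is unitarily equivalent via ${\bf K_{q,T,{\it J}}}$ to $\{P_\cM({\bf S}_{i,j}\otimes I_{\CC^n})|_\cM\}$. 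The equality $\dim[({\bf P}_\CC\otimes I_{\CC^n})\cM]=n$ follows from the identity $({\bf P}_\CC\otimes I_\cD){\bf K_{q,T,{\it J}}}\cH=\cD$ recorded in the proof of Theorem~\ref{dil3}.

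For the converse, given such a co-invariant subspace $\cM\subset\cN_J\otimes\CC^n$, let $V\colon\cH\to\cN_J\otimes\CC^n$ be an isometric embedding with $V\cH=\cM$, and set $T_{i,j}:=V^*({\bf S}_{i,j}\otimes I)V$. Write ${\bf A}:={\bf S}\otimes I_{\CC^n}$, which by Lemma~\ref{univ-mod-var} is pure in the variety. The co-invariance relation $A_{i,\alpha}^*V\cH\subseteq V\cH$ rewrites as $VV^*A_{i,\alpha}^*V=A_{i,\alpha}^*V$; taking adjoints gives $V^*A_{i,\alpha}VV^*=V^*A_{i,\alpha}$. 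These two identities combine to yield both $T_{i,\alpha}V^*=V^*A_{i,\alpha}$ and $VT_{i,\alpha}^*=A_{i,\alpha}^*V$, from which a short calculation produces the compression identity
\begin{equation*}
\Phi_{q_i,T_i}(V^*BV)=V^*\Phi_{q_i,A_i}(B)V, \qquad B\in B(\cN_J\otimes\CC^n).
\end{equation*}
Iterating this identity (and using $V^*IV=I_\cH$) one obtains ${\bf \Delta_{q,T}^p}(I_\cH)=V^*{\bf \Delta_{q,A}^p}(I)V\geq 0$ for all ${\bf 0}\leq{\bf p}\leq{\bf m}$, so ${\bf T}\in{\bf D_q^m}(\cH)$, and $\Phi_{q_i,T_i}^{k}(I)=V^*\Phi_{q_i,A_i}^{k}(I)V\to 0$ strongly as $k\to\infty$, so ${\bf T}$ is pure. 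The variety condition $g({\bf T})=0$ for $g\in J$ follows from $g({\bf T})^*=V^*g({\bf A})^*V$ (the polynomial case obtained by cancelling intermediate $VV^*$ factors, extended to general $g\in J$ via the $F^\infty({\bf D_q^m})$-functional calculus for pure tuples) together with $g({\bf A})=0$ from Lemma~\ref{univ-mod-var}.

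Finally, Lemma~\ref{univ-mod-var}(iii) gives ${\bf \Delta_{q,A}^m}(I)={\bf P}_\CC^{\cN_J}\otimes I_{\CC^n}$, so
\begin{equation*}
{\bf \Delta_{q,T}^m}(I_\cH)=V^*({\bf P}_\CC^{\cN_J}\otimes I_{\CC^n})V=[({\bf P}_\CC\otimes I_{\CC^n})V]^*[({\bf P}_\CC\otimes I_{\CC^n})V],
\end{equation*}
whose rank equals $\dim[({\bf P}_\CC\otimes I_{\CC^n})V\cH]=\dim[({\bf P}_\CC\otimes I_{\CC^n})\cM]=n$. The main technical step is the compression identity above; once the intertwining relations $T_{i,\alpha}V^*=V^*A_{i,\alpha}$ and $VT_{i,\alpha}^*=A_{i,\alpha}^*V$ are extracted from the co-invariance of $\cM$, the defect iteration and the rank computation reduce to routine calculations.
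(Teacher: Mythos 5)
Your proof is correct, and while the forward implication follows the paper (both deduce it from Theorem \ref{dil3}, using that the constrained Berezin kernel is an isometry intertwining ${\bf T}^*$ with ${\bf S}^*\otimes I_\cD$ and that $({\bf P}_\CC\otimes I_\cD){\bf K_{q,T,{\it J}}}\cH$ spans $\cD$), your converse takes a genuinely different and more uniform route. The paper first notes the compression is "clearly" pure and in the variety, writes ${\bf \Delta_{q,T}^m}(I)=P_\cM[{\bf P}_\CC\otimes I_{\CC^n}]|_\cM$, and then splits into two cases: for $n<\infty$ it argues $({\bf P}_\CC\otimes I)\cM=\CC^n$ forces $\cM^\perp\cap\CC^n=\{0\}$, so $P_\cM$ is injective on $\CC^n$ and the rank is $n$; for $n=\infty$ it invokes Theorem \ref{cyclic} to realize the minimal dilation space as $\cN_J\otimes\cE$ with $\cE=({\bf P}_\CC\otimes I)\cM$ and then appeals to the uniqueness of the minimal dilation in Theorem \ref{dil3} to conclude $\dim\overline{{\bf \Delta_{q,T}^m}(I)\cH}=\dim\cE=\infty$. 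You instead extract the intertwining relations $T_{(\alpha)}V^*=V^*A_{(\alpha)}$ from co-invariance, prove the compression identity $\Phi_{q_i,T_i}(V^*BV)=V^*\Phi_{q_i,A_i}(B)V$, and thereby verify explicitly (rather than by assertion) that the compression lies in ${\bf D_q^m}(\cH)$, is pure, and annihilates $J$ via the $F^\infty$-functional calculus; your rank computation then follows from the single factorization ${\bf \Delta_{q,T}^m}(I)=X^*X$ with $X=({\bf P}_\CC\otimes I_{\CC^n})V$, so $\rank{\bf \Delta_{q,T}^m}(I)=\dim\overline{({\bf P}_\CC\otimes I_{\CC^n})\cM}=n$, handling the finite and infinite cases simultaneously and without the cyclicity/uniqueness-of-dilation machinery. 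What your approach buys is a self-contained, case-free argument resting only on Lemma \ref{univ-mod-var}; what it omits (harmlessly, since the same relations give it) is an explicit word on why the entries of $T_s$ and $T_t$ commute for $s\neq t$ — note that $T_{i,j}T_{s,t}=V^*A_{i,j}A_{s,t}V$ by the same cancellation of $VV^*$, which you may want to record, though the paper likewise leaves this implicit.
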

\begin{proof}
  Note that the direct implication  is a consequence
of Theorem \ref{dil3}. We prove the converse.  Assume that
$$
{ \bf T}_{(\alpha)}=P_\cH ({ \bf S}_{(\alpha)}\otimes I_{\CC^n})|_\cH,\qquad  (\alpha) \in \FF_{n_1}^+\times \cdots \times \FF_{n_k}^+
$$
where $\cH\subset \cN_J \otimes \CC^n$ is a co-invariant subspace
under each operator
${\bf S}_{i,j}\otimes  I_{\CC^n}$  such
that $\dim ({\bf P}_\CC\otimes I_{\CC^n})\cH=n$.  It is clear that ${\bf T}$  is  a pure element in the noncommutative variety ${\bf {\cV}_q^m}(\cH)$. First, we consider   the case when
$n<\infty$. Since $({\bf P}_\CC\otimes I_{\CC^n})\cH\subseteq \CC^n$ and $\dim ({\bf P}_\CC\otimes I_{\CC^n})\cH=n$, we
must have $({\bf P}_\CC\otimes I_{\CC^n})\cH=\CC^n$. The later
condition is equivalent  to the equality
 $
  \cH^\perp\cap \CC^n=\{0\}.
$
Since
 ${\bf \Delta_{q,S}^m}(I)={\bf P}_\CC$,
  we have
 $
 {\bf \Delta_{q,T}^m}(I)= P_\cH \left[  {\bf P}_\CC \otimes I_{\CC^n}
\right]|_\cH= P_\cH \CC^n.
 $
Consequently,    $\rank  {\bf \Delta_{q,T}^m}(I)=\dim P_\cH \CC^n$.
 If we assume that $\rank{\bf \Delta_{q,T}^m}(I)<n$, then there exists  $h\in
\CC^n$, $h\neq 0$, with $P_\cH h=0$, which contradicts the relation
$\cH^\perp\cap \CC^n=\{0\}$. Therefore, we must have $\rank {\bf \Delta_{q,T}^m}(I)=n$.

Now, assume that  $n=\infty$. According to Theorem
\ref{cyclic} and its proof,   we have
$$
\cN_J\otimes \cE=\bigvee_{(\alpha)\in \FF_{n_1}^+\times \cdots \times \FF_{n_k}^+} ({\bf S}_{(\alpha)}\otimes
I_{\CC^n}) \cH
$$
where $\cE:=({\bf P}_\CC\otimes I_{\CC^n})\cH$.  Since $\cN_J\otimes \cE$  is reducing for each operator
${\bf S}_{i,j}\otimes I_{\CC^m}$,   we deduce that
$
{ \bf T}_{(\alpha)}=P_\cH ({ \bf S}_{(\alpha)}\otimes I_\cE)|_\cH,$ for all
$ (\alpha) \in \FF_{n_1}^+\times \cdots \times \FF_{n_k}^+.
$
Due to  Theorem \ref{dil3},  the  minimal   dilation of
${\bf T}$ is unique. Consequently,  we have
$\dim \overline{ {\bf \Delta_{q,T}^m(I)}\cH}=\dim\cE=\infty.
$
This completes the proof.
\end{proof}

In what follows, we
characterize   the pure  elements of  rank one in the noncommutative variety $\cV_{\bf q, {\it J}}^{\bf m} (\cH)$  and obtain a classification result.

\begin{corollary}\label{rank1} Under the hypothesis  of Proposition \ref{rank-n}, the following statements hold.
 \begin{enumerate}
 \item[(i)]
If $\cM\subset \cN_J$ is  a co-invariant  subspace under each operator  ${\bf S}_{i,j}$,  then
$
{\bf T}:= \{P_\cM {\bf S}_{i,j}|_\cM\}
$
is a pure  element   in the  noncommutative variety $\cV_{\bf q, {\it J}}^{\bf m}(\cM)$  and
$\rank {\bf \Delta_{q,T}^m}=1$.
\item[(ii)]
If $\cM'$ is another co-invariant subspace under   each operator ${\bf S}_{i,j}$, which gives rise to    ${\bf T}'$, then ${\bf T}$
and ${\bf T}'$ are unitarily equivalent if and only if $\cM=\cM'$.
\end{enumerate}
\end{corollary}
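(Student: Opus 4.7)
The plan is to obtain part (i) as a direct application of Proposition \ref{rank-n} with $n=1$, and to deduce part (ii) from the uniqueness of the minimal dilation established in Theorem \ref{dil3} together with the irreducibility of $C^*({\bf S}_{i,j})$ given by Theorem \ref{compact}.

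For part (i), identify $\cN_J$ with $\cN_J\otimes\CC$ and view $\cM$ as a co-invariant subspace under each ${\bf S}_{i,j}\otimes I_\CC$. To invoke Proposition \ref{rank-n} with $n=1$, I only need to verify that $\dim[({\bf P}_\CC\otimes I_\CC)\cM]=1$ when $\cM\neq\{0\}$. For this, pick $h\in\cM\setminus\{0\}$, expand $h=\sum_{(\beta)} c_{(\beta)}\,e_{(\beta)}$ in the standard basis of $\otimes_{i=1}^k F^2(H_{n_i})$, and choose any multi-index $(\alpha)$ with $c_{(\alpha)}\neq 0$. Since $\cN_J$ is invariant under each ${\bf W}_{i,j}^*$ and ${\bf S}_{i,j}^*={\bf W}_{i,j}^*|_{\cN_J}$, the same computation used in the proof of Theorem \ref{compact} yields
$$
{\bf P}_\CC\,{\bf S}_{1,\alpha_1}^*\cdots {\bf S}_{k,\alpha_k}^* h=\frac{c_{(\alpha)}}{\sqrt{b_{1,\alpha_1}^{(m_1)}}\cdots\sqrt{b_{k,\alpha_k}^{(m_k)}}}\neq 0.
$$
Since $\cM$ is co-invariant, ${\bf S}_{1,\alpha_1}^*\cdots{\bf S}_{k,\alpha_k}^* h\in\cM$, hence ${\bf P}_\CC \cM\neq\{0\}$; being contained in $\CC 1$, it must be one-dimensional. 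The converse implication in Proposition \ref{rank-n} then produces that ${\bf T}=\{P_\cM {\bf S}_{i,j}|_\cM\}$ is pure in $\cV_{\bf q,{\it J}}^{\bf m}(\cM)$ and $\rank {\bf \Delta_{q,T}^m}(I)=1$.

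For part (ii), one direction is trivial: if $\cM=\cM'$ then ${\bf T}={\bf T}'$. For the converse, assume $U:\cM\to\cM'$ is unitary with $UT_{i,j}=T'_{i,j}U$. The inclusion $\iota_\cM:\cM\hookrightarrow\cN_J$ realizes ${\bf T}$ as $T_{(\alpha)}=\iota_\cM^*({\bf S}_{(\alpha)}\otimes I_\CC)\iota_\cM$, and this dilation is minimal by Theorem \ref{cyclic} applied with $\cE={\bf P}_\CC\cM=\CC$ (established in part (i)). Likewise, $V:=\iota_{\cM'}\circ U:\cM\to\cN_J$ is an isometry whose range $\cM'$ is co-invariant under ${\bf S}_{i,j}$ and which satisfies $V T_{i,j}=({\bf S}_{i,j}\otimes I_\CC)^* $-compatibly, namely $V^*({\bf S}_{(\alpha)}\otimes I_\CC)V=T_{(\alpha)}$ (a direct unwinding using $UT_{i,j}=T'_{i,j}U$ and $T'_{i,j}=P_{\cM'}{\bf S}_{i,j}|_{\cM'}$), and this dilation is also minimal by Theorem \ref{cyclic}.

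Now invoke the uniqueness part of Theorem \ref{dil3}, whose hypotheses are precisely those of Proposition \ref{rank-n}: there is a unitary $W:\cN_J\to\cN_J$ with $W{\bf S}_{i,j}={\bf S}_{i,j}W$ for all $i,j$ and $W\circ\iota_\cM=V=\iota_{\cM'}\circ U$. The proof of Theorem \ref{dil3} actually gives $W=I_{\cN_J}\otimes w$ with $w:\CC\to\CC$ unitary, so $W=\lambda I_{\cN_J}$ for some scalar $\lambda$ of modulus one; alternatively, one invokes the irreducibility of $C^*({\bf S}_{i,j})$ from Theorem \ref{compact} together with $W^*{\bf S}_{i,j}W={\bf S}_{i,j}$. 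In either case, for every $h\in\cM$ one has $Uh=\lambda h$ as vectors in $\cN_J$, and consequently $\cM'=U(\cM)=\lambda\,\cM=\cM$, completing the proof.

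The main obstacle is the bookkeeping in part (ii) confirming that $V=\iota_{\cM'}\circ U$ is a bona fide minimal dilation of the same tuple ${\bf T}$ (not of ${\bf T}'$); once this is in place, the uniqueness of the minimal dilation plus the scalar-valued commutant forced by irreducibility immediately identifies $\cM$ and $\cM'$ as subspaces of $\cN_J$.
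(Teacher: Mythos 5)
Your proposal is correct. Part (ii) is essentially the paper's own argument: the paper likewise realizes $\cM$ and $\cM'$ as co-invariant subspaces giving minimal dilations of the same tuple, invokes the uniqueness of the minimal dilation from Theorem \ref{dil3} to produce a unitary $\Lambda$ on $\cN_J$ commuting with each ${\bf S}_{i,j}$ (hence, by unitarity, with each ${\bf S}_{i,j}^*$), and uses the irreducibility of $C^*({\bf S}_{i,j})$ from Theorem \ref{compact} to force $\Lambda$ to be a scalar, so $\cM'=\Lambda\cM=\cM$; your bookkeeping with $V=\iota_{\cM'}\circ U$ just makes explicit what the paper leaves as ``as in the proof of Theorem \ref{dil3}.'' Part (i) is where you deviate mildly: the paper argues directly that ${\bf \Delta_{q,T}^m}(I)=P_\cM{\bf P}_\CC|_\cM$, so $\rank{\bf \Delta_{q,T}^m}(I)\le 1$, and then gets $\rank\ge 1$ from the purity of ${\bf T}$ (inherited from the purity of ${\bf S}$ via co-invariance), whereas you first prove $\dim({\bf P}_\CC\otimes I_\CC)\cM=1$ by the coefficient-extraction computation from Theorem \ref{compact} and then feed this into the converse half of Proposition \ref{rank-n}. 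Both routes are sound and rest on the same facts (${\bf \Delta_{q,S}^m}(I)={\bf P}_\CC$, purity, and Theorem \ref{cyclic}); the paper's version is shorter because it avoids the explicit nonvanishing of ${\bf P}_\CC\cM$, while yours has the small advantage of flagging the implicit assumption $\cM\neq\{0\}$ and of exhibiting $\cE={\bf P}_\CC\cM=\CC$, which you then reuse to justify minimality in part (ii).
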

\begin{proof} To prove (i), note that
 ${\bf \Delta_{q,T}^m}(I)= P_\cM  {\bf P}_\CC|_\cM$
and, consequently,  $\rank  {\bf \Delta_{q,T}^m}(I) \leq 1$.
Since  ${\bf S}$ is pure (see Lemma \ref{univ-mod-var}) and $\cM\subset \cN_J$ is  a co-invariant  subspace under each operator  ${\bf S}_{i,j}$, we deduce that ${\bf T}$ is pure. Hence,  ${\bf \Delta_{q,T}^m}(I)\neq 0$, so
 $\rank {\bf \Delta_{q,T}^m}(I)\geq 1$. Therefore,  $\rank {\bf \Delta_{q,T}^m}(I)=1$.

To prove  (ii), note that,  as in the proof of  Theorem
\ref{dil3}, one can show that ${\bf T}$ and ${\bf T}'$ are unitarily
equivalent if and only if there exists a unitary operator
$\Lambda:\cN_J\to \cN_J$ such that
$
\Lambda {\bf S}_{i,j}={\bf S}_{i,j} \Lambda$ for all $i,j$,
    and $\Lambda \cM=\cM'$.
Since  $\Lambda {\bf S}_{i,j}^*={\bf S}_{i,j}^* \Lambda$ and
$C^*({\bf S}_{i,j})$ is irreducible,
$\Lambda$ must be a scalar multiple of the identity. Therefore, we must have
$\cM=\Lambda \cM=\cM'$. The proof is complete.
\end{proof}

\bigskip

\section{Noncommutative varieties and multivariable  function theory}
 \label{Symmetric}

In  this section,  we find all the joint eigenvectors  for ${\bf S}_{i,j}^*$, where ${\bf S}=\{{\bf S}_{i,j}\}$  is the  universal model
associated with the noncommutative variety ${\bf {\cV}^m_{f,{\it J}}}$  and $J$ is a WOT-closed left ideal of the Hardy space $F^\infty({\bf D_f^m})$. As
consequences, we
  determine the joint right spectrum of  ${\bf S}$ and identify  the character space of the noncommutative variety algebra $\cA({\bf {\cV}^m_{f,{\it J}}})$. When $J_c$ is the commutator ideal of $F^\infty({\bf D_f^m})$, we show that the WOT-closed algebra  $F^\infty({\bf {\cV}^m_{f,{\it J_c}}})$ generated by ${\bf S}_{i,j}$ and the identity coincides with the multiplier algebra of a reproducing kernel Hilbert space of holomorphic functions on  a certain  polydomain in $\CC^n$.
The results of this section show that there is a strong connection between the study of noncommutative varieties in polydomains and  the  analytic function theory  in $\CC^n$.

 Let  ${\bf f}:=(f_1,\ldots,f_k)$ be a $k$-tuple of positive regular free holomorphic functions and define the   polydomain
$${\bf D_{f,>}^m}(\CC):=\left\{ {\bf z}=(z_1,\ldots z_k)\in
\CC^{n_1}\times \cdots \times \CC^{n_k}:\ {\bf \Delta_{f,z}^m}(1)>0 \right\}.
$$
Note that ${\bf D_{f,>}^m}(\CC)={\bf D}_{f_1, >}^1(\CC)\times \cdots \times {\bf D}_{f_k, >}^1(\CC)$, where $f_i:= \sum_{\alpha\in
\FF_{n_i}^+} a_{i,\alpha} Z_\alpha$ and
$$
{\bf D}_{f_i,>}^1(\CC):=\{z_i=(z_{i,1},\ldots, z_{i,n_i})\in \CC^{n_i}: \ \sum_{\alpha\in \FF_{n_i}^+} a_{i,\alpha}|z_{i,\alpha}|^2<1\}.
$$
Let  $J$ be a WOT-closed left ideal of the Hardy space $F^\infty({\bf D_f^m})$. We consider the set
$$
{\bf {\cV}^m_{f,{\it J}, >}}(\CC):=\{{\bf z}=(z_1,\ldots z_k)\in {\bf D_{f,>}^m}(\CC): \ g(z_1,\ldots, z_k)=0 \ \text{ for } \ g\in J\}\subset \CC^n,
$$
where $n=n_1+\cdots +n_k$ is the number of indeterminates in ${\bf f}:=(f_1,\ldots,f_k)$.

\begin{theorem}\label{eigenvectors} Let   ${\bf S}=\{{\bf S}_{i,j}\}$  be the  universal model
associated with the noncommutative variety ${\bf {\cV}^m_{f,{\it J}}}$, where $J$ is a WOT-closed left ideal of the Hardy space $F^\infty({\bf D_f^m})$.
 The joint eigenvectors for
${\bf S}_{i,j}^*$  are
precisely the noncommutative constrained  Berezin kernels
$$
\Gamma_\lambda:={\bf \Delta_{f,\lambda}^m}(1)^{1/2} \sum_{\beta_i\in \FF_{n_i}^+, i=1,\ldots,k}
   \overline{\lambda}_{1,\beta_1}\cdots \overline{\lambda}_{k,\beta_k}\sqrt{b_{1,\beta_1}^{(m_1)}}\cdots \sqrt{b_{k,\beta_k}^{(m_k)}}
   e^1_{\beta_1}\otimes \cdots \otimes  e^k_{\beta_k}
$$
for $\mathbf{\lambda}=(\lambda_1,\ldots, \lambda_n)\in {\bf {\cV}^m_{f,{\it J}, >}}(\CC)$, where ${\bf \Delta_{f,\lambda}^m}(1):=(1-\Phi_{f_1,\lambda_1}(1))^{m_1}\cdots (1-\Phi_{f_k,\lambda_k}(1))^{m_k}$.
 They satisfy the equations
$$
{\bf S}_{i,j}^*\Gamma_\lambda=\overline{\lambda}_{i,j} \Gamma_\lambda
 \qquad \text{
for } \ i\in\{1,\ldots,k\}, j\in \{1,\ldots, n_i\},
$$
 where $\lambda_i=(\lambda_{i,1},\ldots, \lambda_{i,n_i})$.
\end{theorem}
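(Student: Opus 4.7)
The plan is to prove the theorem by two implications, the second being the more substantive one.

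\textbf{Forward direction.} Fix $\lambda \in {\bf {\cV}^m_{f,{\it J},>}}(\CC)$ and view it as a commuting tuple of scalars acting on $\cH = \CC$. Since ${\bf \Delta_{f,\lambda}^m}(1) = \prod_i(1-\Phi_{f_i,\lambda_i}(1))^{m_i} > 0$, the scalar tuple $\lambda$ is pure in ${\bf D_f^m}(\CC)$; and since $g(\lambda)=0$ for $g\in J$ (via the $F^\infty({\bf D_f^m})$--functional calculus at the pure scalar $\lambda$ in the $F^\infty$ case), we have $\lambda\in {\bf \cV_{f,{\it J}}^m}(\CC)$. A direct substitution into the definition of ${\bf K_{f,\lambda}}$ applied to $1\in\CC$ gives ${\bf K_{f,\lambda}}(1)=\Gamma_\lambda$, and by Proposition \ref{Ber-const} the range sits inside $\cN_J$, so ${\bf K_{f,\lambda,J}}(1)=\Gamma_\lambda$. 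The intertwining relation ${\bf K_{f,\lambda,J}}T_{i,j}^* = ({\bf S}_{i,j}^*\otimes I_\CC){\bf K_{f,\lambda,J}}$ applied to the vector $1$, with $T_{i,j}^*=\overline{\lambda}_{i,j}$, then yields ${\bf S}_{i,j}^*\Gamma_\lambda=\overline{\lambda}_{i,j}\Gamma_\lambda$.

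\textbf{Converse -- algebraic skeleton.} Let $v\in\cN_J\setminus\{0\}$ be a common eigenvector of $\{{\bf S}_{i,j}^*\}$ with eigenvalues $\mu_{i,j}$. Since $\cN_J$ is invariant under every ${\bf W}_{i,j}^*$, we have ${\bf S}_{i,j}^*={\bf W}_{i,j}^*|_{\cN_J}$, so $v$ satisfies ${\bf W}_{i,j}^* v = \mu_{i,j} v$ inside $\otimes_{i=1}^k F^2(H_{n_i})$. Expanding $v=\sum_{(\beta)}c_{(\beta)}\,e^1_{\beta_1}\otimes\cdots\otimes e^k_{\beta_k}$ and using the explicit action $W_{i,j}^* e^i_{g_j^i\gamma_i}=\frac{\sqrt{b_{i,\gamma_i}^{(m_i)}}}{\sqrt{b_{i,g_j^i\gamma_i}^{(m_i)}}}\,e^i_{\gamma_i}$ on the orthonormal basis, matching Fourier coefficients yields the recursion
\[ c_{(\beta_1,\ldots,g_j^i\gamma_i,\ldots,\beta_k)} = \mu_{i,j}\,\frac{\sqrt{b_{i,g_j^i\gamma_i}^{(m_i)}}}{\sqrt{b_{i,\gamma_i}^{(m_i)}}}\,c_{(\beta_1,\ldots,\gamma_i,\ldots,\beta_k)}. \]
Iterating this recursion (with $b_{i,g_0^i}^{(m_i)}=1$) forces $c_{(\beta)} = c_0\,\mu_{1,\beta_1}\cdots\mu_{k,\beta_k}\sqrt{b_{1,\beta_1}^{(m_1)}\cdots b_{k,\beta_k}^{(m_k)}}$, where $c_0:=c_{(g_0^1,\ldots,g_0^k)}\neq 0$. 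Setting $\lambda_{i,j}:=\overline{\mu}_{i,j}$, this is precisely the unnormalized $\Gamma_\lambda$ series, so $v$ is a scalar multiple of $\Gamma_\lambda$ as a formal object.

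\textbf{Converse -- analytic content and variety membership.} The condition $\|v\|^2<\infty$ translates to convergence of $\prod_{i=1}^k\sum_{\beta_i\in\FF_{n_i}^+}b_{i,\beta_i}^{(m_i)}|\lambda_{i,\beta_i}|^2$. From the definition \eqref{b-coeff} of the weights, the formal identity $\sum_\alpha b_{i,\alpha}^{(m_i)}z_\alpha=(1-f_i(z))^{-m_i}$ (the binomial expansion of $(1-f_i)^{-m_i}$) shows that the $i$-th factor converges exactly when $\Phi_{f_i,\lambda_i}(1)=\sum_\alpha a_{i,\alpha}|\lambda_{i,\alpha}|^2<1$ and equals $(1-\Phi_{f_i,\lambda_i}(1))^{-m_i}$ in that case; hence $v\in F^2$ forces $\lambda\in{\bf D^m_{f,>}}(\CC)$ and $\|v\|^2=|c_0|^2\,{\bf \Delta_{f,\lambda}^m}(1)^{-1}$, identifying $v$ as a scalar multiple of $\Gamma_\lambda$. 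To finish with $\lambda\in{\bf {\cV}^m_{f,{\it J},>}}(\CC)$, use ${\bf W}_{(\alpha)}(1)=[b_{1,\alpha_1}^{(m_1)}\cdots b_{k,\alpha_k}^{(m_k)}]^{-1/2}\,e^1_{\alpha_1}\otimes\cdots\otimes e^k_{\alpha_k}$ to compute $\langle\Gamma_\lambda,g({\bf W})(1)\rangle={\bf \Delta_{f,\lambda}^m}(1)^{1/2}\,g(\lambda)$ for polynomials $g$; since $g({\bf W})(1)\in\cM_J\perp v$, this forces $g(\lambda)=0$. The \emph{main obstacle} is extending this last step when $J\subset F^\infty({\bf D_f^m})$: one simultaneously passes to the limit in $g({\bf W})=\mathrm{SOT\text{-}}\lim_{r\to 1}g(r{\bf W})$ (whence $g({\bf W})(1)=\lim g(r{\bf W})(1)$ in $F^2$) and uses the $F^\infty$-functional calculus at the pure scalar $\lambda$, which gives $g(\lambda)=\lim_{r\to 1}g(r\lambda)$; taking these two limits jointly in the inner product identity for $r<1$ produces the analogous identity for $g$ and hence the required vanishing.
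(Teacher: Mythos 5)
Your proof is correct and follows essentially the same approach as the paper: constrained Berezin kernel and intertwining for the forward direction, Fourier-coefficient recursion against the weighted shift action for the algebraic part of the converse, and a growth estimate on the $b$-weights plus the $F^\infty$-functional calculus at the pure scalar $\lambda$ for the analytic part. The only cosmetic differences are that you deduce $\Phi_{f_i,\lambda_i}(1)<1$ from the closed-form geometric series $\sum_{p\geq 0}\binom{p+m_i-1}{m_i-1}\Phi_{f_i,\lambda_i}(1)^p$ rather than the paper's lower bound via partial sums, and you establish $g(\lambda)=0$ from the orthogonality $\Gamma_\lambda\perp g({\bf W})(1)\in\cM_J$ whereas the paper computes $\langle h,g({\bf S}_{i,j})h\rangle=\overline{g(\lambda)}\|h\|^2$ and invokes $g({\bf S}_{i,j})=0$ from Lemma \ref{univ-mod-var} — both routes rest on the same $r\to 1$ limit and are equivalent.
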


\begin{proof}
First, note that if $\lambda=(\lambda_1,\ldots, \lambda_n)\in {\bf {\cV}^m_{f,{\it J}, >}}(\CC)$, then $\lambda$ is  a pure element.
   The noncommutative constrained Berezin kernel  at $\lambda$
is ${\bf K_{f,\lambda, {\it J}}}: \CC \to \cN_J\otimes \CC   $
   defined by
$$
{\bf K_{f,\lambda, {\it J}}} (w)={\bf \Delta_{f,\lambda}^m}(1)^{1/2} \sum_{\beta_i\in \FF_{n_i}^+, i=1,\ldots,k}
   \sqrt{b_{1,\beta_1}^{(m_1)}}\cdots \sqrt{b_{k,\beta_k}^{(m_k)}}
   e^1_{\beta_1}\otimes \cdots \otimes  e^k_{\beta_k}\otimes \overline{\lambda}_{1,\beta_1}\cdots \overline{\lambda}_{k,\beta_k}w,\qquad w\in \CC.
   $$
According to Proposition \ref{Ber-const}, we have $({\bf S}_{i,j}^*\otimes I_\CC)
{\bf K_{f,\lambda, {\it J}}}={\bf K_{f,\lambda, {\it J}}}(\overline{\lambda}_{i,j} I_\CC)$ for
$i\in\{1,\ldots,k\}, j\in \{1,\ldots, n_i\}$. Identifying $\cN_J\otimes \CC$ with $\cN_J$, we have ${\bf K_{f,\lambda, {\it J}}}=\Gamma_\lambda$ and  ${\bf S}_{i,j}^*\Gamma_\lambda=\overline{\lambda}_{i,j} \Gamma_\lambda$.

Conversely, let $h=\sum\limits_{\beta_1\in \FF_{n_1}^+,\ldots, \beta_k\in \FF_{n_k}^+} c_{\beta_1,\ldots, \beta_k}
e_{\gamma_1}^1\otimes\cdots \otimes e_{\gamma_k}^k$ be  a non-zero vector in $\cN_J\subset \otimes_{i=1}^kF^2(H_{n_i})$ and assume that  there exists $(\lambda_1,\ldots, \lambda_n)\in
\CC^{n_1}\times \cdots \times \CC^{n_k}$, where $\lambda_i=(\lambda_{i,1},\ldots, \lambda_{i,n_i})$, such that
${\bf S}_{i,j}^*h=\overline{\lambda}_{i,j} h$
  for any $i\in\{1,\ldots,k\}, j\in \{1,\ldots, n_i\}$.  Since $\cN_J$ is invariant under ${\bf W}_{i,j}^*$, we also have ${\bf W}_{i,j}^*h=\overline{\lambda}_{i,j} h$.  Using the definition of the
operators ${\bf W}_{i,j}$ (see Section 1), we deduce that
\begin{equation*}
\begin{split}
c_{\beta_1,\ldots, \beta_k}&=\left< h,e^1_{\beta_1}\otimes \cdots \otimes  e^k_{\beta_k}\right>=\left< h, \sqrt{b_{1,\beta_1}^{(m_1)}}\cdots \sqrt{b_{k,\beta_k}^{(m_k)}}
{\bf W}_{1,\beta_1}\cdots {\bf W}_{k,\beta_k}(1)\right>\\
&= \sqrt{b_{1,\beta_1}^{(m_1)}}\cdots \sqrt{b_{k,\beta_k}^{(m_k)}}\left< {\bf W}_{1,\beta_1}^*\cdots {\bf W}_{k,\beta_k}^* h,1\right>=
\sqrt{b_{1,\beta_1}^{(m_1)}}\cdots \sqrt{b_{k,\beta_k}^{(m_k)}}
     \overline{\lambda}_{1,\beta_1}\cdots \overline{\lambda}_{k,\beta_k}\left< h,1\right>\\
&=c_0\sqrt{b_{1,\beta_1}^{(m_1)}}\cdots \sqrt{b_{k,\beta_k}^{(m_k)}}
     \overline{\lambda}_{1,\beta_1}\cdots \overline{\lambda}_{k,\beta_k}
\end{split}
\end{equation*}
 for any $\beta_1\in \FF_{n_1}^+,\ldots, \beta_k\in \FF_{n_k}^+$. Hence, we  obtain
  $$
  h= c_0 \sum_{\beta_i\in \FF_{n_i}^+, i=1,\ldots,k}
   \overline{\lambda}_{1,\beta_1}\cdots \overline{\lambda}_{k,\beta_k}\sqrt{b_{1,\beta_1}^{(m_1)}}\cdots \sqrt{b_{k,\beta_k}^{(m_k)}}
   e^1_{\beta_1}\otimes \cdots \otimes  e^k_{\beta_k}.
   $$
 Since $h\in \otimes_{i=1}^kF^2(H_{n_i})$, we must have $\sum\limits_{\beta_1\in \FF_{n_1}^+,\ldots, \beta_k\in \FF_{n_k}^+}
    |\lambda_{1,\beta_1}|^2\cdots|\lambda_{k,\beta_k}|^2 {b_{1,\beta_1}^{(m_1)}\cdots
  b_{k,\beta_k}^{(m_k)}}<\infty.$ On the other hand, relation \eqref{b-coeff}
implies
$$
\prod_{i=1}^k\left(\sum_{s=0}^{p_i} \left(\sum_{|\alpha_i|\geq 1} a_{i,\alpha_i}
|\lambda_{i,\alpha_i}|^2\right)^s\right)^{m_i}\leq \sum\limits_{\beta_1\in \FF_{n_1}^+,\ldots, \beta_k\in \FF_{n_k}^+}
    |\lambda_{1,\beta_1}|^2\cdots|\lambda_{k,\beta_k}|^2 {b_{1,\beta_1}^{(m_1)}\cdots
  b_{k,\beta_k}^{(m_k)}}<\infty
$$
for any $p_1,\ldots, p_k\in \NN$. Letting $p_i\to\infty$ in the relation above, we
must have $\sum_{|\alpha_i|\geq 1} a_{i,\alpha_i}
|\lambda_{i,\alpha_i}|^2<1$,
 for each $i\in \{1,\ldots,k\}$. Therefore, $\lambda=(\lambda_1,\ldots, \lambda_n)\in {\bf D_{f,>}^m}(\CC)$. On the other hand,  if $g\in J$, then relation ${\bf S}_{i,j}^*h=\overline{\lambda}_{i,j} h$  and an approximation argument in the norm topology imply
 $$
 \left<h, g(r{\bf S}_{i,j})h\right>=\left<g(r{\bf S}_{i,j})^*h, h\right>=\overline{g(r\lambda_{i,j})} \|h\|^2.
 $$
Using the $F^\infty({\bf D_f^m})$-functional calculus  for pure elements and taking the limit as $r\to 1$ in the relation above, we obtain
$$
 \left<h, g({\bf S}_{i,j})h\right>= \overline{g(\lambda_{i,j})} \|h\|^2.
 $$
Since, due to Lemma \ref{univ-mod-var}, $g({\bf S}_{i,j})=0$ and $h\neq 0$, we conclude that $g(\lambda_{i,j})=0$, which shows that $\lambda \in {\bf {\cV}^m_{f,{\it J}, >}}(\CC)$.
The proof is complete.
\end{proof}

Let   ${\bf S}=\{{\bf S}_{i,j}\}$  be the  universal model
associated with the noncommutative variety ${\bf {\cV}^m_{f,{\it J}}}$, where $J$ is a WOT-closed left ideal of the Hardy algebra $F^\infty({\bf D_f^m})$. We introduce the Hardy algebra   $F^\infty({\bf {\cV}^m_{f,{\it J}}})$  as  the WOT-closed algebra generated by ${\bf S}_{i,j}$ and the identity.

\begin{theorem} \label{w*} Let $J$ be  a WOT-closed left ideal of the Hardy algebra $F^\infty({\bf D_f^m})$ such that $1\in \cN_J$.  Then
 $\Phi:F^\infty({\bf {\cV}^m_{f,{\it J}}})\to \CC$ is a $w^*$-continuous and multiplicative linear functional  if and only if there exists $\lambda\in {\bf {\cV}^m_{f,{\it J},>}}(\CC)$ such that
$$
\Phi(A)=\left<A(1), u_\lambda\right>,\qquad A\in
 F^\infty({\bf {\cV}^m_{f,{\it J}}}),
 $$
where $ u_\lambda:=\frac{1}{{\bf \Delta_{f,\lambda}^m}(1)^{1/2} }\Gamma_\lambda
$
and  $\Gamma_\lambda$ is given by Theorem \ref{eigenvectors}. Moreover, in this case, $A^*u_\lambda=\overline{\Phi(A)} u_\lambda$ and
$$
\Phi(A)=\left<A\Gamma_\lambda, \Gamma_\lambda\right>,\qquad A\in
 F^\infty({\bf {\cV}^m_{f,{\it J}}}).
 $$
\end{theorem}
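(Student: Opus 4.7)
The strategy is to get both directions from Theorem \ref{eigenvectors}: a $w^*$-continuous multiplicative functional on $F^\infty({\bf \cV_{f,{\it J}}^m})$ must come from a joint eigenvector of $\{{\bf S}_{i,j}^*\}$, and by that theorem such eigenvectors are precisely the vectors $\Gamma_\lambda$ for $\lambda\in {\bf \cV_{f,{\it J},>}^m}(\CC)$.

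\textbf{Sufficiency.} Suppose $\lambda\in {\bf \cV_{f,{\it J},>}^m}(\CC)$. Theorem \ref{eigenvectors} gives ${\bf S}_{i,j}^*\Gamma_\lambda=\overline{\lambda_{i,j}}\Gamma_\lambda$, and since $\lambda$ is pure (it lies in the interior of the polydomain), Proposition \ref{Ber-const}(iii) yields $\|\Gamma_\lambda\|=1$, so $u_\lambda={\bf \Delta_{f,\lambda}^m}(1)^{-1/2}\Gamma_\lambda$ satisfies $\langle 1,u_\lambda\rangle=1$ and the same eigenvector identity. I would first promote the polynomial identity ${\bf S}_{(\alpha)}^* u_\lambda=\overline{\lambda_{(\alpha)}}u_\lambda$ to arbitrary $A\in F^\infty({\bf \cV_{f,{\it J}}^m})$: using the $F^\infty$-functional calculus of Section~1, write $A=\text{SOT-}\lim_{r\to 1}A(r{\bf S})$ with each $A(r{\bf S})$ in the variety algebra $\cA({\bf \cV_{f,{\it J}}^m})$ and norm-approximable by polynomials, so $A^*u_\lambda=\overline{\Phi(A)}u_\lambda$ where $\Phi(A):=\langle A(1),u_\lambda\rangle$. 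Multiplicativity of $\Phi$ then follows from $\langle AB(1),u_\lambda\rangle=\langle B(1),A^*u_\lambda\rangle=\Phi(A)\Phi(B)$, $w^*$-continuity is immediate from the rank-one form, and the identity $\Phi(A)=\langle A\Gamma_\lambda,\Gamma_\lambda\rangle$ follows by $\langle A\Gamma_\lambda,\Gamma_\lambda\rangle=\langle \Gamma_\lambda,A^*\Gamma_\lambda\rangle=\Phi(A)\|\Gamma_\lambda\|^2=\Phi(A)$.

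\textbf{Necessity.} Given a $w^*$-continuous multiplicative $\Phi$, set $\lambda_{i,j}:=\Phi({\bf S}_{i,j})$; once $\lambda\in{\bf \cV_{f,{\it J},>}^m}(\CC)$ is established, sufficiency produces $\Psi(A):=\langle A(1),u_\lambda\rangle$ agreeing with $\Phi$ on every ${\bf S}_{(\alpha)}$, and $w^*$-continuity together with $w^*$-density of polynomials in $F^\infty({\bf \cV_{f,{\it J}}^m})$ forces $\Phi=\Psi$. The plan for showing $\lambda\in{\bf \cV_{f,{\it J},>}^m}(\CC)$ is to lift $\Phi$ to the ambient polydomain: define $\tilde\Phi(B):=\Phi(P_{\cN_J}B|_{\cN_J})$ for $B\in F^\infty({\bf D_f^m})$. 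Because $\cN_J$ is invariant under each ${\bf W}_{i,j}^*$ (equivalently, $\cM_J$ is ${\bf W}_{i,j}$-invariant), the compression is a unital $w^*$-continuous completely contractive algebra homomorphism onto $F^\infty({\bf \cV_{f,{\it J}}^m})$, and $\tilde\Phi$ is a $w^*$-continuous character of the polydomain algebra with $\tilde\Phi({\bf W}_{i,j})=\lambda_{i,j}$. The corresponding unconstrained result from \cite{Po-Berezin-poly}, identifying $w^*$-continuous characters of $F^\infty({\bf D_f^m})$ with the points of ${\bf D_{f,>}^m}(\CC)$, then places $\lambda$ in ${\bf D_{f,>}^m}(\CC)$. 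Finally, for any $g\in J$ Lemma \ref{univ-mod-var}(ii) gives $g({\bf S})=0$, while $\Phi(p({\bf S}))=p(\lambda)$ on polynomials extends $w^*$-continuously to yield $g(\lambda)=\Phi(g({\bf S}))=0$, so $\lambda\in{\bf \cV_{f,{\it J},>}^m}(\CC)$.

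\textbf{Main obstacle.} The technical heart is verifying $\lambda\in{\bf D_{f,>}^m}(\CC)$, i.e., extracting the quantitative summability $\sum_{(\alpha)}|\lambda_{(\alpha)}|^2 b_{(\alpha)}^{(m_1)}\cdots b_{(\alpha)}^{(m_k)}<\infty$ from the qualitative hypothesis of $w^*$-continuity. Lifting to the polydomain's character theory is the cleanest route, provided one cites (and carefully checks) the relevant unconstrained statement from \cite{Po-Berezin-poly}. A self-contained alternative is to examine the truncated vectors $v_N:=\sum_{|(\alpha)|\leq N}\overline{\lambda_{(\alpha)}}\sqrt{b_{1,\alpha_1}^{(m_1)}\cdots b_{k,\alpha_k}^{(m_k)}}\,e_{\alpha_1}^1\otimes\cdots\otimes e_{\alpha_k}^k$ in $\otimes_i F^2(H_{n_i})$, note that $\Phi(p({\bf S}))=\langle p({\bf S})(1),v_N\rangle$ for every polynomial $p$ of degree at most $N$, and then extract a uniform bound on $\|v_N\|$ from the contractivity $\|\Phi\|\leq 1$ against suitably chosen test polynomials, so that the representing vector $u_\lambda$ exists as the weak limit.
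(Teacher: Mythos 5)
Your sufficiency direction is sound and is essentially the paper's argument: the eigenvector relation from Theorem \ref{eigenvectors} is promoted to all of $F^\infty({\bf {\cV}^m_{f,{\it J}}})$ by approximating with polynomials, and multiplicativity, $w^*$-continuity, and the identity $\Phi(A)=\left<A\Gamma_\lambda,\Gamma_\lambda\right>$ all follow as you say. The necessity direction, however, has a genuine gap at exactly the point you flag as "the technical heart": you never actually prove that $\lambda_{i,j}:=\Phi({\bf S}_{i,j})$ gives a point of the \emph{open} set ${\bf {\cV}^m_{f,{\it J},>}}(\CC)$, i.e. the strict summability $\sum_{\alpha}a_{i,\alpha}|\lambda_{i,\alpha}|^2<1$. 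Your primary route lifts $\Phi$ through the compression $B\mapsto P_{\cN_J}B|_{\cN_J}$ (this lifting itself is fine, since $\cM_J$ is invariant under the Hardy algebra) and then cites an "unconstrained" identification of $w^*$-continuous characters of $F^\infty({\bf D_f^m})$ with points of ${\bf D_{f,>}^m}(\CC)$. But that statement is precisely the case $J=\{0\}$ of the theorem you are proving, and it is not established anywhere in this paper independently of Theorem \ref{w*}; as written the argument is circular, or at best rests on an unverified citation. Your fallback sketch with the truncations $v_N$ does not close the gap either: first, $v_N$ need not lie in $\cN_J$, so the representation $\Phi(p({\bf S}))=\left<p({\bf S})(1),v_N\right>$ must be run with $P_{\cN_J}v_N$; second, and more seriously, testing $\|\Phi\|\le 1$ against the natural polynomials $p_N(Z)=\sum_{|(\alpha)|\le N}\overline{\lambda_{(\alpha)}}\,b_{1,\alpha_1}^{(m_1)}\cdots b_{k,\alpha_k}^{(m_k)}Z_{(\alpha)}$ only yields $\|P_{\cN_J}v_N\|^2\le\|p_N({\bf S})\|$, while the operator norm $\|p_N({\bf S})\|$ dominates $\|p_N({\bf S})(1)\|$ rather than being dominated by it — the inequality points the wrong way, and no family of test polynomials with controlled operator norm is exhibited.

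The paper closes exactly this gap by a different idea, which your proposal is missing: it does not read $\lambda$ off from $\Phi({\bf S}_{i,j})$ at all, but works with $\cX:=\ker\Phi$. Using $1\in\cN_J$, the density of $\{p({\bf S}_{i,j})(1)\}$ in $\cN_J$, and the decomposition $p({\bf S}_{i,j})(1)=\bigl[p({\bf S}_{i,j})-\Phi(p({\bf S}_{i,j}))I\bigr](1)+\Phi(p({\bf S}_{i,j}))1$, one shows $\overline{\cX\cN_J}+\CC 1=\cN_J$, so $\cM_\cX:=\overline{\cX\cN_J}$ is an invariant subspace of codimension one. Its orthocomplement is then a one-dimensional co-invariant subspace, spanned by a joint eigenvector of the ${\bf S}_{i,j}^*$, and the converse half of Theorem \ref{eigenvectors} — where membership of the eigenvector in $\otimes_{i=1}^kF^2(H_{n_i})$ is what forces the strict summability — places the corresponding $\lambda$ in ${\bf {\cV}^m_{f,{\it J},>}}(\CC)$ with $\cM_\cX=\{u_\lambda\}^\perp$. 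Finally $\ker\Phi\subseteq\ker\Phi_\lambda$, and since both are $w^*$-closed maximal ideals of codimension one, $\Phi=\Phi_\lambda$. In short: the quantitative interior condition comes for free from the eigenvector theorem once one produces a codimension-one invariant subspace from $\ker\Phi$; producing that subspace (not lifting to the ambient polydomain) is the step your proof needs.
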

\begin{proof}
For each $\lambda\in {\bf {\cV}^m_{f,{\it J},>}}(\CC)$, let
$\Phi_\lambda:F^\infty({\bf {\cV}^m_{f,{\it J}}})\to \CC$ be given by
$\Phi_\lambda(A)=\left<A(1), u_\lambda\right>$. It is clear that $\Phi_\lambda$ is $w^*$-continuous. To prove that $\Phi_\lambda$ is multiplicative, let $\varphi,\psi\in F^\infty({\bf {\cV}^m_{f,{\it J}}})$ and let
 $\{p_\iota({\bf S}_{i,j})\}$ and $\{q_\kappa({\bf S}_{i,j})\}$ be nets of polynomials such that $p_\iota({\bf S}_{i,j})\to \varphi$ and $q_\kappa({\bf S}_{i,j})\to \psi$ in the weak operator topology.
 Note that, due to Theorem \ref{eigenvectors}, we have
 $p_\iota(\lambda)=\left<p_\iota({\bf W}_{i,j})1,u_\lambda\right>=\left<p_\iota({\bf S}_{i,j})1,u_\lambda\right>
 $
and, consequently, $\lim_\iota p_\iota(\lambda)=\left<\varphi(1),u_\lambda\right>$. Similarly, we obtain
$\lim_\kappa q_\kappa(\lambda)=\left<\psi(1),u_\lambda\right>$.  Hence, it is easy to see that
\begin{equation*}
\begin{split}
\Phi_\lambda(\varphi \psi)&=\left<\psi\psi(1),u_\lambda\right>
=\lim_\kappa \left<q_\kappa (1), \varphi^*(u_\lambda)\right>\\
&=\lim_\kappa \lim_\iota\left<p_\iota({\bf S}_{i,j})q_\kappa({\bf S}_{i,j})(1),u_\lambda\right>
=\lim_\kappa \lim_\iota p_\iota(\lambda) q_\kappa(\lambda)\\
&= \left<\varphi(1),u_\lambda\right>\lim_\kappa q_\kappa(\lambda)
=\left<\varphi(1),u_\lambda\right>\left<\psi(1),u_\lambda\right>
=\Phi_\lambda(\varphi) \Phi_\lambda(\psi).
\end{split}
\end{equation*}
Note that, due to Theorem \ref{eigenvectors}, we have
\begin{equation*}
\begin{split}
 p_\iota({\bf S}_{i,j})^* u_\lambda =   \overline {p_\iota(\lambda )}u_\lambda =
 \overline{\left<p_\iota({\bf S}_{i,j})1,u_\lambda\right>}  u_\lambda.
\end{split}
\end{equation*}
Since $p_\iota({\bf S}_{i,j})\to \varphi$ in the weak operator topology,   we deduce that
$
 \varphi^* u_\lambda=\overline {
\left<\varphi(1),u_\lambda\right>} u_\lambda.
$
Hence, we deduce that
\begin{equation*}
\begin{split}
\left< \varphi \Gamma_\lambda, \Gamma_\lambda\right>&=
{\bf \Delta_{f,\lambda}^m}(1)\left< u_\lambda,\varphi ^*
u_\lambda\right>
 = \varphi(\lambda)=\Phi_\lambda(\varphi).
\end{split}
\end{equation*}
Now, assume that $\Phi:F^\infty({\bf {\cV}^m_{f,{\it J}}})\to \CC$ is a $w^*$-continuous and multiplicative linear functional and let $\cX:=\ker \Phi$.
Then $\cX$ is a $w^*$-closed two-sided ideal of $F^\infty({\bf {\cV}^m_{f,{\it J}}})$ of codimension one. We claim that   $\cM_\cX:=\overline{\cX \cN_J}$ is a subspace in $\cN_J$ of codimension one and $\cM_\cX+\CC 1=\cN_J$. By contradiction, assume that there is a vector $y\in \cN_J$ which is perpendicular to $\cM_\cX+\CC 1$ and $\|y\|=1$. Since
$$\overline{\text{\rm span}}\{p({\bf W}_{i,j})(1): \ p\in \CC[Z_{i,j}]\}=\otimes_{i=1}^k F^2(H_{n_i})
$$ and  taking the projection onto $\cN_J$, we deduce that
$\overline{\text{\rm span}}\{p({\bf S}_{i,j})(1): \ p\in \CC[Z_{i,j}]\}=\cN_J$.
Consequently, we can choose a polynomial  $p({\bf S}_{i,j})\in F^\infty({\bf {\cV}^m_{f,{\it J}}})$ such that $\|p({\bf S}_{i,j})(1)-y\|<1$. On the other hand, since $p({\bf S}_{i,j})-\Phi(p({\bf S}_{i,j})) I_{\cN_J}$ is in $\cX=\ker \Phi$ and $1\in \cN_J$, we have $p({\bf S}_{i,j})(1)-\Phi(p({\bf S}_{i,j}))\in \cM_\cX$. Taking into account that $y$ is perpendicular to $\cM_\cX+\CC 1$, we have
\begin{equation*}
\begin{split}
\|y\|&=\left<y-\Phi(p({\bf S}_{i,j})),y\right>\\
&\leq |\left<y-p({\bf S}_{i,j})(1),y\right>| + |\left<p({\bf S}_{i,j})(1)-\Phi(p({\bf S}_{i,j})),y\right>|\\
&=|\left<y-p({\bf S}_{i,j})(1),y\right>|\leq \|y-p({\bf S}_{i,j})(1)\| \|y\|<1,
\end{split}
\end{equation*}
which contradicts the fact that $\|y\|=1$ and proves our assertion.
Therefore, $\cM_\cX \subset \cN_J$ has codimension one and it is invariant under each operator ${\bf S}_{i,j}$ for $i\in\{1,\ldots,k\}, j\in \{1,\ldots, n_i\}$. According to Theorem \ref{eigenvectors}, there exists  $\lambda\in {\bf {\cV}^m_{f,{\it J},>}}(\CC)$ such that $\cM_\cX=\{u_\lambda\}^\perp$. As shown in the first part of the proof,  $\Phi_\lambda$ is  a $w^*$-continuous and multiplicative linear functional. Note that, if $A\in \cX:=\ker \Phi$, then $A(1)\in \cM_\cX=\{u_\lambda\}^\perp$, which implies  $\left<A(1), u_\lambda\right>=0$. Hence, $A\in \ker \Phi_\lambda$ and, therefore, $\ker \Phi\subset \ker \Phi_\lambda$. Since $\ker \Phi$ and $\ker \Phi_\lambda$ are $w^*$-closed two sided maximal ideals of $F^\infty({\bf {\cV}^m_{f,{\it J}}})$ of codimension one, we must have
$\ker \Phi=\ker \Phi_\lambda$. Therefore, $\Phi=\Phi_\lambda$. This completes the proof.
\end{proof}

 We make a few remarks concerning the particular case when $J=\{0\}$.
 First, we note that if $\lambda=(\lambda_1,\ldots, \lambda_n)\in {\bf D_{f,>}^m}(\CC)$ and
$
\varphi({\bf W}_{i,j})=\sum\limits_{\beta_1\in \FF_{n_1}^+,\ldots, \beta_k\in \FF_{n_k}^+} c_{\beta_1,\ldots, \beta_k}
{\bf W}_{1,\beta_1}\cdots {\bf W}_{k,\beta_k}
$
is in the Hardy algebra $F^\infty({\bf
D^m_f})$, then $\sum\limits_{\beta_1\in \FF_{n_1}^+,\ldots, \beta_k\in \FF_{n_k}^+} |c_{\beta_1,\ldots, \beta_k}|
|\lambda_{1,\beta_1}|\cdots |\lambda_{k,\beta_k}|<\infty$.
 Indeed, since
  $\varphi({\bf W}_{i,j})(1)\in \otimes_{i=1}^k F^2(H_{n_i})$, we  have
$$K_1:=\sum\limits_{\beta_1\in \FF_{n_1}^+,\ldots, \beta_k\in \FF_{n_k}^+}
 |c_{\beta_1,\ldots, \beta_k}|^2 \frac{1}{b_{1,\beta_1}^{(m_1)}\cdots
  b_{k,\beta_k}^{(m_k)}}<\infty.
$$
  On the other hand, since
$\lambda=(\lambda_1,\ldots, \lambda_n)\in {\bf D_{f,>}^m}(\CC)$,
we deduce that  $$K_2:=\sum\limits_{\beta_1\in \FF_{n_1}^+,\ldots, \beta_k\in \FF_{n_k}^+}
    |\lambda_{1,\beta_1}|^2\cdots|\lambda_{k,\beta_k}|^2 {b_{1,\beta_1}^{(m_1)}\cdots
  b_{k,\beta_k}^{(m_k)}}<\infty.
  $$ Applying Cauchy's inequality, we obtain
\begin{equation*}
\begin{split}
&\sum\limits_{\beta_1\in \FF_{n_1}^+,\ldots, \beta_k\in \FF_{n_k}^+} |c_{\beta_1,\ldots, \beta_k}|
|\lambda_{1,\beta_1}|\cdots |\lambda_{k,\beta_k}|\leq (K_1 K_2)^{1/2}.
\end{split}
\end{equation*}
 We note that the $w^*$-continuous and multiplicative map
$\Phi_\lambda:F^\infty({\bf D^m_f})\to \CC$ satisfies the equation
 $\Phi_\lambda(\varphi({\bf W}_{i,j})):=\varphi(\lambda)$.
Indeed, in this case we have
\begin{equation*}
\begin{split}
\left<\varphi({\bf W}_{i,j})1, u_\lambda\right>&= \left<
\sum\limits_{\beta_1\in \FF_{n_1}^+,\ldots, \beta_k\in \FF_{n_k}^+} c_{{\beta}_1,\ldots, {\beta}_k}\frac{1}{\sqrt{b_{1, \beta_1}^{(m_1)}}}\cdots \frac{1}{\sqrt{b_{k, \beta_k}^{(m_k)}}}
e_{ \beta_1}^1\otimes \cdots \otimes e_{ \beta_k}^k,  u_\lambda\right>\\
&=\sum\limits_{\beta_1\in \FF_{n_1}^+,\ldots, \beta_k\in \FF_{n_k}^+} c_{\beta_1,\ldots, \beta_k}
\lambda_{1,\beta_1}\cdots \lambda_{k,\beta_k}
=\varphi(\lambda).
\end{split}
\end{equation*}

 We recall   that the joint  right spectrum
  $\sigma_r(T_1,\ldots, T_n)$ of an   $n$-tuple
 $(T_1,\ldots, T_n)$ of operators
   in $B(\cH)$ is the set of all $n$-tuples
    $(\mu_1,\ldots, \mu_n)$  of complex numbers such that the
     right ideal of $B(\cH)$  generated by the operators
     $\mu_1I-T_1,\ldots, \mu_nI-T_n$ does
      not contain the identity operator.
We recall \cite{Po-unitary} that
  $(\mu_1,\ldots, \mu_n)\notin \sigma_r(T_1,\ldots, T_n)$
  if and only if  there exists $\delta>0$ such that
  $\sum\limits_{i=1}^n (\mu_iI-T_i)
  (\overline{\mu}_iI-T_i^*)\geq \delta I$.

\begin{proposition}\label{right-spec}  Let $J$ be  a WOT-closed left ideal of the Hardy space $F^\infty({\bf D_f^m})$ and  let  ${\bf S}=\{{\bf S}_{i,j}\}$  be the  universal model
associated with the abstract noncommutative variety ${\bf {\cV}^m_{f,{\it J}}}$. If  the set
${\bf {\cV}^m_{f,{\it J},>}}(\CC)$ is dense in ${\bf {\cV}^m_{f,{\it J}}}(\CC)$,
then the right joint spectrum
$\sigma_r({\bf S})$ coincide with ${\bf {\cV}^m_{f,{\it J}}}(\CC)$.

In particular, if  $\cQ\subset \CC[Z_{i,j}]$
  is a left ideal generated by  noncommutative
    homogenous polynomials,
 then the right joint spectrum
$\sigma_r({\bf S}) ={\bf {\cV}^m_{f,{\cQ}}}(\CC)$.
\end{proposition}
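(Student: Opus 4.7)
The plan is to establish both inclusions $\cV_{\bf f,{\it J}}^{\bf m}(\CC)\subseteq\sigma_r({\bf S})$ and $\sigma_r({\bf S})\subseteq\cV_{\bf f,{\it J}}^{\bf m}(\CC)$, and then verify the density hypothesis in the homogeneous case to obtain the ``in particular'' assertion.

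For $\cV_{\bf f,{\it J}}^{\bf m}(\CC)\subseteq\sigma_r({\bf S})$, I first prove the weaker statement $\cV_{\bf f,{\it J},>}^{\bf m}(\CC)\subseteq\sigma_r({\bf S})$. Given $\lambda\in\cV_{\bf f,{\it J},>}^{\bf m}(\CC)$, Theorem~\ref{eigenvectors} supplies a nonzero joint eigenvector $\Gamma_\lambda\in\cN_J$ satisfying ${\bf S}_{i,j}^*\Gamma_\lambda=\overline{\lambda}_{i,j}\Gamma_\lambda$ for all $i,j$, so $\sum_{i,j}\|(\overline{\lambda}_{i,j}I-{\bf S}_{i,j}^*)\Gamma_\lambda\|^2=0$. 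This rules out any inequality $\sum_{i,j}(\lambda_{i,j}I-{\bf S}_{i,j})(\overline{\lambda}_{i,j}I-{\bf S}_{i,j}^*)\geq\delta I$ with $\delta>0$, so $\lambda\in\sigma_r({\bf S})$ by the characterization recalled just before the statement. Since $\sigma_r({\bf S})$ is closed in $\CC^n$ and by hypothesis $\cV_{\bf f,{\it J},>}^{\bf m}(\CC)$ is dense in $\cV_{\bf f,{\it J}}^{\bf m}(\CC)$, the full inclusion follows.

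For the reverse inclusion, suppose $\lambda\in\sigma_r({\bf S})$. The same characterization produces a sequence of unit vectors $x_n\in\cN_J$ with $\|({\bf S}_{i,j}^*-\overline{\lambda}_{i,j})x_n\|\to 0$ for each $i,j$. An induction on degree, exploiting the boundedness of each ${\bf S}_{i,j}^*$ and the fact that approximate eigenvectors propagate through products, gives $p({\bf S})^*x_n-\overline{p(\lambda)}x_n\to 0$ in norm for every $p\in\CC[Z_{i,j}]$. If $g\in J\cap\CC[Z_{i,j}]$ then Lemma~\ref{univ-mod-var}(ii) yields $g({\bf S})=0$, so $\overline{g(\lambda)}x_n\to 0$ forces $g(\lambda)=0$. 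For the polydomain membership, the relation $\Phi_{f_i,{\bf S}_i}(I)\leq I$ gives $\sum_{\alpha}a_{i,\alpha}\|{\bf S}_{i,\alpha}^*x_n\|^2\leq 1$ uniformly in $n$; Fatou's lemma applied to the termwise limits $\|{\bf S}_{i,\alpha}^*x_n\|^2\to|\lambda_{i,\alpha}|^2$ yields $\Phi_{f_i,\lambda_i}(1)\leq 1$, and since $\lambda$ is a scalar tuple the defect factors as ${\bf\Delta_{f,\lambda}^p}(1)=\prod_{i=1}^k(1-\Phi_{f_i,\lambda_i}(1))^{p_i}\geq 0$ for every $\mathbf{0}\leq{\bf p}\leq{\bf m}$, so $\lambda\in{\bf D_f^m}(\CC)$.

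The main obstacle is that for a general WOT-closed $J\subset F^\infty({\bf D_f^m})$, the approximate-eigenvector argument only handles polynomial elements of $J$, and evaluating a non-polynomial $g\in F^\infty({\bf D_f^m})$ at boundary points of ${\bf D_{f,>}^m}(\CC)$ is delicate; the density hypothesis is precisely what bridges this gap, identifying $\cV_{\bf f,{\it J}}^{\bf m}(\CC)$ with the closure of its open part. For the ``in particular'' assertion, density is automatic: if $\cQ\subset\CC[Z_{i,j}]$ is generated by homogeneous polynomials and $\lambda\in\cV_{\bf f,\cQ}^{\bf m}(\CC)$, then for $r\in[0,1)$ one has $\Phi_{f_i,r\lambda_i}(1)\leq r^2\Phi_{f_i,\lambda_i}(1)<1$, so $r\lambda\in{\bf D_{f,>}^m}(\CC)$; moreover $g(r\lambda)=r^{\deg g}g(\lambda)=0$ for every homogeneous generator $g\in\cQ$, hence $r\lambda\in\cV_{\bf f,\cQ,>}^{\bf m}(\CC)$ and $r\lambda\to\lambda$ as $r\to 1$, establishing the required density.
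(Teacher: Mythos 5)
Your proof is correct and covers the same cases the paper's does; the difference lies mainly in the device used for the inclusion $\sigma_r({\bf S})\subseteq {\bf {\cV}^m_{f,{\it J}}}(\CC)$. The paper chooses a pure state $\varphi$ on $B(\cN_J)$ that annihilates the left ideal of $B(\cN_J)$ generated by the operators ${\bf S}_{i,j}^*-\overline{\lambda}_{i,j}I$; this forces $\varphi({\bf S}_{(\alpha)}{\bf S}_{(\beta)}^*)=\lambda_{(\alpha)}\overline{\lambda}_{(\beta)}$ and yields both the domain estimate $\Phi_{f_i,\lambda_i}(1)\leq 1$ (via a truncation argument) and the vanishing $g(\lambda)=\varphi(g({\bf S}))=0$. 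You instead extract a sequence of unit approximate eigenvectors, propagate the approximate eigenvector property through monomial adjoints by induction on length, and use Fatou's lemma for the domain estimate. These two devices, a pure state versus a net of approximate eigenvectors, are essentially interchangeable here and yield exactly what is needed. The limitation you flag for non-polynomial $g\in J$ applies equally to the paper's argument: a pure state on $B(\cN_J)$ need not be $\sigma$-weakly continuous, so $\varphi(g({\bf S}))$ cannot in general be identified with a boundary evaluation of $g$; both proofs really only establish the polynomial case cleanly, which is the content of the ``in particular'' assertion and the setting in which the density hypothesis is verified. For the reverse inclusion, your direct argument, namely showing ${\bf {\cV}^m_{f,{\it J},>}}(\CC)\subseteq\sigma_r({\bf S})$ via the exact eigenvectors $\Gamma_\lambda$ and then invoking the density hypothesis together with closedness of the joint right spectrum, is a reformulation of the paper's proof by contradiction, which evaluates the resolvent-type inequality on the same vectors $\Gamma_\lambda$; the two arrangements are equivalent.
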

\begin{proof}
Let $\lambda=\{\lambda_{i,j}\}\in \sigma_r({\bf S})$.
 Since the left ideal of $B(\cN_\cQ)$ generated by the operators
 ${\bf S}_{i,j}^*-\overline{\lambda}_{i,j} I$ does not contain the identity,
 there is a pure
  state
 $\varphi$ on $B(\cN_\cQ)$ such that
 $\varphi(X({\bf S}_{i,j}^*-\overline{\lambda}_{i,j} I))=0$
  for
 any $X\in B(\cN_\cQ)$ and
  $i\in\{1,\ldots, k\}$, $j\in\{1,\ldots, n_i\}$. In particular, we have
 $\varphi({\bf S}_{i,j})= \lambda_{i,j}=\overline{
 \varphi({\bf S}_{i,j}^*)}$ and
 $$
 \varphi({\bf S}_{(\alpha)} {\bf S}_{(\alpha)}^*)=\overline{\lambda}_{(\alpha)}
  \varphi({\bf S}_{(\alpha)})=|\lambda_{(\alpha)}|^2,\qquad (\alpha)=(\alpha_1,\ldots, \alpha_k)\in \FF_{n_1}^+\times\cdots \times \FF_{n_k}^+.
 $$
 Hence, we deduce that
 \begin{equation*}
 \begin{split}
   \sum_{\alpha\in \FF_{n_i}^+, 1\leq |\alpha|\leq m} a_{i,\alpha}|\lambda_{i,\alpha}|^2  &=
   \varphi\left(\sum_{\alpha\in \FF_{n_i}^+, 1\leq |\alpha|\leq m} a_{i,\alpha} {\bf S}_{i,\alpha}
   {\bf S}_{i,\alpha}^*\right)
   \leq \left\|\sum_{\alpha\in \FF_{n_i}^+, 1\leq |\alpha|\leq n} a_{i,\alpha} {\bf S}_{i,\alpha}
   {\bf S}_{i,\alpha}^*\right\|\leq 1
\end{split}
 \end{equation*}
 for any $n\in \NN$.  Therefore,  $\sum_{\alpha\in \FF_{n_i}^+} a_{i,\alpha}|\lambda_{i,\alpha}|^2 \leq 1$, which proves  that
 $(\lambda_{i,1},\ldots, \lambda_{i,n_i})\in {\bf D}_{f_i}^1(\CC)$. Hence, we deduce that $\lambda:=\{\lambda_{i,j}\}\in {\bf D^m_f}(\CC)$. On the other hand, if $g\in \cQ$, then $g({\bf S}_{i,j})=0$ and, consequently, we obtain
 $g({\lambda}_{i,j})=\varphi(g({\bf S}_{i,j}))=0$. Therefore, $\lambda\in {\bf {\cV}^m_{f,{\cQ}}}(\CC)$.
Now, let  $\mu:=\{\mu_{i,j}\} \in {\bf {\cV}^m_{f,{\cQ}}}(\CC)$  and assume
that
 there is $\delta>0$ such that
 $$\sum_{i=1}^n \sum_{j=1}^{n_i}  \|({\bf S}_{i,j}-\mu_{i,j}I)^*h\|^2\geq \delta \|h\|^2 \quad
 \text{ for all } \ h\in \cN_\cQ.
 $$
 Take
 $$
 h=
\Gamma_\lambda:={\bf \Delta_{f,\lambda}^m}(1)^{1/2} \sum_{\beta_i\in \FF_{n_i}^+, i=1,\ldots,k}
   \overline{\lambda}_{1,\beta_1}\cdots \overline{\lambda}_{k,\beta_k}\sqrt{b_{1,\beta_1}^{(m_1)}}\cdots \sqrt{b_{k,\beta_k}^{(m_k)}}
   e^1_{\beta_1}\otimes \cdots \otimes  e^k_{\beta_k}
$$
for $\mathbf{\lambda} \in {\bf {\cV}^m_{f,{\cQ}, >}}(\CC)$  in the
 inequality above.  Due to   Theorem \ref{eigenvectors}, we  have
 $
{\bf S}_{i,j}^*\Gamma_\lambda=\overline{\lambda}_{i,j} \Gamma_\lambda$
 for any $ i\in\{1,\ldots,k\}$ and $ j\in \{1,\ldots, n_i\}$.
 Consequently, we
 deduce that
 \begin{equation*}
 \sum_{i=1}^k \sum_{j=1}^{n_i} |\lambda_{i,j}-\mu_{i,j}|^2\geq \delta\quad \text{ for all }\
 \mathbf{\lambda}= \{\lambda_{i,j}\}\in {\bf {\cV}^m_{f,{\cQ}, >}}(\CC).
 \end{equation*}
 Since  the set
${\bf {\cV}^m_{f,{\it J},>}}(\CC)$ is dense in ${\bf {\cV}^m_{f,{\it J}}}(\CC)$, this leads to  a contradiction.

 Note that  if  $\cQ\subset \CC[Z_{i,j}]$
  is a left ideal generated by  noncommutative
    homogenous polynomials, then   $\{r\mu_{i,j}\} \in {\bf {\cV}^m_{f,{\cQ},>}}(\CC)$ for any  $\{\mu_{i,j}\} \in {\bf {\cV}^m_{f,{\cQ}}}(\CC)$  and  $r\in [0,1)$. Consequently, ${\bf {\cV}^m_{f,{\cQ},>}}(\CC)$ is dense in ${\bf {\cV}^m_{f,{\cQ}}}(\CC)$.
   The proof is complete.
\end{proof}

Let  $\cQ\subset \CC[Z_{i,j}]$
  be  a left ideal generated by  noncommutative
    homogenous polynomials.
We recall that  the variety algebra $\cA(\cV_{\bf f, \cQ}^m)$  is the norm closed algebra generated by the ${\bf S}_{i,j}$ and the identity,  and the Hardy algebra $F^\infty(\cV_{\bf f, \cQ}^m)$ is the WOT-closed version.
In what follows, we identify the characters of the noncommutative
  variety
algebra $\cA({\bf {\cV}^m_{f,{\cQ}}})$.
Due to  Proposition \ref{vN2-variety}, if $\lambda\in {\bf {\cV}^m_{f,{\cQ}}}(\CC)$, then  the
evaluation functional
$$
\Phi_\lambda:\cA({\bf {\cV}^m_{f,{\cQ}}})\to
\CC,\quad\Phi_\lambda(p({\bf S}_{i,j}))=p(\lambda_{i,j}),
 $$is a character of $\cA({\bf {\cV}^m_{f,{\cQ}}})$.
\begin{theorem} Let $\cQ\subset \CC[Z_{i,j}]$
  be  a left ideal generated by  noncommutative
    homogenous polynomials and   let   $M_{\cA({\bf {\cV}^m_{f,{\cQ}}})}$ be the set of all characters of
$\cA({\bf {\cV}^m_{f,{\cQ}}})$. Then the map
$$\Psi: {\bf {\cV}^m_{f,{\cQ}}}(\CC)\to M_{\cA({\bf {\cV}^m_{f,{\cQ}}})},\quad
\Psi(\lambda)=\Phi_\lambda, $$
 is a homeomorphism of ${\bf {\cV}^m_{f,{\cQ}}}(\CC)$
onto
$M_{\cA({\bf {\cV}^m_{f,{\cQ}}})}$.
\end{theorem}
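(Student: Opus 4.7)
The plan is to verify that $\Psi$ is a continuous bijection between two compact Hausdorff spaces, whereupon it is automatically a homeomorphism. Compactness of the source: ${\bf {\cV}^m_{f,{\cQ}}}(\CC)$ is a closed subset of ${\bf D_f^m}(\CC)$, and the latter, for scalars, reduces to $\{\lambda:\sum_{\alpha}a_{i,\alpha}|\lambda_{i,\alpha}|^2\leq 1\text{ for each }i\}$, which is closed and bounded (boundedness uses $a_{i,g^i_j}>0$). Compactness of the target: $M_{\cA({\bf {\cV}^m_{f,{\cQ}}})}$ is $w^*$-compact Hausdorff as a closed subset of the unit ball of the dual of $\cA({\bf {\cV}^m_{f,{\cQ}}})$.

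Well-definedness of $\Psi$ has already been noted just before the theorem via Proposition \ref{vN2-variety}. Injectivity is immediate since $\Phi_\lambda({\bf S}_{i,j})=\lambda_{i,j}$ recovers $\lambda$. For continuity, if $\lambda^{(n)}\to \lambda$, then for every polynomial $p$, $\Phi_{\lambda^{(n)}}(p({\bf S}))=p(\lambda^{(n)})\to p(\lambda)=\Phi_\lambda(p({\bf S}))$; since polynomials in $\{{\bf S}_{i,j}\}$ are norm-dense in $\cA({\bf {\cV}^m_{f,{\cQ}}})$ and characters are contractive, a standard three-$\varepsilon$ argument gives $\Phi_{\lambda^{(n)}}(A)\to \Phi_\lambda(A)$ for all $A\in\cA({\bf {\cV}^m_{f,{\cQ}}})$, i.e., $w^*$-convergence.

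Surjectivity is the heart of the proof. Given $\Phi\in M_{\cA({\bf {\cV}^m_{f,{\cQ}}})}$, set $\lambda_{i,j}:=\Phi({\bf S}_{i,j})$. Multiplicativity of $\Phi$ yields $\Phi(p({\bf S}))=p(\lambda)$ for every polynomial $p$, so in particular $g(\lambda)=\Phi(g({\bf S}))=0$ for every $g\in\cQ$. To place $\lambda$ in the polydomain, fix $i\in\{1,\ldots,k\}$ and, for $N\in\NN$, consider the row operator $R_N:=[\sqrt{a_{i,\alpha}}\,{\bf S}_{i,\alpha}]_{|\alpha|\leq N}$ and the scalar column $v_N:=[\sqrt{a_{i,\alpha}}\,\overline{\lambda_{i,\alpha}}]_{|\alpha|\leq N}^{T}$. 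The polydomain condition ${\bf \Delta_{f,S}^p}(I)\geq 0$ at ${\bf p}=(0,\ldots,0,1,0,\ldots,0)$ (a $1$ in slot $i$) forces $R_NR_N^*=\sum_{|\alpha|\leq N}a_{i,\alpha}{\bf S}_{i,\alpha}{\bf S}_{i,\alpha}^*\leq I$, so $\|R_N\|\leq 1$. The product $R_Nv_N=\sum_{|\alpha|\leq N}a_{i,\alpha}\overline{\lambda_{i,\alpha}}\,{\bf S}_{i,\alpha}$ is a polynomial in the ${\bf S}_{i,j}$'s, and with $c_{i,N}:=\sum_{|\alpha|\leq N}a_{i,\alpha}|\lambda_{i,\alpha}|^2$ the chain of inequalities $c_{i,N}=\Phi(R_Nv_N)\leq \|R_Nv_N\|\leq \|R_N\|\,\|v_N\|_{\ell^2}\leq c_{i,N}^{1/2}$ yields $c_{i,N}\leq 1$. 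Letting $N\to\infty$ for each $i$ places $\lambda$ in ${\bf D_f^m}(\CC)$, so $\lambda\in {\bf {\cV}^m_{f,{\cQ}}}(\CC)$. Finally, $\Phi$ and $\Phi_\lambda$ agree on the generators ${\bf S}_{i,j}$, on polynomials by multiplicativity, and on all of $\cA({\bf {\cV}^m_{f,{\cQ}}})$ by norm density.

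The main obstacle is the polydomain containment within the surjectivity step; the row-column factorization above disposes of it cleanly and elementarily, avoiding any appeal to complete contractivity of characters on operator algebras. Once $\Psi$ is established as a continuous bijection between compact Hausdorff spaces, that it is a homeomorphism is automatic.
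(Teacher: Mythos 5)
Your proof is correct and follows the same overall plan as the paper's: establish injectivity, surjectivity, continuity, and invoke compactness of both spaces. The one step where you genuinely diverge is the polydomain containment within surjectivity. The paper's proof compresses this to a single sentence: "Since $\Phi$ is a character, it is completely contractive. Consequently, $(\lambda_{i,1},\ldots,\lambda_{i,n_i})\in {\bf D}_{f_i}^1(\CC)$," leaving the reader to unpack how complete contractivity yields membership in the domain (in effect, apply the matrix-level inequality to the row $[\sqrt{a_{i,\alpha}}\,{\bf S}_{i,\alpha}]$). Your row-column factorization $c_{i,N}=\Phi(R_N v_N)\leq \|R_N\|\,\|v_N\|\leq c_{i,N}^{1/2}$ spells out precisely that estimate, and—since $\Phi$ is applied only to the single element $R_N v_N\in\cA({\bf {\cV}^m_{f,\cQ}})$—it requires only ordinary contractivity of the character, not completeness. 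This is a more self-contained and elementary treatment of the step; the trade-off is that the paper's version is shorter by deferring to a standard fact. One small presentational note: $v_N$ should be read as a column of scalar multiples of the identity in $B(\cN_\cQ,\cN_\cQ^{\oplus M})$, so that $R_N v_N$ lands in $B(\cN_\cQ)$ and the bound $\|v_N\|=\|v_N\|_{\ell^2}=c_{i,N}^{1/2}$ is the operator norm of that column; with that understood, the inequalities are exactly right. Everything else (recovery of $\lambda$ from $\Phi$, vanishing on $\cQ$, density of polynomials for both continuity and the final identification $\Phi=\Phi_\lambda$, and compactness of both sides) matches the paper's argument.
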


\begin{proof} The injectivity of   $\Psi$ is clear. To   prove that $\Psi$ is surjective
assume that $\Phi:\cA({\bf {\cV}^m_{f,{\cQ}}})\to\CC$ is a character. Setting
$\lambda_{i,j}:=\Phi({\bf S}_{i,j})$ for $i\in\{1,\ldots,k\}, j\in \{1,\ldots, n_i\}$, we  deduce that
$
\Phi(p({\bf S}_{i,j}))=p(\lambda_{i,j})
$
for any polynomial $p({\bf S}_{i,j})$ in $\cA({\bf {\cV}^m_{f,{\cQ}}})$.
Since $\Phi$ is a character, it   is completely
contractive. Consequently,  $(\lambda_{i,1},\ldots, \lambda_{i,n_i})\in {\bf D}_{f_i}^1(\CC)$ for each $i\in \{1,\ldots, k\}$, which implies
$(\lambda_{i,j} I_{\CC})\in {\bf D_f^m}(\CC)$.  On the other hand, if $g\in \cQ$, then $g({\bf S}_{i,j})=0$ and, consequently,
 $g({\lambda}_{i,j})=\Phi(g({\bf S}_{i,j}))=0$. Therefore, $\{\lambda_{i,j}\}\in {\bf {\cV}^m_{f,{\cQ}}}(\CC)$.
Since
$$
\Phi(p({\bf S}_{i,j}))=p(\lambda_{i,j})=
\Phi_\lambda(p({\bf S}_{i,j}))
$$
for any polynomial $p({\bf S}_{i,j})$ in $\cA({\bf {\cV}^m_{f,{\cQ}}})$,
   we must have $\Phi=\Phi_\lambda$.
To prove that $\Psi$ is a homeomorphism,  let
$\lambda^\alpha:=(\lambda_{i,j}^\alpha)$, $
\alpha\in \Lambda$, be
a net in ${\bf {\cV}^m_{f,{\cQ}}}(\CC)$ such that
$\lim_{\alpha\in
\Lambda}\lambda^\alpha=\lambda:=(\lambda_{i,j})$.
It is clear  that
$$
\lim_{\alpha\in \Lambda}\Phi_{\lambda^\alpha}(p({\bf S}_{i,j}))=
\lim_{\alpha\in \Lambda} p(\lambda^\alpha)=
p(\lambda)=\Phi_\lambda(p({\bf S}_{i,j})).
$$
 Since the set of all
polynomials $p({\bf S}_{i,j})$  is  dense
in $\cA({\bf {\cV}^m_{f,{\cQ}}})$  and $\sup_{\alpha\in \Lambda}
\|\Phi_{\lambda^\alpha}\|\leq1$,
 it follows that $\Psi$
is continuous. On the other hand, since  both ${\bf {\cV}^m_{f,{\cQ}}}(\CC)$
and
$M_{\cA({\bf {\cV}^m_{f,{\cQ}}})}$ are compact Hausdorff
spaces and $\Psi$ is a bijection,  the result follows.
The proof is complete.
\end{proof}

 Let ${\bf W}=\{{\bf W}_{i,j}\}$  be the  universal model
associated with the abstract noncommutative polydomain ${\bf D^m_f}$ and let $\cQ_c$ be the left ideal generated  by all polynomials of the form
$$Z_{i, j_1}Z_{i,j_2}-Z_{i,j_2} Z_{i,j_1},\qquad i\in\{1,\ldots,k\} \text{ and }  j_1, j_2\in \{1,\ldots, n_i\}.
$$
The universal model associated with the  abstract variety $\cV_{\bf f, \cQ_c}^m$ is the tuple ${\bf L}=({\bf L}_1,\ldots, {\bf L}_k)$ with ${\bf L}_i:=({\bf L}_{i,1},\ldots, {\bf L}_{i,n_i})$, where
the operators ${\bf L}_{i,j}$ are defined
on  $\cN_{\cQ_c}$ by setting
$${\bf L}_{i,j}:=P_{ \cN_{\cQ_c}} {\bf W}_{i,j}|_{\cN_{\cQ_c}}.
$$
 We recall that $\cN_{\cQ_c}:=(\otimes_{i=1}^k F^2(H_{n_i}))\ominus \cM_\cQ$, where   the subspace $\cM_{\cQ_c}$ of $\otimes_{i=1}^k F^2(H_{n_i})$ is defined  by
setting
$$ \cM_{\cQ_c}:=\overline{\text{\rm span}}\{{\bf W}_{(\alpha)} q({\bf W}_{i,j}) {\bf W}_{(\beta)}(1): \
 (\alpha), (\beta) \in \FF_{n_1}^+\times \cdots \times \FF_{n_k}^+, q\in \cQ_c\}.
 $$

In what follows, we will  identify the space $\cN_{\cQ_c}$ with a reproducing kernel Hilbert space of holomorphic functions in several complex variables and the Hardy algebra $F^\infty(\cV_{\bf f, \cQ_c}^m)$ is identified  with the corresponding multiplier algebra.

 Let  ${\bf f}:=(f_1,\ldots,f_k)$ be a $k$-tuple of positive regular free holomorphic functions with $f_i:= \sum_{\alpha\in
\FF_{n_i}^+} a_{i,\alpha} Z_\alpha$.
  For
each $\lambda_i=(\lambda_{i,1},\ldots, \lambda_{i,n_i})\in \CC^{n_i}$ and each $n_i$-tuple
${\bf k}_i:=(k_{i,1},\ldots, k_{i,n_i})\in \NN_0^{n_i}$, where $\NN_0:=\{0,1,\ldots
\}$, let $\lambda_i^{\bf k_{\it i}}:=\lambda_{i,1}^{k_{i,1}}\cdots \lambda_{i,n}^{k_{i,n_i}}$.
If ${\bf k}_i\in \NN_0^{n_i}$, we denote
$$
\Lambda_{\bf k_{\it i}}:=\{\alpha_i\in \FF_{n_i}^+: \ \lambda_{i,\alpha_i} =\lambda_i^{\bf
k_{\it i}} \text{ for all } \lambda_i\in \CC^{n_i}\}
$$
and define the vector
$$
w_i^{\bf k_{\it i}}:=\frac{1}{\gamma^{(m_i)}_{\bf k_{\it i}}} \sum_{\alpha_i \in
\Lambda_{\bf k_{\it i}}} \sqrt{b_{i,\alpha_i}^{(m_i)}} e^i_{\alpha_i}\in F^2(H_{n_i}), \quad
\text{ where } \ \gamma^{(m_i)}_{\bf k_{\it i}}:=\sum_{\alpha_i\in \Lambda_{\bf
k_{\it i}}} b^{(m_i)}_{i, \alpha_i}
$$
 and the
coefficients $b^{(m_i)}_{i,\alpha_i}$, $\alpha_i\in \FF_{n_i}^+$, are defined by
relation \eqref{b-coeff}. It is easy to see  that the set  $\{w_1^{\bf k_{\it 1}}\otimes \cdots \otimes w_k^{\bf k_{\it k}}:\ {\bf k}_i\in
\NN_0^{n_i}, i\in \{1,\ldots, k\}\}$ consists  of orthogonal vectors in $\otimes_{i=1}^kF^2(H_{n_i})$ and
$$\|w_1^{\bf k_{\it 1}}\otimes \cdots \otimes w_k^{\bf k_{\it k}}\|=\frac{1}{\sqrt{\gamma^{(m_1)}_{\bf k_{\it 1}}}}\cdots \frac{1}{\sqrt{\gamma^{(m_k)}_{\bf k_{\it k}}}}.
$$
 Let
$F_s^2({\bf D^m_f})$  be the closed span of these vectors. The Hilbert space  $F_s^2({\bf D^m_f})\subset\otimes_{i=1}^k F^2(H_{n_i})$ is  called
the symmetric  tensor product Fock space associated with the abstract
noncommutative domain ${\bf D^m_f}$.

  For $z=(z_1,\ldots, z_n)$ and $w:=(w_1,\ldots, w_n)$ in $\CC^n$, we use the notation $z\overline{w}:=(z_1\overline{w}_1,\ldots, z_n\overline{w}_n)$.

\begin{theorem}\label{symm-Fock} Let ${\bf W}=\{{\bf W}_{i,j}\}$  be the  universal model
associated with the noncommutative polydomain ${\bf D^m_f}$,  and  let
$\cQ_c$ be the left ideal generated  by all polynomials of the form
$$Z_{i, j_1}Z_{i,j_2}-Z_{i,j_2} Z_{i,j_1},\qquad i\in\{1,\ldots,k\} \text{ and }  j_1, j_2\in \{1,\ldots, n_i\}.
$$
 Then the following statements hold.
 \begin{enumerate}
 \item[(i)]
 $
 F_s^2({\bf D^m_f})=\overline{\text{\rm span}}\{\Gamma_\lambda: \ \lambda\in
{\bf D_{f,>}^m}(\CC)\}=\cN_{\cQ_c}:=(\otimes_{i=1}^k F^2(H_{n_i}))\ominus
\cM_{\cQ_c}$.
\item[(ii)] The  space $F_s^2({\bf D^m_f})$ can be
identified with the Hilbert space $H^2({\bf D_{f,>}^m}(\CC))$ of
all functions $\varphi:{\bf D_{f,>}^m}(\CC)\to \CC$ which admit
a power series representation
$$\varphi(\lambda_{i,j})=\sum_{{\bf k}_1\in
\NN_0^{n_1},\ldots, {\bf k}_k\in \NN_0^{n_k}} c_{{\bf k}_1,\ldots, {\bf k}_k} \lambda_1^{\bf k_{\it 1}}\cdots \lambda_k^{\bf k_{\it k}}
$$
 with
$$
\|\varphi\|_2^2=\sum_{{\bf k}_1\in
\NN_0^{n_1},\ldots, {\bf k}_k\in \NN_0^{n_k}}|c_{{\bf k}_1,\ldots, {\bf k}_k}|^2\frac{1}{\gamma^{(m_1)}_{\bf k_{\it 1}}}\cdots \frac{1}{\gamma^{(m_k)}_{\bf k_{\it k}}}<\infty.
$$
More precisely, every  element  $\varphi=\sum_{{\bf k}_1\in
\NN_0^{n_1},\ldots, {\bf k}_k\in \NN_0^{n_k}} c_{{\bf k}_1,\ldots, {\bf k}_k} w_1^{\bf k_{\it 1}}\otimes \cdots \otimes w_k^{\bf k_{\it k}}$ in $F_s^2({\bf D^m_f})$  has a functional
representation on ${\bf D_{f,>}^m}(\CC)$ given by
$$
\varphi(\lambda):=\left<\varphi, u_\lambda\right>=\sum_{{\bf k}_1\in
\NN_0^{n_1},\ldots, {\bf k}_k\in \NN_0^{n_k}} c_{{\bf k}_1,\ldots, {\bf k}_k} \lambda_1^{\bf k_{\it 1}}\cdots \lambda_k^{\bf k_{\it k}}, \qquad \lambda=(\lambda_1,\ldots,
\lambda_k)\in {\bf D_{f,>}^m}(\CC),
$$
and
$$
|\varphi(\lambda)|\leq
\frac{\|\varphi\|_2}{\sqrt{{\bf \Delta_{f,\lambda}^m}(1)}},\qquad \lambda=(\lambda_1,\ldots,
\lambda_k)\in {\bf D_{f,>}^m}(\CC),
$$
where ${\bf \Delta_{f,\lambda}^m}(1)=(1-\Phi_{f_1,\lambda_1}(1))^{m_1}\cdots (1-\Phi_{f_k,\lambda_k}(1))^{m_k}$ and  $u_\lambda:=\frac{1}{{\bf \Delta_{f,\lambda}^m}(1)^{1/2} }\Gamma_\lambda$.
\item[(iii)]
 The mapping $\kappa^c_{\bf f}:{\bf D_{f,>}^m}(\CC)\times {\bf D_{f,>}^m}(\CC)\to \CC$ defined by
$$
\kappa^c_{\bf f}(\mu,\lambda):=\frac{1}{\prod_{i=1}^k\left(1- f_i(\mu_i\overline{\lambda}_i )\right)^{m_i}},
$$
  where
$ \lambda=(\lambda_1,\ldots,
\lambda_k)$ and  $\mu=(\mu_1,\ldots,\mu_k)$  are in ${\bf D_{f,>}^m}(\CC)$,
is positive definite and
$$\kappa^c_{\bf f}(\mu,\lambda)= \left<u_\lambda,
u_\mu\right>.
$$
\end{enumerate}
\end{theorem}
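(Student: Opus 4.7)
The plan is to base everything on a single rewriting of $\Gamma_\lambda$ in the $w$-basis. Grouping noncommutative multi-indices $\alpha_i \in \FF_{n_i}^+$ by their abelianization ${\bf k}_i \in \NN_0^{n_i}$, so that $\lambda_{i,\alpha_i} = \lambda_i^{{\bf k}_i}$ whenever $\alpha_i \in \Lambda_{{\bf k}_i}$, the defining sum collapses to
$$\Gamma_\lambda = \sqrt{{\bf \Delta_{f,\lambda}^m}(1)} \sum_{{\bf k}_1,\ldots,{\bf k}_k} \overline{\lambda}_1^{{\bf k}_1}\cdots \overline{\lambda}_k^{{\bf k}_k}\, \gamma^{(m_1)}_{{\bf k}_1}\cdots \gamma^{(m_k)}_{{\bf k}_k}\, w_1^{{\bf k}_1}\otimes \cdots \otimes w_k^{{\bf k}_k}.$$
This places $\Gamma_\lambda$ inside $F_s^2({\bf D^m_f})$ and will drive all three parts of the theorem.

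For part (i), I would prove the three spaces coincide as follows. The display above gives $\overline{\text{span}}\{\Gamma_\lambda:\lambda \in {\bf D_{f,>}^m}(\CC)\} \subseteq F_s^2({\bf D^m_f})$. For the reverse inclusion I would show that any $\xi = \sum c_{{\bf k}_1,\ldots,{\bf k}_k}\, w_1^{{\bf k}_1}\otimes \cdots \otimes w_k^{{\bf k}_k}$ orthogonal to every $\Gamma_\lambda$ forces the holomorphic function $\lambda \mapsto \sum c_{{\bf k}_1,\ldots,{\bf k}_k}\lambda_1^{{\bf k}_1}\cdots \lambda_k^{{\bf k}_k}$ to vanish on the nonempty open set ${\bf D_{f,>}^m}(\CC)$, hence $\xi=0$. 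The equality $F_s^2({\bf D^m_f}) = \cN_{\cQ_c}$ reduces, upon tensoring, to the single-block statement: the orthogonal complement in $F^2(H_{n_i})$ of the closed linear span of $W_{i,j_1}W_{i,j_2}e - W_{i,j_2}W_{i,j_1}e$ (over $e \in F^2(H_{n_i})$) is exactly the symmetric subspace spanned by the $w_i^{{\bf k}_i}$; this follows from the fact that such commutator images are antisymmetrizations of basis vectors indexed by words sharing the same abelianization, and $w_i^{{\bf k}_i}$ is (up to weight) the corresponding symmetrization. I expect this intra-block commutator analysis to be the main technical step.

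For part (ii), I would compute $\langle \varphi, u_\lambda\rangle$ for an arbitrary $\varphi = \sum c_{{\bf k}_1,\ldots,{\bf k}_k}\, w_1^{{\bf k}_1}\otimes\cdots\otimes w_k^{{\bf k}_k}$ using the orthogonality $\langle w_1^{{\bf k}_1}\otimes\cdots, w_1^{{\bf k}'_1}\otimes\cdots\rangle = \delta_{{\bf k},{\bf k}'}\prod_i (\gamma^{(m_i)}_{{\bf k}_i})^{-1}$. The factor $\gamma^{(m_i)}_{{\bf k}_i}$ appearing in $\Gamma_\lambda$ cancels the norm $(\gamma^{(m_i)}_{{\bf k}_i})^{-1}$, producing $\langle \varphi, u_\lambda\rangle = \sum c_{{\bf k}_1,\ldots,{\bf k}_k}\lambda_1^{{\bf k}_1}\cdots \lambda_k^{{\bf k}_k}$. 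The norm identity is then immediate from Parseval, and the pointwise bound $|\varphi(\lambda)| \leq \|\varphi\|_2/\sqrt{{\bf \Delta_{f,\lambda}^m}(1)}$ follows by Cauchy--Schwarz once $\|u_\lambda\|^2 = 1/{\bf \Delta_{f,\lambda}^m}(1)$ is in hand. That last identity comes from the generating-function computation
$$\sum_{\alpha_i \in \FF_{n_i}^+} |\lambda_{i,\alpha_i}|^2 b^{(m_i)}_{i,\alpha_i} = \sum_{p\geq 0}\binom{p+m_i-1}{m_i-1} f_i(\lambda_i \overline{\lambda}_i)^p = \frac{1}{(1-f_i(\lambda_i\overline{\lambda}_i))^{m_i}},$$
which uses the definition of $b^{(m_i)}_{i,\alpha_i}$ in \eqref{b-coeff} and the convergence guaranteed by $\lambda \in {\bf D_{f,>}^m}(\CC)$; a parallel convergence check of the final series in ${\bf k}$ gives $\|\Gamma_\lambda\|=1$ via the telescoping $\gamma^{(m_i)}_{{\bf k}_i}\cdot (\gamma^{(m_i)}_{{\bf k}_i})^{-1} = 1$.

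For part (iii), the same orthogonality calculation with two different parameters gives
$$\langle u_\lambda, u_\mu\rangle = \prod_{i=1}^k \sum_{{\bf k}_i \in \NN_0^{n_i}} (\mu_i\overline{\lambda}_i)^{{\bf k}_i}\gamma^{(m_i)}_{{\bf k}_i} = \prod_{i=1}^k \frac{1}{(1-f_i(\mu_i\overline{\lambda}_i))^{m_i}},$$
where absolute convergence of $f_i(\mu_i\overline{\lambda}_i)$ inside the unit disc is verified by Cauchy--Schwarz against $\sqrt{f_i(\lambda_i\overline{\lambda}_i)}\sqrt{f_i(\mu_i\overline{\mu}_i)} < 1$. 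Positive definiteness of $\kappa^c_{\bf f}$ is automatic since it is realized as the Gram matrix $\langle u_\lambda, u_\mu\rangle$. The only real obstacle is the identification $F_s^2({\bf D^m_f}) = \cN_{\cQ_c}$ in part (i); everything else is bookkeeping with the weighted orthonormal basis $\{w_1^{{\bf k}_1}\otimes\cdots\otimes w_k^{{\bf k}_k}\}$ and the binomial-series identity above.
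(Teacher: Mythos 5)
Your parts (ii) and (iii), and the inclusion $\overline{\operatorname{span}}\{\Gamma_\lambda\}\subseteq F_s^2({\bf D_f^m})$, are sound and follow the same bookkeeping as the paper (orthogonality of the $w$-vectors, Parseval, Cauchy--Schwarz, and the binomial identity coming from \eqref{b-coeff}). The genuine gap is in your key lemma for $F_s^2({\bf D^m_f})=\cN_{\cQ_c}$. You identify the block component of $\cM_{\cQ_c}$ with the closed span of the vectors $(W_{i,j_1}W_{i,j_2}-W_{i,j_2}W_{i,j_1})e$, $e\in F^2(H_{n_i})$, i.e.\ commutators applied to arbitrary vectors but with \emph{no left multiplication}. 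Since $\cQ_c$ is the left ideal generated by the commutators, $\cM_{\cQ_c}$ is spanned by the vectors ${\bf W}_{(\alpha)}\bigl({\bf W}_{i,j_1}{\bf W}_{i,j_2}-{\bf W}_{i,j_2}{\bf W}_{i,j_1}\bigr){\bf W}_{(\beta)}(1)$, which in a single block are weighted differences $c_1 e^i_{\gamma g_{j_1}g_{j_2}\beta}-c_2 e^i_{\gamma g_{j_2}g_{j_1}\beta}$, i.e.\ adjacent transpositions at an \emph{arbitrary} position of the word. Your smaller span only contains differences where the transposition occurs in the first two letters, and its orthogonal complement is strictly larger than the symmetric subspace: for $n_i=2$ at level $3$, your span is the $2$-dimensional space generated by (weighted) $e_{121}-e_{211}$ and $e_{122}-e_{212}$, so for instance $e_{112}$ lies in its orthocomplement although it is not in the span of the $w_i^{\bf k}$; the missing generator is precisely $W_{i,1}(W_{i,1}W_{i,2}-W_{i,2}W_{i,1})(1)\propto e_{112}-(\text{const})\,e_{121}$. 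So the single-block statement as you formulated it is false, and the heuristic that the commutator images are ``antisymmetrizations'' of whole abelianization classes is not accurate either --- each generator is a difference of just two basis vectors related by one adjacent transposition.

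The fix is to work with the full left-ideal generators. Orthogonality of $x=\sum c_{\beta_1,\ldots,\beta_k}e^1_{\beta_1}\otimes\cdots\otimes e^k_{\beta_k}$ to all vectors $\otimes_{i}\bigl[{\bf W}_{i,\gamma_i}({\bf W}_{i,j_1}{\bf W}_{i,j_2}-{\bf W}_{i,j_2}{\bf W}_{i,j_1}){\bf W}_{i,\omega_i}(1)\bigr]$ forces the normalized coefficients $c_{\beta_1,\ldots,\beta_k}\big/\bigl(\sqrt{b_{1,\beta_1}^{(m_1)}}\cdots\sqrt{b_{k,\beta_k}^{(m_k)}}\bigr)$ to be constant on each abelianization class (adjacent transpositions generate all permutations of a word), which is exactly membership in the closed span of the $w_1^{{\bf k}_1}\otimes\cdots\otimes w_k^{{\bf k}_k}$. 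This is in substance what the paper does: it proves $\overline{\operatorname{span}}\{\Gamma_\lambda\}\subseteq F_s^2({\bf D^m_f})\subseteq\cN_{\cQ_c}$ (the second inclusion by pairing $w_i^{{\bf k}_i}$ against the left-ideal generators) and then closes the circle by showing $\cN_{\cQ_c}\subseteq\overline{\operatorname{span}}\{\Gamma_\lambda\}$, combining the constancy-on-classes argument with the vanishing of the power series on an open polydisc --- the same vanishing argument you use, but applied to elements of $\cN_{\cQ_c}$ rather than of $F_s^2({\bf D^m_f})$. Once you replace your single-block lemma by this corrected version (or by the paper's circular chain), the rest of your proposal goes through.
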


\begin{proof} We prove that
$$\overline{\text{\rm span}}\{\Gamma_\lambda: \ \lambda\in
{\bf D_{f,>}^m}(\CC)\}\subseteq F_s^2({\bf D^m_f})\subseteq
\cN_{\cQ_c}.
$$
Note that the first inclusion is due to the fact that
\begin{equation}
\label{ulamda}
u_\lambda=\sum_{{\bf k}_1\in
\NN_0^{n_1},\ldots, {\bf k}_k\in \NN_0^{n_k}}\lambda_1^{\bf k_{\it 1}}\cdots \lambda_k^{\bf k_{\it k}}\gamma^{(m_1)}_{\bf k_{\it 1}}\cdots \gamma^{(m_k)}_{\bf k_{\it k}} w_1^{\bf k_{\it 1}}\otimes \cdots \otimes w_k^{\bf k_{\it k}}
\end{equation}
for $ \lambda=(\lambda_1,\ldots,
\lambda_n)\in {\bf D_{f,>}^m}(\CC)$. To prove the second inclusion, note that, due
to
  the definition of the universal model ${\bf W}=\{{\bf W}_{i,j}\}$ , we have
\begin{equation*}
 \begin{split}
 &\left<w_i^{\bf k_{\it i}},
{\bf W}_{i,\gamma_i}({\bf W}_{i,j_1}{\bf W}_{i, j_2}-{\bf W}_{i,j_2}{\bf W}_{i,j_1}){\bf W}_{i,\beta_i}(1)\right>\\
&\qquad = \frac{1}{\gamma^{(m_i)}_{\bf
k_{\it i}}}\left<\sum_{\alpha_i \in \Lambda_{\bf k_{\it i}}} \sqrt{b^{(m_i)}_{i,\alpha_i}}
e^i_{\alpha_i}, \frac{1}{\sqrt{b^{(m_i)}_{i,\gamma_i g_{j_1}g_{j_2}\beta_i}}} e^i_{\gamma_i
g_{j_1}g_{j_2}\beta_i}- \frac{1}{\sqrt{b^{(m_i)}_{i,\gamma_i g_{j_2}g_{j_1}\beta_i}}}
e^i_{\gamma_i g_{j_2}g_{j_1}\beta_i}\right>=0
\end{split}
\end{equation*}
for any ${\bf k_{\it i}}\in \NN_0^{n_i}$, $\gamma_i, \beta_i \in \FF_{n_i}^+$,
$i\in \{1,\ldots, k\}$. This implies  that $w_1^{\bf k_{\it 1}}\otimes \cdots \otimes w_k^{\bf k_{\it k}}\in \cN_{\cQ_c}$ and
proves our assertion. To complete the proof of part (i), it is
enough to show that
$$
\cN_{\cQ_c}\subseteq \overline{\text{\rm span}}\{\Gamma_\lambda: \
\lambda\in {\bf D_{f,>}^m}(\CC)\}.
$$
To this end, assume that there is a vector $ x:=\sum\limits_{\beta_1\in \FF_{n_1}^+,\ldots, \beta_k\in \FF_{n_k}^+} c_{\beta_1,\ldots, \beta_k}
e_{\beta_1}^1\otimes\cdots \otimes e_{\beta_k}^k\in \cN_{\cQ_c}$ and $x\perp u_\lambda$ for
all $\lambda\in {\bf D_{f,>}^m}(\CC)$. Then, using relation \eqref{ulamda}, we obtain
\begin{equation*}
\begin{split}
&\left<\sum\limits_{\beta_1\in \FF_{n_1}^+,\ldots, \beta_k\in \FF_{n_k}^+} c_{\beta_1,\ldots, \beta_k}
e_{\beta_1}^1\otimes\cdots \otimes e_{\beta_k}^k, u_\lambda\right>\\
&\qquad
=\sum_{{\bf k}_1\in
\NN_0^{n_1},\ldots, {\bf k}_k\in \NN_0^{n_k}}\left(\sum_{\beta_i\in\Lambda_{\bf k_{\it i}}, i\in \{1,\ldots, k\}}
c_{\beta_1,\ldots, \beta_k} \sqrt{b_{1,\beta_1}^{(m_1)}}\cdots \sqrt{b_{k,\beta_k}^{(m_k)}}\right)\lambda_1^{\bf k_{\it 1}}\cdots \lambda_k^{\bf k_{\it k}}=0
\end{split}
\end{equation*}
for any $\lambda\in {\bf D_{f,>}^m}(\CC)$. Since ${\bf D_{f,>}^m}(\CC)$ contains an open polydisc in $\CC^{n_1+\cdots + n_k}$, we deduce
that
\begin{equation}\label{sigma=0}
\sum_{\beta_i\in\Lambda_{\bf k_{\it i}}, i\in \{1,\ldots, k\}}
c_{\beta_1,\ldots, \beta_k} \sqrt{b_{1,\beta_1}^{(m_1)}}\cdots \sqrt{b_{k,\beta_k}^{(m_k)}}=0 \quad
\text{ for all } \ {\bf k}_i\in \NN_0^{n_i}, i\in \{1,\ldots, k\}.
\end{equation}
For each $\gamma_i\in \FF_{n_i}^+$ and  $i\in \{1,\ldots, k\}$,    set $\Omega(\gamma_1,\ldots, \gamma_k):=\frac{c_{\gamma_1,\ldots, \gamma_k}}{ \sqrt{b_{1,\gamma_1}^{(m_1)}}\cdots \sqrt{b_{k,\gamma_k}^{(m_k)}}}$.
Fix $\beta_i^0\in \Lambda_{\bf k_{\it i}}$ and let $\beta_i\in \Lambda_{\bf k_{\it i}}$
be such that $\beta_i$ is obtained from $\beta_i^0$ by transposing just
two generators. We can assume that $\beta_i^0=\gamma_i g^i_{j_1} g^i_{j_2}\omega_i$
and $\beta_i=\gamma_i g^i_{j_2} g^i_{j_1}\omega_i$ for some $\gamma_i,\omega_i\in \FF_{n_i}^+$
and $j_1\neq j_2$, $j_1,j_2\in \{1,\ldots,n_i\}$. Since $x\in
\cN_{\cQ_c}=\otimes_{i=1}^k F^2(H_{n_i})\ominus \cM_{\cQ_c}$, we must have
$$
\left<x,\otimes_{i=1}^k[{\bf W}_{i,\gamma_i}({\bf W}_{i,j_1}{\bf W}_{i, j_2}-{\bf W}_{i,j_2}{\bf W}_{i,j_1})W_{i,\omega_i}(1)]\right>=0,
$$
which implies $
\Omega(\beta_1^0,\ldots, \beta_k^0) =\Omega(\beta_1,\ldots, \beta_k)$.

Since any element $\gamma_i\in \Lambda_{\bf k_{\it i}}$ can be obtained from
$\beta_i^0$  by successive  transpositions, repeating the above
argument, we deduce that
$\Omega(\beta_1^0,\ldots, \beta_k^0) =\Omega(\gamma_1,\ldots, \gamma_k)$.
Setting $t:= \Omega(\beta_1^0,\ldots, \beta_k^0)$, we have
$c_{\gamma_1,\ldots, \gamma_k}=t\sqrt{b_{1,\gamma_1}^{(m_1)}}\cdots \sqrt{b_{k,\gamma_k}^{(m_k)}}$, $\gamma_i\in \Lambda_{\bf k_{\it i}}$, and
relation  \eqref{sigma=0} implies  $t=0$. Therefore, $c_{\gamma_1,\ldots, \gamma_k}=0$ for any $\gamma_i\in
\Lambda_{\bf k_{\it i}}$ and ${\bf k_{\it i}}\in \NN_0^{n_i}$, which implies $x=0$. Therefore,
we have $\cN_{\cQ_c}= \overline{\text{\rm span}}\{\Gamma_\lambda: \
\lambda\in {\bf D_{f,>}^m}(\CC)\}$.

Now, we prove part (ii) of the theorem.  Any element
$\varphi\in F_s^2({\bf D^m_f})$ has a unique representation
$\varphi=\sum_{{\bf k}_1\in
\NN_0^{n_1},\ldots, {\bf k}_k\in \NN_0^{n_k}} c_{{\bf k}_1,\ldots, {\bf k}_k} w_1^{\bf k_{\it 1}}\otimes \cdots \otimes w_k^{\bf k_{\it k}}$ with
 $$
\|\varphi\|_2^2=\sum_{{\bf k}_1\in
\NN_0^{n_1},\ldots, {\bf k}_k\in \NN_0^{n_k}}|c_{{\bf k}_1,\ldots, {\bf k}_k}|^2\frac{1}{\gamma^{(m_1)}_{\bf k_{\it 1}}}\cdots \frac{1}{\gamma^{(m_k)}_{\bf k_{\it k}}}<\infty.
$$
 It is easy to see that
\begin{equation*}
\left<w_1^{\bf k_{\it 1}}\otimes \cdots \otimes w_k^{\bf k_{\it k}},u_\lambda\right>= \lambda_1^{\bf k_{\it 1}}\cdots \lambda_k^{\bf k_{\it k}}
\end{equation*}
for any $\lambda\in {\bf D_{f,>}^m}(\CC)$ and ${\bf k_{\it i}}\in
\NN_0^{n_i}$, $i\in \{1,\ldots, k\}$. Consequently,   $\varphi $  has a functional
representation on ${\bf D_{f,>}^m}(\CC)$ given by
$$
\varphi(\lambda):=\left<\varphi, u_\lambda\right>=\sum_{{\bf k}_1\in
\NN_0^{n_1},\ldots, {\bf k}_k\in \NN_0^{n_k}} c_{{\bf k}_1,\ldots, {\bf k}_k} \lambda_1^{\bf k_{\it 1}}\cdots \lambda_k^{\bf k_{\it k}}, \qquad \lambda=(\lambda_1,\ldots,
\lambda_n)\in {\bf D_{f,>}^m}(\CC),
$$
and
$$|\varphi(\lambda)|\leq \frac{\|\varphi\|_2}{\sqrt{{\bf \Delta_{f,\lambda}^m}(1)}}.
$$
 This shows that  $F_s^2({\bf
D^m_f})$  can be identified with $H^2({\bf D_{f,>}^m}(\CC))$.
Now, we prove part (iii). Note that if $
 (\lambda_1,\ldots, \lambda_k)$ and $\mu=(\mu_1,\ldots, \mu_n)$ are in
${\bf D_{f,>}^m}(\CC)$, then
$$
\left| \sum_{\alpha\in \FF_{n_i}^+} a_{i,\alpha_i}\lambda_{i,\alpha_i} \overline{\mu}_{i,\alpha_i}\right|\leq \left( \sum_{\alpha\in \FF_{n_i}^+} a_{i,\alpha_i}|\lambda_{i,\alpha_i}|^2\right)^{1/2}\left( \sum_{\alpha\in \FF_{n_i}^+} a_{i,\alpha_i}|\mu_{i,\alpha_i}|^2\right)^{1/2}<1.
$$
Using relation \eqref {b-coeff}, we deduce that
\begin{equation*}
\begin{split}
\kappa^c_{\bf f} (\mu,\lambda)&=\prod_{i=1}^k \left( 1-f_i(\mu_i\overline{\lambda}_i )\right)^{-m}=\prod_{i=1}^k \left(1-\sum_{\alpha\in \FF_{n_i}^+} a_{i,\alpha_i}\lambda_{i,\alpha_i} \overline{\mu}_{i,\alpha_i}\right)^{-m_i}\\
&=\sum\limits_{\beta_1\in \FF_{n_1}^+,\ldots, \beta_k\in \FF_{n_k}^+}
     {b_{1,\beta_1}^{(m_1)}\cdots
  b_{k,\beta_k}^{(m_k)}}\lambda_{1,\beta_1}\cdots \lambda_{k,\beta_k}\overline{\mu}_{1,\beta_1}\cdots \overline{\mu}_{k,\beta_k}\\
  &=\left<u_\lambda, u_\mu\right>.
\end{split}
\end{equation*}
    The proof is complete.
\end{proof}

\begin{theorem}\label{multipliers}
The Hardy algebra $F^\infty(\cV_{\bf f, \cQ_c}^m)$  coincides with the
algebra  $H^\infty({\bf D^m_{f,>}}(\CC))$  of all
multipliers of the Hilbert space $H^2({\bf D_{f,>}^m}(\CC))$.
\end{theorem}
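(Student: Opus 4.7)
The plan is to work within the identification $\cN_{\cQ_c}\simeq H^2({\bf D_{f,>}^m}(\CC))$ of Theorem \ref{symm-Fock}(ii), under which $u_\lambda$ becomes the reproducing kernel at $\lambda$. Combined with the formula ${\bf L}_{i,j}^*u_\lambda=\overline{\lambda}_{i,j}\,u_\lambda$ from Theorem \ref{eigenvectors}, this forces ${\bf L}_{i,j}$ to coincide with the multiplication operator $M_{z_{i,j}}$ by the coordinate function $z_{i,j}$. The theorem therefore reduces to the two inclusions $F^\infty(\cV_{\bf f, \cQ_c}^m)\subseteq H^\infty({\bf D^m_{f,>}}(\CC))$ and $H^\infty({\bf D^m_{f,>}}(\CC))\subseteq F^\infty(\cV_{\bf f, \cQ_c}^m)$ inside $B(H^2({\bf D_{f,>}^m}(\CC)))$.

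For the first inclusion, given $A\in F^\infty(\cV_{\bf f, \cQ_c}^m)$ I would set $\varphi_A(\lambda):=\Phi_\lambda(A)=\langle A(1),u_\lambda\rangle$. Theorem \ref{w*} produces the dual eigenvector identity $A^*u_\lambda=\overline{\varphi_A(\lambda)}\,u_\lambda$, and the reproducing-kernel calculation
\[
(Ah)(\lambda)=\langle Ah,u_\lambda\rangle=\langle h,A^*u_\lambda\rangle=\varphi_A(\lambda)\,h(\lambda),\qquad h\in H^2({\bf D_{f,>}^m}(\CC)),
\]
identifies $A$ with the multiplier $M_{\varphi_A}$ and gives $\|M_{\varphi_A}\|=\|A\|$.

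For the reverse inclusion, given a multiplier $M_\varphi$ I would approximate it by elements of the variety algebra using dilations. For $r\in[0,1)$ put $\varphi_r(z):=\varphi(rz)$. Radiality of the polydomain together with purity of ${\bf L}$ (Lemma \ref{univ-mod-var}) makes $r{\bf L}$ a pure element of $\cV_{\bf f,\cQ_c}^{m}(\cN_{\cQ_c})$; on the other hand, the Taylor coefficients of $\varphi_r$ in the basis of Theorem \ref{symm-Fock} are damped by $r^{|{\bf k}_1|+\cdots+|{\bf k}_k|}$, so the partial sums of $\sum r^{|{\bf k}|}c_{\bf k}\,{\bf L}_1^{{\bf k}_1}\cdots{\bf L}_k^{{\bf k}_k}$ form a Cauchy sequence in operator norm whose limit agrees with $M_{\varphi_r}$ on polynomials. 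This places $M_{\varphi_r}\in\cA(\cV_{\bf f, \cQ_c}^m)\subseteq F^\infty(\cV_{\bf f, \cQ_c}^m)$. Granted a uniform estimate $\sup_{r<1}\|M_{\varphi_r}\|\leq\|M_\varphi\|$, the pointwise convergence $(M_{\varphi_r}h)(\lambda)=\varphi(r\lambda)h(\lambda)\to\varphi(\lambda)h(\lambda)$ on ${\bf D_{f,>}^m}(\CC)$ extends, by density of polynomials in $H^2$, to $\text{\rm SOT-}\lim_{r\to 1}M_{\varphi_r}=M_\varphi$, and WOT-closedness of $F^\infty(\cV_{\bf f, \cQ_c}^m)$ forces $M_\varphi\in F^\infty(\cV_{\bf f, \cQ_c}^m)$.

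The main obstacle is establishing the uniform bound $\sup_{r<1}\|M_{\varphi_r}\|\leq\|M_\varphi\|$. I would prove it by invoking Proposition \ref{Ber-const}: the constrained Berezin kernel ${\bf K}_{{\bf f},r{\bf L},\cQ_c}$ is an isometric intertwiner carrying $r{\bf L}_{i,j}^*$ to ${\bf L}_{i,j}^*\otimes I$. Testing the induced identity on the total family of joint eigenvectors $\{u_\lambda\}$ yields $M_{\varphi_r}={\bf K}_{{\bf f},r{\bf L},\cQ_c}^*(M_\varphi\otimes I){\bf K}_{{\bf f},r{\bf L},\cQ_c}$, exhibiting $M_{\varphi_r}$ as the constrained Berezin transform of $M_\varphi$ at the pure element $r{\bf L}$. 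The resulting complete contractivity furnishes the desired norm estimate and closes the argument.
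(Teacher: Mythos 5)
Your proposal is correct in substance, and it splits into a half that matches the paper and a half that does not. For the inclusion $F^\infty(\cV_{\bf f, \cQ_c}^m)\subseteq H^\infty({\bf D^m_{f,>}}(\CC))$ you argue exactly as the paper does, through the reproducing-kernel mechanism: the paper derives the eigenvector identity $Y^*u_\lambda=\overline{\varphi(\lambda)}u_\lambda$ from invariance of $\{u_\lambda\}^\perp$, while you simply quote the ``moreover'' clause of Theorem \ref{w*}; this is a legitimate shortcut, the content is identical. For the reverse inclusion the paper is much terser: given a bounded multiplier $M_g$ it invokes Ces\`aro means (Proposition 3.2 of \cite{Po-Berezin-poly}) to produce polynomials $p_n$ in the basis vectors $w_1^{\bf k_1}\otimes\cdots\otimes w_k^{\bf k_k}$ with $M_{p_n}\to M_g$ in SOT, and concludes since each $M_{p_n}$ is a polynomial in the ${\bf L}_{i,j}$. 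Your route — radial dilates $\varphi_r$, norm-convergent damped series placing $M_{\varphi_r}$ in $\cA(\cV_{\bf f,\cQ_c}^m)$, the uniform bound $\|M_{\varphi_r}\|\le\|M_\varphi\|$ obtained by identifying $M_{\varphi_r}={\bf K}_{{\bf f},r{\bf L},\cQ_c}^*(M_\varphi\otimes I){\bf K}_{{\bf f},r{\bf L},\cQ_c}$ via the eigenvectors, then a limit as $r\to 1$ — is genuinely different; it stays entirely inside the present paper's toolkit (Proposition \ref{Ber-const}, Theorem \ref{eigenvectors}, Theorem \ref{symm-Fock}) instead of importing the Ces\`aro lemma, and it yields the quantitative contractivity of the radial approximants, at the cost of being longer. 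Two steps should be tightened. First, the norm-Cauchy claim for $\sum r^{|{\bf k}|}c_{\bf k}{\bf L}^{\bf k}$ does not follow from the damping alone: you need $\varphi=M_\varphi 1\in H^2$ together with a Cauchy--Schwarz estimate
$\bigl\|\sum_{|{\bf k}|=q}c_{\bf k}{\bf L}^{\bf k}\bigr\|\le \|\varphi\|_2\bigl\|\sum_{|{\bf k}|=q}\gamma_{\bf k}{\bf L}^{\bf k}({\bf L}^{\bf k})^*\bigr\|^{1/2}$, and the second factor is controlled polynomially in $q$ because $\sum_{|\alpha|=q}b^{(m_i)}_{i,\alpha}{\bf W}_{i,\alpha}{\bf W}_{i,\alpha}^*\le q\binom{q+m_i-1}{m_i-1}I$; only then does the series converge in norm for every $r<1$. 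Second, pointwise convergence $\varphi(r\lambda)h(\lambda)\to\varphi(\lambda)h(\lambda)$ plus the uniform bound gives convergence tested against the total family $\{u_\lambda\}$, i.e.\ WOT convergence of $M_{\varphi_r}$ to $M_\varphi$, not immediately SOT; since $F^\infty(\cV_{\bf f,\cQ_c}^m)$ is WOT-closed this is all you need, so you should simply state the conclusion at the WOT level.
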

\begin{proof}
   Let
$\varphi({\bf W}_{i,j})\in F_n^\infty({\bf D^m_f})$ and set
$M_\varphi:=P_{F_s^2({\bf D^m_f})} \varphi({\bf W}_{i,j})|_{F_s^2({\bf D^m_f})}$.
According to Theorem \ref{eigenvectors}, Proposition \ref{w*},  and Theorem \ref{symm-Fock}, we have $ F_s^2({\bf D^m_f})=\cN_{\cQ_c}$, the vector
 $\Gamma_\lambda$ is in $ F_s^2({\bf D^m_f})$
for $\lambda\in {\bf D^m_{f,>}}(\CC)$, and $\varphi({\bf W}_{i,j})^*\Gamma_\lambda= \overline{\varphi(\lambda)} \Gamma_\lambda$. Consequently,  we obtain
\begin{equation*}
\begin{split}
[M_\varphi \psi](\lambda)&=\left<M_\varphi\psi, u_\lambda\right>
=\left<\varphi({\bf W}_{i,j})\psi, u_\lambda\right>\\
&=\left< \psi,\varphi({\bf W}_{i,j})^*u_\lambda\right>
=\left< \psi,\overline{\varphi(\lambda)} u_\lambda\right>
=\varphi(\lambda)\psi(\lambda)
\end{split}
\end{equation*}
for any $\psi\in F_s^2({\bf D^m_f})$ and $\lambda\in {\bf D^m_{f,>}}(\CC)$. Therefore, $M_\varphi$ is a multiplier of $F_s^2({\bf D^m_f})$.
In particular, the operator ${\bf L}_{i,j}$ is the  multiplier
 by the coordinate function ${\lambda_{i,j}}$.
 Now, we show that $H^\infty({\bf D^m_{f,>}}(\CC))$ is  included in $F^\infty(\cV_{\bf f, \cQ_c}^m)$,  the weakly closed algebra generated by the operators ${\bf L}_{i,j}$ and the identity.
Suppose that $g=\sum_{{\bf k}_1\in
\NN_0^{n_1},\ldots, {\bf k}_k\in \NN_0^{n_k}} c_{{\bf k}_1,\ldots, {\bf k}_k} w_1^{\bf k_{\it 1}}\otimes \cdots \otimes w_k^{\bf k_{\it k}}$ is a
bounded multiplier, i.e., $M_g\in B(F_s^2({\bf D^m_f}))$. As in  \cite{Po-Berezin-poly} (Proposition 3.2),  using Cesaro means,  one can find a sequence $p_n$  of polynomials
 in $w_1^{\bf k_{\it 1}}\otimes \cdots \otimes w_k^{\bf k_{\it k}}$, where
$ {\bf k}_1\in \NN_0^{n_1},\ldots, {\bf k}_k\in \NN_0^{n_k}$, such that $M_{p_n}$ converges
to $M_g$ in the strong operator topology and, consequently, in  the
$WOT$-topology.   Since $M_{p_n}$ is a polynomial in ${\bf L}_{i,j}$ and the identity,  our assertion follows.

Conversely, assume that  the operator  $Y\in B(F_s^2({\bf D}^m_f))$ is in $F^\infty(\cV_{\bf f, \cQ_c}^m)$.    Then $Y$  leaves
invariant all the invariant subspaces under each operator
${\bf L}_{i,j}$. Due to Theorem \ref{eigenvectors}, we have
${\bf L}_{i,j}^*u_\lambda=\overline{\lambda}_{i,j} u_\lambda$ for any $\lambda\in
{\bf D^m_{f,>}}(\CC)$. Therefore,
the vector $u_\lambda$ must be an eigenvector for $Y^*$.
Consequently, there is a function $\varphi:{\bf D^m_{f,>}}(\CC)\to \CC$ such that
$Y^*u_\lambda=\overline{\varphi(\lambda)} u_\lambda$ for any
$\lambda\in {\bf D^m_{f,>}}(\CC)$. Note that, if $f\in
F_s^2({\bf D^m_f})$, then, due to Theorem \ref{symm-Fock}, $Yf$
has the functional representation
$$
(Yf)(\lambda)=\left<Yf,u_\lambda\right>=\left<f,Y^*u_\lambda\right>=
\varphi(\lambda)f(\lambda),\qquad   \lambda\in{\bf D^m_{f,>}}(\CC).
$$
In particular, if $f=1$, then  the the functional representation  of
$Y(1)$  coincide with $\varphi$. Consequently, $\varphi$ admits a
power series representation  on ${\bf D^m_{f,>}}(\CC)$ and can
be identified with $Y(1)\in F_s^2({\bf D^m_f})$. Moreover, the
equality above shows that $\varphi f\in H^2({\bf D^m_{f,>}}(\CC))$ for any $f\in F_s^2({\bf D^m_f})$. The proof is complete.
\end{proof}
 We need to recall some definitions.
 The set of all
invariant subspaces of $A\in B(\cH)$ is denoted by $\text{\rm Lat~}A$. Given  $\cU\subset B(\cH)$, we define
$
\text{\rm Lat~}\cU=\bigcap_{A\in\cU}\text{\rm Lat~}A.
$
 If $\cS$ is any collection of subspaces of $\cH$,
then we define $\text{\rm Alg~}\cS$ by setting $\text{\rm
Alg~}\cS:=\{A\in B(\cH):\ \cS\subset\text{\rm Lat~}A\}.$  The algebra $~\cU\subset B(\cH)~$ is called  reflexive if
$\cU=\text{\rm Alg Lat~}\cU.$

A closser look at the proof of Theorem \ref{multipliers} reveals the following result.

\begin{corollary}\label{reflexivity}
The Hardy algebra $F^\infty(\cV_{\bf f, \cQ_c}^m)$  is reflexive.
\end{corollary}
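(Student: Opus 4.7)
The plan is to invoke the argument already used in the proof of Theorem \ref{multipliers} in a slightly more general context. The inclusion
$$
F^\infty(\cV_{\bf f, \cQ_c}^m)\subseteq \text{\rm Alg Lat}\, F^\infty(\cV_{\bf f, \cQ_c}^m)
$$
is automatic, so the only content is the reverse inclusion. Fix $Y\in \text{\rm Alg Lat}\, F^\infty(\cV_{\bf f, \cQ_c}^m)$; I want to show $Y\in F^\infty(\cV_{\bf f, \cQ_c}^m)$.

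First I would note that, by Theorem \ref{eigenvectors} together with the identification $\cN_{\cQ_c}=F_s^2({\bf D_f^m})$ from Theorem \ref{symm-Fock}, the vector $u_\lambda$ satisfies ${\bf L}_{i,j}^*u_\lambda=\overline{\lambda}_{i,j}u_\lambda$ for every $\lambda\in {\bf D^m_{f,>}}(\CC)$ and every $i,j$. Consequently, for each such $\lambda$ the one-dimensional subspace $\CC u_\lambda$ is co-invariant under all the ${\bf L}_{i,j}$, i.e., its orthogonal complement $\{u_\lambda\}^\perp$ is an invariant subspace for every operator in $F^\infty(\cV_{\bf f, \cQ_c}^m)$. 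Hence $Y$ must leave $\{u_\lambda\}^\perp$ invariant, which forces $Y^*u_\lambda\in \CC u_\lambda$. Thus there is a scalar-valued function $\varphi:{\bf D^m_{f,>}}(\CC)\to\CC$ with
$$
Y^*u_\lambda=\overline{\varphi(\lambda)}\,u_\lambda,\qquad \lambda\in {\bf D^m_{f,>}}(\CC).
$$

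Next I would follow verbatim the last paragraph of the proof of Theorem \ref{multipliers}. For any $f\in F_s^2({\bf D^m_f})$, the functional representation of Theorem \ref{symm-Fock} (ii) gives
$$
(Yf)(\lambda)=\left<Yf,u_\lambda\right>=\left<f,Y^*u_\lambda\right>=\varphi(\lambda)f(\lambda),\qquad \lambda\in {\bf D^m_{f,>}}(\CC),
$$
so taking $f=1$ identifies $\varphi$ with the functional representation of $Y(1)\in H^2({\bf D^m_{f,>}}(\CC))$. In particular $\varphi$ is holomorphic on ${\bf D^m_{f,>}}(\CC)$, and the displayed identity shows that $\varphi f\in H^2({\bf D^m_{f,>}}(\CC))$ for every $f\in H^2({\bf D^m_{f,>}}(\CC))$; that is, $\varphi\in H^\infty({\bf D^m_{f,>}}(\CC))$ and $Y=M_\varphi$.

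Finally, Theorem \ref{multipliers} asserts $H^\infty({\bf D^m_{f,>}}(\CC))=F^\infty(\cV_{\bf f, \cQ_c}^m)$, so $Y\in F^\infty(\cV_{\bf f, \cQ_c}^m)$, which completes the reverse inclusion and proves reflexivity. The only real point is the observation that the joint eigenvectors $u_\lambda$ of the adjoints of the generators produce enough co-invariant lines to force $u_\lambda$ to be an eigenvector of $Y^*$; everything else is the identification of a bounded multiplier with an element of the Hardy algebra already established in Theorem \ref{multipliers}.
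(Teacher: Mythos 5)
Your argument is exactly what the paper has in mind: the remark preceding Corollary \ref{reflexivity} ("a closer look at the proof of Theorem \ref{multipliers} reveals the following result") refers precisely to the observation that the converse direction of Theorem \ref{multipliers} only uses that $Y$ leaves invariant all joint invariant subspaces of the ${\bf L}_{i,j}$, not that $Y$ lies in the algebra. You have made this explicit and correctly assembled the pieces (eigenvectors $u_\lambda$ from Theorem \ref{eigenvectors}, the functional representation and multiplier-algebra identification from Theorems \ref{symm-Fock} and \ref{multipliers}); this is the intended proof.
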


Now, we make a few remarks in    the particular case when $n_1=\cdots =n_k=n$. Let $\cQ_{cc}$ be the left ideal of $\CC[Z_{i,j}]$ generated by the polynomials
$Z_{i, j_1}Z_{i,j_2}-Z_{i,j_2} Z_{i,j_1}$ and $Z_{i,j}-Z_{p,j}$, where
 $i,p\in\{1,\ldots,k\}$ and  $j_1, j_2, j\in \{1,\ldots, n\}$.
 The universal model associated with the variety $\cV_{\bf f, \cQ_{cc}}^m$ is the $n$-tuple
 $C=(C_1,\ldots,C_n)$, where $C_j:=P_{\cN_{\cQ_{cc}}} {\bf W}_{1,j}|_{\cN_{\cQ_{cc}}}$ for $j\in \{1,\ldots,n\}$. Note that, in this case,
we have  $\cV_{\bf f, \cQ_{cc},>}^m(\CC)=\cap_{i=1}^k{\bf D}^1_{f_i,>}(\CC)$.
Similarly  to  Theorem \ref{symm-Fock}, one can show that the space $\cN_{\cQ_{cc}}$ can be identified with a reproducing kernel Hilbert space with kernel
$$
\kappa_f^{cc}(z,w):=\frac{1}{\prod_{i=1}^k\left(1- f_i(z\overline{w} )\right)^{m_i}},
$$
  where $z=(z_1,\ldots,z_n)$ , $w=(w_1,\ldots,w_n)$ are in the  set
  $\cV_{\bf f, \cQ_{cc},>}^m(\CC)\subset \CC^n$. We remark that in the particular case when $f_1=\cdots =f_k=Z_1+\cdots +Z_n$ and $m_1=\cdots=m_k=1$, we obtain   the reproducing kernel
  $(z,w)\mapsto \frac{1}{(1-\left<z,w\right>)^k}$ on the unit ball $\BB_n$. In this case, the reproducing kernel Hilbert spaces  are the  Hardy-Sobolev spaces (see \cite{BeaBur1}). The case when $k=n$ corresponds to  the Hardy space of the ball, and the case when $k=n+1$  corresponds to the Bergman space.

\bigskip

\section{Isomorphisms of universal operator algebras }

In this section, we show that the isomorphism  problem for the universal polydomain algebras is closed related to to the biholomorphic  equivalence of Reinhardt domains in several complex variables. Our results also show that there are  many non-isomorphic  polydomain algebras.

Given a Hilbert space $\cH$,     the radial polydomain associated with the abstract ${\bf D_f^m}$ is the set
$$
{\bf D_{f,\text{\rm rad}}^m}(\cH):= \bigcup_{0\leq r<1} r{\bf D_{f}^m}(\cH)\subseteq {\bf D_{f}^m}(\cH).
$$
 A formal power series
$
\varphi=\sum_{(\alpha) \in \FF_{n_1}^+\times \cdots \times \FF_{n_k}^+} a_{(\alpha)}  Z_{(\alpha)}$, $ a_{(\alpha)} \in \CC, $
in ideterminates $Z_{i,j}$
is called  {\it free holomorphic function} on the
{\it abstract radial polydomain}
${\bf D_{f,\text{\rm rad}}^m}:=
\{{\bf D_{f,\text{\rm rad}}^m}(\cH):\ \cH \text{ is a Hilbert space}\}$ if the series
$$
\varphi(X_{i,j}):=\sum_{q=0}^\infty \sum_{{(\alpha)\in \FF_{n_1}^+\times \cdots \times\FF_{n_k}^+ }\atop {|\alpha_1|+\cdots +|\alpha_k|=q}} a_{(\alpha)} X_{(\alpha)}
$$
is convergent in the operator norm topology for any $X=\{X_{i,j}\}\in {\bf D_{f, \text{\rm rad}}^m}(\cH)$   and any Hilbert space $\cH$. We denote by $Hol({\bf D_{f, \text{\rm rad}}^m})$ the set of all free holomorphic functions on the abstract radial polydomain ${\bf D_{f,\text{\rm rad}}^m}$. Let
$H^\infty({\bf D_{f,\text{\rm rad}}^m})$  denote the set of  all
elements $\varphi$ in $Hol({\bf D_{f,\text{\rm rad}}^m})$     such
that
$$\|\varphi\|_\infty:= \sup \|\varphi(X_{i,j} )\|<\infty,
$$
where the supremum is taken over all   $ \{X_{i,j}\}\in {\bf D_{f,\text{\rm rad}}^m(\cH)}$ and any Hilbert space
$\cH$. One can  show that $H^\infty({\bf D_{f,\text{\rm
rad}}^m)}$ is a Banach algebra under pointwise multiplication and the
norm $\|\cdot \|_\infty$.
For each $p\in \NN$, we define the norms $\|\cdot
\|_p:M_{p\times p}\left(H^\infty({\bf D_{f,\text{\rm rad}}^m})\right)\to
[0,\infty)$ by setting
$$
\|[\varphi_{st}]_{p\times p}\|_p:= \sup \|[\varphi_{st}(X )]_{p\times p}\|,
$$
where the supremum is taken over all  $ X:=\{X_{i,j}\}\in {\bf D_{f,\text{\rm rad}}^m(\cH)}$ and any Hilbert space
$\cH$.
The norms  $\|\cdot\|_p$, $p\in \NN$,
determine  an operator space structure  on $H^\infty({\bf D_{f,\text{\rm rad}}^m})$,
 in the sense of Ruan (\cite{Pa-book}). Throughout  this section, we assume that  ${\bf D_f^m} (\cH)$  is
closed in the operator norm topology for any Hilbert space $\cH$. Then  we have ${\bf D_{f, \text{\rm rad}}^m}(\cH)^- ={\bf D_f^m} (\cH)$. Note that the
interior  of ${\bf D_f^m} (\cH)$, which we denote by $Int({\bf
D_f^m} (\cH))$, is a subset of ${\bf D_{f, \text{\rm rad}}^m}(\cH)$.
 We remark that if  ${\bf q}=(q_1,\ldots, q_k)$ is a $k$-tuple of   positive regular  noncommutative polynomials, then ${\bf D_q^m} (\cH)$  is
closed in the operator norm topology.

 We  denote by  $A({\bf
D_{f,\text{\rm rad}}^m})$   the set of all  elements $g$
  in $Hol({\bf
D_{f,\text{\rm rad}}^m})$   such that the mapping
$${\bf
D_{f,\text{\rm rad}}^m}(\cH)\ni X\mapsto
g(X)\in B(\cH)$$
 has a continuous extension to  $[{\bf
D_{f,\text{\rm rad}}^m}(\cH)]^-={\bf
D_{f}^m}(\cH)$ for any Hilbert space $\cH$. We remark that  $A({\bf
D_{f,\text{\rm rad}}^m})$ is a  Banach algebra under pointwise
multiplication and the norm $\|\cdot \|_\infty$, and it has an operator space structure under the norms $\|\cdot \|_p$, $p\in \NN$.  Moreover, we can
identify the polydomain algebra $\cA({\bf D^m_f})$ with the subalgebra
 $A({\bf D_{f,\text{\rm rad}}^m})$, as follows.
The map
$ \Phi:A({\bf D_{f,\text{\rm rad}}^m})\to \cA({\bf D^m_f}) $
defined by
$$
\Phi\left(\sum\limits_{(\alpha)} a_{(\alpha)} Z_{(\alpha)}\right):=\sum\limits_{(\alpha)} a_{(\alpha)} {\bf W}_{(\alpha)}
$$
is a completely isometric isomorphism of operator algebras.
If  $g:=\sum\limits_{(\alpha)} a_{(\alpha)} Z_{(\alpha)}$
is  a free holomorphic function on the abstract radial  polydomain ${\bf
D_{f,\text{\rm rad}}^m}$,
then  $g\in A({\bf D_{f,\text{\rm
rad}}^m})$
 if and only if $g(r{\bf W}_{i,j}):=\sum_{q=0}^\infty \sum_{{(\alpha)\in \FF_{n_1}^+\times \cdots \times\FF_{n_k}^+ }\atop {|\alpha_1|+\cdots +|\alpha_k|=q}} r^q  a_{(\alpha)} {\bf W}_{(\alpha)}$ is convergent in the  norm topology  as $r\to 1$.
 In this case, there exists a unique $\varphi\in \cA({\bf D^m_f})$ with $g={\bf
B}[\varphi]$, where ${\bf B}$ is the  noncommutative Berezin
transform  associated with the abstract  polydomain ${\bf D^m_f}$, with the properties
 $$
\Phi(g)=\lim_{r\to 1}g(r{\bf W}_{i,j})  \quad \text{ and } \quad  \Phi^{-1}(\varphi)={\bf B}[\varphi],\quad \varphi\in \cA({\bf D^m_f}).
$$
We proved in \cite{Po-Berezin-poly}(see Proposition 4.4) that if   $p\in \NN$ and
$\varphi$ is  a  free holomorphic function   on the abstract radial polydomain ${\bf D_{f,\text{\rm rad}}^m}$,
  then its representation on $\CC^p$,  i.e., the map $\check \varphi$  defined by
$$
  \CC^{(n_1+\cdots +n_k)p^2}\supset {\bf
D_{f,\text{\rm rad}}^m}(\CC^p)\ni \Lambda\mapsto \varphi(\Lambda)\in M_{ p\times p}(\CC)\subset
\CC^{p^2}
$$
is a  holomorphic function on the interior of ${\bf
D_{f}^m}(\CC^p)$. Moreover,
  if $\varphi\in \cA({\bf D}_{{\bf f},\text{\rm rad}}^{\bf m})$,  then  its representation on $\CC^p$ has a   continuous  extension to ${\bf D_{f}^m}(\CC^p)$ and  it is  holomorphic
 on the interior of ${\bf D_{f}^m}(\CC^p)$. The continuous extension is defined by $\check\varphi(\Lambda):=\lim_{r\to 1} {\bf B}_{r\Lambda}[\varphi]$ for
  $\Lambda\in{\bf  D_f^m}(\CC^p)$.

Let $\Omega_1$, $\Omega_2$ be domains (open and connected sets) in
$\CC^d$. If there exist   holomorphic maps $\zeta:\Omega_1\to
\Omega_2$ and $\psi:\Omega_2\to \Omega_1$ such that
$\zeta\circ\psi=id_{\Omega_2}$ and $\psi\circ
\zeta=id_{\Omega_1}$, then $\Omega_1$ and $\Omega_2$ are called
biholomorphic equivalent  and $\varphi$ and $\psi$ are called
biholomorphic maps.

 \begin{theorem}
 \label{classification}  Let ${\bf f}=(f_1,\ldots, f_k)$ and ${\bf g}=(g_1,\ldots, g_{k'})$ be  tuples of positive regular free
holomorphic functions with $n$ and $\ell$ indeterminates, respectively, and let ${\bf m}:=(m_1,\ldots, m_k)\in \NN^k$ and ${\bf d}:=(d_1,\ldots, d_{k'})\in \NN^{k'}$.
If
    $\widehat\Psi:\cA({\bf D_f^m})
\to \cA({\bf D_g^d})$ is a unital completely contractive
  isomorphism, then the map $\varphi: {\bf
D_{g}^d}(\CC)\to {\bf D_{f}^m}(\CC)$ defined by
$$
\varphi(\lambda):=\left[ \lim_{r\to 1}{\bf B}_{{\bf g},r\lambda}[\widehat{\Psi}({\bf W}^{({\bf f})}_{i,j})]: \ {i\in \{1,\ldots, k\}, j\in \{1,\ldots, n_i\}}\right],\qquad \lambda \in {\bf D_{g}^d}(\CC),
$$
where  ${\bf W}^{(\bf f)}:=\{ {\bf W}^{({\bf f})}_{i,j}\}$ is the universal model of the abstract polydomain ${\bf D_f^m}$ and ${\bf B}_{{\bf g},r\lambda}$ is the Berezin  transform at $r\lambda\in {\bf D_{g,>}^d}(\CC)$,
is a homeomorphism  which is a biholomorphic function from $Int({\bf D_{g}^d}(\CC))$ onto $Int({\bf
D_{f}^m}(\CC))$ and  $n=\ell$.
\end{theorem}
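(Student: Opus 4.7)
The plan is to exploit the identification of the Gelfand (character) space of $\cA({\bf D_f^m})$ with the scalar polydomain ${\bf D_f^m}(\CC)$. This is the $\cQ=\{0\}$ specialization of the character-space theorem proved earlier in this section: the map $\mu\mapsto\Phi_\mu$, sending a scalar tuple to the evaluation character $\Phi_\mu(A):=\check A(\mu)$, is a homeomorphism of ${\bf D_f^m}(\CC)$ onto $M_{\cA({\bf D_f^m})}$. Here $\check A$ denotes the scalar representation of $A\in\cA({\bf D_f^m})$; by the isomorphism $\cA({\bf D_f^m})\cong A({\bf D}_{{\bf f},\text{\rm rad}}^{\bf m})$ recalled in the excerpt, together with the cited Proposition~4.4 of \cite{Po-Berezin-poly}, $\check A$ is continuous on ${\bf D_f^m}(\CC)$ and holomorphic on its interior.

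First I would construct $\varphi$ abstractly and then identify it with the Berezin-limit formula of the statement. For each $\lambda\in{\bf D_g^d}(\CC)$, the pullback $\Phi_\lambda\circ\widehat\Psi$ is a character of $\cA({\bf D_f^m})$, hence equals $\Phi_{\varphi(\lambda)}$ for a unique point $\varphi(\lambda)\in{\bf D_f^m}(\CC)$. Evaluating on the generators yields
$$
\bigl(\varphi(\lambda)\bigr)_{i,j}=\Phi_\lambda\bigl(\widehat\Psi({\bf W}^{({\bf f})}_{i,j})\bigr)=\check{A}_{i,j}(\lambda),\qquad A_{i,j}:=\widehat\Psi({\bf W}^{({\bf f})}_{i,j})\in\cA({\bf D_g^d}).
$$
For $r\in[0,1)$ the tuple $r\lambda$ lies in the radial interior and is pure, so ${\bf B}_{{\bf g},r\lambda}[A]=\check A(r\lambda)$; since $\check A$ extends continuously to ${\bf D_g^d}(\CC)$, letting $r\to 1$ reproduces the Berezin-limit formula stated in the theorem and shows along the way that $\varphi(\lambda)\in{\bf D_f^m}(\CC)$.

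Next I would verify the topological and analytic properties. The pullback $\tau\mapsto\tau\circ\widehat\Psi$ is a bijection of character spaces, weak-$*$ continuous by pointwise evaluation; compactness upgrades it to a homeomorphism, and transport via the identifications $\Phi_{\bullet}$ gives that $\varphi:{\bf D_g^d}(\CC)\to{\bf D_f^m}(\CC)$ is a homeomorphism. Each component of $\varphi$ is the scalar representation of an element of $\cA({\bf D_g^d})$, hence is holomorphic on $Int({\bf D_g^d}(\CC))$. Running the same construction with $\widehat\Psi^{-1}$ produces $\psi:{\bf D_f^m}(\CC)\to{\bf D_g^d}(\CC)$ with holomorphic components on $Int({\bf D_f^m}(\CC))$, and functoriality of the pullback forces $\psi\circ\varphi=id$ and $\varphi\circ\psi=id$, so $\varphi$ restricts to a biholomorphism of the two interiors. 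The equality $n=\ell$ then follows from the holomorphic invariance of dimension: an invertible holomorphic map between open sets of $\CC^\ell$ and $\CC^n$ has an invertible complex Jacobian at any point, which forces $n=\ell$.

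The main subtlety lies in handling boundary points. For general $\lambda\in{\bf D_g^d}(\CC)$, the tuple $\lambda$ need not be pure or even completely non-coisometric, so evaluating $\Phi_\lambda$ through the $F^\infty({\bf D_g^d})$-functional calculus is unavailable; the Berezin-limit formula must instead be justified via the radial continuity afforded by the identification $\cA({\bf D_g^d})\cong A({\bf D}_{{\bf g},\text{\rm rad}}^{\bf d})$. This is what upgrades $\varphi$ from a map defined only on the radial interior to the claimed homeomorphism of the full compact polydomains.
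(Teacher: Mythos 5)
Your character-space route is legitimate for the bulk of the statement and is genuinely different from the paper's argument. The paper never passes through the Gelfand spectrum: it transports the positivity conditions $0\leq\sum_{|\alpha|\leq N}a_{i,\alpha}\widetilde\varphi_{i,\alpha}\widetilde\varphi_{i,\alpha}^{*}\leq I$ through the completely contractive $\widehat\Psi$ to show directly that $\varphi(\lambda)\in {\bf D_f^m}(\CC)$, and it proves $\xi\circ\varphi=id$ and $\varphi\circ\xi=id$ by hand, expanding $\widetilde\xi_{i,j}=\widehat\Psi^{-1}({\bf W}^{({\bf g})}_{i,j})$ in its Fourier representation, pushing it through $\widehat\Psi$ and the Berezin transform at points of ${\bf D}^{\bf d}_{{\bf g},>}(\CC)$, and then using density of ${\bf D}^{\bf d}_{{\bf g},>}(\CC)$ in ${\bf D_g^d}(\CC)$ together with continuity. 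Your pullback-of-characters argument packages well-definedness, bijectivity and bicontinuity of $\varphi$ in one stroke (and, incidentally, does not even use complete contractivity for that part), provided you record that the character-space theorem of Section 3 is stated for ideals generated by homogeneous polynomials and that its proof carries over verbatim to $J=\{0\}$, i.e., to $\cA({\bf D_f^m})$ itself; your identification of $\Phi_\lambda\circ\widehat\Psi$ on the generators with the Berezin-limit formula of the statement is also correct.

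The genuine gap is in the final step. From the facts that $\varphi:{\bf D_g^d}(\CC)\to{\bf D_f^m}(\CC)$ is a homeomorphism whose components are holomorphic on $Int({\bf D_g^d}(\CC))$ and that $\psi=\varphi^{-1}$ has components holomorphic on $Int({\bf D_f^m}(\CC))$, it does not follow formally that ``$\varphi$ restricts to a biholomorphism of the two interiors'': a homeomorphism between compact subsets of $\CC^\ell$ and $\CC^n$ need not carry interior points to interior points when the ambient dimensions may differ, and nothing established so far produces even one point $x\in Int({\bf D_g^d}(\CC))$ with $\varphi(x)\in Int({\bf D_f^m}(\CC))$ (the isomorphism has no reason to fix the origin character, so $\varphi(0)=0$ is unavailable). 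Your Jacobian identity $D\psi(\varphi(x))\,D\varphi(x)=I_\ell$, $D\varphi(x)\,D\psi(\varphi(x))=I_n$ requires exactly such a point, so as written the derivation of $n=\ell$ presupposes the biholomorphic restriction it is meant to justify. This is precisely where the paper inserts topology: Brouwer's invariance of domain (equivalently, topological invariance of dimension) first gives $n=\ell$, since homeomorphic compacta with nonempty interior in $\CC^\ell$ and $\CC^n$ must have equal dimension, and then shows that $\varphi(Int({\bf D_g^d}(\CC)))$ is open, hence contained in --- and by symmetry equal to --- $Int({\bf D_f^m}(\CC))$, after which biholomorphy is immediate from the holomorphy of $\varphi$ and $\psi$. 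Inserting that invariance-of-domain step closes your argument; without it, the last paragraph of your proposal is circular.
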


\begin{proof}
Denote
 \begin{equation}
 \label{var-tilde}
 \widetilde\varphi_{i,j}:=\widehat \Psi({\bf W}_{i,j}^{({\bf f})})\in \cA({\bf D_g^d}),
\qquad i\in \{1,\ldots, k\}, j\in \{1,\ldots, n_i\},
 \end{equation}
 where  ${\bf W}^{(\bf f)}:=\{ {\bf W}^{({\bf f})}_{i,j}\}$ is the universal model of the abstract polydomain ${\bf D_f^m}$. Assume that $f_i$ has the representation  $f_i:= \sum_{\alpha\in
\FF_{n_i}^+} a_{i,\alpha} Z_{i,\alpha}$.
 Taking into account that  $0\leq \Phi_{f_i,{\bf W}_i^{({\bf f})}}(I)\leq I$, we deduce that
 $0\leq \sum_{\alpha\in
\FF_{n_i}^+, |\alpha|\leq N} a_{i,\alpha}{\bf W}_{i,\alpha}^{({\bf f})}
({\bf W}_{i,\alpha}^{({\bf f})})^*\leq I
$
for any $N\in \NN$.
Using the fact that $a_{i,\alpha}\geq 0$ and
  $\widehat
 \Psi $ is a  completely contractive homomorphism, one can easily  see that
 $0\leq \Phi_{f_i,\widetilde \varphi_i}(I)\leq I$, where $\widetilde\varphi_i:=(\widetilde\varphi_{i,1}, \ldots, \widetilde\varphi_{i,n_i})$ and  $\widetilde
 \varphi:=(\widetilde
 \varphi_1,\ldots, \widetilde\varphi_{k})$.   Due to the remarks preceding the theorem, for each $i\in \{1,\ldots, k\}$ and $ j\in \{1,\ldots, n_i\}$, the map $\varphi_{i,j}: {\bf D_g^d}(\CC)\to \CC$, given
 by
 $$\varphi_{i,j}(\lambda):=\lim_{r\to 1}{\bf B}_{{\bf g},r\lambda}[\widetilde
 \varphi_{i,j}]
 $$
  is continuous on ${\bf D_g^d}(\CC)$ and holomorphic on $Int({\bf D_{g}^d}(\CC))$. Now, we define the function $\varphi:{\bf D_g^d}(\CC)\to\CC^\ell$ by setting
   $\varphi(\lambda):=(
 \varphi_1(\lambda),\ldots, \varphi_{k}(\lambda))$, where $\varphi_i(\lambda):=(\varphi_{i,1}(\lambda), \ldots, \varphi_{i,n_i}(\lambda))$ for all  $\lambda\in {\bf D_g^d}(\CC)$.
 Since $0\leq \Phi_{f_i,\widetilde \varphi_i}(I)\leq I$, we have
 $0\leq \sum_{\alpha\in
\FF_{n_i}^+, |\alpha|\leq N} a_{i,\alpha}\widetilde\varphi_{i,\alpha}
{\widetilde\varphi}_{i,\alpha}^*\leq I$ for all $N\in \NN$.
Apply the Berezin transform at $r\lambda\in {\bf D_{g,>}^d}(\CC)$, $r\in [0,1)$, we obtain
 $$0\leq \sum_{\alpha\in
\FF_{n_i}^+, |\alpha|\leq N} a_{i,\alpha} \varphi_{i,\alpha}(r\lambda)
\overline{\varphi_{i,\alpha}(r\lambda)} \leq 1,\qquad N\in \NN.
$$
 Taking $r\to 1$ and $N\to\infty$, we deduce that $0\leq \Phi_{f_i, \varphi_i(\lambda)}(1)\leq 1$. Consequently,  $\varphi(\lambda)\in {\bf
D_{f}^m}(\CC)$ for all $\lambda\in {\bf D_g^d}(\CC)$. Moreover, the map  $\varphi:{\bf D_g^d}(\CC)\to {\bf
D_{f}^m}(\CC) $ is continuous on ${\bf D_g^d}(\CC)$ and holomorphic on $Int({\bf D_{g}^d}(\CC))$.
Now, we set
\begin{equation}
 \label{xi}
 \widetilde\xi_{i,j}:=\widehat \Psi^{-1}({\bf W}_{i,j}^{({\bf g})})\in \cA({\bf D_f^m}),
\qquad i\in \{1,\ldots, k'\}, j\in \{1,\ldots, \ell_i\},
 \end{equation}
 where  ${\bf W}^{(\bf g)}:=\{ {\bf W}^{({\bf g})}_{i,j}\}$ is the universal model of the abstract polydomain ${\bf D_g^d}$.
 Since $0\leq \Phi_{g_i,{\bf W}_i^{({\bf g})}}(I)\leq I$ and
  $\widehat
 \Psi^{-1} $ is a  completely contractive homomorphism, we deduce that
 $0\leq \Phi_{g_i,\widetilde \xi_i}(I)\leq I$, where  we set $\widetilde\xi_i:=(\widetilde\xi_{i,1}, \ldots, \widetilde\xi_{i,\ell_i})$ and $\widetilde
 \xi:=(\widetilde
 \xi_1,\ldots, \widetilde\xi_{k'})$.
 As above, for each $i\in \{1,\ldots, k'\}$ and $ j\in \{1,\ldots, \ell_i\}$, the map $\xi_{i,j}: {\bf D_f^m}(\CC)\to \CC$, given
 by
 $$
 \xi_{i,j}(\mu):=\lim_{r\to 1}{\bf B}_{{\bf f},r\mu}[\widetilde
 \xi_{i,j}]
 $$
  is continuous on ${\bf D_f^m}(\CC)$ and holomorphic on $Int({\bf D_{f}^m}(\CC))$.
Set $\xi(\mu):=(
 \xi_1(\mu),\ldots, \xi_{k'}(\mu))$ and $\xi_i(\mu):=(\xi_{i,1}(\mu), \ldots, \xi_{i,\ell_i}(\mu))$ for all $\mu\in {\bf D_f^m}(\CC)$.
 Since $0\leq \Phi_{g_i,\widetilde \xi_i}(I)\leq I$,    we can show that $0\leq \Phi_{g_i, \xi_i(\mu)}(1)\leq 1$. Hence, we deduce that $\xi(\mu)\in {\bf
D_{g}^d}(\CC)$ for all $\mu\in {\bf D_f^m}(\CC)$. Therefore, the map  $\xi:{\bf D_f^m}(\CC)\to {\bf
D_{g}^d}(\CC) $ is continuous on ${\bf D_f^m}(\CC)$ and holomorphic on $Int({\bf D_{f}^m}(\CC))$.

 Now,   each $\widetilde \xi_{i,j}\in
\cA({\bf D_f^m})$, $i\in \{1,\ldots, k'\}$, $j\in \{1,\ldots, \ell_i\}$,  has a unique Fourier
representation $ \sum_{{(\alpha)\in \FF_{n_1}^+\times \cdots \times\FF_{n_k}^+ }}   a_{(\alpha)} {\bf W}_{(\alpha)}^{({\bf f})}$
such that
 $$
 \widetilde \xi_{i,j}=\lim_{r\to 1} \sum_{q=0}^\infty \sum_{{(\alpha)\in \FF_{n_1}^+\times \cdots \times\FF_{n_k}^+ }\atop {|\alpha_1|+\cdots +|\alpha_k|=q}} r^q  a_{(\alpha)} {\bf W}_{(\alpha)}^{({\bf f})},
$$
 where the
limit is in the operator norm topology. Hence, using the
continuity of $\widehat \Psi$ in the operator norm, and relations
\eqref{xi} and \eqref{var-tilde}, we obtain
\begin{equation*}
\begin{split}
{\bf W}_{i,j}^{({\bf g})}&=\widehat \Psi(\widetilde \xi_{i,j})=\widehat
\Psi\left(\lim_{r\to 1} \sum_{q=0}^\infty \sum_{{(\alpha)\in \FF_{n_1}^+\times \cdots \times\FF_{n_k}^+ }\atop {|\alpha_1|+\cdots +|\alpha_k|=q}} r^q  a_{(\alpha)} {\bf W}_{(\alpha)}^{({\bf f})}
\right)\\
&=\lim_{r\to 1} \sum_{q=0}^\infty \sum_{{(\alpha)\in \FF_{n_1}^+\times \cdots \times\FF_{n_k}^+ }\atop {|\alpha_1|+\cdots +|\alpha_k|=q}} r^q  a_{(\alpha)}\widehat\Psi( {\bf W}_{(\alpha)}^{({\bf f})})= \lim_{r\to 1}
\sum_{q=0}^\infty \sum_{{(\alpha)\in \FF_{n_1}^+\times \cdots \times\FF_{n_k}^+ }\atop {|\alpha_1|+\cdots +|\alpha_k|=q}} r^q  a_{(\alpha)}  \widetilde
\varphi_{(\alpha)}.
\end{split}
\end{equation*}
Consequently, using the  continuity in the operator norm  of  the
noncommutative Berezin transform  at $\lambda\in {\bf D_{g,>}^d}(\CC)$ on the
polydomain algebra $\cA({\bf D_g^d})$, and  relations $\varphi_{i,j}(\lambda):={\bf B}_{{\bf g},\lambda}[\widetilde
 \varphi_{i,j}]$ for all $\lambda\in {\bf D_{g,>}^d}(\CC)$,  and $\xi_{i,j}(\mu):=\lim_{r\to 1}{\bf B}_{{\bf f},r\mu}[\widetilde
 \xi_{i,j}]$ for $\mu\in {\bf D_f^m}(\CC)$,  we have
\begin{equation*}
\begin{split}
\lambda_{i,j}&={\bf B}_{{\bf g},\lambda}[{\bf W}_{i,j}^{({\bf g})}]={\bf B}_{{\bf g},\lambda}\left[\lim_{r\to 1}\sum_{q=0}^\infty \sum_{{(\alpha)\in \FF_{n_1}^+\times \cdots \times\FF_{n_k}^+ }\atop {|\alpha_1|+\cdots +|\alpha_k|=q}} r^q  a_{(\alpha)}  \widetilde
\varphi_{(\alpha)}\right]\\
&=\lim_{r\to 1}\sum_{q=0}^\infty \sum_{{(\alpha)\in \FF_{n_1}^+\times \cdots \times\FF_{n_k}^+ }\atop {|\alpha_1|+\cdots +|\alpha_k|=q}} r^q  a_{(\alpha)}
{\bf B}_{{\bf g},\lambda}[\widetilde\varphi_{(\alpha)}]
=\lim_{r\to 1} \sum_{q=0}^\infty \sum_{{(\alpha)\in \FF_{n_1}^+\times \cdots \times\FF_{n_k}^+ }\atop {|\alpha_1|+\cdots +|\alpha_k|=q}} r^q  a_{(\alpha)}
\varphi_{(\alpha)}(\lambda)\\
&=\lim_{r\to 1} {\bf B}_{{\bf f},r\varphi(\lambda)} [\widetilde\xi_{i,j}]=\xi_{i,j}(\varphi(\lambda))
\end{split}
\end{equation*}
for each $i\in \{1,\ldots, k'\}$, $j\in \{1,\ldots, \ell_i\}$, and any $\lambda\in {\bf D_{g,>}^d}(\CC)$. Hence
$(\xi\circ \varphi)(\lambda)=\lambda$ for all  $\lambda\in {\bf D_{g,>}^d}(\CC)$. Now, using the fact that the functions  $\varphi:{\bf D_g^d}(\CC)\to {\bf
D_{f}^m}(\CC) $  and   $\xi:{\bf D_f^m}(\CC)\to {\bf
D_{g}^d}(\CC) $ are  continuous, and ${\bf D_{g,>}^d}(\CC)$ is dense in ${\bf D_g^d}(\CC)$, we conclude that $(\xi\circ \varphi)(\lambda)=\lambda$ for all  $\lambda\in {\bf D_{g}^d}(\CC)$.
Similarly, one can prove that $(\varphi\circ \xi)(\mu)=\mu$ for  $\mu\in
{\bf D_f^m}(\CC)$. Therefore, the map
 $\varphi:{\bf D_g^d}(\CC)\to {\bf D_f^m}(\CC)$ is a homeomorphism  such that
  $\varphi$ and $\varphi^{-1}:=\xi$ are   holomorphic functions on   $Int({\bf D_{g}^d}(\CC))$ and  $Int({\bf
D_{f}^m}(\CC))$, respectively.   Now, a standard argument
using Brouwer's invariance of domain theorem \cite{Bo}  shows that $\varphi$ is a  biholomorphic function from  $Int({\bf D_{g}^d}(\CC))$ onto  $Int({\bf
D_{f}^m}(\CC))$ and $n=\ell$.
The proof is complete.
\end{proof}

\begin{corollary}\label{Sunada} Let ${\bf f}=(f_1,\ldots, f_k)$ and ${\bf g}=(g_1,\ldots, g_{k'})$ be  tuples of positive regular free
holomorphic functions with $n$ and $\ell$ indeterminates, respectively, and let ${\bf m}\in \NN^k$ and ${\bf d}\in \NN^{k'}$.  If   the domain algebras $\cA({\bf D_f^m})$ and
 $\cA({\bf D_g^d})$  are   unital completely contractive
  isomorphic, then    $n=\ell$
    and    there exists a permutation $\sigma$ of the set $\{1,\ldots, n\}$ and
scalars $t_1,\ldots, t_n >0$ such that the map
$$
Int({\bf
D_{f}^m}(\CC))\ni(z_1,\ldots, z_n)\mapsto (t_1z_{\sigma(1)},\ldots, t_n z_{\sigma(n)})\in Int({\bf D_{g}^d}(\CC))
$$
is a biholomorphic map.
\end{corollary}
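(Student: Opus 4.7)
The plan is to reduce the statement to Theorem~\ref{classification} and then invoke the classical theorem of Sunada on biholomorphic equivalence of bounded complete Reinhardt domains. First, applying Theorem~\ref{classification} to the given unital completely contractive isomorphism produces a homeomorphism $\varphi : {\bf D_g^d}(\CC) \to {\bf D_f^m}(\CC)$ whose restriction is a biholomorphism $Int({\bf D_g^d}(\CC)) \to Int({\bf D_f^m}(\CC))$, and yields $n=\ell$. Its inverse $\psi := \varphi^{-1} : Int({\bf D_f^m}(\CC)) \to Int({\bf D_g^d}(\CC))$ is then a biholomorphism in the opposite direction.

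Next I would verify that $Int({\bf D_f^m}(\CC))$ and $Int({\bf D_g^d}(\CC))$ satisfy the hypotheses of Sunada's theorem, namely that they are bounded complete Reinhardt domains containing the origin in $\CC^n$. The Reinhardt property is immediate from the definition of the defect map, since
$$\Phi_{f_i,z_i}(1)=\sum_{\alpha\in \FF_{n_i}^+} a_{i,\alpha}|z_{i,\alpha}|^2$$
depends only on the moduli of the coordinates, and consequently so does $ {\bf \Delta_{f,z}^m}(1)$. Completeness follows from the fact that these quantities are monotone nondecreasing in each $|z_{i,j}|$, because all coefficients $a_{i,\alpha}$ are nonnegative; hence shrinking the moduli preserves the defining inequalities. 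The origin lies in the interior because ${\bf \Delta_{f,0}^m}(1)=1>0$. Boundedness follows from the condition $a_{i,g_j^i}>0$ built into the definition of a positive regular free holomorphic function, which forces $|z_{i,j}|^2 \leq 1/a_{i,g_j^i}$ on the domain. The same argument applies to ${\bf D_g^d}(\CC)$.

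Finally I would invoke Sunada's classification: every biholomorphism between two bounded complete Reinhardt domains in $\CC^n$ containing the origin has the form
$$(z_1,\ldots,z_n)\mapsto (t_1 z_{\sigma(1)},\ldots, t_n z_{\sigma(n)})$$
for some permutation $\sigma$ of $\{1,\ldots,n\}$ and positive constants $t_1,\ldots,t_n>0$. Applying this to $\psi$ delivers precisely the desired map. Since the Sunada form automatically sends the origin to the origin, there is no issue about the biholomorphism from Theorem~\ref{classification} a~priori not fixing $0$; the theorem still forces this standard shape. The entire argument is short: the only nontrivial work has already been packaged into Theorem~\ref{classification}, and the main (mild) obstacle is simply the careful verification that the polydomains meet the structural hypotheses of Sunada's theorem.
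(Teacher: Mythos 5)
Your proof follows the same route as the paper: reduce to Theorem~\ref{classification}, check Sunada's hypotheses, and apply Sunada's theorem. The extra work you do spelling out why $Int({\bf D_f^m}(\CC))$ is a bounded complete Reinhardt domain containing the origin is a useful elaboration of the paper's terse two-line proof. However, you misstate Sunada's theorem. It is \emph{not} true that every biholomorphism between two bounded complete Reinhardt domains containing the origin is of the form $z \mapsto (t_1 z_{\sigma(1)},\ldots,t_n z_{\sigma(n)})$; the unit ball $\BB_n$ is such a domain and its automorphism group contains M\"obius transformations not fixing $0$, and even among origin-fixing automorphisms it contains all unitaries, not just permutation-scalings. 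What Sunada actually proves (for complete bounded Reinhardt domains containing $0$) is that \emph{if} the two domains are biholomorphically equivalent, \emph{then} there exists \emph{some} biholomorphism of the permutation-plus-dilation form between them. This weaker (and correct) statement is still exactly what the corollary requires: Theorem~\ref{classification} supplies the biholomorphic equivalence, and Sunada supplies a map of the desired shape. In particular, your remark that ``the theorem still forces this standard shape'' on the map $\psi$ coming from Theorem~\ref{classification} is incorrect and should be dropped; one does not need $\psi$ itself to have this form.
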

\begin{proof}
 Note that the sets  $Int({\bf D_{f}^m}(\CC))\subset \CC^n$ and
$Int({\bf D_{g}^d}(\CC))\subset \CC^\ell$ are Reinhardt domains which
contain $0$. Due to Theorem \ref{classification}, there is a  biholomorphic function from  $Int({\bf D_{g}^d}(\CC))$ onto  $Int({\bf
D_{f}^m}(\CC))$ and  $n=\ell$. Using  Sunada's result \cite{Su}, we complete the proof.
\end{proof}

\begin{proposition}\label{continuity}  Let $\cQ\subset \CC[Z_{i,j}]$
  be  a left ideal generated by  noncommutative
    homogenous polynomials and   let $\cA({\bf {\cV}^m_{f,{\cQ}}})$ be   the corresponding  noncommutative
  variety
algebra. If $\varphi\in \cA({\bf {\cV}^m_{f,{\cQ}}})$, then the map
$\check \varphi :{\bf {\cV}^m_{f,{\cQ}}}(\cH) \to B(\cH)$ defined by
$$
\check \varphi(Y):=\lim_{r\to 1} {\bf B}_{rY,\cQ}[\varphi],\qquad  Y\in {\bf {\cV}^m_{f,{\cQ}}}(\cH),
$$
is continuous,
where the convergence is in the  operator norm topology and ${\bf B}_{{\bf f},rY,\cQ}$ is the constrained noncommutative Berezin tranform.
 \end{proposition}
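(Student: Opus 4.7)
The plan is a standard density/three-epsilon argument: approximate $\varphi\in\cA(\cV_{\bf f,\cQ}^{\bf m})$ in norm by polynomials in the universal model, handle the polynomial case by the trivial norm-continuity of polynomial functional calculus, and use a uniform bound on $\check\varphi$ in $Y$ coming from Proposition \ref{vN2-variety} to close the estimate.

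First, I would invoke Proposition \ref{vN2-variety}: for every $Y\in\cV_{\bf f,\cQ}^{\bf m}(\cH)$ the norm limit $\check\varphi(Y)=\lim_{r\to 1}{\bf B}_{rY,\cQ}[\varphi]$ exists and coincides with ${\bf \Psi_{f,Y,\cQ}}(\varphi)$, and the restriction of ${\bf \Psi_{f,Y,\cQ}}$ to the variety algebra is a unital completely contractive homomorphism. In particular,
$$
\|\check\varphi(Y)-\check\psi(Y)\|=\|{\bf \Psi_{f,Y,\cQ}}(\varphi-\psi)\|\le \|\varphi-\psi\|
$$
for all $\varphi,\psi\in\cA(\cV_{\bf f,\cQ}^{\bf m})$ and all $Y$. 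This is the crucial uniformity in $Y$ which makes the approximation argument work.

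Next, since $\cA(\cV_{\bf f,\cQ}^{\bf m})$ is by definition the norm-closed algebra generated by $\{{\bf S}_{i,j}\}$ and the identity, given $\varepsilon>0$ I can choose a polynomial $p({\bf S}_{i,j})\in \CC[{\bf S}_{i,j}]$ with $\|\varphi-p({\bf S}_{i,j})\|<\varepsilon/3$. For a polynomial, Proposition \ref{vN2-variety} gives $\check{p}(Y)=p(Y_{i,j})$, which is a noncommutative polynomial in the entries of $Y$. Since addition, scalar multiplication, and operator composition are jointly norm-continuous on bounded subsets of $B(\cH)$, the map $Y\mapsto p(Y_{i,j})$ from $\cV_{\bf f,\cQ}^{\bf m}(\cH)\subset B(\cH)^{n_1+\cdots+n_k}$ (with the product operator-norm topology) into $B(\cH)$ is continuous. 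Hence for $Y'$ sufficiently close to $Y$, $\|\check{p}(Y)-\check{p}(Y')\|<\varepsilon/3$.

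Finally, splitting via the triangle inequality,
$$
\|\check\varphi(Y)-\check\varphi(Y')\|\le \|\check\varphi(Y)-\check{p}(Y)\|+\|\check{p}(Y)-\check{p}(Y')\|+\|\check{p}(Y')-\check\varphi(Y')\|,
$$
the outer two terms are each bounded by $\|\varphi-p({\bf S}_{i,j})\|<\varepsilon/3$ by the complete contractivity noted above, and the middle term is $<\varepsilon/3$ by the polynomial case. Thus $\|\check\varphi(Y)-\check\varphi(Y')\|<\varepsilon$, proving continuity. There is no real obstacle here; the only thing to be careful about is making sure the uniform bound $\|{\bf \Psi_{f,Y,\cQ}}\|\le 1$ is available for all $Y\in\cV_{\bf f,\cQ}^{\bf m}(\cH)$, which is exactly the content of Proposition \ref{vN2-variety} and is guaranteed by the hypothesis that $\cQ$ is generated by homogeneous polynomials.
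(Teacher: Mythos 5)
Your proof is correct and follows essentially the same route as the paper: a three-$\varepsilon$/density argument that approximates $\varphi$ by a polynomial, exploits the uniform contractivity of the constrained Berezin calculus $\Psi_{\bf f, Y, \cQ}$ (Proposition \ref{vN2-variety}) to control the approximation error uniformly in $Y$, and handles the polynomial case by joint norm-continuity of multiplication. The paper writes the same estimate at the level of $\limsup_{r\to 1}\|{\bf B}_{rY,\cQ}[\cdot]\|$ before passing to the limit, whereas you apply the bound directly to $\check\varphi$; this is a cosmetic difference, and your presentation is if anything slightly tidier.
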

 \begin{proof} First, note that the map $\check \varphi$ is well-defined due to
 Proposition \ref{vN2-variety}. Let $p_n({\bf S}_{i,j})$ be a sequence of polynomials in the variety algebra $\cA({\bf {\cV}^m_{f,{\cQ}}})$ such that $p_n({\bf S}_{i,j})\to \varphi$ in the operator norm. Given $\epsilon >0$, let $N\in \NN$ be such that $\|\varphi- p_N({\bf S}_{i,j})\|<\frac{\epsilon}{4}$. Fix $A\in {\bf {\cV}^m_{f,{\cQ}}}(\cH)$ and and choose $\delta>0$ such that $\|p_N(Y)-p_N(A)\|<\frac{\epsilon}{2}$, whenever $Y\in  {\bf {\cV}^m_{f,{\cQ}}}(\cH)$  and  $\|Y-A\|<\delta$. Now, using again  Proposition \ref{vN2-variety}, we have
 \begin{equation*}
 \begin{split}
 \|\check \varphi(Y)-\check \varphi(A)\|
 &\leq \limsup_{r\to\infty}
  \| {\bf B}_{r{\bf Y},\cQ}[\varphi]- {\bf B}_{r{\bf A},\cQ}[\varphi]\|\\
  &=\limsup_{r\to\infty}\left\{ {\bf B}_{r{\bf Y},\cQ}[\varphi-p_N({\bf S}_{i,j})]\|+\|{\bf B}_{r{\bf Y},\cQ}[p_N({\bf S}_{i,j})]-{\bf B}_{r{\bf A},\cQ}[p_N({\bf S}_{i,j})]\|\right.\\
  &\qquad \qquad \qquad \left.+ \|{\bf B}_{r{\bf A},\cQ}[p_N({\bf S}_{i,j})-\varphi]\|\right\}\\
  &\leq 2\|\varphi- p_N({\bf S}_{i,j})\|+\limsup_{r\to 1} \|p_N(rY)-p_N(rA)\|\\
  &\leq 2\|\varphi- p_N({\bf S}_{i,j})\|+\|p_N(Y)-p_N(A)\|\leq \epsilon
\end{split}
 \end{equation*}
 for any  $Y\in  {\bf {\cV}^m_{f,{\cQ}}}(\cH)$  with  $\|Y-A\|<\delta$.
The proof is complete
 \end{proof}

Consider the particular case when $\cQ=\cQ_c$.  According to Theorem \ref{multipliers}, the Hardy algebra $F^\infty(\cV_{\bf f, \cQ_c}^m)$  coincides with the
algebra  $H^\infty({\bf D^m_{f,>}}(\CC))$  of all
multipliers of the Hilbert space $H^2({\bf D_{f,>}^m}(\CC))$.
We remark that each $\varphi\in \cA({\bf {\cV}^m_{f,{\cQ_c}}})$ can be identified with a multiplier $\xi$ of $H^2({\bf D_{f,>}^m}(\CC))$ which admits a continuous extension to ${\bf D_f^m}(\CC)$. Moreover,
$$\xi(\lambda)=\lim_{r\to 1} {\bf B}_{r\lambda,\cQ_c}[\varphi],\qquad \lambda\in {\bf D}_{{\bf f},>}^{\bf m}(\CC).
$$
Indeed, due to Theorem \ref{multipliers}, $\varphi$ can be identified with a multiplier $\xi$  which is given by the  relation
$\xi(\lambda)=\left<\varphi(1), u_\lambda\right>$ for all $\lambda\in {\bf D_{f,>}^m}(\CC)$. On the other hand, due to Proposition \ref{continuity},
the map $\check \varphi :{\bf {\cV}^m_{f,{\cQ}}}(\CC) \to \CC$ defined by $\check \varphi(\lambda):=\lim_{r\to 1} {\bf B}_{r\lambda,\cQ}[\varphi]$
 is continuous on
 ${\bf {\cV}^m_{f,{\cQ}}}(\CC)={\bf D}_{{\bf f}}^{\bf m}(\CC)$. According to Theorem \ref{w*} and the remarks that follow, we deduce that
 $\xi(\lambda)=\left<\varphi(1), u_\lambda\right>=\check \varphi(\lambda)$ for all $\lambda\in {\bf D_{f,>}^m}(\CC)$, which proves our assertion.

 \begin{theorem}
 \label{classification2}  Let ${\bf f}=(f_1,\ldots, f_k)$ and ${\bf g}=(g_1,\ldots, g_{k'})$ be  tuples of positive regular free
holomorphic functions with $n$ and $\ell$ indeterminates, respectively,  let ${\bf m}:=(m_1,\ldots, m_k)\in \NN^k$ and ${\bf d}:=(d_1,\ldots, d_{k'})\in \NN^{k'}$, and let $\cQ $
  be  a left ideal generated by
    homogenous polynomials in $ \CC[Z_{i,j}]$.
If
    $\widehat\Psi:\cA({\bf {\cV}^m_{f,{\cQ}}})
\to \cA({\bf {\cV}^d_{g,{\cQ}}})$ is a unital completely contractive
  isomorphism, then the map $\varphi: {\bf {\cV}^d_{g,{\cQ}}}(\CC)\to {\bf {\cV}^m_{f,{\cQ}}}(\CC)$ defined by
$$
\varphi(\lambda):=\left[ \lim_{r\to 1}{\bf B}_{{\bf g},r\lambda, \cQ}[\widehat{\Psi}({\bf S}^{({\bf f})}_{i,j}]: \ {i\in \{1,\ldots, k\}, j\in \{1,\ldots, n_i\}}\right],\qquad \lambda \in {\bf {\cV}^d_{g,{\cQ}}}(\CC),
$$
where  ${\bf S}^{(\bf f)}:=\{ {\bf S}^{({\bf f})}_{i,j}\}$ is the universal model of the abstract variety ${\bf {\cV}^m_{f,{\cQ}}}$ and ${\bf B}_{{\bf g}, r\lambda, \cQ}$ is the constrained  Berezin  transform at $r\lambda$,
is a homeomorphism  of ${\bf {\cV}^d_{g,{\cQ}}}(\CC)$  onto ${\bf {\cV}^m_{f,{\cQ}}}(\CC)$.

In the particular case when $\cQ=\cQ_c$, the map $\varphi$ is, in addition,
 a biholomorphic function from $Int({\bf {\cV}^d_{g,{\cQ_c}}}(\CC))$ onto $Int({\bf {\cV}^m_{f,{\cQ_c}}}(\CC))$ and  $n=\ell$.
\end{theorem}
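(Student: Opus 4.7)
The plan is to imitate the proof of Theorem \ref{classification}, replacing the polydomain universal model ${\bf W}^{(\bf f)}$ by the constrained universal model ${\bf S}^{(\bf f)}$ of $\cV^{\bf m}_{\bf f,\cQ}$ and using Proposition \ref{vN2-variety} and Proposition \ref{continuity} in place of their unconstrained analogues. Set $\widetilde{\varphi}_{i,j}:=\widehat\Psi({\bf S}^{(\bf f)}_{i,j})\in \cA({\bf \cV^d_{g,\cQ}})$ and $\widetilde{\xi}_{i,j}:=\widehat\Psi^{-1}({\bf S}^{(\bf g)}_{i,j})\in \cA({\bf \cV^m_{f,\cQ}})$. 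Since $\widehat\Psi$ is a unital completely contractive homomorphism and the defect inequalities $0\le \sum_{|\alpha|\le N}a_{i,\alpha}{\bf S}^{(\bf f)}_{i,\alpha}({\bf S}^{(\bf f)}_{i,\alpha})^*\le I$ are preserved by such maps, we obtain $0\le \Phi_{f_i,\widetilde{\varphi}_i}(I)\le I$; hence, on scalars, $0\le \Phi_{f_i,\varphi_i(\lambda)}(1)\le 1$, so $\varphi(\lambda)\in {\bf D^m_f}(\CC)$ for every $\lambda\in {\bf \cV^d_{g,\cQ}}(\CC)$. The crucial additional point is that $\varphi(\lambda)$ sits in the variety: for any $g\in \cQ$, Lemma \ref{univ-mod-var}(ii) gives $g({\bf S}^{(\bf f)}_{i,j})=0$, hence $g(\widetilde{\varphi}_{i,j})=\widehat\Psi(g({\bf S}^{(\bf f)}_{i,j}))=0$, and applying the constrained Berezin transform ${\bf B}_{{\bf g},r\lambda,\cQ}$ and letting $r\to 1$ yields $g(\varphi(\lambda))=0$. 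The analogous statements hold for $\xi$.

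Next I would establish $\xi\circ\varphi=\mathrm{id}$ on ${\bf \cV^d_{g,\cQ}}(\CC)$. By Proposition \ref{vN2-variety}, each $\widetilde{\xi}_{i,j}$ is the norm limit as $r\to 1$ of its Cesaro-type polynomial approximants $\sum r^q a_{(\alpha)}{\bf S}^{(\bf f)}_{(\alpha)}$; applying $\widehat\Psi$, using its norm continuity, and then applying the constrained Berezin transform ${\bf B}_{{\bf g},\lambda,\cQ}$ (which is norm continuous on $\cA({\bf \cV^d_{g,\cQ}})$, again by Proposition \ref{vN2-variety}) produces the identity $\lambda_{i,j}=\xi_{i,j}(\varphi(\lambda))$ on the pure part ${\bf \cV^d_{g,\cQ,>}}(\CC)$, exactly as in the proof of Theorem \ref{classification}. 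Since $\cQ$ is homogeneous, $r\mu\in {\bf \cV^d_{g,\cQ,>}}(\CC)$ for every $\mu\in {\bf \cV^d_{g,\cQ}}(\CC)$ and $r\in [0,1)$, so ${\bf \cV^d_{g,\cQ,>}}(\CC)$ is dense in ${\bf \cV^d_{g,\cQ}}(\CC)$; Proposition \ref{continuity} ensures $\varphi$ and $\xi$ are continuous, and the identity extends by continuity. The symmetric argument gives $\varphi\circ\xi=\mathrm{id}$, so $\varphi$ is a homeomorphism.

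For the commutative case $\cQ=\cQ_c$, on scalars the commutators vanish automatically, so ${\bf \cV^m_{f,\cQ_c}}(\CC)={\bf D^m_f}(\CC)$ and ${\bf \cV^m_{f,\cQ_c,>}}(\CC)={\bf D^m_{f,>}}(\CC)$; likewise for $\bf g,d$. By Theorem \ref{multipliers} and the remarks following Proposition \ref{continuity}, each $\widetilde{\varphi}_{i,j}\in \cA({\bf \cV^d_{g,\cQ_c}})$ is a multiplier of $H^2({\bf D^d_{g,>}}(\CC))$ whose boundary value is $\varphi_{i,j}(\lambda)=\lim_{r\to 1}{\bf B}_{{\bf g},r\lambda,\cQ_c}[\widetilde{\varphi}_{i,j}]$; since multipliers are, in particular, holomorphic functions on the Reinhardt domain $\mathrm{Int}({\bf D^d_g}(\CC))={\bf D^d_{g,>}}(\CC)$, the components $\varphi_{i,j}$ are holomorphic there. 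The symmetric statement holds for $\xi$. Hence $\varphi$ restricts to a holomorphic bijection between the open Reinhardt domains $\mathrm{Int}({\bf \cV^d_{g,\cQ_c}}(\CC))$ and $\mathrm{Int}({\bf \cV^m_{f,\cQ_c}}(\CC))$ with holomorphic inverse, and Brouwer's invariance of domain (as in Theorem \ref{classification}) forces $n=\ell$.

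The main technical obstacle will be the bookkeeping in transferring the Berezin-transform/Fourier-representation argument of Theorem \ref{classification} to the variety setting, specifically verifying that $\varphi(\lambda)$ remains in the variety (not merely in the polydomain) and that the relevant convergences survive the compression to $\cN_\cQ$; both are handled by invoking Lemma \ref{univ-mod-var}(ii) and the norm continuity of the constrained Berezin transform provided by Propositions \ref{Ber-const}, \ref{vN2-variety}, and \ref{continuity}.
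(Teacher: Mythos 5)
Your argument is essentially the paper's, with two small divergences worth noting. For the $\xi\circ\varphi=\mathrm{id}$ step, you attribute to Proposition \ref{vN2-variety} a Cesaro/Fourier approximation of $\widetilde\xi_{i,j}$ by $\sum r^q a_{(\alpha)}{\bf S}^{({\bf f})}_{(\alpha)}$; Proposition \ref{vN2-variety} does not furnish such a canonical Fourier series for elements of the variety algebra, and the paper simply takes \emph{any} sequence of polynomials $p_s({\bf S}^{({\bf f})})$ converging to $\widetilde\xi_{i,j}$ in operator norm, which is available directly from the definition of $\cA({\bf\cV^m_{f,\cQ}})$ as the norm-closed algebra generated by the ${\bf S}_{i,j}$. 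The fact you need is true and the mechanism is the same, but the citation is misplaced. For the $\cQ=\cQ_c$ case you invoke Theorem \ref{multipliers} to see that $\varphi_{i,j}$ is a multiplier of $H^2({\bf D^d_{g,>}}(\CC))$, hence holomorphic on the open Reinhardt domain; the paper instead leans on the scalar identification ${\bf\cV^m_{f,\cQ_c}}(\CC)={\bf D^m_f}(\CC)$ and a bare reference back to Theorem \ref{classification}. Your route via the reproducing-kernel/multiplier structure is more explicit and self-contained here, and both end with the same Brouwer invariance-of-domain argument for $n=\ell$. No gaps; the rest of the argument (defect inequalities to land in the polydomain, Lemma \ref{univ-mod-var}(ii) to land in the variety, density of ${\bf\cV^d_{g,\cQ,>}}(\CC)$ from homogeneity of $\cQ$, and the symmetric inverse) matches the paper.
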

\begin{proof}   We only sketch the proof,  since it is very similar to that  of Theorem \ref{classification}, and point out the differences.
Denote
 \begin{equation}
 \label{var-tilde2}
 \widetilde\varphi_{i,j}:=\widehat \Psi({\bf S}_{i,j}^{({\bf f})})\in\cA({\bf {\cV}^d_{g,{\cQ}}}),
\qquad i\in \{1,\ldots, k\}, j\in \{1,\ldots, n_i\},
 \end{equation}
 where  ${\bf S}^{(\bf f)}:=\{ {\bf S}^{({\bf f})}_{i,j}\}$ is the universal model of the abstract variety  ${\bf {\cV}^m_{f,{\cQ}}}$.   Due to Proposition \ref{continuity}, the map $\varphi_{i,j}: {\bf {\cV}^d_{g,{\cQ}}}(\CC)\to \CC$, given
 by
 $$\varphi_{i,j}(\lambda):=\lim_{r\to 1}{\bf B}_{{\bf g},r\lambda,\cQ}[\widetilde
 \varphi_{i,j}]
 $$
  is well-defined and continuous. Consider  the function $\varphi:{\bf {\cV}^d_{g,{\cQ}}}(\CC)\to\CC^\ell$  given by
   $\varphi(\lambda):=(
 \varphi_1(\lambda),\ldots, \varphi_{k}(\lambda))$, where $\varphi_i(\lambda):=(\varphi_{i,1}(\lambda), \ldots, \varphi_{i,n_i}(\lambda))$ for all  $\lambda\in {\bf {\cV}^d_{g,{\cQ}}}(\CC)$ and note that   $\varphi(\lambda)\in {\bf
D_{f}^m}(\CC)$ for all ${\bf {\cV}^d_{g,{\cQ}}}(\CC)$. On the other hand, since
$q({\bf S}^{(\bf f)})=0$ for any $q\in \cQ$,  and $\widehat\Psi$ is a homomorphism, one can deduce that $q(\widetilde \varphi)=0$. Applying the constrained Berezin transform ${\bf B}_{{\bf g},r\lambda,\cQ}$ and taking the limit as $r\to 1$, we obtain that $q(\varphi(\lambda))=0$ for any $q\in \cQ$. Therefore $\varphi(\lambda)\in {\bf {\cV}^m_{f,{\cQ}}}(\CC)$ and
 the map  $\varphi: {\bf {\cV}^d_{g,{\cQ}}}(\CC)\to {\bf {\cV}^m_{f,{\cQ}}}(\CC)$  is continuous.
Similarly, setting
\begin{equation}
 \label{xi2}
 \widetilde\xi_{i,j}:=\widehat \Psi^{-1}({\bf S}_{i,j}^{({\bf g})})\in \cA({\bf {\cV}^m_{f,{\cQ}}}),
\qquad i\in \{1,\ldots, k'\}, j\in \{1,\ldots, \ell_i\},
 \end{equation}
 where  ${\bf S}^{(\bf g)}:=\{ {\bf S}^{({\bf g})}_{i,j}\}$ is the universal model of the abstract variety ${\bf {\cV}^d_{g,{\cQ}}}$, Proposition \ref{continuity} shows  that
   the map $\xi_{i,j}: {\bf{\cV}^m_{f,{\cQ}}}(\CC)\to \CC$ given
 by $\xi_{i,j}(\mu):=\lim_{r\to 1}{\bf B}_{{\bf f},r\mu, \cQ}[\widetilde
 \xi_{i,j}]$ is well-defined and continuous. Now, one can prove that
   the map  $\xi:{\bf {\cV}^m_{f,{\cQ}}}(\CC)\to {\bf {\cV}^d_{g,{\cQ}}}(\CC)$  defined by
$\xi(\mu):=(
 \xi_1(\mu),\ldots, \xi_{k'}(\mu))$,  where  $\xi_i(\mu):=(\xi_{i,1}(\mu), \ldots, \xi_{i,\ell_i}(\mu))$,
is continuous.

 For each $\widetilde \xi_{i,j}\in
\cA({\bf {\cV}^m_{f,{\cQ}}})$, $i\in \{1,\ldots, k'\}$, $j\in \{1,\ldots, \ell_i\}$,  let $p_s({\bf S}^{({\bf f})})=
  \sum_{{(\alpha)\in \FF_{n_1}^+\times \cdots \times\FF_{n_k}^+ }}   a_{(\alpha)}^{(s)} {\bf S}_{(\alpha)}^{({\bf f})}$, $s\in \NN$, be a sequence of polynomials
such that
 $
 \widetilde \xi_{i,j}=\lim_{s\to \infty} p_s({\bf S}^{({\bf f})})
$
 where the
convergence  is in the operator norm. Using the
continuity of $\widehat \Psi$ in the operator norm, and relations
\eqref{xi2} and \eqref{var-tilde2}, we obtain
\begin{equation*}
\begin{split}
{\bf S}_{i,j}^{({\bf g})}&=\widehat \Psi(\widetilde \xi_{i,j})=\widehat
\Psi\left(\lim_{s\to \infty} p_s({\bf S}^{({\bf f})})
\right) =\lim_{s\to \infty} p_s({\widetilde \varphi} ).
\end{split}
\end{equation*}
Consequently, using the  continuity in the operator norm  of  the constrained
noncommutative Berezin transform  at $\lambda\in {\bf {\cV}^d_{g,{\cQ},>}}(\CC)$ on the
 variety algebra $\cA({\bf {\cV}^d_{g,{\cQ}}})$ and   the relations above, we obtain
\begin{equation*}
\begin{split}
\lambda_{i,j}&={\bf B}_{{\bf g},\lambda, \cQ}[{\bf S}_{i,j}^{({\bf g})}]={\bf B}_{{\bf g},\lambda, \cQ}\left[\lim_{s\to \infty} p_s({\widetilde \varphi} )\right]\\
&
= \lim_{s\to\infty}  p_s({  \varphi(\lambda)} )
=\lim_{s\to\infty} {\bf B}_{{\bf f},\varphi(\lambda),\cQ} [p_s({\bf S}^{({\bf f})}) ]\\
&=\xi_{i,j}(\varphi(\lambda))
\end{split}
\end{equation*}
for each $i\in \{1,\ldots, k'\}$, $j\in \{1,\ldots, \ell_i\}$, and any $\lambda\in {\bf {\cV}^d_{g,{\cQ},>}}(\CC)$. Hence
$(\xi\circ \varphi)(\lambda)=\lambda$ for all $\lambda\in {\bf {\cV}^d_{g,{\cQ},>}}(\CC)$. Now, using the fact that the functions  $\varphi: {\bf {\cV}^d_{g,{\cQ}}}(\CC)\to {\bf {\cV}^m_{f,{\cQ}}}(\CC)$  and   $\xi:{\bf {\cV}^m_{f,{\cQ}}}(\CC)\to {\bf {\cV}^d_{g,{\cQ}}}(\CC)$ are  continuous, and ${\bf {\cV}^d_{g,{\cQ},>}}(\CC)$ is dense in ${\bf {\cV}^d_{g,{\cQ}}}(\CC)$, we conclude that $(\xi\circ \varphi)(\lambda)=\lambda$ for all  $\lambda\in {\bf {\cV}^d_{g,{\cQ}}}(\CC)$.
Similarly, one can prove that $(\varphi\circ \xi)(\mu)=\mu$ for  $\mu\in
{\bf {\cV}^m_{f,{\cQ}}}(\CC)$. Therefore, the map
 $\varphi$ is a homeomorphism.
 Note that in the particular case when $\cQ=\cQ_c$,  we have ${\bf {\cV}^m_{f,{\cQ_c}}}(\CC)={\bf D_{f}^m}(\CC)$ and ${\cV}^d_{g,{\cQ_c}}(\CC)={\bf D_{g}^d}(\CC)$. Using Theorem \ref{classification}, one can complete the proof.
\end{proof}

We remark that a result similar to Corollary \ref{Sunada} holds in the commutative setting. Therefore, if     the variety algebras $\cA({\bf {\cV}^m_{f,{\cQ_c}}})$ and $\cA({\bf {\cV}^d_{g,{\cQ_c}}})$  are   unital completely contractive
  isomorphic, then    $n=\ell$
    and    there exists a permutation $\sigma$ of the set $\{1,\ldots, n\}$ and
scalars $t_1,\ldots, t_n >0$ such that the map
$$
Int({\bf {\cV}^m_{f,{\cQ_c}}}(\CC))\ni(z_1,\ldots, z_n)\mapsto (t_1z_{\sigma(1)},\ldots, t_n z_{\sigma(n)})\in Int({\bf {\cV}^d_{g,{\cQ_c}}}(\CC))
$$
is a biholomorphic map.

The results of this section show    that there are many non-isomorphic polydomain algebras. We consider the following particular case.
If  ${\bf f}=Z_1+\cdots +Z_n$, then $\cA(\cV_{{\bf f},\cQ_c}^1)$ is the universal algebra of commuting row contractions, and
 $ Int(\cV_{f,\cQ_c}^1(\CC)=\BB_n$, the open unit ball of $\CC^n$. When ${\bf g}=(Z_1,\ldots, Z_n)$, then $\cA(\cV_{{\bf g},\cQ_c}^1)$ is the commutative polydisc algeba.
 In this case, we have
 $ Int(\cV_{f,\cQ_c}^1(\CC)=\DD^n$. Since $\BB_n$ and $\DD^n$ are not biholomorphic domains in $\CC^n$  if $n\geq 2$, Theorem \ref{classification2} shows that the universal  algebras  $\cA(\cV_{{\bf f},\cQ_c}^1)$ and  $\cA(\cV_{{\bf g},\cQ_c}^1)$ are not isomorphic.

\bigskip

\section{Dilation theory on noncommutative varieties in polydomains}

In this section we develop a dilation theory  on abstract  noncommutative varieties $\cV_{\bf f, {\it J}}^{\bf m}$, where $J$ is a norm-closed two sided ideal of   the noncommutative polydomain algebra $\cA({\bf D_f^m})$ such that $\cN_J\neq \{0\}$.
The dilation theory can be  refined for the class of noncommutative varieties   $\cV_{\bf q, \cQ}^{\bf m} $, where $\cQ\subset \CC[Z_{i,j}]$ is an ideal generated by homogeneous  polynomials and ${\bf q}=(q_1,\ldots, q_k)$ is a $k$-tuple of   positive regular  noncommutative polynomials. In this case, we also obtain  Wold type
decompositions for non-degenerate $*$-representations of the
$C^*$-algebra $C^*({\bf S}_{i,j})$ generated by the universal model.

\begin{lemma}\label{Psi-X}
Let ${\bf T}=({ T}_1,\ldots, { T}_k)$ be in the noncommutative polydomain ${\bf D_f^m}(\cH)$ and let $X\in B(\cH)$ be a positive operator such that
   $
  {\bf \Delta_{f,T}^p}(X)\geq 0
    $
 for any  ${\bf p}:=(p_1,\ldots, p_k)\in \ZZ_+^k$ with ${\bf p}\leq {\bf m}$. Then
       \begin{equation*}
0\leq
\lim_{q_k\to\infty}\ldots \lim_{q_1\to\infty}  (id-\Phi_{f_k,T_k}^{q_k})\cdots (id-\Phi_{f_1,T_1}^{q_1})(X)\leq X.
\end{equation*}
\end{lemma}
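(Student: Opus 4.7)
The plan is to induct on $k$, the number of polydomain factors, using at each stage the fact that for a positive operator $A$ with $\Phi_{f_i,T_i}(A)\le A$ the sequence $\Phi_{f_i,T_i}^{q}(A)$ is positive and decreasing, hence converges in SOT (Vigier's theorem) to a positive limit bounded above by $A$. For the base case $k=1$, the hypothesis gives $X\ge 0$ and $\Phi_{f_1,T_1}(X)\le X$, so this observation yields an SOT-limit $\Lambda_1 X$ with $0\le\Lambda_1 X\le X$; hence $(id-\Phi_{f_1,T_1}^{q_1})(X)=X-\Phi_{f_1,T_1}^{q_1}(X)$ converges to $X-\Lambda_1 X\in[0,X]$.

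For the inductive step, abbreviate $\Phi_i:=\Phi_{f_i,T_i}$ and note that the $\Phi_i$ pairwise commute since $T_i,T_j$ have commuting entries for $i\neq j$. Setting $\Theta_{\mathbf{q}'}:=(id-\Phi_k^{q_k})\cdots(id-\Phi_2^{q_2})$, commutativity gives
\[
(id-\Phi_k^{q_k})\cdots(id-\Phi_1^{q_1})(X)=\Theta_{\mathbf{q}'}(X)-\Phi_1^{q_1}\bigl(\Theta_{\mathbf{q}'}(X)\bigr).
\]
Using $id-\Phi_i^{q_i}=(id-\Phi_i)\sum_{s=0}^{q_i-1}\Phi_i^s$ together with commutativity,
\[
(id-\Phi_1)\Theta_{\mathbf{q}'}(X)=\Bigl[\prod_{i=2}^{k}\sum_{s=0}^{q_i-1}\Phi_i^s\Bigr]\Bigl({\bf \Delta_{f,T}^{(1,1,\ldots,1)}}(X)\Bigr)\ge 0,
\]
by positivity of the outer bracket (a sum of compositions of completely positive maps) and the hypothesis ${\bf \Delta_{f,T}^{(1,1,\ldots,1)}}(X)\ge 0$. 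Hence $\Phi_1(\Theta_{\mathbf{q}'}X)\le\Theta_{\mathbf{q}'}X$, and the base-case observation produces $\Lambda_1(\Theta_{\mathbf{q}'}X)\le\Theta_{\mathbf{q}'}X$. Passing $q_1\to\infty$ and using $\Theta_{\mathbf{q}'}\Lambda_1=\Lambda_1\Theta_{\mathbf{q}'}$ (see below),
\[
\lim_{q_1\to\infty}(id-\Phi_k^{q_k})\cdots(id-\Phi_1^{q_1})(X)=\Theta_{\mathbf{q}'}\bigl(X^{(1)}\bigr),\qquad X^{(1)}:=X-\Lambda_1 X\in[0,X].
\]

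To close the induction I must verify that $(T_2,\ldots,T_k)\in{\bf D}^{(m_2,\ldots,m_k)}_{(f_2,\ldots,f_k)}(\cH)$ and $X^{(1)}$ satisfy the lemma's hypotheses for $k-1$: namely $\prod_{i=2}^{k}(id-\Phi_i)^{p_i}(X^{(1)})\ge 0$ for $0\le p_i\le m_i$, $i\ge 2$. By commutativity this equals $(id-\Lambda_1)(A)$ with $A:=\prod_{i=2}^{k}(id-\Phi_i)^{p_i}(X)\ge 0$ (the hypothesis with $p_1=0$). Since $\Phi_1(A)\le A$ is equivalent to ${\bf \Delta_{f,T}^{(1,p_2,\ldots,p_k)}}(X)\ge 0$, which is hypothesized because $m_1\ge 1$, the base-case observation yields $\Lambda_1 A\le A$, so $(id-\Lambda_1)(A)\ge 0$. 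The inductive hypothesis then produces the iterated limit in $[0,X^{(1)}]\subseteq[0,X]$. The main technical delicacy is the identity $\Theta_{\mathbf{q}'}\Lambda_1=\Lambda_1\Theta_{\mathbf{q}'}$, which requires SOT-continuity of each $\Phi_j$ on bounded monotone nets; this is a standard consequence of the series representation $\Phi_j(Y)=\sum_\alpha a_{j,\alpha}T_{j,\alpha}YT_{j,\alpha}^{*}$ together with dominated convergence applied to uniformly bounded decreasing nets of positive operators.
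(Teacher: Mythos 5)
Your proof is correct, but it follows a genuinely different route from the paper's. The paper proves the lemma by introducing, for each $i$, the summation map $\Psi_i(Y)=\sum_{\beta_i\in\FF_{n_i}^+}b_{i,\beta_i}^{(m_i)}T_{i,\beta_i}YT_{i,\beta_i}^*$ and invoking the identity (imported from the proof of Theorem 2.2 in the earlier paper on Berezin transforms on polydomains) $0\leq\Psi_i({\bf \Delta_{f,T}^p}(X))={\bf \Delta}_{\bf f,T}^{{\bf p}-m_ie_i}\bigl(id-\lim_{q_i}\Phi_{f_i,T_i}^{q_i}\bigr)(X)\leq{\bf \Delta}_{\bf f,T}^{{\bf p}-m_ie_i}(X)\leq X$ for ${\bf p}\leq{\bf m}$ with $p_i=m_i$; composing $\Psi_k\circ\cdots\circ\Psi_1$ applied to ${\bf \Delta_{f,T}^m}(X)$ then produces the iterated limit together with the bounds. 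You instead induct on the number of factors $k$, using only the one-variable Vigier argument (a positive $A$ with $\Phi_i(A)\leq A$ has $\Phi_i^{q}(A)$ decreasing to a positive limit $\leq A$), the commutativity of the maps $\Phi_{f_i,T_i}$, the factorization $id-\Phi_i^{q_i}=(id-\Phi_i)\sum_{s=0}^{q_i-1}\Phi_i^s$, and WOT-continuity of each $\Phi_{f_j,T_j}$ on bounded monotone sequences, which you correctly identify as the only technical point (it follows from the series representation exactly as you indicate, and is also used freely in the paper, e.g.\ in the proof of Lemma \ref{ine-ber}). Your steps check out: positivity of $(id-\Phi_1)\Theta_{{\bf q}'}(X)$ uses only ${\bf \Delta}_{\bf f,T}^{(1,\ldots,1)}(X)\geq0$; the verification that $X^{(1)}=X-\Lambda_1X$ satisfies the $(k-1)$-variable hypotheses uses the exponents $(0,p_2,\ldots,p_k)$ and $(1,p_2,\ldots,p_k)$, all of which are $\leq{\bf m}$; the only small omission is an explicit remark that $\Theta_{{\bf q}'}(X)\geq0$ (needed so that $\Phi_1^{q_1}(\Theta_{{\bf q}'}X)$ is bounded below by $0$), which follows by the same factorization from ${\bf \Delta}_{\bf f,T}^{(0,1,\ldots,1)}(X)\geq0$. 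What the two approaches buy: the paper's argument identifies the iterated limit explicitly as $(\Psi_k\circ\cdots\circ\Psi_1)({\bf \Delta_{f,T}^m}(X))$, a weighted sum over the orbit of the full defect, at the cost of the combinatorial identity involving the coefficients $b^{(m_i)}_{i,\beta_i}$; your argument is self-contained and more elementary, needs only a subset of the positivity hypotheses at each stage, but does not produce that closed-form expression for the limit.
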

\begin{proof}
For each $i\in \{1,\ldots,k\}$, let $\Omega_i\subset B(\cH)$ be the  set of all $Y\in B(\cH)$, $Y\geq0$, such that
the series $\sum_{\beta_i\in \FF_{n_i}^+} b_{i,\beta_i}^{(m_i)} T_{i,\beta_i}Y T_{i,\beta_i}^*$ is convergent in the weak operator topology,
where
\begin{equation*}
  b_{i,g_0}^{(m_i)}:=1\quad \text{ and } \quad
 b_{i,\alpha}^{(m_i)}:= \sum_{p=1}^{|\alpha|}
\sum_{{\gamma_1,\ldots,\gamma_p\in \FF_{n_i}^+}\atop{{\gamma_1\cdots \gamma_p=\alpha }\atop {|\gamma_1|\geq
1,\ldots, |\gamma_p|\geq 1}}} a_{i,\gamma_1}\cdots a_{i,\gamma_p}
\left(\begin{matrix} p+m_i-1\\m_i-1
\end{matrix}\right)
\end{equation*}
for all  $ \alpha\in \FF_{n_i}^+$ with  $|\alpha|\geq 1$.
We define the map $\Psi_i:\Omega_i\to B(\cH)$ by setting
\begin{equation*}
\label{Psi-i}
\Psi_i(Y):=\sum_{\beta_i\in \FF_{n_i}^+} b_{i,\beta_i}^{(m_i)} T_{i,\beta_i}Y T_{i,\beta_i}^*.
\end{equation*}
Fix $i\in \{1,\ldots,k\}$ and assume that $1\leq p_i= m_i$.
In \cite{Po-Berezin-poly}  (see the proof of Theorem 2.2), we proved that
\begin{equation}
\label{Psi}
\begin{split}
0\leq \Psi_i( {\bf \Delta_{f,T}^p}(X))&={\bf \Delta_{f,T}^{ p-\text{$m_i$}e_i}}
\left(id-\lim_{q_{i}\to \infty} \Phi_{f_i,T_i}^{q_{i}} \right)(X)\\
&
\leq {\bf \Delta_{f,T}^{ p-\text{$m_i$}e_i}}(X)\leq X,
\end{split}
\end{equation}
for any  ${\bf p}:=(p_1,\ldots, p_k)\in \ZZ_+^k$ with ${\bf p}\leq {\bf m}$ and
 $p_i=m_i$.
A repeated application of \eqref{Psi}, leads to the relation
\begin{equation*}
0\leq (\Psi_k\circ\cdots \circ \Psi_1)({\bf \Delta_{f,T}^m}(X))=
\lim_{q_k\to\infty}\ldots \lim_{q_1\to\infty}  (id-\Phi_{f_k,T_k}^{q_k})\cdots (id-\Phi_{f_1,T_1}^{q_1})(X)\leq X.
\end{equation*}
The proof is complete.
\end{proof}

\begin{lemma} \label{ine-ber}
Let ${\bf T}=({ T}_1,\ldots, { T}_k)$ be in the noncommutative polydomain ${\bf D_f^m}(\cH)$ and let ${\bf K_{f,T}}$ be the associated Berezin kernel. Then
$$
{\bf \Delta_{f,T}^p}({\bf K_{f,T}^*}{\bf K_{f,T}})\leq {\bf \Delta_{f,T}^p}(I)
$$
for any  ${\bf p}:=(p_1,\ldots, p_k)\in \ZZ_+^k$ with ${\bf p}\leq {\bf m}$.  The equality occurs if \  ${\bf p}\geq (1,\ldots, 1)$.

\end{lemma}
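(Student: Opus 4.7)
My plan is to set $Q:={\bf K_{f,T}^*}{\bf K_{f,T}}$ and $Y:={\bf \Delta_{f,T}^p}(I)\geq 0$, reduce the claim to an iterated-limit inequality for $Y$, and then finish by induction. From the formula recalled before Lemma~\ref{Psi-X}, $Q=\lim_{q_k\to\infty}\cdots\lim_{q_1\to\infty}\prod_i(id-\Phi_{f_i,T_i}^{q_i})(I)$ in WOT. Since ${\bf T}\in{\bf D_f^m}(\cH)$ forces pairwise commutativity of the $\Phi_{f_i,T_i}$'s (entries of $T_i$ commute with those of $T_j$ for $i\neq j$), and each such map is $\sigma$-weakly continuous with $\Phi_{f_i,T_i}(I)\leq I$ (so the approximating net is uniformly bounded), I can pull the polynomial ${\bf \Delta_{f,T}^p}=\prod_i(id-\Phi_{f_i,T_i})^{p_i}$ through the iterated WOT-limit and commute it past the $(id-\Phi_{f_j,T_j}^{q_j})$'s, yielding
$$
{\bf \Delta_{f,T}^p}(Q)=\lim_{q_k\to\infty}\cdots\lim_{q_1\to\infty}\prod_{i=1}^k(id-\Phi_{f_i,T_i}^{q_i})(Y).
$$

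Next I will eliminate the indices $i$ with $p_i\geq 1$ by a binomial-expansion trick. Factor $Y=(id-\Phi_{f_i,T_i})^{p_i}(Y_i')$ with $Y_i':=\prod_{j\neq i}(id-\Phi_{f_j,T_j})^{p_j}(I)\geq 0$. Replacing $p_i$ by $1$ in ${\bf p}$ yields a multi-index still $\leq{\bf m}$, so $(id-\Phi_{f_i,T_i})(Y_i')\geq 0$ and $\Phi_{f_i,T_i}^q(Y_i')\searrow L_i^\infty(Y_i')$. The expansion $\Phi_{f_i,T_i}^{q_i}(Y)=\sum_{l=0}^{p_i}(-1)^l\binom{p_i}{l}\Phi_{f_i,T_i}^{q_i+l}(Y_i')$ combined with $\sum_l(-1)^l\binom{p_i}{l}=0$ (for $p_i\geq 1$) forces $\lim_{q_i\to\infty}\Phi_{f_i,T_i}^{q_i}(Y)=0$, so the $i$-th factor becomes $id$ in the iterated limit. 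Setting $A:=\{i:p_i=0\}$ and $L_i^\infty(X):=\lim_q\Phi_{f_i,T_i}^q(X)$ (the latter existing for $i\in A$ because $(id-\Phi_{f_i,T_i})(Y)={\bf \Delta_{f,T}^{p+e_i}}(I)\geq 0$, with $e_i$ the $i$-th standard basis vector of $\ZZ_+^k$), this reduces the claim to
$$
{\bf \Delta_{f,T}^p}(Q)=\prod_{i\in A}(id-L_i^\infty)(Y).
$$
When ${\bf p}\geq(1,\ldots,1)$, $A=\emptyset$ and the product is $id$, giving the equality ${\bf \Delta_{f,T}^p}(Q)={\bf \Delta_{f,T}^p}(I)$ immediately.

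In general I will finish by induction on $|A|$, proving the stronger statement: if $Y\geq 0$ satisfies ${\bf \Delta_{f,T}^{\sum_{i\in S}e_i}}(Y)\geq 0$ for every $S\subseteq A$ (met here because $p_i=0$ for $i\in A$ keeps $p+\sum_{i\in S}e_i\leq{\bf m}$), then $0\leq\prod_{i\in A}(id-L_i^\infty)(Y)\leq Y$. The base $|A|=0$ is trivial. For the step, fix $i_0\in A$, set $A':=A\setminus\{i_0\}$, $V:=\prod_{i\in A'}(id-L_i^\infty)(Y)$, and $Y':=(id-\Phi_{f_{i_0},T_{i_0}})(Y)\geq 0$. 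The induction hypothesis applied to $(Y,A')$ gives $0\leq V\leq Y$; applied to $(Y',A')$ (whose positivity conditions follow by shifting the multi-index by $e_{i_0}$, still $\leq{\bf m}$) it gives $\prod_{i\in A'}(id-L_i^\infty)(Y')\geq 0$. Commuting $\Phi_{f_{i_0},T_{i_0}}$ past each $L_i^\infty$ produces $V-\Phi_{f_{i_0},T_{i_0}}(V)=\prod_{i\in A'}(id-L_i^\infty)(Y')\geq 0$, whence $L_{i_0}^\infty(V)\leq V$ and $(id-L_{i_0}^\infty)(V)\geq 0$ and $\leq V\leq Y$, closing the induction. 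The hard part will be the bookkeeping in this induction, which must simultaneously carry the shifted operators $Y'$ along and verify that their positivity conditions descend from the single hypothesis ${\bf p}\leq{\bf m}$; by comparison, the limit interchange in the first paragraph only requires standard $\sigma$-weak continuity on the uniformly bounded net.
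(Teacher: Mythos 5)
Your argument is correct and takes essentially the same route as the paper: write ${\bf K_{f,T}^*}{\bf K_{f,T}}$ as the iterated WOT-limit, pass the normal map ${\bf \Delta_{f,T}^p}$ through the (uniformly bounded) limits, cancel the factors at coordinates with $p_i\geq 1$ (your binomial identity is the paper's telescoping computation), and then squeeze the residual limits over the coordinates with $p_i=0$ between $0$ and ${\bf \Delta_{f,T}^p}(I)$ — where the paper appeals to its monotonicity remark and Lemma \ref{Psi-X}, you prove the same fact by an explicit induction over $A$, which also supplies the domination $0\leq W\leq Y$ needed when a $p_i\geq 1$ coordinate is not innermost. The bookkeeping you flag is real but routine, and it is handled at the same level of detail as in the paper's own proof.
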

\begin{proof}
 Let $s\in \{1,\ldots, k\}$ and let  $Y\geq 0$ be such that  $(id-\Phi_{f_s,T_s})\cdots (id-\Phi_{f_1,T_1})(Y)\geq 0$.
 Note that
  $\{(id-\Phi_{f_s,T_k}^{q_s})\cdots (id-\Phi_{f_1,T_1}^{q_1})(Y)\}_{{\bf q}=(q_1,\ldots, q_s)\in \ZZ_+^s}$ is an  increasing sequence of positive operators.
 Indeed,  since  $\Phi_{f_1, T_1}, \ldots, \Phi_{f_k,T_k}$ are commuting, we have
 $$
 0\leq (id-\Phi_{f_s,T_s}^{q_s})\cdots (id-\Phi_{f_1,T_1}^{q_1})(Y)
 =\sum_{t=0}^{q_s-1}\Phi_{f_s, T_s}^t\cdots \sum_{t=0}^{q_1-1}\Phi_{f_1,T_1}^t
(id-\Phi_{f_s,T_s})\cdots (id-\Phi_{f_1,T_1})(Y),
$$
which proves our assertion.
If ${\bf p}=0$, the inequality  in the lemma is due to the fact that ${\bf K_{f,T}^*}{\bf K_{f,T}}\leq I$. Assume that ${\bf p}\neq 0$. Without loss of generality, we can assume that
 $p_j\geq 1$ for any $j\in \{1,\ldots,s\}$ for some $s\in \{1,\ldots, k\}$, and $p_j=0$ for any $j\in \{s+1,\ldots, k\}$ if $s<k$.
 Since
 $$
{\bf K_{f,T}^*}{\bf K_{f,T}}= \lim_{q_k\to\infty}\ldots \lim_{q_1\to\infty}  (id-\Phi_{f_k,T_k}^{q_k})\cdots (id-\Phi_{f_1,T_1}^{q_1})(I),
$$
and taking into account that  the maps $\Phi_{f_i,T_i}$ are WOT-continuous  and commuting, we deduce that
\begin{equation*}
\begin{split}
&(id-\Phi_{f_1,T_1})^{p_1}\cdots (id-\Phi_{f_s,T_s})^{p_s}({\bf K_{f,T}^*}{\bf K_{f,T}})\\
&=\lim_{{\bf q}} (id-\Phi_{f_k,T_k}^{q_k})\cdots (id-\Phi_{f_{s+1},T_{s+1}}^{q_{s+1}}) \left[(id-\Phi_{f_s,T_s})^{p_s}(id-\Phi_{f_s,T_s}^{q_s})\right]\cdots \left[(id-\Phi_{f_1,T_1})^{p_1}(id-\Phi_{f_1,T_1}^{q_1})\right](I)
\end{split}
\end{equation*}
Now, let $j\in\{1,\ldots,s\}$ and let $Y\geq 0$ be such that $(id-\Phi_{f_j,T_j})(Y)\geq 0$. Due to the remark at the beginning of the proof,
 WOT-$\lim_{q_j\to \infty} (id-\Phi_{f_j,T_j}^{q_j})(Y)$ exists and, since $\Phi_{f_i,T_i}$ is WOT-continuous,
 we have
\begin{equation*}
\begin{split}
\lim_{q_j\to \infty} (id-\Phi_{f_j,T_j})^{p_j}(id-\Phi_{f_j,T_j}^{q_j})(Y)&=
(id-\Phi_{f_j,T_j})^{p_j-1}\lim_{q_j\to\infty} (id-\Phi_{f_j,T_j})(id-\Phi_{f_j,T_j}^{q_j})(Y)\\
&=(id-\Phi_{f_j,T_j})^{p_j}(Y).
\end{split}
\end{equation*}
Applying this result repeatedly,  when $j=1$ and $Y=I$,  when $j=2$ and $Y= (id-\Phi_{f_1,T_1})^{p_1}(I)$, and so on,   when  $j=s$ and $Y=(id-\Phi_{f_1,T_1})^{p_1}\cdots (id-\Phi_{f_s,T_s})^{p_s}(I)$, we obtain
\begin{equation*}
\begin{split}
\lim_{q_s\to\infty}\cdots \lim_{q_1\to\infty}\left[(id-\Phi_{f_s,T_s})^{p_s}
(id-\Phi_{f_s,T_s}^{q_s})\right]\cdots& \left[(id-\Phi_{f_1,T_1})^{p_1}(id-\Phi_{f_1,T_1}^{q_1})\right](I)\\
&
=(id-\Phi_{f_1,T_1})^{p_1}\cdots (id-\Phi_{f_s,T_s})^{p_s}(I)
\end{split}
\end{equation*}
Summing up the results above  and using Lemma \ref{Psi-X}, we deduce that
\begin{equation*}
\begin{split}
&(id-\Phi_{f_1,T_1})^{p_1}\cdots (id-\Phi_{f_s,T_s})^{p_s}({\bf K_{f,T}^*}{\bf K_{f,T}})\\
&\qquad =\lim_{ (q_{s+1},\ldots, q_k)} (id-\Phi_{f_k,T_k}^{q_k})\cdots (id-\Phi_{f_{s+1},T_{s+1}}^{q_{s+1}}) (id-\Phi_{f_1,T_1})^{p_1}\cdots (id-\Phi_{f_s,T_s})^{p_s}(I)\\
&\qquad \leq (id-\Phi_{f_1,T_1})^{p_1}\cdots (id-\Phi_{f_s,T_s})^{p_s}(I).
\end{split}
\end{equation*}
The last part of this lemma is now obvious.
The proof is complete.
\end{proof}

Let   ${\bf f}=(f_1,\ldots, f_k)$ be  a $k$-tuple of   positive regular  free holomorphic functions and let ${\bf S}=({\bf S}_1,\ldots,
{\bf S}_n)$ with ${\bf S}_i=({\bf S}_{i,1},\ldots, {\bf S}_{i,n_i})$   be the universal model
associated with the abstract noncommutative variety $\cV_{\bf f, {\it J}}^{\bf m}$, where $J$ is a norm-closed two sided ideal of   the noncommutative domain algebra $\cA({\bf D_f^m})$ such that $\cN_J\neq \{0\}$.
Let ${\bf U}=\{U_{i,j}\} \in \cV_{\bf f, {\it J}}^{\bf m}(\cK)$ be  such that
$$ (id-\Phi_{f_k,U_k})\cdots (id-\Phi_{f_1,U_1})(I)=0,
$$
where $U_i=(U_{i,1},\ldots, U_{i,n_i})$.
A tuple ${\bf V}:=\{V_{i,j}\} $   having the matrix
representation
\begin{equation}\label{Vi}
V_{i,j}:=\left[\begin{matrix}
{\bf S}_{i,j}\otimes I_\cD&0\\
0&U_{i,j}
\end{matrix}
\right], \qquad i\in \{1,\ldots, k\}, j\in\{1,\ldots, n_i\},
\end{equation}
   is called
{\it constrained} (or $J$-{\it constrained}) {\it dilation} of $T=\{T_{i,j}\} \in
\cV_{\bf f, {\it J}}^{\bf m}(\cH)$ if   $\cH$ can be identified with a co-invariant subspace
under each operator
  $V_{i,j}$   such that
$$
T_{(\alpha)}^* = V_{(\alpha)}^*|\cH,\qquad  (\alpha) \in \FF_{n_1}^+\times \cdots \times \FF_{n_k}^+.
$$
The dilation is called {\it minimal} if
$$
(\cN_J\otimes \cD)\oplus \cK=\overline{\text{\rm span}}\left\{
V_{(\alpha)} \cH: \  (\alpha) \in \FF_{n_1}^+\times \cdots \times \FF_{n_k}^+\right\}.
$$
The {\it dilation index} of $T$  is the minimum dimension of $\cD$ such that $V$ is a
constrained   dilation of $T$.

Our first dilation result   on  the abstract noncommutative variety
$\cV_{\bf f, {\it J}}^{\bf m}$  is the following.

\begin{theorem}\label{dil1} Let   ${\bf S}=\{{\bf S}_{i,j}\}$   be the universal model
associated with the abstract noncommutative variety $\cV_{\bf f, {\it J}}^{\bf m}$, where $J$ is a norm-closed two sided ideal of the noncommutative polydomain algebra $\cA({\bf D_f^m})$. If ${\bf T}:=\{T_{i,j}\}$   is an element
   in the noncommutative variety
$\cV_{\bf f, {\it J}}^{\bf m}(\cH)$, then there exists a Hilbert space $\cK$  and ${\bf U}=\{U_{i,j}\} \in \cV_{\bf f, {\it J}}^{\bf m}(\cK)$ with
$$ (id-\Phi_{f_k,U_k})\cdots (id-\Phi_{f_1,U_1})(I)=0$$
and such that
 $\cH$ can be identified with a  co-invariant subspace of \
   $\tilde\cK:=[\cN_J\otimes \overline{{\bf \Delta_{f,T}^m}(I)\cH}]\oplus \cK$ under each  operator
$$
V_{i,j}:=\left[\begin{matrix} {\bf S}_{i,j}\otimes
I_{\overline{{\bf \Delta_{f,T}^m}(I)\cH}}&0\\0&U_{i,j}
\end{matrix}\right],\qquad i\in \{1,\ldots, k\}, j\in\{1,\ldots, n_i\},
$$
where ${\bf \Delta_{f,T}^m}(I)  :=(id-\Phi_{f_1,T_1})^{m_1}\cdots (id-\Phi_{f_k,T_k})^{m_k}(I)$,
 and
$$
T_{i,j}^*=V_{i,j}^*|_\cH,\qquad i\in \{1,\ldots, k\}, j\in\{1,\ldots, n_i\}.
$$
 Moreover,  the following statements hold.
   \begin{enumerate}
   \item[(i)] The dilation index of ${\bf T}$ coincides with $\text{\rm rank}\,
 {\bf \Delta_{f,T}^m}(I)$.

   \item[(ii)]${\bf T}$ is  a pure element in $\cV_{\bf f, {\it J}}^{\bf m}(\cH)$ if and only if the dilation ${\bf V}:=\{V_{i,j}\}$ is pure.
       \end{enumerate}
\end{theorem}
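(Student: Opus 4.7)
The plan is to split $\cH$ according to the constrained noncommutative Berezin kernel ${\bf K}:={\bf K_{f,T,{\it J}}}:\cH\to\cN_J\otimes\cD$, where $\cD:=\overline{{\bf\Delta_{f,T}^m}(I)\cH}$. Set $Q:=I_\cH-{\bf K}^*{\bf K}\ge 0$ and let $\cK:=\overline{Q^{1/2}\cH}$; the map $\tilde V h:=({\bf K}h,\,Q^{1/2}h)$ is then an isometry from $\cH$ into $(\cN_J\otimes\cD)\oplus\cK$. I will construct a tuple ${\bf U}=\{U_{i,j}\}$ on $\cK$ so that the image $\tilde V\cH$ is co-invariant under the block-diagonal $V_{i,j}=({\bf S}_{i,j}\otimes I_\cD)\oplus U_{i,j}$. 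Co-invariance reduces to two intertwinings: ${\bf K}T_{i,j}^*=({\bf S}_{i,j}^*\otimes I)\,{\bf K}$, already supplied by Proposition \ref{Ber-const}, and $Q^{1/2}T_{i,j}^*=U_{i,j}^*Q^{1/2}$, which I use to \emph{define} $U_{i,j}^*$ on $\text{\rm range}\,Q^{1/2}$.

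Well-definedness of $U_{i,j}^*$ uses $\Phi_{f_i,T_i}(Q)\le Q$, which follows from Lemma \ref{ine-ber}: that lemma yields ${\bf\Delta_{f,T}^p}(Q)={\bf\Delta_{f,T}^p}(I)-{\bf\Delta_{f,T}^p}({\bf K}^*{\bf K})\ge 0$ for every ${\bf p}\le{\bf m}$, with \emph{equality} (hence ${\bf\Delta_{f,T}^p}(Q)=0$) whenever ${\bf p}\ge(1,\ldots,1)$. Writing $\Gamma:=Q^{1/2}:\cH\to\cK$, an induction in the exponents propagates the defining intertwining to arbitrary products, yielding $\Gamma^*{\bf\Delta_{f,U}^p}(I_\cK)\Gamma={\bf\Delta_{f,T}^p}(Q)$ for all ${\bf p}\le{\bf m}$; density of $\text{\rm range}\,\Gamma$ in $\cK$ then lifts the positivity to ${\bf\Delta_{f,U}^p}(I_\cK)\ge 0$, placing ${\bf U}$ in ${\bf D_f^m}(\cK)$. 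The commuting relations for the tuples $U_i$, $U_s$ with $i\ne s$ transfer in the same way from those of ${\bf T}$, and the variety condition $g({\bf U})=0$ for $g\in J$ follows, for polynomials, from $g({\bf U})^*\Gamma=\Gamma g({\bf T})^*=0$ combined with density of $\text{\rm range}\,\Gamma$; norm continuity of the $\cA({\bf D_f^m})$-functional calculus extends this to all of $J$.

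Specializing the identity above to ${\bf p}=(1,\ldots,1)$ gives the required vanishing $(id-\Phi_{f_k,U_k})\cdots(id-\Phi_{f_1,U_1})(I_\cK)=0$: its $\Gamma^*(\cdot)\Gamma$ compression is zero by the equality case of Lemma \ref{ine-ber}, and the operator itself is positive since ${\bf U}\in{\bf D_f^m}(\cK)$, so density of $\text{\rm range}\,\Gamma$ forces it to vanish. This inductive transfer of defects from $\cH$ to $\cK$ is the main technical step; the remainder is bookkeeping. For (ii), ${\bf T}$ pure is equivalent to ${\bf K}^*{\bf K}=I_\cH$, i.e.\ $Q=0$, i.e.\ $\cK=\{0\}$, which is exactly when ${\bf V}={\bf S}\otimes I_\cD$ is pure (purity of ${\bf S}$ is in Lemma \ref{univ-mod-var}); conversely, ${\bf V}$ pure forces the diagonal block ${\bf U}$ to be pure, yet the zero defect $(id-\Phi_{f_k,U_k})\cdots(id-\Phi_{f_1,U_1})(I_\cK)=0$ propagates via the factorization $(id-\Phi_i^{q_i})=(id-\Phi_i)\,P_{q_i}(\Phi_i)$ (together with commutativity) to the vanishing of $(id-\Phi_{f_k,U_k}^{q_k})\cdots(id-\Phi_{f_1,U_1}^{q_1})(I_\cK)$ for all ${\bf q}$, contradicting purity unless $I_\cK=0$. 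For (i), the dilation just built has index $\dim\cD=\rank{\bf\Delta_{f,T}^m}(I)$; any competing dilation satisfies ${\bf\Delta_{f,V}^m}(I)={\bf P}_\CC\otimes I_{\cD'}\oplus 0$ (since $(1,\ldots,1)\le{\bf m}$ forces ${\bf\Delta_{f,U}^m}(I_\cK)=0$ after factoring an $(id-\Phi_1)\cdots(id-\Phi_k)$ out of ${\bf\Delta_{f,U}^m}$), whence co-invariance of $\cH$ in $\tilde\cK$ gives ${\bf\Delta_{f,T}^m}(I)=P_\cH{\bf\Delta_{f,V}^m}(I)|_\cH$ and therefore $\rank{\bf\Delta_{f,T}^m}(I)\le\dim\cD'$.
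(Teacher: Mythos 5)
Your proposal is correct and follows essentially the same route as the paper's proof: you form $Q=I-{\bf K_{f,T,{\it J}}^*}{\bf K_{f,T,{\it J}}}$, define $U_{i,j}^*$ on $\overline{Q^{1/2}\cH}$ through the intertwining $U_{i,j}^*Q^{1/2}=Q^{1/2}T_{i,j}^*$, use Lemma \ref{ine-ber} to transfer positivity of the defects ${\bf \Delta_{f,U}^p}(I_\cK)$ and the vanishing at ${\bf p}=(1,\ldots,1)$, pass membership in the variety to ${\bf U}$ by the norm-continuous functional calculus, and assemble the isometry $V=({\bf K_{f,T,{\it J}}},Q^{1/2})$, exactly as in the paper. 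Your arguments for (i) (compressing ${\bf \Delta_{f,V}^m}(I)={\bf P}_\CC\otimes I_{\cD'}\oplus 0$ to $\cH$) and for (ii) (the factorization of $(id-\Phi^{q})$ through $(id-\Phi)$ forcing $\cK=\{0\}$ when ${\bf V}$ is pure) likewise coincide with the paper's.
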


\begin{proof}

We recall   that  the constrained noncommutative  Berezin kernel
associated with the ${\bf T}\in {\bf \cV_{f,{\it J}}^m}(\cH)$ is  the
bounded operator  \ ${\bf K_{f,T,{\it J}}}:\cH\to \cN_J\otimes
\overline{{\bf \Delta_{f,T}^m}(I) (\cH)}$ defined by
$${\bf K_{f,T,{\it J}}}:=\left(P_{\cN_J}\otimes I_{\overline{{\bf \Delta_{f,T}^m}(I) (\cH)}}\right){\bf K_{f,T}},
$$
where ${\bf K_{f,T}}$ is the noncommutative Berezin kernel associated with ${\bf T}\in
{\bf D_f^m}(\cH)$.  Taking into account the properties of the Berezin kernel and the fact that
$
\text{\rm range}\,{\bf K_{f,T}}\subseteq \cN_J\otimes
\overline{{\bf \Delta_{f,T}^m}(I)\cH},
$
 we have
\begin{equation}\label{KJ}
{\bf K_{f,T,{\it J} }}T_{(\alpha)}^*=({\bf S}_{(\alpha)}^*\otimes
I_\cH){\bf K_{f,T,{\it J}}},\qquad (\alpha)  \in \FF_{n_1}^+\times \cdots \times \FF_{n_k}^+
\end{equation}
and ${\bf K^*_{f,T,{\it J}}} {\bf K_{f,T,{\it J}}}={\bf K_{f,T }^*}{\bf K_{f,T}}$.
We consider the Hilbert space $ \cK:=\overline{(I-{\bf K_{f,T}^*}{\bf K_{f,T}})\cH}$  and denote   $Y:=I-{\bf K_{f,T}^*}{\bf K_{f,T}}$.
For each $i\in \{1,\ldots, k\}$, $j\in\{1,\ldots, n_i\}$, define the operator $L_{i,j}:\cK\to
\cK$ by setting
\begin{equation*}
L_{i,j} Y^{1/2}h:=Y^{1/2}T_{i,j}^*h,\quad h\in \cH.
\end{equation*}
Note that each  $L_{i,j}$ is  well-defined. Indeed, due to  Lemma \ref{ine-ber}, we have
${\bf \Delta}_{\bf f,T}^{(1,\ldots,1)}({\bf K_{f,T}^*}{\bf K_{f,T}})\leq {\bf \Delta}_{\bf f,T}^{(1,\ldots,1)}(I)$. Hence, we deduce that
 $\Phi_{f_i,T_i}(Y)\leq Y$. Therefore,
\begin{equation*}
\begin{split}
\sum_{\alpha\in \FF_{n_i}^+, |\alpha|\geq 1}
 a_{i,\alpha}\|L_{i,{\tilde \alpha}}Y^{1/2}h\|^2
=\left< \Phi_{f_i,T_i}(Y)h,h\right>\leq \|Y^{1/2} h\|^2
\end{split}
\end{equation*}
for any $h\in \cH$, where $\tilde\alpha$ is the reverse of $\alpha$.  Consequently, we have
$a_{i,g_j^i}\|L_{i,j}Y^{1/2} x\|^2\leq \|Y^{1/2} x\|^2$, for any $x\in
\cN_J\otimes \cH$.  Since $a_{i,g_j^i}\neq 0$ each  $L_{i,j}$ can be uniquely
be extended to a bounded operator (also denoted by $L_{i,j}$) on the
subspace $\cK$. Denoting $U_{i,j}:=L_{i,j}^*$ and  setting ${\bf U}=(U_1,\ldots, U_k)$ with $U_i=(U_{i,1},\ldots, U_{i,n_i})$,  an
approximation argument shows that $\Phi_{f_i,U_i}(I_\cM)\leq I_\cM$  for  $i\in \{1,\ldots, k\}$ and  $j\in\{1,\ldots, n_i\}$.
The definition of $L_{i,j}$  implies
\begin{equation}
\label{int*}
U_{i,j}^*(Y^{1/2} h)=Y^{1/2} {T}_{i,j}^*h,\qquad h\in  \cH,
\end{equation}
 for any $i\in \{1,\ldots, k\}$ and $j\in \{1,\ldots, n_i\}$. Hence, and using again Lemma \ref{ine-ber}, we deduce that
 $$
 Y^{1/2} {\bf \Delta_{f,U}^p}(I_\cK) Y^{1/2}
 ={\bf \Delta}_{{\bf f,T}}^{\bf p}(I-{\bf K_{f,T}^*}{\bf K_{f,T}})\geq 0
 $$
for any ${\bf p}:=(p_1,\ldots, p_k)\in \ZZ_+^k$ such that ${\bf p}\leq {\bf m}$, ${\bf p}\neq 0$, and   $Y^{1/2}(id-\Phi_{f_k,U_k})\cdots (id-\Phi_{f_1,U_1})(I)Y^{1/2}=0$. Since $Y^{1/2}$ is injective on $\cK=\overline{Y\cH}$, we conclude that  ${\bf U}=(U_1,\ldots, U_k) \in \cV_{\bf f, {\it J}}^{\bf m}(\cK)$  and
$$ (id-\Phi_{f_k,U_k})\cdots (id-\Phi_{f_1,U_1})(I)=0.
$$
On the other hand,    relation \eqref{int*} implies
$$Y^{1/2}q({\bf U}) =q({\bf T})Y^{1/2}=0, \qquad q\in \CC[Z_{i,j}].
$$
Using the von Neumann  type inequality for the elements in the abstract noncommutative polydomain ${\bf D_f^m}$ and the fact that the polynomials in ${\bf W}_{i,j}$ and the identity are dense in the noncommutative polydomain algebra $\cA({\bf D_f^m})$, an approximation argument shows that  $Y^*g({\bf U})=0$ for any $g\in J$.
Once again, since $Y^{1/2}$ is injective on $\cK=\overline{Y\cH}$, we have
$g({\bf U})=0$ for any $q\in J$.
Let $V:\cH\to [\cN_J\otimes \cH]\oplus \cK$ be defined by
$$V:=\left[\begin{matrix}
{\bf K_{f,T,{\it J}}}\\ Y
\end{matrix}\right].
$$
Note that
\begin{equation*}\begin{split}
\|Vh\|^2=\|{\bf K_{f,T,{\it J}}}h\|^2+\|(I-{\bf K_{f,T,{\it J}}^*}{\bf K_{f,T,{\it J}}})^{1/2}h\|^2 =\|h\|^2
\end{split}
\end{equation*}
for any $h\in \cH$. Therefore, $V$ is an isometry. Using relations \eqref{KJ} and \eqref{int*},
we obtain
\begin{equation*}
\begin{split}
VT_{i,j}^*&=\left[\begin{matrix} {\bf K_{f,T,{\it J}}}\\ Y
\end{matrix}\right]T_{i,j}^*h={\bf K_{f,T,{\it J}}}T_{i,j}^*h\oplus YT_{i,j}^*h\\
&=({\bf S}_{i,j}^*\otimes I_\cH){\bf K_{f,T{\it J}}}h\oplus U_{i,j}^*Yh\\
&=\left[\begin{matrix} {\bf S}_{i,j}^*\otimes
I_{\overline{{\bf\Delta_{f,T}^m}\cH}}&0\\0&U_{i,j}^*
\end{matrix}\right]Vh.
\end{split}
\end{equation*}
 Identifying   $\cH$  with $V\cH$, we deduce that
 $
T_{i,j}^*=V_{i,j}^*|\cH$ for $i\in \{1,\ldots, k\}$ and $ j\in\{1,\ldots, n_i\}.
$

  Now, we prove the second part of the theorem.  Assume that  ${\bf T}$ has the dilation ${\bf V}$ given by relation \eqref{Vi}. Since ${\bf \Delta_{f,U}^m}(I)=0$ and
 ${\bf \Delta_{f,S}^m}(I)={\bf P}_\CC$, where ${\bf P}_\CC$ is the
 orthogonal projection from $\cN_J$ onto
  $\CC 1\subset \cN_J$,
  we deduce that
 $
 {\bf \Delta_{f,T}^m}(I)= P_\cH \left[  {\bf P}_\CC \otimes I_\cD
\right]|\cH.
 $
Hence, $\rank {\bf \Delta_{f,T}^m}(I)\leq \dim \cD$. The reverse inequality is due to the first part of the theorem.
To prove item (ii), note that if ${\bf T}$ is pure then ${\bf K_{f,T}}$ is an isometry and, consequently, $\cK=\{0\}$. This implies  ${\bf V}={\bf S}$, which is pure. Conversely, if we assume that ${\bf V}$ is pure, we must have
 $$
    \lim_{q=(q_1,\ldots, q_k)\in \NN^k}   (id-\Phi_{f_1,V_1}^{q_1})\cdots (id-\Phi_{f_k,V_k}^{q_k})(I_{\tilde{\cK}})= I_{\tilde{\cK}}.
    $$
Taking into account the matrix representation of each operator  ${ V_{i,j}}$ and the fact that
 $$
    \lim_{q=(q_1,\ldots, q_k)\in \NN^k}   (id-\Phi_{f_1,U_1}^{q_1})\cdots (id-\Phi_{f_k,U_k}^{q_k})(I_{{\cK}})= 0,
    $$
 we deduce that
 $\cK=\{0\}$. This shows that   the noncommutative Berezin kernel ${\bf K_{f,T}}$ is an isometry, which is equivalent to the fact that ${\bf T}$ is  pure.
The proof is complete.
\end{proof}

 In what follows, we provide a   Wold type
decomposition for non-degenerate $*$-representations of the
$C^*$-algebra $C^*({\bf S}_{i,j})$.

\begin{theorem}\label{wold}  Let ${\bf q}=(q_1,\ldots, q_k)$ be a $k$-tuple of   positive regular  noncommutative polynomials
 and let ${\bf S}=({\bf S}_1,\ldots, {\bf S}_k)$ be the universal model
associated with the abstract noncommutative variety   $\cV_{\bf q, {\it J}}^{\bf m}$, where $J$ is a WOT-closed two sided ideal of $F^\infty({\bf D_q^m})$ such that $1\in \cN_J$. If
\ $\pi:C^*({\bf S}_{i,j})\to B(\cK)$ is  a nondegenerate
$*$-representation  of \ $C^*({\bf S}_{i,j})$ on a separable
Hilbert space  $\cK$, then $\pi$ decomposes into a direct sum
$$
\pi=\pi_0\oplus \pi_1 \  \text{ on  } \ \cK=\cK_0\oplus \cK_1,
$$
where $\pi_0$ and  $\pi_1$  are disjoint representations of \
$C^*({\bf S}_{i,j})$ on the Hilbert spaces
\begin{equation*}
\cK_0:=\overline{\text{\rm span}}\left\{\pi({\bf S}_{(\alpha)}) {\bf \Delta_{q,\pi(S)}^m}
(I_{\cK})  \cK:\ (\alpha)\in \FF_{n_1}^+\times \cdots \times \FF_{n_k} \right\}
\end{equation*}
and $\cK_1:=\cK_0^\perp,$
 respectively, where  $\pi({\bf S}):= (\pi({\bf S}_1),\ldots, \pi({S}_k))   $
 and $\pi({\bf S}_{i}):=(\pi({\bf S}_{i,1}),\ldots, \pi({\bf S}_{i,n_i}))$. Moreover,
  up to an isomorphism,
\begin{equation*}
\cK_0\simeq \cN_J\otimes \cG, \quad  \pi_0(X)=X\otimes I_\cG \quad
\text{ for  any } \  X\in C^*({\bf S}_{i,j}),
\end{equation*}
 where $\cG$ is  a Hilbert space with
$$
\dim \cG=\dim \left\{\text{\rm range}\,  {\bf \Delta_{q,\pi(S)}^m}
(I_{\cK})\right\},
$$
 and $\pi_1$ is a $*$-representation  which annihilates the compact operators   and
$$
 (I-\Phi_{q_1,\pi_1({\bf S}_1)}) \cdots (I-\Phi_{q_k,\pi_1({\bf S}_k)})
(I_{\cK_1})= 0.
$$
If  $\pi'$ is another nondegenerate  $*$-representation of
$C^*({\bf S}_{i,j})$ on a separable  Hilbert space  $\cK'$, then
$\pi$ is unitarily equivalent to $\pi'$ if and only if
$\dim\cG=\dim\cG'$ and $\pi_1$ is unitarily equivalent to $\pi_1'$.
\end{theorem}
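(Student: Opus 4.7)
My approach is a Wold-type argument built on two facts established earlier: by Theorem \ref{compact}, the $C^*$-algebra $C^*({\bf S}_{i,j})$ contains the ideal $\mathfrak{K}(\cN_J)$ of compact operators on $\cN_J$ and acts irreducibly; and by Lemma \ref{univ-mod-var}(iii), the defect operator ${\bf \Delta_{q,S}^m}(I_{\cN_J})$ equals the rank-one projection $P_\CC^{\cN_J}$ onto $\CC 1\subset\cN_J$. Accordingly, set
$$P:=\pi(P_\CC^{\cN_J})={\bf \Delta_{q,\pi(S)}^m}(I_\cK),\qquad \cG:=P\cK,$$
so that $\dim\cG=\mathrm{rank}\,{\bf \Delta_{q,\pi(S)}^m}(I_\cK)$; define $\cK_0$ as in the statement and $\cK_1:=\cK_0^\perp$. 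Using the description of the compacts from the proof of Theorem \ref{compact} (the rank-one operators $\chi({\bf S})P_\CC^{\cN_J}g({\bf S})^*$ span a dense subspace of $\mathfrak{K}(\cN_J)$), I will verify the alternative description $\cK_0=\overline{\pi(\mathfrak{K}(\cN_J))\cK}$, from which $\pi$-reducingness of $\cK_0$ is immediate because $\mathfrak{K}(\cN_J)$ is a closed two-sided ideal of $C^*({\bf S}_{i,j})$.

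The heart of the proof is identifying $\pi_0:=\pi|_{\cK_0}$ with $X\mapsto X\otimes I_\cG$ on $\cN_J\otimes\cG$. Because $P_\CC^{\cN_J}$ is a rank-one projection on the unit vector $1$, one has the identity $P_\CC^{\cN_J}XP_\CC^{\cN_J}=\langle X\cdot 1,1\rangle P_\CC^{\cN_J}$ in $B(\cN_J)$; applying $\pi$ yields $P\pi(X)P=\langle X\cdot 1,1\rangle P$ on $\cK$, and specializing to $X={\bf S}_{(\beta)}^*{\bf S}_{(\alpha)}$ gives
$$\langle\pi({\bf S}_{(\alpha)})g,\pi({\bf S}_{(\beta)})g'\rangle_\cK=\langle{\bf S}_{(\alpha)}(1),{\bf S}_{(\beta)}(1)\rangle_{\cN_J}\langle g,g'\rangle_\cG,\qquad g,g'\in\cG.$$
Since $\{{\bf S}_{(\alpha)}(1)\}_{(\alpha)}$ spans a dense subspace of $\cN_J$ (compare the proof of Theorem \ref{compact}), the prescription $V({\bf S}_{(\alpha)}(1)\otimes g):=\pi({\bf S}_{(\alpha)})g$ extends to a well-defined surjective isometry $V:\cN_J\otimes\cG\to\cK_0$. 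A direct computation using the commuting-tuple structure and ${\bf S}_{i,j}{\bf S}_{(\alpha)}={\bf S}_{(\alpha')}$ then gives $V({\bf S}_{i,j}\otimes I_\cG)=\pi({\bf S}_{i,j})V$, so $V$ unitarily implements $\pi_0\simeq (X\mapsto X\otimes I_\cG)$.

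For $\pi_1:=\pi|_{\cK_1}$, the inclusion $\cG\subseteq\cK_0$ forces $P\cK_1=0$, i.e.\ $\pi_1(P_\CC^{\cN_J})=0$; applying $\pi_1$ to Lemma \ref{univ-mod-var}(iii) then gives the asserted vanishing of $(I-\Phi_{q_1,\pi_1({\bf S}_1)})\cdots(I-\Phi_{q_k,\pi_1({\bf S}_k)})(I_{\cK_1})$, and since every compact operator is a norm-limit of combinations of $\chi({\bf S})P_\CC^{\cN_J}g({\bf S})^*$, it follows that $\pi_1$ annihilates $\mathfrak{K}(\cN_J)$. Disjointness is then automatic: $\pi_0$ is a multiple of an irreducible representation (Theorem \ref{compact}) that does not kill $\mathfrak{K}(\cN_J)$, whereas $\pi_1$ does. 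For uniqueness, any unitary intertwining $\pi$ with $\pi'$ must carry $\cK_0=\overline{\pi(\mathfrak{K}(\cN_J))\cK}$ onto $\cK_0'$, hence restricts to intertwinings $\pi_0\simeq\pi_0'$ (equivalent to $\dim\cG=\dim\cG'$ by the representation theory of $\mathfrak{K}(\cN_J)$) and $\pi_1\simeq\pi_1'$; the converse is by assembling direct sums. \emph{The main technical point} is the inner-product identity above; everything else follows from standard representation theory for the ideal of compact operators once the identification of $\cK_0$ via $\mathfrak{K}(\cN_J)$ is in place.
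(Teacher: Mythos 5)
Your proof is correct and reaches the same conclusions by the same structural mechanism (the ideal $\mathfrak{K}(\cN_J)\subseteq C^*({\bf S}_{i,j})$ from Theorem \ref{compact}, the rank-one defect $P_\CC^{\cN_J}$ from Lemma \ref{univ-mod-var}(iii), and the representation theory of the compacts), but it is more self-contained than the paper's argument. Where the paper appeals directly to the general theory in Arveson's book --- the decomposition of a nondegenerate representation of a $C^*$-algebra relative to the ideal of compacts, and the fact that any representation of $\mathfrak{K}(\cN_J)$ is a multiple of the identity representation --- you unpack both of these into an explicit construction: the operator identity $P_\CC^{\cN_J}XP_\CC^{\cN_J}=\langle X\cdot 1,1\rangle P_\CC^{\cN_J}$ is pushed through $\pi$ to give the inner-product formula, which in turn yields the intertwining unitary $V:\cN_J\otimes\cG\to\cK_0$ by hand. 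This buys you a concrete picture of the isomorphism $\pi_0\simeq\mathrm{id}\otimes I_\cG$ and makes the multiplicity $\dim\cG=\mathrm{rank}\,{\bf \Delta}_{{\bf q},\pi({\bf S})}^{\bf m}(I_\cK)$ visible at the level of vectors, at the cost of being a bit longer; the paper's citation route is shorter but hides the mechanism. (For completeness: your intertwining step $V({\bf S}_{i,j}\otimes I_\cG)=\pi({\bf S}_{i,j})V$ relies on the multiplicativity of compression to a co-invariant subspace, i.e.\ ${\bf S}_{(\alpha')}={\bf S}_{i,j}{\bf S}_{(\alpha)}$, which holds precisely because $\cN_J$ is invariant under each ${\bf W}_{i,j}^*$; that is implicit in your write-up and worth making explicit.) The paper also cites Corollary \ref{eq-mult} for the uniqueness clause, which is exactly the multiplicity statement you derive from the representation theory of the compacts, so there is no gap there either.
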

\begin{proof}  Note that, due to
 Theorem \ref{compact},   the ideal $\cC(\cN_J)$ of
compact operators  in $B(\cN_J)$ is contained in the
$C^*$-algebra  $C^*({\bf S}_{i,j})$. Due to  standard theory of
representations of the $C^*$-algebras \cite{Arv-book}, the
representation $\pi$ decomposes into a direct sum $\pi=\pi_0\oplus
\pi_1$ on  $ \cK=\tilde\cK_0\oplus \tilde\cK_1$, where
$$\tilde\cK_0:=\overline{\text{\rm span}}\{\pi(X)\cK:\ X\in \cC(\cN_J)\}
\quad \text{ and  }\quad  \tilde\cK_1:=\tilde\cK_0^\perp,
$$
and the  representations $\pi_j:C^*({\bf S}_{i,j})\to
B(\tilde\cK_j)$ are defined by $\pi_j(X):=\pi(X)|_{\tilde{\cK}_j}$ for
$j=0,1$. We remark  that $\pi_0$, $\pi_1$  are disjoint
representations of $C^*({\bf S}_{i,j})$
 such that
 $\pi_1$ annihilates  the compact operators in $B(\cN_J)$, and
  $\pi_0$ is uniquely determined by the action of $\pi$ on the
  ideal $\cC(\cN_J)$ of compact operators.
Since every representation of $\cC(\cN_J)$ is equivalent to a
multiple of the identity representation, we deduce that
$\cK_0\simeq \cN_J\otimes \cG$, $\pi_0(X)=X\otimes I_\cG$,
 for  any  $  X\in C^*({\bf S}_{i,j})$,
 where $\cG$ is  a Hilbert space.
Using Theorem \ref{compact} and its proof, one can
 show that the space $\tilde\cK_0$ coincides with the space
$\cK_0$.
Taking into account that $(I-\Phi_{q_1, {\bf S}_1) })^{m_1}\cdots (I-\Phi_{q_k, {\bf S}_k})^{m_k}
(I )={\bf P}_\CC$    is a
 projection  of rank one in $C^*({\bf S}_{i,j})$,
  we deduce  that
$
 (I-\Phi_{q_1,\pi({\bf S}_1)})^{m_1}\cdots (I-\Phi_{q_k,\pi({\bf S}_k)})^{m_k}
(I_{\cK_\pi})= 0$
  and
$
\dim \cG=\dim \left[\text{\rm range}\,\pi({\bf P}_\CC )\right].
$
The uniqueness of the decomposition is due to standard theory of
representations of $C^*$-algebras and Proposition \ref{eq-mult}.
\end{proof}

We remark that under
 the hypotheses and notations of Theorem $\ref{wold}$, and setting
$V_{i,j}:=\pi({\bf S}_{i,j})$ for any $i\in \{1,\ldots,k\}$ and $ j\in\{1,\ldots, n_i\}$,   one can see  that
${\bf V}:=\{V_{i,j}\}$  is a pure element in $\cV_{\bf q, {\it J}}^{\bf m}(\cK)$  if and only if
$
\cK:=\overline{\text{\rm span}}\left\{ {V}_{(\alpha)} {\bf \Delta_{q,V}^m}
(I_{\cK})  (\cK):\ (\alpha)\in \FF_{n_1}^+\times \cdots \times \FF_{n_k} \right\}.
$

 We can   obtain  a more refined dilation theorem for the class of noncommutative varieties   $\cV_{\bf q, \cQ}^{\bf m}(\cH)$, where $\cQ\subset \CC[Z_{i,j}]$ is an ideal generated by homogeneous  polynomials and ${\bf q}=(q_1,\ldots, q_k)$ is  a $k$-tuple of   positive regular  noncommutative polynomials.

Let $C^*(\Gamma)$ be the $C^*$-algebra generated by  a set of operators $\Gamma\subset B(\cK)$ and the identity.
A subspace $\cH\subset \cK$ is called $*$-cyclic for $\Gamma$ if
$\cK=\overline{\text{\rm span}}\{Xh, X\in C^*(\Gamma), h\in \cH\}$.

\begin{theorem}\label{dil2}  Let ${\bf q}=(q_1,\ldots, q_k)$ be a $k$-tuple of   positive regular  noncommutative polynomials
 and let ${\bf S}=\{{\bf S}_{i,j}\}$  be the universal model
associated with the abstract noncommutative variety   $\cV_{\bf q, \cQ}^{\bf m}$, where $\cQ\subset \CC[Z_{i,j}]$ is an ideal generated by homogeneous  polynomials.
If ${\bf T}=\{{ T}_{i,j}\}$ is   in
$\cV_{\bf q, \cQ}^{\bf m}(\cH)$,
then there exists  a $*$-representation $\pi:C^*({\bf S}_{i,j})\to
B(\cK_\pi)$  on a separable Hilbert space $\cK_\pi$,  which
annihilates the compact operators and
$$
(I-\Phi_{q_1,\pi({\bf S}_1)}) \cdots (I-\Phi_{q_k,\pi({\bf S}_k)}) (I_{\cK_\pi})=0,
$$
where $\pi({\bf S}):= (\pi({\bf S}_1),\ldots, \pi({S}_k))$
 and $\pi({\bf S}_{i}):=(\pi({\bf S}_{i,1}),\ldots, \pi({\bf S}_{i,n_i}))$,
such that
$\cH$ can be identified with a $*$-cyclic co-invariant subspace of
$$\tilde\cK:=\left[\cN_\cQ\otimes \overline{{\bf \Delta_{f,T}^m}(I)(\cH)}\right]\oplus
\cK_\pi$$ under  each operator
$$
V_{i,j}:=\left[\begin{matrix} {\bf S}_{i,j}\otimes
I_{\overline{{\bf \Delta_{f,T}^m}(I)(\cH)}}&0\\0&\pi({\bf S}_{i,j})
\end{matrix}\right],\qquad i\in \{1,\ldots,k\}, j\in\{1,\ldots, n_i\},
$$
where ${\bf \Delta_{q,T}^m}(I):= (id-\Phi_{q_1,T_1})^{m_1}\cdots (id-\Phi_{q_k,T_k})^{m_k}(I)$,
and such that
$$ T_{i,j}^*=V_{i,j}^*|_{\cH} \qquad
\text{ for all  } \quad i\in \{1,\ldots,k\}, j\in\{1,\ldots, n_i\}.
$$
\end{theorem}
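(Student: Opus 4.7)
\medskip
\textbf{Proof proposal.}
The plan is to realize ${\bf T}$ as a compression to a co-invariant, $\ast$-cyclic subspace of $\pi({\bf S}_{i,j})$ for a single $\ast$-representation $\pi$ of $C^*({\bf S}_{i,j})$, and then to split $\pi$ by Theorem~\ref{wold} to obtain the block-diagonal structure in the statement. First, by Theorem~\ref{C*-charact2}, the hypothesis ${\bf T}\in \cV_{\bf q,\cQ}^{\bf m}(\cH)$ produces a unital completely positive linear map $\Psi:\overline{\cS}\to B(\cH)$ satisfying $\Psi({\bf S}_{(\alpha)}{\bf S}_{(\beta)}^*)=T_{(\alpha)}T_{(\beta)}^*$ for all multi-indices. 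Extend $\Psi$ to a UCP map $\widetilde\Psi:C^*({\bf S}_{i,j})\to B(\cH)$ via Arveson's extension theorem, and apply the minimal Stinespring construction to obtain a $\ast$-representation $\pi:C^*({\bf S}_{i,j})\to B(\widetilde\cK)$ together with an isometry $V:\cH\to \widetilde\cK$ such that $\widetilde\Psi(X)=V^*\pi(X)V$ for all $X$ and such that $V\cH$ is $\ast$-cyclic in $\widetilde\cK$.

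Next I would show that $V\cH$ is co-invariant under each $\pi({\bf S}_{i,j})$ with $T_{i,j}^*=\pi({\bf S}_{i,j})^*|_{V\cH}$, using the multiplicative domain of $\widetilde\Psi$. The identity $\widetilde\Psi({\bf S}_{(\alpha)}{\bf S}_{(\alpha)}^*)=T_{(\alpha)}T_{(\alpha)}^*=\widetilde\Psi({\bf S}_{(\alpha)})\widetilde\Psi({\bf S}_{(\alpha)}^*)$ saturates the Kadison--Schwarz inequality and hence places each ${\bf S}_{(\alpha)}^*$, and by the $\ast$-algebra structure of the multiplicative domain each ${\bf S}_{i,j}$ itself, in that domain; a routine computation then gives $\pi({\bf S}_{i,j})^*V=VT_{i,j}^*$. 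Applying Theorem~\ref{wold} to the nondegenerate $\pi$ decomposes $\widetilde\cK=\widetilde\cK_0\oplus \cK_\pi$, with $\widetilde\cK_0\simeq \cN_\cQ\otimes \cG$, $\pi_0(X)=X\otimes I_\cG$, and $\pi_1$ (relabeled as $\pi$ on the second summand) annihilating the compacts and satisfying the desired vanishing identity. Setting $V_{i,j}:=\pi({\bf S}_{i,j})$ then produces the block form required in the theorem, with $V\cH$ co-invariant and $\ast$-cyclic.

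What remains, and what I expect to be the main obstacle, is the identification of $\cG$ with $\cD:=\overline{{\bf \Delta_{q,T}^m}(I)(\cH)}$. Since ${\bf P}_\CC={\bf \Delta}_{{\bf q},{\bf S}}^{\bf m}(I_{\cN_\cQ})$ by Lemma~\ref{univ-mod-var}(iii), I compute $V^*\pi({\bf P}_\CC)V=\widetilde\Psi({\bf P}_\CC)={\bf \Delta_{q,T}^m}(I_\cH)$, so $\rank(\pi({\bf P}_\CC)V)=\rank {\bf \Delta_{q,T}^m}(I)$. Because $\pi_1$ annihilates the compact operator ${\bf P}_\CC$, the range of $\pi({\bf P}_\CC)$ sits inside $\CC\otimes \cG\subset \widetilde\cK_0$, giving the a priori bound $\rank {\bf \Delta_{q,T}^m}(I)\le \dim\cG$. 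For the reverse inequality I would pass to the compression $P_{\widetilde\cK_0}V\cH\subset \widetilde\cK_0$: the $\ast$-cyclicity of $V\cH$ under $\pi$ descends, via the compact ideal $\cC(\cN_\cQ)$ of $C^*({\bf S}_{i,j})$ provided by Theorem~\ref{compact}, to $\ast$-cyclicity of $P_{\widetilde\cK_0}V\cH$ in $\widetilde\cK_0$ under $\pi_0$. Then the multiplicity theory of the identity representation of $\cC(\cN_\cQ)$ on $\cN_\cQ\otimes \cG$ forces $({\bf P}_\CC\otimes I_\cG)P_{\widetilde\cK_0}V\cH=\pi({\bf P}_\CC)V\cH$ to span all of $\CC\otimes \cG$, yielding $\dim\cG=\rank {\bf \Delta_{q,T}^m}(I)=\dim\cD$. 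Turning this qualitative cyclicity into the precise equality of dimensions is the technical heart of the argument; once it is in hand, the unitary identification $\cG\cong \cD$ completes the proof.
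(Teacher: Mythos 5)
Your proposal is correct and follows essentially the same route as the paper: apply Theorem~\ref{C*-charact2} together with Arveson's extension theorem and the minimal Stinespring dilation, verify co-invariance of $V\cH$ from the saturated Kadison--Schwarz inequality (the paper does the equivalent direct norm computation $\|\pi({\bf S}_{(\alpha)})^*Vh - VT_{(\alpha)}^*h\|=0$), and then invoke the Wold decomposition of Theorem~\ref{wold}. Your identification $\dim\cG = \dim\cD$ via the polar decomposition of $h\mapsto \pi({\bf P}_\CC)Vh$, using $V^*\pi({\bf P}_\CC)V = {\bf\Delta^m_{q,T}}(I)$ and the cyclicity of $\pi({\bf P}_\CC)V\cH$ under the ${\bf S}_{(\alpha)}\otimes I_\cG$ (obtained from minimality of Stinespring plus the ideal property of $\cC(\cN_\cQ)$ and the already-established co-invariance), is exactly the computation that the paper omits with "one can complete the proof."
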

 \begin{proof}
 Applying Arveson extension theorem
\cite{Arv-acta} to the map ${\bf \Psi}$ of Theorem \ref{C*-charact2}, we find a unital
completely positive linear map ${\bf \Psi}:C^*({\bf S}_{i,j})\to B(\cH)$ such that ${\bf \Psi} ({\bf S}_{(\alpha)} {\bf S}_{(\beta)})^*={\bf T}_{(\alpha)} {\bf T}_{(\beta)}^*$ for all $(\alpha), (\beta)$ in $\FF_{n_1}^+\times \cdots \times \FF_{n_k}^+$.
 Let $\tilde\pi:C^*({\bf S}_{i,j})\to
B(\tilde\cK)$ be the minimal Stinespring dilation \cite{St}  of
${\bf \Psi}$. Then we have
$${\bf \Psi}(X)=P_{\cH} \tilde\pi(X)|\cH,\quad X\in C^*({\bf S}_{i,j}),
$$
and $\tilde\cK=\overline{\text{\rm span}}\{\tilde\pi(X)h:\ X\in C^*({\bf S}_{i,j}), h\in
\cH\}.$  Now,  one can  show that  that
  that $P_\cH \tilde\pi({\bf S}_{(\alpha)})|_{\cH^\perp}=0$ for any
$(\alpha)\in \FF_{n_1}^+\times \cdots \times \FF_{n_k}^+$. Consequently, $\cH$
 is an
invariant subspace under each  operator $\tilde\pi({\bf S}_{i,j})^*$
and
\begin{equation*}
\tilde\pi({\bf S}_{i,j})^*|_\cH={\bf \Psi}({\bf S}_{i,j}^*)=T_{i,j}^*
\end{equation*}
for any $i\in \{1,\ldots,k\}$ and $ j\in\{1,\ldots, n_i\}$.
Applying    the Wold decomposition of  Theorem \ref{wold}  to the  Stinespring representation $\tilde\pi$, one can complete the proof of the theorem. We omit the details since the proof is now very similar to the corresponding result from \cite{Po-Berezin-poly}.
\end{proof}

Let ${\bf V} $ be the dilation
 of ${\bf T}$ given by Theorem $\ref{dil2}$. One can easily prove that
  ${\bf V}$ is a pure element in ${\bf {\cV}_q^m}(\tilde\cK)$ if and only if
${\bf T}$ is  a pure element in ${\bf {\cV}_q^m}(\cH)$, and
   $
 (I-\Phi_{q_1,{V}_1}) \cdots (I-\Phi_{q_k, {V}_k})
(I_{\tilde\cK})= 0
$ \
if and only if \ $
 (I-\Phi_{q_1, {T}_1}) \cdots (I-\Phi_{q_k,{T}_k})
(I_{\cH})= 0
$.
 We remark that under the
additional  condition that
\begin{equation*}
\overline{\text{\rm span}}\,\{{\bf S}_{(\alpha)}  {\bf S}_{(\beta)}^* :\
 (\alpha), (\beta)\in \FF_{n_1}^+\times \cdots \times \FF_{n_k}^+\}=C^*({\bf S}_{i,j}),
\end{equation*}
 which holds, for  example, for the    polyballs (commutative or noncommutative), one can show  that  the dilation provided by Theorem \ref{dil2} is minimal.
In this case,   taking into account the uniqueness of the
minimal Stinespring representation    and the
Wold type decomposition of Theorem \ref{wold}, we can prove that the dilation is unique up to  unitary equivalence.

\bigskip

\section{Characteristic functions and operator models}

We  provide a characterization for the class of  elements  in  the abstract  noncommutative variety $\cV_{{\bf f},J}^{\bf m}$ which admit constrained  characteristic functions.
  The characteristic function is  a complete unitary invariant  for   completely non-coisometric tuples. In this case, we obtain  operator models  in terms of  the  constrained characteristic functions.

Let ${\bf S}:=\{{\bf S}_{i,j}\}$ be the universal model
 associated to the abstract  noncommutative variety $\cV_{{\bf f},J}^{\bf m}$ and let
   $\Phi:\cN_J\otimes \cH\to
\cN_J\otimes \cK$  be a multi-analytic operator
with respect to ${\bf S}$, i.e.,
  $$\Phi({\bf S}_{i,j}\otimes I_\cH)=({\bf S}_{i,j}\otimes I_\cK)\Phi$$
for any $i\in \{1,\ldots,k\}$ and  $j\in \{1,\ldots, n_i\}$. The {\it support} of $\Phi$  is  the  smallest reducing subspace
 $\supp (\Phi)$ of $ \cN_J\otimes \cH$
 under  each operator ${\bf S}_{i,j}$ containing   the co-invariant
  subspace $\cM:=\overline{\Phi^*(\cN_J\otimes \cK)}$.
  Using Theorem \ref{cyclic} and its proof, we deduce that if $1\in \cN_J$, then
 $$
 \supp (\Phi)=\bigvee_{(\alpha)\in \FF_{n_1}^+\times\cdots \times \FF_{n_k}^+}({\bf S}_{(\alpha)}\otimes I_\cK) (\cM)=\cN_J\otimes \cL,
 $$
 where $\cL:=({\bf P}_\CC\otimes I_\cH)\overline{\Phi^*(\cN_J\otimes \cK)}$.
  We say that  two
multi-analytic operator $\Phi:\cN_J\otimes \cK_1 \to \cN_J\otimes \cK_2$ and
 $\Phi':\cN_J\otimes \cK_1' \to \cN_J\otimes \cK_2'$  coincide if there are two unitary operators $\tau_j\in
B(\cK_j, \cK_j')$ such that
$$
\Phi'(I_{\cN_J}\otimes \tau_1)
=(I_{\cN_J}\otimes \tau_2) \Phi.
$$
As in \cite{Po-Berezin-poly}, one can prove that if    $\Phi_s:\cN_J\otimes \cH_s\to
\cN_J\otimes \cK$, \ $s=1,2$,    are  multi-analytic operators   with respect to ${\bf S}:=\{{\bf S}_{i,j}\}$ such that
$
\Phi_1 \Phi_1^*=\Phi_2 \Phi_2^*,
$
then there is a unique partial isometry $V:\cH_1\to \cH_2$ such that
$\Phi_1=\Phi_2(I_{\cN_J}\otimes V),
$
where $(I_{\cN_J}\otimes V)$ is a inner multi-analytic operator  with initial space $\supp (\Phi_1)$ and  final space $\supp (\Phi_2)$.
In particular,  the  multi-analytic operators $\Phi_1|_{\supp (\Phi_1)}$ and $\Phi_2|_{\supp (\Phi_2)}$ coincide.

\begin{definition} A $k$-tuple  ${\bf T} \in \cV_{{\bf f},J}^{\bf m}(\cH)$  is said to have constrained
  characteristic function   if there is a Hilbert space $\cE$ and
   a multi-analytic operator $\Psi:\cN_J\otimes \cE \to \cN_J\otimes \overline{{\bf \Delta_{f,T}^m}(I) (\cH)}$ with respect to ${\bf S}=\{{\bf S}_{i,j}\}$ such that
$$
{\bf K_{f,T, {\it J}}}{\bf K_{f,T,{\it J}}^*} +\Psi \Psi^*=I,
$$
where ${\bf K_{f,T, {\it J}}}$ is  the constrained noncommutative Berezin kernel associated with ${\bf T}\in \cV_{{\bf f},J}^{\bf m}(\cH)$.
\end{definition}
 According to the remarks above, if $1\in \cN_J$ and  there is a constrained characteristic function for ${\bf T}\in \cV_{{\bf f},J}^{\bf m}(\cH)$, then it is essentially unique.
We also remark that in the particular case when $k=1$ and $m_1=1$,
all the elements in the
 noncommutative variety $\cV_{f_1}^{1}$ have constrained characteristic functions.
Using Theorem \ref{Beur-fact}, one can deduce the following result.
\begin{theorem} An element   ${\bf T}=\{T_{i,j}\}$  in  the noncommutative variety $\cV_{{\bf f}, J}^{\bf m}(\cH)$  admits a constrained characteristic function if and only if
$$
{\bf \Delta}_{{\bf f,S}\otimes I}^{\bf p}(I -{\bf K_{f,T,{\it J}}}{\bf K_{f,T, {\it J}}^*})\geq 0
$$
for  any ${\bf p}:=(p_1,\ldots, p_k)\in \ZZ_+^k$ such that ${\bf p}\leq {\bf m}$, where ${\bf K_{f,T,{\it J}}}$ is the  constrained  Berezin kernel  associated with ${\bf T}$ and ${\bf S}:=\{{\bf S}_{i,j}\}$ is the universal model of $\cV_{{\bf f}, J}^{\bf m}$.
\end{theorem}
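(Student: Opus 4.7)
The plan is to apply Theorem \ref{Beur-fact} directly to the positive operator $G := I - {\bf K_{f,T,{\it J}}}{\bf K_{f,T,{\it J}}^*}$, viewed as an operator on the Hilbert space $\cN_J \otimes \overline{{\bf \Delta_{f,T}^m}(I)(\cH)}$ (so that here the role of $\cH$ in Theorem \ref{Beur-fact} is played by $\overline{{\bf \Delta_{f,T}^m}(I)(\cH)}$). First I would note that by Proposition \ref{Ber-const}, the constrained Berezin kernel ${\bf K_{f,T,{\it J}}}$ is a contraction, so ${\bf K_{f,T,{\it J}}}{\bf K_{f,T,{\it J}}^*} \leq I$ and hence $G \geq 0$. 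This handles the case ${\bf p}=0$ in the statement, for which the condition ${\bf \Delta}_{{\bf f,S}\otimes I}^{\bf 0}(G) = G \geq 0$ holds automatically.

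By the very definition of a constrained characteristic function, ${\bf T}$ admits such a function if and only if there exists a Hilbert space $\cE$ and a multi-analytic operator $\Psi:\cN_J\otimes \cE \to \cN_J\otimes \overline{{\bf \Delta_{f,T}^m}(I)(\cH)}$ with respect to ${\bf S}=\{{\bf S}_{i,j}\}$ such that $\Psi\Psi^* = G$. Invoking Theorem \ref{Beur-fact} for the operator $G$ on $\cN_J \otimes \overline{{\bf \Delta_{f,T}^m}(I)(\cH)}$, such a factorization exists precisely when
$$
{\bf \Delta}_{{\bf f,S}\otimes I}^{\bf p}(G) \geq 0 \qquad \text{for every } {\bf p}\in \ZZ_+^k \text{ with } {\bf 0}\neq {\bf p}\leq {\bf m}.
$$
Combining this with the automatic ${\bf p}=0$ case yields the stated equivalence.

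Since the heavy lifting has already been done in Theorem \ref{Beur-fact}, no genuine obstacle remains; the only bookkeeping is to confirm that the Hilbert space carrying $G$ is properly identified and that the multi-analyticity in the definition of the characteristic function coincides with the condition appearing in the hypothesis of Theorem \ref{Beur-fact}. Both points are immediate from the definitions and from Proposition \ref{Ber-const}, so the proof reduces to a one-line citation of Theorem \ref{Beur-fact}.
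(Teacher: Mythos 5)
Your proof is correct and follows essentially the same approach as the paper, which itself simply says that the result is deduced from Theorem \ref{Beur-fact}. The only small extra care you take — observing that the ${\bf p}=0$ case of the positivity condition is automatic because the constrained Berezin kernel is a contraction (Proposition \ref{Ber-const}), so it is consistent to state the theorem with ${\bf p}\leq{\bf m}$ rather than the strict ${\bf 0}\neq{\bf p}\leq{\bf m}$ appearing in Theorem \ref{Beur-fact}(ii) — is a welcome clarification rather than a divergence.
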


 If ${\bf T}$ has characteristic function, the multi-analytic operator $\Gamma$ provided by the  proof of Theorem \ref{Beur-fact} when $G=I -{\bf K_{f,T,{\it J}}}{\bf K_{f,T,{\it J}}^*}$, which we denote by $\Theta_{\bf f,T,{\it J}}$,  is called  the {\it constrained  characteristic function} of ${\bf T}$. More precisely, $\Theta_{\bf f,T, {\it J}}$
  is the multi-analytic operator
 $$\Theta_{\bf f,T,{\it J}}:\cN_J\otimes \overline{{\bf \Delta_{f,M_T}^m}(I)(\cM_T)} \to \cN_J\otimes \overline{{\bf \Delta_{f,T}^m}(I)(\cH)}
 $$
defined by $\Theta_{\bf f,T,{\it J}}:=(I -{\bf K_{f,T,{\it J}}}{\bf K_{f,T,{\it J}}^*})^{1/2} {\bf K_{f,M_T,{\it J}}^*}$, where
$${\bf K_{f,T,{\it J}}}: \cH \to \cN_J \otimes  \overline{{\bf \Delta_{f,T}^m}(I)(\cH)}$$
is the constrained  noncommutative Berezin kernel associated with ${\bf T}$ and
$${\bf K_{f,M_T, {\it J}}}: \cH \to \cN_J \otimes  \overline{{\bf \Delta_{f,M_T}^m}(I)(\cM_{\bf T})}$$
is the constrained noncommutative Berezin kernel associated
 with ${\bf M_T}\in {\bf \cV_f^m}(\cM_{\bf T})$.  Here, we have
$$
\cM_{\bf T}:= \overline{{\rm range}\,(I -{\bf K_{f,T, {\it J}}}{\bf K_{f,T, {\it J}}^*}) }
$$
and  ${\bf M_T}:=\{M_{i,j}\}$, where $M_{i,j}\in B(\cM_{\bf T})$  is  given by $M_{i,j}:=A_{i,j}^*$ and  $A_{i,j}\in B(\cM_{\bf T})$ is uniquely defined by
$$
A_{i,j}\left[(I -{\bf K_{f,T,{\it J}}}{\bf K_{f,T, {\it J}}^*})^{1/2}x\right]:=(I -{\bf K_{f,T,{\it J}}}{\bf K_{f,T, {\it J}}^*})^{1/2}({\bf S}_{i,j}\otimes I)x
$$
for any $x\in \cN_J\otimes \overline{{\bf \Delta_{f,T}^m}(I)(\cH)}$. According to Theorem \ref{Beur-fact}, we have
$$
{\bf K_{f,T, {\it J}}}{\bf K_{f,T, {\it J}}^*}+ \Theta_{\bf f,T, {\it J}}
\Theta_{\bf f, T, {\it J}}^*=I.
$$

We denote by $\cC_{\bf f, {\it J}}^{\bf m}(\cH)$ the set of all ${\bf T}= \{T_{i,j}\}\in \cV_{{\bf f}, J}^{\bf m}(\cH)$  which admit constrained characteristic functions.
 In what follows, we  provide a model theorem for class of  the  completely non-coisometric  elements in
$\cC_{{\bf f}, J}^{\bf m}(\cH)$. Due to the results obtained in  the previous sections, the proof is now  similar to that of Theorem 6.4 from \cite{Po-Berezin-poly}. We shall  omit it.

\begin{theorem}\label{model}  Let ${\bf T}=\{T_{i,j}\}$ be a
 completely non-coisometric  element  in
$\cC_{\bf f, {\it J}}^{\bf m}(\cH)$  and let  ${\bf S}:=\{{\bf S}_{i,j}\}$ be the universal model associated to the abstract  noncommutative variety $\cV_{\bf f,{\it J}}^{\bf m}$.   Set
$$
\cD:=\overline{{\bf \Delta_{f,T}^m}(I)(\cH)},\quad  \quad \cD_*:=\overline{{\bf \Delta_{f,M_T}^m}(I)(\cM_T)},
$$
and $\Delta_{\Theta_{\bf f,T,{\it J}}}:= \left(I-\Theta_{\bf f,T,{\it J}}^*
\Theta_{\bf f,T,{\it J}}\right)^{1/2}$, where $\Theta_{\bf f,T,{\it J}}$ is the characteristic function of ${\bf T}$.
 Then ${\bf T} $ is unitarily equivalent to
$\TT:=\{\TT_{i,j}\}\in \cC_{\bf f}^{\bf m}(\HH_{\bf f,T,{\it J}})$, where $\TT_{i,j}$ is a bounded operator acting on the
Hilbert space
\begin{equation*}
\begin{split}
\HH_{\bf f,T, {\it J}}&:=\left[\left(\cN_J\otimes\cD\right)\oplus
\overline{\Delta_{\Theta_{\bf f,T,{\it J}}}(\cN_J\otimes \cD_*)}\right]\\
& \qquad \qquad\ominus\left\{\Theta_{\bf f,T,{\it J}}\varphi\oplus
\Delta_{\Theta_{\bf f,T,{\it J}}}\varphi:\ \varphi\in \cN_J\otimes  \cD_*\right\}
\end{split}
\end{equation*}
 and is uniquely defined by the relation
$$
\left( P_{\cN_J\otimes\cD}|_{\HH_{\bf f,T,{\it J}}}\right) \TT_{i,j}^*x=
({\bf S}_{i,j}^*\otimes I_{ \cD})\left( P_{\cN_J\otimes \cD}|_{\HH_{\bf f,T,{\it J}}}\right)x
$$
for any $x\in \HH_{\bf f,T,{\it J}}$. Here,
   $ P_{\cN_J\otimes  \cD}$ is the orthogonal
projection of the Hilbert space
$$\cK_{\bf f,T,{\it J}}:=\left(\cN_J\otimes\cD\right)\oplus
\overline{\Delta_{\Theta_{\bf f,T,{\it J}}}(\cN_J\otimes \cD_*)}$$
 onto
the subspace $\cN_J\otimes \cD$.
 \end{theorem}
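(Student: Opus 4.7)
The plan is to construct an explicit isometry $V: \cH \to \cK_{\bf f,T,\it J}$ with range equal to $\HH_{\bf f,T,\it J}$ that intertwines ${\bf T}_{i,j}^*$ with the compression of ${\bf S}_{i,j}^*\otimes I_\cD$ to $\HH_{\bf f,T,\it J}$, and then to read off the model operators $\TT_{i,j}$ from this intertwining. The essential inputs are the identity ${\bf K_{f,T,\it J}}{\bf K_{f,T,\it J}^*}+\Theta_{\bf f,T,\it J}\Theta_{\bf f,T,\it J}^*=I$, the co-invariance relation ${\bf K_{f,T,\it J}}T_{i,j}^*=({\bf S}_{i,j}^*\otimes I)\,{\bf K_{f,T,\it J}}$ from Proposition~\ref{Ber-const}, and the explicit construction of $\Theta_{\bf f,T,\it J}$ via Theorem~\ref{Beur-fact} applied to $G=I-{\bf K_{f,T,\it J}}{\bf K_{f,T,\it J}^*}$.

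First, I would define $V:\cH\to\cK_{\bf f,T,\it J}=(\cN_J\otimes\cD)\oplus\overline{\Delta_{\Theta_{\bf f,T,\it J}}(\cN_J\otimes\cD_*)}$ by
$$
Vh:={\bf K_{f,T,\it J}}h\,\oplus\,\Delta_{\Theta_{\bf f,T,\it J}}{\bf K_{f,M_T,\it J}^*}|_{\cH}\,h,
$$
where I use the natural identification (inherited from the construction of $\Theta_{\bf f,T,\it J}$ in Theorem~\ref{Beur-fact}) between $\cM_{\bf T}=\overline{\mathrm{range}\,(I-{\bf K_{f,T,\it J}}{\bf K_{f,T,\it J}^*})}$ and a subspace of $\overline{\Delta_{\Theta_{\bf f,T,\it J}}(\cN_J\otimes\cD_*)}$. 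Isometry of $V$ follows from the computation
$$
V^*V={\bf K_{f,T,\it J}^*}{\bf K_{f,T,\it J}}+(I-{\bf K_{f,T,\it J}}{\bf K_{f,T,\it J}^*})^{1/2}\,{\bf K_{f,M_T,\it J}}{\bf K_{f,M_T,\it J}^*}\,(I-{\bf K_{f,T,\it J}}{\bf K_{f,T,\it J}^*})^{1/2}
$$
together with the fact that ${\bf M_T}$ is pure (so ${\bf K_{f,M_T,\it J}^*}{\bf K_{f,M_T,\it J}}=I_{\cM_{\bf T}}$, giving ${\bf K_{f,M_T,\it J}}{\bf K_{f,M_T,\it J}^*}$ a projection structure) and the completely non-coisometric hypothesis, which by Proposition~\ref{Ber-const}(i) forces ${\bf K_{f,T,\it J}^*}{\bf K_{f,T,\it J}}+(I-{\bf K_{f,T,\it J}^*}{\bf K_{f,T,\it J}})=I$ to be realized injectively on the defect direction.

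Second, I would verify that $V\cH\subseteq\HH_{\bf f,T,\it J}$, i.e.\ that $Vh$ is orthogonal to every vector of the form $\Theta_{\bf f,T,\it J}\varphi\oplus\Delta_{\Theta_{\bf f,T,\it J}}\varphi$ with $\varphi\in\cN_J\otimes\cD_*$. Using $\Theta_{\bf f,T,\it J}=(I-{\bf K_{f,T,\it J}}{\bf K_{f,T,\it J}^*})^{1/2}{\bf K_{f,M_T,\it J}^*}$, this reduces to the identity
$$
{\bf K_{f,T,\it J}^*}(I-{\bf K_{f,T,\it J}}{\bf K_{f,T,\it J}^*})^{1/2}{\bf K_{f,M_T,\it J}^*}+{\bf K_{f,M_T,\it J}}(I-\Theta_{\bf f,T,\it J}^*\Theta_{\bf f,T,\it J})=0
$$
on $\cN_J\otimes\cD_*$, which is a routine manipulation after expanding $\Delta_{\Theta_{\bf f,T,\it J}}^2=I-\Theta_{\bf f,T,\it J}^*\Theta_{\bf f,T,\it J}$. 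Then I would define $\TT_{i,j}:=VT_{i,j}V^*$, so $V$ is automatically a unitary between $\cH$ and its image, and the intertwining relation
$$
(P_{\cN_J\otimes\cD}|_{\HH_{\bf f,T,\it J}})\TT_{i,j}^*x=({\bf S}_{i,j}^*\otimes I_\cD)(P_{\cN_J\otimes\cD}|_{\HH_{\bf f,T,\it J}})x
$$
follows from $P_{\cN_J\otimes\cD}Vh={\bf K_{f,T,\it J}}h$ together with ${\bf K_{f,T,\it J}}T_{i,j}^*=({\bf S}_{i,j}^*\otimes I){\bf K_{f,T,\it J}}$.

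The two steps I expect to be the main obstacle are (a) showing that $V\cH$ fills up all of $\HH_{\bf f,T,\it J}$ and (b) showing that $\TT_{i,j}$ is actually uniquely characterized by the displayed compression formula. For (a), one takes a vector $\eta\oplus\xi\in\HH_{\bf f,T,\it J}$ orthogonal to $V\cH$, so that ${\bf K_{f,T,\it J}^*}\eta+({\bf K_{f,M_T,\it J}}\Delta_{\Theta_{\bf f,T,\it J}})\xi=0$; combined with the orthogonality to $\{\Theta_{\bf f,T,\it J}\varphi\oplus\Delta_{\Theta_{\bf f,T,\it J}}\varphi\}$, standard Sz.-Nagy--Foias-type manipulations (carried out, e.g., in Theorem~6.4 of \cite{Po-Berezin-poly}) reduce the equation to one forcing the ${\bf W}^*$-invariance of the component, and the c.n.c.\ assumption then yields $\eta=0$, $\xi=0$. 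For (b), the uniqueness of $\TT_{i,j}$ follows once one observes that the projection $P_{\cN_J\otimes\cD}|_{\HH_{\bf f,T,\it J}}$ is injective on $\HH_{\bf f,T,\it J}$ — again a consequence of the c.n.c.\ hypothesis together with $\Delta_{\Theta_{\bf f,T,\it J}}$ being injective on its range — so the displayed equation determines $\TT_{i,j}^*$ completely, and verifying $\TT\in\cC_{\bf f,\it J}^{\bf m}(\HH_{\bf f,T,\it J})$ is then automatic from the unitary equivalence to ${\bf T}$.
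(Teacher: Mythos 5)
Your overall strategy (build an explicit isometry $V:\cH\to\cK_{\bf f,T,{\it J}}$ with range $\HH_{\bf f,T,{\it J}}$, intertwine via ${\bf K_{f,T,{\it J}}}T_{i,j}^*=({\bf S}_{i,j}^*\otimes I){\bf K_{f,T,{\it J}}}$, then read off $\TT_{i,j}$) is the right shape — it is the Sz.-Nagy--Foias scheme that the omitted proof (Theorem 6.4 of \cite{Po-Berezin-poly}, to which the paper defers) follows. But the central object of your argument, the second component of $V$, is not well defined, and the verifications you sketch do not go through. The operator ${\bf K_{f,M_T,{\it J}}^*}$ maps $\cN_J\otimes\cD_*$ into $\cM_{\bf T}\subseteq\cN_J\otimes\cD$, so neither the restriction ``$|_{\cH}$'' nor the composition $\Delta_{\Theta_{\bf f,T,{\it J}}}{\bf K_{f,M_T,{\it J}}^*}$ makes sense; and the ``natural identification'' of $\cM_{\bf T}$ with a subspace of $\overline{\Delta_{\Theta_{\bf f,T,{\it J}}}(\cN_J\otimes\cD_*)}$ that you invoke does not exist in the form you need: for $B:=\Delta_{\Theta_{\bf f,T,{\it J}}}{\bf K_{f,M_T,{\it J}}}$ one computes, using $\Theta^*\Theta={\bf K_{f,M_T,{\it J}}}(I-{\bf K_{f,T,{\it J}}}{\bf K_{f,T,{\it J}}^*}){\bf K_{f,M_T,{\it J}}^*}$ and purity of ${\bf M_T}$, that $B^*B=({\bf K_{f,T,{\it J}}}{\bf K_{f,T,{\it J}}^*})|_{\cM_{\bf T}}$, which is not the identity, so $B$ is not an isometric embedding. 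Your isometry computation also conflates the range defect $I-{\bf K_{f,T,{\it J}}}{\bf K_{f,T,{\it J}}^*}$ (an operator on $\cN_J\otimes\cD$) with the domain defect $I_\cH-{\bf K_{f,T,{\it J}}^*}{\bf K_{f,T,{\it J}}}$: what the second component $C$ of $V$ must satisfy is $C^*C=I_\cH-{\bf K_{f,T,{\it J}}^*}{\bf K_{f,T,{\it J}}}$ together with $\Delta_{\Theta_{\bf f,T,{\it J}}}C=-\Theta_{\bf f,T,{\it J}}^*{\bf K_{f,T,{\it J}}}$ (note the sign, forced by orthogonality to $\{\Theta\varphi\oplus\Delta\varphi\}$), and no operator of the shape you propose satisfies these two identities; correspondingly, the ``routine'' orthogonality identity you display is ill-typed (its two summands do not even map into the same space) and would be false with the types repaired.

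Beyond the construction of $V$, the two points you yourself flag as the main obstacles are exactly where the completely non-coisometric hypothesis does the work, and you discharge them by appealing to ``standard manipulations carried out in Theorem 6.4 of \cite{Po-Berezin-poly}'' — which is circular here, since that is precisely the argument being reconstructed (the present paper proves nothing for this theorem beyond citing it). In the genuine proof, the second component of the embedding is produced not by an elementary algebraic formula but through the isometric dilation / a limiting procedure in which c.n.c.\ guarantees well-definedness and the norm identity, and the surjectivity of $V$ onto $\HH_{\bf f,T,{\it J}}$ is the heart of the proof. One small point in your favor: the uniqueness of $\TT_{i,j}$ does follow from injectivity of $P_{\cN_J\otimes\cD}|_{\HH_{\bf f,T,{\it J}}}$, but that injectivity is automatic (if $0\oplus\xi\in\HH_{\bf f,T,{\it J}}$ then $\Delta_{\Theta_{\bf f,T,{\it J}}}\xi=0$ while $\xi\in\overline{\mathrm{range}\,\Delta_{\Theta_{\bf f,T,{\it J}}}}$, hence $\xi=0$); it does not need the c.n.c.\ hypothesis, contrary to what you assert.
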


\begin{corollary}\label{pure-model} Let  ${\bf T}=\{T_{i,j}\}$ be an element  in
$\cC_{\bf f, {\it J}}^{\bf m}(\cH)$. Then ${\bf T}$  is pure if and only if  the constrained characteristic function $\Theta_{\bf f,T, {\it J}}$ is an inner multi-analytic operator with respect to  ${\bf S}:=\{{\bf S}_{i,j}\}$. Moreover, in this case  ${\bf T}=\{T_{i,j}\}$ is unitarily equivalent to ${\bf G}=\{ G_{i,j}\}$, where
$$ G_{i,j}:=P_{\bf H_{f,T, {\it J}}} \left({\bf S}_{i,j}\otimes I\right)|_{\bf H_{f,T, {\it J}}}
$$
and $P_{\bf H_{f,T, {\it J}}}$ is the orthogonal projection of $\cN_J\otimes \overline{{\bf \Delta_{f,T}^m}(I)(\cH)}$ onto
$${\bf H_{f,T,{\it J}}}:=\left\{\cN_J\otimes \overline{{\bf \Delta_{f,T}^m}(I)(\cH)}\right\}\ominus {\rm range}\, \Theta_{\bf f,T, {\it J}}.
$$
\end{corollary}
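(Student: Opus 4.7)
\emph{Plan.} The whole corollary turns on the defining identity
\[
\mathbf{K}_{\mathbf{f},T,J}\mathbf{K}_{\mathbf{f},T,J}^{*}\;+\;\Theta_{\mathbf{f},T,J}\Theta_{\mathbf{f},T,J}^{*}\;=\;I,
\]
together with Proposition~\ref{Ber-const}(iii), which says that $\mathbf{T}$ is pure precisely when $\mathbf{K}_{\mathbf{f},T,J}$ is an isometry. I would first dispose of the direct implication: if $\mathbf{T}$ is pure, then $\mathbf{K}_{\mathbf{f},T,J}$ is an isometry, so $\mathbf{K}_{\mathbf{f},T,J}\mathbf{K}_{\mathbf{f},T,J}^{*}$ is an orthogonal projection, and consequently $\Theta_{\mathbf{f},T,J}\Theta_{\mathbf{f},T,J}^{*}=I-\mathbf{K}_{\mathbf{f},T,J}\mathbf{K}_{\mathbf{f},T,J}^{*}$ is also an orthogonal projection, which makes $\Theta_{\mathbf{f},T,J}$ a partial isometry, i.e., inner.

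For the converse, assume $\Theta_{\mathbf{f},T,J}$ is inner. Then $\Theta_{\mathbf{f},T,J}\Theta_{\mathbf{f},T,J}^{*}$ is a projection, hence so is $\mathbf{K}_{\mathbf{f},T,J}\mathbf{K}_{\mathbf{f},T,J}^{*}$; via polar decomposition this forces $\mathbf{K}_{\mathbf{f},T,J}^{*}\mathbf{K}_{\mathbf{f},T,J}$ to be a projection as well, and hence $\mathbf{K}_{\mathbf{f},T,J}$ is a partial isometry. The subtle step is to boost this to an isometry. Here I would exploit (i) the intertwining relation $\mathbf{K}_{\mathbf{f},T,J}T_{i,j}^{*}=(\mathbf{S}_{i,j}^{*}\otimes I)\mathbf{K}_{\mathbf{f},T,J}$ from Proposition~\ref{Ber-const}(ii), which shows that $\ker\mathbf{K}_{\mathbf{f},T,J}$ is invariant under every $T_{i,j}^{*}$; (ii) the explicit formula for the Berezin kernel, which yields $\mathbf{\Delta}_{\mathbf{f},T}^{\mathbf{m}}(I)^{1/2}T_{(\beta)}^{*}h=0$ for every $h\in\ker\mathbf{K}_{\mathbf{f},T,J}$ and every multi-index $(\beta)$; and (iii) Lemma~\ref{ine-ber}, which gives the identity $\mathbf{\Delta}_{\mathbf{f},T}^{\mathbf{m}}(I-\mathbf{K}_{\mathbf{f},T,J}^{*}\mathbf{K}_{\mathbf{f},T,J})=0$. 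Combining these observations with Proposition~\ref{Ber-const}(i), one can iterate the WOT-continuous maps $\Phi_{f_i,T_i}$ on the projection $Q:=I-\mathbf{K}_{\mathbf{f},T,J}^{*}\mathbf{K}_{\mathbf{f},T,J}$ to conclude that the limit defining $\mathbf{K}_{\mathbf{f},T,J}^{*}\mathbf{K}_{\mathbf{f},T,J}$ equals $I$, so that $Q=0$. Therefore $\mathbf{K}_{\mathbf{f},T,J}$ is an isometry and $\mathbf{T}$ is pure.

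For the ``moreover'' claim, once $\mathbf{T}$ is pure, $\mathbf{K}_{\mathbf{f},T,J}$ is an isometry whose range coincides with the range of the projection $\mathbf{K}_{\mathbf{f},T,J}\mathbf{K}_{\mathbf{f},T,J}^{*}=I-\Theta_{\mathbf{f},T,J}\Theta_{\mathbf{f},T,J}^{*}$. Since $\Theta_{\mathbf{f},T,J}$ is a partial isometry, $\Theta_{\mathbf{f},T,J}\Theta_{\mathbf{f},T,J}^{*}$ is the orthogonal projection onto $\mathrm{range}\,\Theta_{\mathbf{f},T,J}$, so the range of $\mathbf{K}_{\mathbf{f},T,J}$ is exactly $\mathbf{H}_{\mathbf{f},T,J}$. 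The intertwining relation then shows that, after identifying $\mathcal{H}$ with $\mathbf{H}_{\mathbf{f},T,J}$ via $\mathbf{K}_{\mathbf{f},T,J}$, each $T_{i,j}$ becomes $P_{\mathbf{H}_{\mathbf{f},T,J}}(\mathbf{S}_{i,j}\otimes I)|_{\mathbf{H}_{\mathbf{f},T,J}}$. Alternatively, one can recover this by specializing Theorem~\ref{model}: when $\Theta_{\mathbf{f},T,J}$ is a partial isometry, $\Delta_{\Theta_{\mathbf{f},T,J}}=(I-\Theta_{\mathbf{f},T,J}^{*}\Theta_{\mathbf{f},T,J})^{1/2}$ reduces to the orthogonal projection onto $\ker\Theta_{\mathbf{f},T,J}$, and the subspace $\{\Theta_{\mathbf{f},T,J}\varphi\oplus\Delta_{\Theta_{\mathbf{f},T,J}}\varphi\}$ decomposes as $\mathrm{range}\,\Theta_{\mathbf{f},T,J}\oplus\ker\Theta_{\mathbf{f},T,J}$, so the model space collapses to $\mathbf{H}_{\mathbf{f},T,J}$.

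The main obstacle is the step inside the converse direction where one must upgrade the partial isometry $\mathbf{K}_{\mathbf{f},T,J}$ to a full isometry; once that is done, the rest of the argument is a routine identification.
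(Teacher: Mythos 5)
Your forward implication and the ``moreover'' identification are fine: purity makes ${\bf K_{f,T,{\it J}}}$ an isometry, so $\Theta_{\bf f,T,{\it J}}\Theta_{\bf f,T,{\it J}}^*=I-{\bf K_{f,T,{\it J}}}{\bf K_{f,T,{\it J}}^*}$ is a projection and $\Theta_{\bf f,T,{\it J}}$ is inner; and since the range of the isometry ${\bf K_{f,T,{\it J}}}$ is then $(\cN_J\otimes\cD)\ominus\operatorname{range}\Theta_{\bf f,T,{\it J}}={\bf H_{f,T,{\it J}}}$, the intertwining relation of Proposition \ref{Ber-const}(ii) gives the stated model. Your alternative route for this part (specializing Theorem \ref{model}: for inner $\Theta$, $\Delta_{\Theta_{\bf f,T,{\it J}}}$ is the projection onto $\ker\Theta_{\bf f,T,{\it J}}$, the graph subspace becomes $\operatorname{range}\Theta\oplus\ker\Theta$, and $\HH_{\bf f,T,{\it J}}$ collapses to ${\bf H_{f,T,{\it J}}}$) is precisely how the paper intends the corollary to follow from the model theorem, which it states without separate proof.

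The converse, however, has a genuine gap, located exactly at what you call the subtle step. Inner-ness of $\Theta_{\bf f,T,{\it J}}$ says precisely that $I-{\bf K_{f,T,{\it J}}}{\bf K_{f,T,{\it J}}^*}$ is a projection, i.e.\ that the constrained Berezin kernel is a partial isometry, and nothing more; so the converse is literally the claim ``partial isometry $\Rightarrow$ isometry,'' and the ingredients you list cannot force it. Indeed, all of (i) $\ker{\bf K_{f,T,{\it J}}}$ is invariant under the $T_{i,j}^*$, (ii) ${\bf \Delta_{f,T}^m}(I)^{1/2}T_{(\beta)}^*h=0$ for $h\in\ker{\bf K_{f,T,{\it J}}}$, (iii) ${\bf \Delta_{f,T}^m}(I-{\bf K_{f,T,{\it J}}^*}{\bf K_{f,T,{\it J}}})=0$, together with Proposition \ref{Ber-const}(i), are compatible with $Q:=I-{\bf K_{f,T,{\it J}}^*}{\bf K_{f,T,{\it J}}}\neq 0$: take $k=m_1=1$ (where, as remarked in Section 6, every element of the variety admits a constrained characteristic function) and ${\bf T}=U\oplus T_0$ with $U$ unitary and $T_0$ pure; then ${\bf K_{f,T,{\it J}}}$ vanishes on the unitary summand and acts as the isometry ${\bf K_{f,T_0,{\it J}}}$ on the pure one, so it is a partial isometry with nonzero kernel, $\Theta_{\bf f,T,{\it J}}$ is inner, yet ${\bf T}$ is not pure, while (i)--(iii) all hold. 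Hence no iteration of the maps $\Phi_{f_i,T_i}$ on $Q$ can close your argument; the completely non-coisometric hypothesis (automatic for pure tuples, and the standing assumption of Theorem \ref{model}) must enter, and your ``Alternatively'' sentence is in fact the only viable route: assume ${\bf T}$ c.n.c., apply Theorem \ref{model}, note that for inner $\Theta$ the model space reduces to ${\bf H_{f,T,{\it J}}}$, a co-invariant subspace for ${\bf S}\otimes I$, and conclude purity because compressions of the pure tuple ${\bf S}\otimes I$ to co-invariant subspaces are pure. In short, your converse as sketched would fail; it should be rewritten to go through the model theorem (equivalently, the corollary must be read within the c.n.c. class).
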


As consequences of the results above, we can easily  show that
  if  ${\bf T}=\{T_{i,j}\}\in \cV_{\bf f,{\it J}}^{\bf m}(\cH)$, then
 ${\bf T}$  is unitarily equivalent to
$\{{\bf S}_{i,j}\otimes I_\cK\}$ for some Hilbert space $\cK$ if and only if ${\bf T}\in \cC_{\bf f,{\it J}}^{\bf m}$ is completely non-coisometric  and
the characteristic function $\Theta_{\bf f,T,{\it J}}=0$.
 On the other hand, if ${\bf T}\in \cC_{\bf f,{\it J}}^{\bf m}$, then $\Theta_{\bf f,T,{\it J}}$  has dense range if and only if there is no nonzero vector $h\in \cH$ such that
    $$
    \lim_{q=(q_1,\ldots, q_k)\in \NN^k} \left< (id-\Phi_{f_1,T_1}^{q_1})\cdots (id-\Phi_{f_k,T_k}^{q_k})(I_\cH)h, h\right>=\|h\|,
    $$
where $T_i:=(T_{i,1},\ldots, T_{i, n_i})$ for any $i\in \{1,\ldots, k\}$.

An important consequence of Theorem \ref{model} is   that the constrained  characteristic function
$\Theta_{\bf f,T,{\it J}}$  is a complete unitary invariant for the completely non-coisometric
part of the noncommutative domain $\cC_{\bf f,{\it J}}^{\bf m}$. The proof is similar to that of Theorem 6.5 from \cite{Po-Berezin-poly}.

\begin{theorem}\label{u-inv}
Let   ${\bf T}=\{T_{i,j}\}\in \cC_{\bf f, {\it J}}^{\bf m}(\cH)$ and ${\bf T'}=\{T_{i,j}'\}\in
\cC_{\bf f, {\it J}}^{\bf m}(\cH')$ be two completely non-coisometric elements. Then ${\bf T}$ and
${\bf T}'$ are unitarily equivalent if and only if their constrained
characteristic functions $\Theta_{\bf f,T,{\it J}}$ and $\Theta_{\bf f,T',{\it J}}$
coincide.
\end{theorem}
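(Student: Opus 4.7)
The plan is to prove the two directions separately, with the ``if'' direction falling out quickly from Theorem \ref{model} while the ``only if'' direction requires tracing a unitary equivalence through the constructions of the Berezin kernel, the defect space $\cM_{\bf T}$, and the multi-analytic operator $\Theta_{\bf f,T,{\it J}}$.

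For the ``if'' direction, suppose the characteristic functions $\Theta_{\bf f,T,{\it J}}$ and $\Theta_{\bf f,T',{\it J}}$ coincide, i.e.\ there exist unitaries $\tau_1:\overline{{\bf \Delta_{f,M_T}^m}(I)(\cM_{\bf T})} \to \overline{{\bf \Delta_{f,M_{T'}}^m}(I)(\cM_{\bf T'})}$ and $\tau_2:\overline{{\bf \Delta_{f,T}^m}(I)(\cH)}\to \overline{{\bf \Delta_{f,T'}^m}(I)(\cH')}$ such that $\Theta_{\bf f,T',{\it J}}(I_{\cN_J}\otimes \tau_1)=(I_{\cN_J}\otimes \tau_2)\Theta_{\bf f,T,{\it J}}$. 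By Theorem \ref{model}, ${\bf T}$ and ${\bf T}'$ are each unitarily equivalent to explicit model operators built functorially from the data $(\cD,\cD_*,\Theta_{\bf f,T,{\it J}})$ and $(\cD',\cD_*',\Theta_{\bf f,T',{\it J}})$ respectively. The pair $(I_{\cN_J}\otimes \tau_1, I_{\cN_J}\otimes \tau_2)$ then induces a unitary from $\cK_{\bf f,T,{\it J}}$ onto $\cK_{\bf f,T',{\it J}}$ that carries the defect-function graph onto the defect-function graph, hence descends to a unitary between $\HH_{\bf f,T,{\it J}}$ and $\HH_{\bf f,T',{\it J}}$ intertwining the model tuples. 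Composing with the two unitary equivalences from Theorem \ref{model} yields a unitary equivalence between ${\bf T}$ and ${\bf T}'$.

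For the ``only if'' direction, let $U:\cH\to \cH'$ be a unitary with $U T_{i,j}=T_{i,j}'U$ for all $i,j$. First observe that $U$ intertwines the completely positive maps $\Phi_{f_i,T_i}$ and $\Phi_{f_i,T_i'}$, hence $U\,{\bf \Delta_{f,T}^p}(I)\,U^* = {\bf \Delta_{f,T'}^p}(I)$ for every ${\bf p}\leq {\bf m}$. In particular $U$ restricts to a unitary $\tau_2:\cD\to \cD'$, where $\cD:=\overline{{\bf \Delta_{f,T}^m}(I)(\cH)}$ and $\cD':=\overline{{\bf \Delta_{f,T'}^m}(I)(\cH')}$. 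Inserting this into the definition of the constrained Berezin kernel gives $(I_{\cN_J}\otimes \tau_2){\bf K_{f,T,{\it J}}}={\bf K_{f,T',{\it J}}}U$, so the two range projections $I-{\bf K_{f,T,{\it J}}}{\bf K_{f,T,{\it J}}^*}$ and $I-{\bf K_{f,T',{\it J}}}{\bf K_{f,T',{\it J}}^*}$ are conjugate via $I_{\cN_J}\otimes \tau_2$. This in turn identifies $\cM_{\bf T}$ with $\cM_{\bf T'}$ through the unitary $\omega$ defined on $(I-{\bf K_{f,T,{\it J}}}{\bf K_{f,T,{\it J}}^*})^{1/2}x \mapsto (I-{\bf K_{f,T',{\it J}}}{\bf K_{f,T',{\it J}}^*})^{1/2}(I_{\cN_J}\otimes \tau_2)x$, and a routine verification using the defining relation of $M_{i,j}$ shows that $\omega$ intertwines the auxiliary tuples ${\bf M_T}$ and ${\bf M_{T'}}$. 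Consequently $\omega$ restricts to a unitary $\tau_1:\overline{{\bf \Delta_{f,M_T}^m}(I)(\cM_{\bf T})}\to \overline{{\bf \Delta_{f,M_{T'}}^m}(I)(\cM_{\bf T'})}$, and applying it to the formula $\Theta_{\bf f,T,{\it J}}=(I-{\bf K_{f,T,{\it J}}}{\bf K_{f,T,{\it J}}^*})^{1/2}{\bf K_{f,M_T,{\it J}}^*}$ yields the intertwining relation $\Theta_{\bf f,T',{\it J}}(I_{\cN_J}\otimes \tau_1)=(I_{\cN_J}\otimes \tau_2)\Theta_{\bf f,T,{\it J}}$, which is precisely the coincidence of the two characteristic functions.

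The main obstacle I expect is the bookkeeping in the ``only if'' direction: specifically, verifying that the operator $\omega$ defined on the dense subspace $(I-{\bf K_{f,T,{\it J}}}{\bf K_{f,T,{\it J}}^*})^{1/2}(\cN_J\otimes \cD)$ is well-defined, extends to a unitary, and genuinely intertwines the auxiliary operators $M_{i,j}$ and $M_{i,j}'$ that enter the construction of the characteristic function. Once this is in place, the rest is an unwinding of definitions, and complete non-coisometricity plays no role in this direction (it is used only to make the identification in the ``if'' direction unambiguous, via the uniqueness clause of the model).
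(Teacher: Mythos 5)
Your proof is correct and takes essentially the approach the paper alludes to (the paper itself cites Theorem 6.5 of the companion paper \cite{Po-Berezin-poly} in lieu of repeating the argument). The ``if'' direction correctly leans on Theorem \ref{model}, and in the ``only if'' direction the steps you outline all go through as expected: $U$ conjugates the defect operators and hence restricts to $\tau_2:\cD\to\cD'$, which intertwines the Berezin kernels; the operator $\omega$ on $(I-{\bf K_{f,T,{\it J}}}{\bf K_{f,T,{\it J}}^*})^{1/2}(\cN_J\otimes\cD)$ is isometric because $I_{\cN_J}\otimes\tau_2$ conjugates the two defect projections, it intertwines $\{M_{i,j}\}$ and $\{M_{i,j}'\}$ by the computation you flag, and substituting into $\Theta_{\bf f,T,{\it J}}=(I-{\bf K_{f,T,{\it J}}}{\bf K_{f,T,{\it J}}^*})^{1/2}{\bf K_{f,M_T,{\it J}}^*}$ produces the coincidence relation.
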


\bigskip

\bigskip

       %

      \end{document}